\documentclass[12pt]{amsart}
\usepackage{amsfonts}
\usepackage{amssymb, amstext, amscd, amsmath}
\usepackage{amsthm}
\usepackage{times}
\usepackage{fullpage}
\usepackage{txfonts} 
\usepackage{indentfirst}
\usepackage[all]{xy}
\usepackage{subfigure}
\usepackage{graphicx}
\usepackage{tikz}

\usepackage{nicematrix} 
\usepackage{amsfonts}
\usepackage{mathrsfs}
\usepackage{lscape}
\usepackage{setspace} 
\usepackage{graphicx}    

\usepackage{geometry}
\usepackage{subfigure} 
\usepackage{xcolor} 
\usepackage{color}              
\usepackage{indentfirst} 
\usepackage{url}
\usepackage[colorlinks = true,
linkcolor=blue,
urlcolor=blue,
citecolor=blue,
anchorcolor=blue,
backref=page]{hyperref}

\usepackage[nocompress]{cite}
\newtheoremstyle{noparens}%
{}{}%
{\itshape}{}%
{\bfseries}{.}%
{ }%
{\thmname{#1}\thmnumber{ #2}\mdseries\thmnote{ #3}}
\theoremstyle{noparens}
\allowdisplaybreaks
\newtheorem{theorem}{Theorem}[section]
\newtheorem{corollary}{Corollary}[section]
\newtheorem{proposition}{Proposition}[section]
\newtheorem{lemma}{Lemma}[section]
\newtheorem{remark}{Remark}[section]
\newtheorem{definition}{Definition}[section]
\newtheorem{example}{Example}[section]

\numberwithin{equation}{section}
\numberwithin{equation}{section}

\renewcommand{\baselinestretch}{1.2} 

\begin{document}
	
\title[Two-variable Parity Polynomial for Virtual Knotoids]{Two-variable Parity Polynomial for Virtual Knotoids}
	
\author{Siqi Ding}
\address{School of Mathematical Sciences, Dalian University of Technology, Dalian 116024, P. R. China}
\email{sqding@yeah.net}

\author{Suo Gao}
\address{School of Information Science and Engineering, Dalian Polytechnic University, Dalian 116034, China}
\email{gaosuo@dlpu.edu.cn}

\author{Fengchun Lei}
\address{Beijing Institute of Mathematical Sciences and Applications, Beijing 101408, P.R. China; School of Mathematical Sciences, Dalian University of Technology, Dalian 116024, P.R. China. }
\email{leifengchun@bimsa.cn}
	
\author{Fengling Li*}
\address{School of Mathematical Sciences, Dalian University of Technology, Dalian 116024, P. R. China}
\email{fenglingli@dlut.edu.cn}
	
\author{Andrei Vesnin}
\address{Sobolev Institute of Mathematics of the Siberian Branch of the Russian Academy of Sciences, Novosibirsk 630090, Russia; Regional Scientific and Educational Mathematical Center, Tomsk State University, Tomsk, 634050, Russia}
\email{vesnin@math.nsc.ru}

\thanks{F.\,Lei supported in part by a grant of NSFC (No. 12331003); F.\,Li supported by the Fundamental Research Funds for the Central Universities (No. DUT25LAB302); A.\,V. supported by the Ministry of Sciences and Higher Education of Russia (agreement no. 075-02-2026-1339).}

\date{\today} 
	
\subjclass{57K12, 57K16}

\keywords{Virtual knotoids; parity of crossings; Vassiliev invariant of order one; }
	
\begin{abstract}
In this paper, we introduce a two-variable parity polynomial invariant for virtual knotoids, defined on oriented virtual knotoid diagrams. The construction is based on the parity of classical crossings, where each crossing is classified as even or odd and treated accordingly in the definition of the invariant. We study several fundamental properties of this invariant. We demonstrate that the parity polynomial can distinguish pairs of virtual knotoids that are not distinguished by the odd writhe, prove that it is equivalent to the affine index polynomial, and establish that it is a Vassiliev invariant of order one. Finally, we give its relationship with the Petit gluing invariant. 
\end{abstract}
\maketitle

\section{Introduction}

Knotoid theory, introduced by Turaev~\cite{Tur1} in 2012, generalizes classical knot theory by considering open-ended knot diagrams in oriented surfaces. A knotoid diagram is a generic immersion of the unit interval into a surface with finitely many transversal double points endowed with over/under-crossing information. In contrast to classical knots or long knots, the endpoints of a knotoid are allowed to lie in different regions of the diagram. 

The construction and study of knotoid invariants have become a central topic in knotoid theory. 
In 2017, G\"{u}g\"{u}mc\"{u} and Kauffman~\cite{GK} developed the theory of virtual knotoids, and constructed several new invariants for knotoids, such as the odd writhe, the parity bracket polynomial, the affine index polynomial and the arrow polynomial. In~\cite{GN}, G\"{u}g\"{u}mc\"{u} and Nelson constructed enhanced biquandle counting invariants for knotoids in $\mathbb{R}^2$ or $S^2$ that can distinguish knotoids which the counting invariant fails to distinguish. In~\cite{FL}, Feng and Li introduced a polynomial invariant for knotoids, called the $F$-polynomial, and studied its properties. 

The notion of finite type (Vassiliev) invariants was introduced by Vassiliev~\cite{Vas1, Vas2}. A systematic history and theory of these invariants is provided in~\cite{CDM}. Goussarov, Polyak, and Viro~\cite{GPV} introduced a new notion of finite type invariant and a universal invariant of order $\leq n$ via a Gauss diagram formula. Im, Kim and Lee~\cite{IKL} introduced a two-variable parity polynomial invariant for oriented virtual knots, refining the odd writhe polynomial by means of a modified warping degree and verified that this invariant is a Vassiliev invariant.  In 2021, Manouras, Lambropoulou, and Kauffman~\cite{MLK} extended Vassiliev invariant theory to knotoids via two approaches: using knot closures to define knotoid invariants, and directly defining finite type invariants on knotoids via the Vassiliev skein relation. They proved the existence of non-trivial invariants of order one for spherical knotoids, classified linear chord diagrams of order one, and presented examples from the affine index polynomial and extended bracket polynomial. Feng, Li, and Vesnin~\cite{FLV} introduced a three-variable transcendental invariant of planar knotoids, defined via an index function of a Gauss diagram and proved that this invariant is a Vassiliev invariant of order one. 

Parity is a property that assigns an even or odd label to each crossing of a knot diagram via a preassigned rule. When the rule satisfies axioms compatible with Reidemeister moves, it enables the construction of knot invariants. Kauffman introduced invariants of virtual knots using odd crossings and defined the odd self-linking index~\cite{Kau2}. Manturov studied knot theories with a parity property for crossings, constructed related invariants, proved the non-triviality of free knots, and strengthened known invariants such as the Kauffman bracket by parity considerations~\cite{Man}. Based on Manturov's parity axioms, Cheng and Gao constructed polynomial invariants for virtual knots and links~\cite{CG}. In \cite{GK}, G\"ug\"umc\"u and Kauffman extended parity to knotoids and redefined the parity bracket polynomial for both classical and virtual knotoids. In \cite{GK2}, they further studied parity in planar and spherical knotoids, introduced a planar version of the parity bracket polynomial, and proved Turaev's conjecture that minimal diagrams of knot-type knotoids have zero height. For more results on parity, we refer the reader to~\cite{Afa,KK}.	

Based matrices for flat virtual knots corresponding to virtual strings were first introduced by Turaev in~\cite{Tur2}. Based matrices were later generalized by Henrich~\cite{Hen} to flat singular virtual knots with a single singular crossing. Petit~\cite{Pet} further extended the construction to framed virtual knots and to long virtual knots, both framed and unframed. In 2017, Cahn studied a generalized cobracket for virtual strings and proved that it yields a stronger minimal self-intersection bound than Turaev's virtual cobracket~\cite{Cahn}.
In 2020, Freund generalized Turaev's based matrices for virtual 1-strings to construct multistring based matrices for virtual $n$-strings~\cite{Fre}.

In this paper, for an oriented virtual  knotoid $K$, we define a two-variable parity polynomial $\mathbf{P}_D(x,y) \in \mathbb Z[x^{\pm 1},  y^{\pm 1}]$ constructed from a diagram $D$ of $K$, 
$$
\mathbf{P}_D(x,y)=\sum_{c \in E(D)} \operatorname{sgn}(c) i(c) x^{W_D(c)} + \sum_{c \in O(D)} \operatorname{sgn}(c) i(c) y^{W_D(c)}, 
$$
where $O(D)$ is the set of odd crossings in a diagram $D$ of an oriented virtual knotoid, and $E(D)$ is the set of even crossings in $D$, see Definition~\ref{def2.1} for details. The construction involves an integer labeling of arcs, analogous to the Cheng~\cite{Che} coloring for virtual knot diagrams, a crossing parity, analogous to parity, introduced by Manturov~\cite{Man} for virtual knot diagrams, and an intersection index $i(c)$, analogous to intersection index for virtual knot diagrams, introduced by Henrich~\cite{Hen}, see Section~\ref{section2}. In Theorem~\ref{th2.1} we prove that $\mathbf{P}_D(x,y)$ that it is an invariant of virtual knotoids and demonstrate in Example~\ref{exp2.1} that this invariant is non-trivial. In Section~\ref{section3} we demonstrate some properties of $\mathbf{P}_D(x,y)$. In Example~\ref{example3.0}, we present a pair of virtual knotoids that cannot be distinguished by the odd writhe invariant, whereas they are distinguished by $\mathbf{P}_K(x,y)$. Theorem~\ref{thm2} shows that $\textbf{P}_K(x,y)$ is equivalent to the affine index polynomial $AI_K(t)$. However, $\mathbf{P}_K(t,t)$ is, in general, not equal to $AI_K(t)$, see Example~\ref{exp3}. In Theorem~\ref{thm3} we prove  that $\textbf{P}_K(x,y)$ is a Vassiliev invariant of order one. In Proposition~\ref{prop3.1} we describe how this invariant changes when we consider  the inverse image and the mirror image of $K$. In particular, $\textbf{P}_K(x,y)$ vanishes if virtual knotoid $K$ is both invertible and amphichiral.  Finally, in Section~\ref{section4}, we compare $\textbf{P}_K(x,y)$ with Petit gluing invariant and prove that the latter is strictly stronger, see Theorem~\ref{prop4.1}.  

%%%%%
\section{Two-variable parity polynomial $\mathbf{P}_K(x,y)$ of virtual knotoids} \label{section2}

\subsection{Virtual knotoids and Reidemeister moves} \label{subsection2.1}

In this subsection we briefly recall the basic notions and results on virtual knotoids that will be used throughout the paper, see~\cite{GK,Tur1}.

A \textit{knotoid diagram} $D$ on an oriented surface $\Sigma$ is a generic immersion of $[0,1]$ into $\Sigma$ with finitely many transverse double points endowed with over/under-crossing information, called \textit{classical crossings}. The endpoints of $D$ are defined as the images of $0$ and $1$, called the \textit{tail} and the \textit{head}, respectively, these points are distinct from each other and from any crossing. The \textit{orientation} of $D$ is from the tail to the head.

A \textit{virtual knotoid diagram} is a knotoid diagram in $S^2$ equipped with \textit{virtual crossings}. Such a diagram has two types of crossings: classical crossings in Fig.~\ref{fig1}(a), and virtual crossings, represented by a small circle with no over/under-crossing information, see Fig.~\ref{fig1}(b). 
\begin{figure}[!ht]
	\centering
	\tikzset{every picture/.style={line width=1.0pt}} 
	\begin{tikzpicture}[x=0.75pt,y=0.75pt,yscale=-1,xscale=1]
		\draw    (201.26,306.05) -- (142.17,366.42) ;
		\draw   (142.41,311.9) -- (141.66,305.97) -- (147.43,307.17) ;
		\draw   (195.79,307.32) -- (201.51,305.91) -- (200.98,311.87) ;
		\draw    (176.09,340.88) -- (203.09,367.71) ;
		\draw    (288.81,306.61) -- (264.76,331.15) ;
		\draw    (229.6,306.67) -- (288.93,366.92) ;
		\draw   (230.25,312.46) -- (229.51,306.53) -- (235.27,307.73) ;
		\draw   (283.34,307.88) -- (289.06,306.47) -- (288.53,312.43) ;
		\draw    (254.09,342.15) -- (229.72,366.98) ;
		\draw    (398.82,306.85) -- (339.74,367.22) ;
		\draw    (339.62,306.91) -- (398.94,367.16) ;
		\draw   (340.27,312.7) -- (339.52,306.77) -- (345.29,307.97) ;
		\draw   (393.35,308.12) -- (399.07,306.71) -- (398.54,312.67) ;
		\draw    (141.17,305.66) -- (166.76,331.54) ;
		\draw    (500.82,306.83) -- (441.74,367.21) ;
		\draw    (441.62,306.9) -- (500.94,367.14) ;
		\draw   (442.27,312.69) -- (441.52,306.76) -- (447.29,307.95) ;
		\draw   (495.35,308.11) -- (501.07,306.7) -- (500.54,312.65) ;
		\draw   (363.86,337.04) .. controls (363.86,334.04) and (366.29,331.62) .. (369.28,331.62) .. controls (372.27,331.62) and (374.7,334.04) .. (374.7,337.04) .. controls (374.7,340.03) and (372.27,342.45) .. (369.28,342.45) .. controls (366.29,342.45) and (363.86,340.03) .. (363.86,337.04) -- cycle ;
		\draw    (596.82,306.65) -- (537.74,367.03) ;
		\draw    (537.62,306.72) -- (596.94,366.96) ;
		\draw   (538.27,312.51) -- (537.52,306.58) -- (543.29,307.77) ;
		\draw   (591.35,307.93) -- (597.07,306.52) -- (596.54,312.47) ;
		\draw  [fill={rgb, 255:red, 14; green, 13; blue, 13 }  ,fill opacity=1 ] (561.86,336.84) .. controls (561.86,333.85) and (564.29,331.42) .. (567.28,331.42) .. controls (570.27,331.42) and (572.7,333.85) .. (572.7,336.84) .. controls (572.7,339.83) and (570.27,342.26) .. (567.28,342.26) .. controls (564.29,342.26) and (561.86,339.83) .. (561.86,336.84) -- cycle ;
		\draw (179.67,331.25) node [anchor=north west][inner sep=0.75pt]    {$c$};
		\draw (267.6,331.25) node [anchor=north west][inner sep=0.75pt]    {$c$};
		\draw (143.32,370.02) node [anchor=north west][inner sep=0.75pt]  [font=\small]  {$\text{sgn}(c) =1$};
		\draw (226.44,370.02) node [anchor=north west][inner sep=0.75pt]  [font=\small]  {$\text{sgn}(c) =-1$};
		\draw (205.83,387.01) node [anchor=north west][inner sep=0.75pt]  [font=\large] [align=left] {{\fontfamily{ptm}\selectfont (a)}};
		\draw (360.53,387.01) node [anchor=north west][inner sep=0.75pt]  [font=\large] [align=left] {{\fontfamily{ptm}\selectfont (b)}};
		\draw (462.53,387.01) node [anchor=north west][inner sep=0.75pt]  [font=\large] [align=left] {{\fontfamily{ptm}\selectfont (c)}};
		\draw (558.53,387.01) node [anchor=north west][inner sep=0.75pt]  [font=\large] [align=left] {{\fontfamily{ptm}\selectfont (d)}};
	\end{tikzpicture}
	\caption{Crossings: (a) classical, (b) virtual, (c) flat and (d) singular.} \label{fig1}
\end{figure}

A \textit{flat knotoid diagram} is obtained from a knotoid diagram by forgetting the over/under-crossing information at each classical crossing, so that all classical crossings are replaced by flat crossings without crossing data, see Fig.~\ref{fig1}(c). It can be viewed as the shadow (or, the flattening) of the corresponding knotoid diagram.

For later use, we also illustrate singular crossings in Fig.~\ref{fig1}(d). The notion of singular knotoids will be introduced in  subsection~\ref{subsection3.3} and will play a role in the study of Vassiliev invariants.

\begin{figure}[!htbp]
	\centering
		\tikzset{every picture/.style={line width=1.0pt}} 
		\begin{tikzpicture}[x=0.75pt,y=0.75pt,yscale=-0.9,xscale=0.9]	
			\draw    (48.88,161.75) -- (122.56,224.35) ;
			\draw    (178.71,153.8) -- (178.16,224.88) ;
			\draw    (243.86,226.37) -- (279.02,189.89) -- (244.33,152.77) ;
			\draw    (278.56,152.81) -- (243.58,189) -- (278.35,226.37) ;
			\draw    (344.68,152.61) -- (344.68,226.97) ;
			\draw    (358.06,152.61) -- (358.06,226.97) ;
			\draw    (489.31,152.61) -- (425.44,227.57) ;
			\draw    (425.25,152.96) -- (489.5,227.21) ;
			\draw    (419.14,169.02) -- (495.59,169.02) ;
			\draw    (152.79,275.73) -- (87.34,348.27) ;
			\draw    (83.47,287.55) -- (157.2,286.95) ;
			\draw    (87.01,276.09) -- (115.81,307.39) ;
			\draw    (124.6,315.58) -- (153.63,347.9) ;
			\draw    (48.88,161.75) .. controls (73.94,149.37) and (98.85,149.37) .. (123.76,161.4) ;
			\draw    (49.02,224.06) -- (123.76,161.4) ;
			\draw   (150.76,192.15) -- (156.47,194.67) -- (150.76,197.2) ;
			\draw    (155.76,194.83) -- (128.65,194.83) ;
			\draw   (133.76,197.38) -- (127.94,194.97) -- (133.53,192.33) ;
			\draw   (321.08,194.5) -- (326.79,197.03) -- (321.08,199.55) ;
			\draw    (326.07,197.19) -- (298.97,197.19) ;
			\draw   (304.08,199.73) -- (298.26,197.32) -- (303.84,194.68) ;
			\draw    (618.79,227.72) -- (554.91,152.76) ;
			\draw    (554.73,227.36) -- (618.97,153.12) ;
			\draw    (548.62,211.31) -- (625.07,211.31) ;
			\draw   (531.01,195.64) -- (536.72,198.17) -- (531.01,200.7) ;
			\draw    (536.01,198.33) -- (508.9,198.33) ;
			\draw   (514.01,200.87) -- (508.19,198.46) -- (513.78,195.82) ;
			\draw    (218.52,349.21) -- (283.98,276.67) ;
			\draw    (287.84,337.39) -- (214.11,337.99) ;
			\draw    (284.3,348.85) -- (255.5,317.55) ;
			\draw    (246.72,309.36) -- (217.69,277.04) ;
			\draw   (192.75,314.83) -- (198.45,317.36) -- (192.75,319.89) ;
			\draw    (198.74,317.52) -- (171.64,317.52) ;
			\draw   (175.75,320.06) -- (169.93,317.66) -- (175.51,315.01) ;
			\draw   (80.37,192.73) .. controls (80.37,189.4) and (83.07,186.71) .. (86.39,186.71) .. controls (89.72,186.71) and (92.41,189.4) .. (92.41,192.73) .. controls (92.41,196.05) and (89.72,198.75) .. (86.39,198.75) .. controls (83.07,198.75) and (80.37,196.05) .. (80.37,192.73) -- cycle ;
			\draw   (255.05,170.9) .. controls (255.05,167.58) and (257.74,164.88) .. (261.07,164.88) .. controls (264.39,164.88) and (267.09,167.58) .. (267.09,170.9) .. controls (267.09,174.23) and (264.39,176.93) .. (261.07,176.93) .. controls (257.74,176.93) and (255.05,174.23) .. (255.05,170.9) -- cycle ;
			\draw   (255.42,208.13) .. controls (255.42,204.81) and (258.12,202.11) .. (261.44,202.11) .. controls (264.77,202.11) and (267.46,204.81) .. (267.46,208.13) .. controls (267.46,211.46) and (264.77,214.15) .. (261.44,214.15) .. controls (258.12,214.15) and (255.42,211.46) .. (255.42,208.13) -- cycle ;
			\draw   (433.09,168.71) .. controls (433.09,165.39) and (435.79,162.69) .. (439.11,162.69) .. controls (442.44,162.69) and (445.13,165.39) .. (445.13,168.71) .. controls (445.13,172.04) and (442.44,174.73) .. (439.11,174.73) .. controls (435.79,174.73) and (433.09,172.04) .. (433.09,168.71) -- cycle ;
			\draw   (469.49,168.71) .. controls (469.49,165.39) and (472.19,162.69) .. (475.51,162.69) .. controls (478.84,162.69) and (481.53,165.39) .. (481.53,168.71) .. controls (481.53,172.04) and (478.84,174.73) .. (475.51,174.73) .. controls (472.19,174.73) and (469.49,172.04) .. (469.49,168.71) -- cycle ;
			\draw   (451.35,190.09) .. controls (451.35,186.76) and (454.05,184.07) .. (457.37,184.07) .. controls (460.7,184.07) and (463.4,186.76) .. (463.4,190.09) .. controls (463.4,193.41) and (460.7,196.11) .. (457.37,196.11) .. controls (454.05,196.11) and (451.35,193.41) .. (451.35,190.09) -- cycle ;
			\draw   (580.83,190.24) .. controls (580.83,186.92) and (583.52,184.22) .. (586.85,184.22) .. controls (590.18,184.22) and (592.87,186.92) .. (592.87,190.24) .. controls (592.87,193.57) and (590.18,196.26) .. (586.85,196.26) .. controls (583.52,196.26) and (580.83,193.57) .. (580.83,190.24) -- cycle ;
			\draw   (562.69,211.11) .. controls (562.69,207.79) and (565.39,205.09) .. (568.71,205.09) .. controls (572.04,205.09) and (574.73,207.79) .. (574.73,211.11) .. controls (574.73,214.44) and (572.04,217.13) .. (568.71,217.13) .. controls (565.39,217.13) and (562.69,214.44) .. (562.69,211.11) -- cycle ;
			\draw   (599.49,211.11) .. controls (599.49,207.79) and (602.19,205.09) .. (605.51,205.09) .. controls (608.84,205.09) and (611.53,207.79) .. (611.53,211.11) .. controls (611.53,214.44) and (608.84,217.13) .. (605.51,217.13) .. controls (602.19,217.13) and (599.49,214.44) .. (599.49,211.11) -- cycle ;
			\draw   (91.49,286.99) .. controls (91.49,283.67) and (94.19,280.97) .. (97.51,280.97) .. controls (100.84,280.97) and (103.53,283.67) .. (103.53,286.99) .. controls (103.53,290.32) and (100.84,293.01) .. (97.51,293.01) .. controls (94.19,293.01) and (91.49,290.32) .. (91.49,286.99) -- cycle ;
			\draw   (136.29,286.99) .. controls (136.29,283.67) and (138.99,280.97) .. (142.31,280.97) .. controls (145.64,280.97) and (148.33,283.67) .. (148.33,286.99) .. controls (148.33,290.32) and (145.64,293.01) .. (142.31,293.01) .. controls (138.99,293.01) and (136.29,290.32) .. (136.29,286.99) -- cycle ;
			\draw   (222.29,337.79) .. controls (222.29,334.47) and (224.99,331.77) .. (228.31,331.77) .. controls (231.64,331.77) and (234.33,334.47) .. (234.33,337.79) .. controls (234.33,341.12) and (231.64,343.81) .. (228.31,343.81) .. controls (224.99,343.81) and (222.29,341.12) .. (222.29,337.79) -- cycle ;
			\draw   (268.29,337.39) .. controls (268.29,334.07) and (270.99,331.37) .. (274.31,331.37) .. controls (277.64,331.37) and (280.33,334.07) .. (280.33,337.39) .. controls (280.33,340.72) and (277.64,343.41) .. (274.31,343.41) .. controls (270.99,343.41) and (268.29,340.72) .. (268.29,337.39) -- cycle ;
			\draw    (272.34,83.03) -- (283.26,95.48) ;
			\draw    (265.42,41.06) -- (250.7,58.33) -- (265.42,75.18) ;
			\draw    (282.83,20.53) -- (272.24,33.03) ;
			\draw    (33.88,29.97) .. controls (60.71,17.15) and (87.39,17.15) .. (114.07,29.61) ;
			\draw    (34.03,94.51) -- (114.07,29.61) ;
			\draw    (33.88,29.97) -- (67.52,57.52) ;
			\draw   (153.43,57.49) -- (159.13,60.02) -- (153.43,62.54) ;
			\draw    (158.42,60.18) -- (131.32,60.18) ;
			\draw   (136.43,62.72) -- (130.61,60.31) -- (136.19,57.67) ;
			\draw    (178.74,21.38) -- (178.7,95.15) ;
			\draw    (254.51,95.3) -- (287.07,58.15) -- (254.94,20.35) ;
			\draw    (341.03,19.71) -- (341.03,95.48) ;
			\draw    (356.07,19.71) -- (356.07,95.48) ;
			\draw    (487.16,19.24) -- (433.54,94.81) ;
			\draw    (427.73,33.39) -- (437.22,33.39) ;
			\draw    (79.65,67.64) -- (112.78,94.81) ;
			\draw    (433.2,19.24) -- (456.29,51.31) ;
			\draw    (464.33,62.57) -- (487.12,94.1) ;
			\draw    (451.1,33.21) -- (469.63,33.14) ;
			\draw    (483.72,33.37) -- (493.2,33.37) ;
			\draw   (322.32,58.4) -- (328.02,60.93) -- (322.32,63.46) ;
			\draw    (327.31,61.09) -- (300.21,61.09) ;
			\draw   (305.32,63.64) -- (299.5,61.23) -- (305.08,58.58) ;
			\draw   (527.03,57.07) -- (532.74,59.6) -- (527.03,62.13) ;
			\draw    (532.02,59.76) -- (504.92,59.76) ;
			\draw   (510.03,62.3) -- (504.21,59.89) -- (509.79,57.25) ;
			\draw    (607.94,20.58) -- (554.14,95.48) ;
			\draw    (548.31,81.46) -- (557.83,81.45) ;
			\draw    (553.79,20.58) -- (576.97,52.37) ;
			\draw    (585.03,63.53) -- (607.9,94.77) ;
			\draw    (571.75,81.63) -- (590.35,81.7) ;
			\draw    (604.49,81.47) -- (614.01,81.47) ;
			\draw    (381.4,314.9) -- (445.66,314.9) ;
			\draw    (413.28,276.87) -- (413.98,352.67) ;
			\draw    (529.64,314.38) -- (565.11,314.9) ;
			\draw    (577.43,274.73) -- (578.13,353.22) ;
			\draw   (500.98,317.72) -- (506.69,320.25) -- (500.98,322.78) ;
			\draw    (505.98,320.41) -- (478.87,320.41) ;
			\draw   (483.98,322.95) -- (478.16,320.54) -- (483.75,317.9) ;
			\draw  [fill={rgb, 255:red, 13; green, 13; blue, 13 }  ,fill opacity=1 ] (444,314.9) .. controls (444,313.98) and (444.75,313.24) .. (445.66,313.24) .. controls (446.58,313.24) and (447.32,313.98) .. (447.32,314.9) .. controls (447.32,315.81) and (446.58,316.55) .. (445.66,316.55) .. controls (444.75,316.55) and (444,315.81) .. (444,314.9) -- cycle ;
			\draw  [fill={rgb, 255:red, 13; green, 13; blue, 13 }  ,fill opacity=1 ] (563.45,314.9) .. controls (563.45,313.98) and (564.19,313.24) .. (565.11,313.24) .. controls (566.02,313.24) and (566.76,313.98) .. (566.76,314.9) .. controls (566.76,315.81) and (566.02,316.55) .. (565.11,316.55) .. controls (564.19,316.55) and (563.45,315.81) .. (563.45,314.9) -- cycle ;
			\draw   (407.51,314.9) .. controls (407.51,311.57) and (410.2,308.87) .. (413.53,308.87) .. controls (416.85,308.87) and (419.55,311.57) .. (419.55,314.9) .. controls (419.55,318.22) and (416.85,320.92) .. (413.53,320.92) .. controls (410.2,320.92) and (407.51,318.22) .. (407.51,314.9) -- cycle ;
			\draw (134.19,174.41) node [anchor=north west][inner sep=0.75pt]  [font=\small]  {$\Omega_{1}^{v}$};
			\draw (513.38,176.98) node [anchor=north west][inner sep=0.75pt]  [font=\small]  {$\Omega_{3}^{v}$};
			\draw (304.01,176.26) node [anchor=north west][inner sep=0.75pt]  [font=\small]  {$\Omega_{2}^{v}$};
			\draw (175.26,294.73) node [anchor=north west][inner sep=0.75pt]  [font=\small]  {$\Omega_{3}^{m}$};
			\draw (136.31,39.62) node [anchor=north west][inner sep=0.75pt]  [font=\small]  {$\Omega_{1}$};
			\draw (509.9,38.9) node [anchor=north west][inner sep=0.75pt]  [font=\small]  {$\Omega_{3}$};
			\draw (304.66,39.56) node [anchor=north west][inner sep=0.75pt]  [font=\small]  {$\Omega_{2}$};
			\draw (483.19,301.86) node [anchor=north west][inner sep=0.75pt]  [font=\small]  {$\Omega_{v}$};
		\end{tikzpicture}
	\caption{Generalized Reidemeister moves.\label{fig2}} 
\end{figure}
A \textit{virtual knotoid} is defined as an equivalence class of virtual knotoid diagrams modulo the generalized Reidemeister moves $\textit{g}\mathcal{R}$, namely the classical moves $\Omega_1, \Omega_2, \Omega_3$, the virtual moves $\Omega_1^v, \Omega_2^v, \Omega_3^v$, the mixed move $\Omega_3^m$, and $\Omega_{v}$-move as shown in Fig.~\ref{fig2}. The moves $\Phi_{+}$ and $\Phi_{-}$, illustrated in Fig.~\ref{fig3}, are called the \textit{forbidden moves} for knotoid diagrams, since any knotoid diagram in $\Sigma$ can be transformed into the trivial diagram by a finite sequence of such moves.
\begin{figure}[htbp]
	\centering
	\tikzset{every picture/.style={line width=1.0pt}} 
	\begin{tikzpicture}[x=0.75pt,y=0.75pt,yscale=-1.0,xscale=1.0]  	
		\draw    (42.82,349.32) -- (117.64,348.82) ;
		\draw    (80.96,304.68) -- (81.26,339.78) ;
		\draw    (81.26,358.86) -- (81.26,390.55) ;
		\draw    (205.82,350.39) -- (238.57,350.39) ;
		\draw    (255.07,305.75) -- (255.64,392.55) ;
		\draw    (401.07,350.18) -- (431.11,350.28) ;
		\draw    (392.04,306.06) -- (392.57,391.55) ;
		\draw    (349.64,350.29) -- (380.38,350.28) ;
		\draw   (170.33,351.01) -- (176.04,353.54) -- (170.33,356.06) ;
		\draw    (175.32,353.7) -- (148.22,353.7) ;
		\draw   (153.33,356.24) -- (147.51,353.83) -- (153.1,351.19) ;
		\draw   (317,350.34) -- (322.7,352.87) -- (317,355.4) ;
		\draw    (321.99,353.03) -- (294.89,353.03) ;
		\draw   (300,355.58) -- (294.18,353.17) -- (299.76,350.52) ;
		\draw  [fill={rgb, 255:red, 13; green, 13; blue, 13 }  ,fill opacity=1 ] (115.98,348.82) .. controls (115.98,347.9) and (116.72,347.16) .. (117.64,347.16) .. controls (118.55,347.16) and (119.29,347.9) .. (119.29,348.82) .. controls (119.29,349.73) and (118.55,350.47) .. (117.64,350.47) .. controls (116.72,350.47) and (115.98,349.73) .. (115.98,348.82) -- cycle ;
		\draw  [fill={rgb, 255:red, 13; green, 13; blue, 13 }  ,fill opacity=1 ] (236.91,350.39) .. controls (236.91,349.48) and (237.65,348.73) .. (238.57,348.73) .. controls (239.48,348.73) and (240.23,349.48) .. (240.23,350.39) .. controls (240.23,351.31) and (239.48,352.05) .. (238.57,352.05) .. controls (237.65,352.05) and (236.91,351.31) .. (236.91,350.39) -- cycle ;
		\draw  [fill={rgb, 255:red, 13; green, 13; blue, 13 }  ,fill opacity=1 ] (429.45,350.28) .. controls (429.45,349.36) and (430.2,348.62) .. (431.11,348.62) .. controls (432.03,348.62) and (432.77,349.36) .. (432.77,350.28) .. controls (432.77,351.19) and (432.03,351.93) .. (431.11,351.93) .. controls (430.2,351.93) and (429.45,351.19) .. (429.45,350.28) -- cycle ;
		\draw (152.97,333.09) node [anchor=north west][inner sep=0.75pt]  [font=\small]  {$\Phi _{+}$};
		\draw (299.63,333.63) node [anchor=north west][inner sep=0.75pt]  [font=\small]  {$\Phi _{-}$};
	\end{tikzpicture}
	\caption{Forbidden knotoid moves $\Phi_{+}$ and $\Phi_{-}$.\label{fig3}}	
\end{figure}
For each classical crossing $c$, a sign $\operatorname{sgn}(c) \in \{1,-1\}$ is defined as shown in Fig.~\ref{fig1}(a). The \textit{writhe} of a (classical or virtual) knotoid diagram $D$ is defined by
\begin{equation}
	\operatorname{wr}(D) = \sum_{c \in D} \operatorname{sgn}(c),
\end{equation}
where the sum runs over all classical crossings. 

We recall the notion of multi-knotoids, following Turaev~\cite{Tur1}, and extend it by admitting virtual and flat versions. A \textit{multi-knotoid diagram} in $S^2$ is an immersion of a single oriented segment together with several oriented circles in $S^2$, where each double point is endowed with over/under-crossing information. A \textit{multi-knotoid} is the equivalence class of such diagrams under the equivalence relation for knotoids. A \textit{virtual multi-knotoid diagram} is a multi-knotoid diagram with virtual crossings. A \textit{virtual multi-knotoid} is the equivalence class of such diagrams under the equivalence relation for virtual knotoids. A \textit{flat virtual multi-knotoid diagram} is a virtual multi-knotoid diagram in which the over/under information at each classical crossing is ignored. 

%%%
\subsection{Definition of $\mathbf{P}_K(x,y)$ and its invariance} \label{subsection2.2}

Now, let us introduce a two-variable parity polynomial invariant for virtual knotoids. The construction of this polynomial is based on assigning integer labels to the arcs of a virtual knotoid diagram and using the parity of crossings. The key idea is to distinguish classical crossings into two types and use this distinction to define the invariant. Before giving the definition of the polynomial, we introduce several necessary notions.

We define the \textit{integer labeling} of arcs, where an arc is taken to be a curve segment connecting two crossings.  Let $D$ be an oriented (virtual) knotoid diagram. Each arc $\alpha$ of $D$ is assigned an integer label $\ell(\alpha)$ starting from the tail, and the labels change only at classical crossings: the label increases by $1$ when passing through a crossing from down right to top left and decreases by $1$ when passing from down left to top right, while remaining unchanged at virtual crossings. The labeling rule is illustrated in Fig.~\ref{fig11}. This integer labeling is analogous to the Cheng coloring for virtual knot diagrams, see~\cite{Che}.
\begin{figure}[!ht]
	\begin{center}
		\tikzset{every picture/.style={line width=1.0pt}}  
		\begin{tikzpicture}[x=0.75pt,y=0.75pt,yscale=-1,xscale=1]	
			\draw    (339.04,32.33) -- (388.49,81.42) ;
			\draw    (225.66,32.01) -- (176.99,81.45) ;
			\draw    (176.89,31.91) -- (197.71,52.56) ;
			\draw   (177.19,36.8) -- (176.57,31.94) -- (181.32,32.93) ;
			\draw   (221.15,33.05) -- (225.86,31.9) -- (225.42,36.77) ;
			\draw    (205.39,60.22) -- (226.34,81) ;
			\draw   (339.33,37.22) -- (338.72,32.36) -- (343.47,33.34) ;
			\draw   (383.3,33.47) -- (388.01,32.31) -- (387.57,37.19) ;
			\draw    (387.8,32.42) -- (368.27,52.3) ;
			\draw    (358.67,62) -- (339.14,81.87) ;
			\draw (210.22,50) node [anchor=north west][inner sep=0.75pt]  [font=\small]  {$c$};
			\draw (372.22,50) node [anchor=north west][inner sep=0.75pt]  [font=\small]  {$c$};
			\draw (146.49,78.53) node [anchor=north west][inner sep=0.75pt]  [font=\normalsize]  {$\ell(\alpha)$};
			\draw (229.1,78.57) node [anchor=north west][inner sep=0.75pt]  [font=\normalsize]  {$\ell(\beta)$};
			\draw (128.85,15.31) node [anchor=north west][inner sep=0.75pt]  [font=\normalsize]  {$\ell(\beta)+1$};
			\draw (223.6,15.75) node [anchor=north west][inner sep=0.75pt]  [font=\normalsize]  {$\ell(\alpha)-1$};
			\draw (310.64,78.95) node [anchor=north west][inner sep=0.75pt]  [font=\normalsize]  {$\ell(\alpha)$};
			\draw (391.25,78.98) node [anchor=north west][inner sep=0.75pt]  [font=\normalsize]  {$\ell(\beta)$};
			\draw (294,15.73) node [anchor=north west][inner sep=0.75pt]  [font=\normalsize]  {$\ell(\beta)+1$};
			\draw (385.74,15.75) node [anchor=north west][inner sep=0.75pt]  [font=\normalsize]  {$\ell(\alpha)-1$};
		\end{tikzpicture}
		\caption{Labeling around a classical crossing $c$ of $D$.} \label{fig11}
	\end{center}
\end{figure}
For a classical crossing $c$ of $D$, let $\ell(\alpha)$ and $\ell(\beta)$ denote the labels of the incoming arcs $\alpha$ and $\beta$, respectively. The \textit{weight} of $c$ is denoted by $W_D(c)$ and defined by
$$
W_D(c) = \operatorname{sgn}(c)\,\big(\ell(\alpha) - (\ell(\beta)+1)\big),
$$
where $\operatorname{sgn}(c)$ is the sign of the crossing. 

The parity of classical crossings in virtual knotoid diagrams is defined in the same way as for virtual knots, see~\cite{Man}. We recall the definition adapted to our setting.

Let $D$ be an oriented virtual knotoid diagram and let $c$ be a classical crossing of $D$. Consider the diagram obtained by applying the orientation-preserving smoothing, also known as  $1$-smoothing, at the crossing $c$. This operation produces a two-component diagram, denoted by $D_1 \cup D_2$. An ordering ($D_1, D_2$) for components $D_1$ and $D_2$ of $D_1 \cup D_2$ is chosen according to the sign of $c$ as shown in Fig.~\ref{fig12}. Let $I(c)$ denote the number of classical crossings between the two components $D_1$ and $D_2$. If $I(c)$ is even, then the crossing $c$ is called \textit{even}, otherwise, it is called \textit{odd}.  This definition is independent of the choice of orientation. We note that this parity is the same as parity of the degree $d(c)$ of crossing $c$ considered in~\cite{FLV}. 
\begin{figure}[!ht]
	\begin{center}
		\tikzset{every picture/.style={line width=1.0pt}}  
		\begin{tikzpicture}[x=0.75pt,y=0.75pt,yscale=-1,xscale=1]	
			\draw    (154.26,40.4) -- (95.17,100.78) ;
			\draw   (149.49,41.25) -- (154.28,40.44) -- (153.47,45.3) ;
			\draw    (129.09,75.23) -- (156.09,102.06) ;
			\draw    (438.11,41.26) -- (414.06,65.8) ;
			\draw    (378.9,41.33) -- (438.23,101.57) ;
			\draw   (432.64,42.53) -- (438.36,41.12) -- (437.83,47.08) ;
			\draw    (403.39,76.8) -- (379.02,101.64) ;
			\draw    (94.17,40.01) -- (119.76,65.9) ;
			\draw    (228.89,108.6) .. controls (253.17,84.5) and (257.06,67.8) .. (227.37,41.93) ;
			\draw    (282.27,41.65) .. controls (257.88,64.79) and (257.34,84.13) .. (283.57,108.32) ; 
			\draw    (509.35,107.4) .. controls (533.64,83.3) and (537.53,66.6) .. (507.84,40.73) ; 
			\draw    (562.73,40.45) .. controls (538.34,63.59) and (537.8,82.93) .. (564.04,107.12) ;
			\draw    (173.55,72.98) -- (207.91,72.9) ;
			\draw [shift={(209.9,72.9)}, rotate = 179.87] [color={rgb, 255:red, 0; green, 0; blue, 0 }  ][line width=0.75]    (10.93,-3.29) .. controls (6.95,-1.4) and (3.31,-0.3) .. (0,0) .. controls (3.31,0.3) and (6.95,1.4) .. (10.93,3.29)   ;
			\draw   (278.04,41.81) -- (282.85,41.15) -- (281.9,45.98) ;
			\draw   (558.09,41.05) -- (562.89,40.28) -- (562.04,45.14) ;
			\draw   (512.57,41.03) -- (507.73,40.65) -- (508.96,45.42) ;
			\draw   (383.21,41.58) -- (378.4,40.89) -- (379.33,45.73) ;
			\draw   (231.59,41.78) -- (226.75,41.4) -- (227.98,46.17) ;
			\draw   (98.71,40.52) -- (93.89,39.91) -- (94.89,44.74) ;
			\draw    (456.42,75.01) -- (490.78,74.93) ;
			\draw [shift={(492.78,74.93)}, rotate = 179.87] [color={rgb, 255:red, 0; green, 0; blue, 0 }  ][line width=0.75]    (10.93,-3.29) .. controls (6.95,-1.4) and (3.31,-0.3) .. (0,0) .. controls (3.31,0.3) and (6.95,1.4) .. (10.93,3.29)   ;
			\draw (134.17,63) node [anchor=north west][inner sep=0.75pt]    {$c$};
			\draw (418.4,64) node [anchor=north west][inner sep=0.75pt]    {$c$};
			\draw (178,109.2) node [anchor=north west][inner sep=0.75pt]    {(a)};
			\draw (461.8,109.8) node [anchor=north west][inner sep=0.75pt]    {(b)};
			\draw (210,85.65) node [anchor=north west][inner sep=0.75pt]  [font=\small]  {$D_{1}$};
			\draw (562.73,86.65) node [anchor=north west][inner sep=0.75pt]  [font=\small]  {$D_{1}$};
			\draw (282.27,85.65) node [anchor=north west][inner sep=0.75pt]  [font=\small]  {$D_{2}$};
			\draw (488,85.65) node [anchor=north west][inner sep=0.75pt]  [font=\small]  {$D_{2}$};
		\end{tikzpicture}
		\caption{Applying the orientation-preserving smoothing at the crossing $c$.} \label{fig12} 
	\end{center}
\end{figure}

The intersection index of an ordered virtual multi-knotoid associated with a smoothed crossing $c$ in virtual knotoid diagrams is defined in the same way as for virtual knots, see~\cite[Def 3.1]{Hen}. We now present the definition adjusted for virtual knotoids. The intersection index $i(c)$ is given by $i(c) = \sum_{e \in D_1 \cap D_2} \operatorname{ind}(e)$, where $\operatorname{ind}(e)$ is defined for classical crossings $e \in D_1 \cap D_2$ as shown in Fig.~\ref{fig511}. In case (a), when $D_1$ is over, we suppose $\operatorname{ind}(e)=\operatorname{sgn}(e)$; \and in case (b), when $D_2$ is over, we suppose $\operatorname{ind}(e)=-\operatorname{sgn}(e)$. 

\begin{figure}[htbp]
	\begin{center}
		\tikzset{every picture/.style={line width=1.0pt}} 
		\begin{tikzpicture}[x=0.75pt,y=0.75pt,yscale=-1,xscale=1]
			\draw    (153.66,169.2) -- (94.57,229.58) ;
			\draw   (148.89,170.05) -- (153.68,169.24) -- (152.87,174.1) ; 
			\draw    (128.49,204.03) -- (155.49,230.86) ;
			\draw    (93.57,168.81) -- (119.16,194.7) ;
			\draw   (98.11,169.32) -- (93.29,168.71) -- (94.29,173.54) ;
			\draw    (404.66,168.4) -- (345.57,228.78) ;
			\draw   (399.89,169.25) -- (404.68,168.44) -- (403.87,173.3) ;
			\draw    (379.49,203.23) -- (406.49,230.06) ;
			\draw    (344.57,168.01) -- (370.16,193.9) ;
			\draw   (349.11,168.52) -- (344.29,167.91) -- (345.29,172.74) ;
			\draw    (262.31,168.86) -- (238.26,193.4) ;
			\draw    (203.1,168.93) -- (262.43,229.17) ;
			\draw   (256.84,170.13) -- (262.56,168.72) -- (262.03,174.68) ;
			\draw    (227.59,204.4) -- (203.22,229.24) ;
			\draw   (207.41,169.18) -- (202.6,168.49) -- (203.53,173.33) ;
			\draw    (455.2,168.4) -- (514.29,228.78) ;
			\draw   (510.29,169.25) -- (515.08,168.44) -- (514.27,173.3) ;
			\draw    (480.37,203.23) -- (453.37,230.06) ;
			\draw    (515.29,168.01) -- (489.7,193.9) ;
			\draw   (459.51,168.52) -- (454.69,167.91) -- (455.69,172.74) ;
			\draw (334.6,145.8) node [anchor=north west][inner sep=0.75pt]    {$D_{1}$};
			\draw (391.47,145.65) node [anchor=north west][inner sep=0.75pt]    {$D_{2}$};
			\draw (444.2,145) node [anchor=north west][inner sep=0.75pt]    {$D_{2}$};
			\draw (501.07,144.85) node [anchor=north west][inner sep=0.75pt]    {$D_{1}$};
			\draw (193.4,144.6) node [anchor=north west][inner sep=0.75pt]    {$D_{1}$};
			\draw (250.27,144.45) node [anchor=north west][inner sep=0.75pt]    {$D_{2}$};
			\draw (84.6,144.6) node [anchor=north west][inner sep=0.75pt]    {$D_{2}$};
			\draw (141.47,144.45) node [anchor=north west][inner sep=0.75pt]    {$D_{1}$};
			\draw (103.02,194) node [anchor=north west][inner sep=0.75pt]    {$e$};
			\draw (213.02,194) node [anchor=north west][inner sep=0.75pt]    {$e$};
			\draw (355.82,194) node [anchor=north west][inner sep=0.75pt]    {$e$};
			\draw (465.44,194) node [anchor=north west][inner sep=0.75pt]    {$e$};
			\draw (117,241.2) node [anchor=north west][inner sep=0.75pt]    {(a) $\operatorname{ind}(e)=\operatorname{sgn}(e)$};
			\draw (363.8,241.8) node [anchor=north west][inner sep=0.75pt]    {(b) $\operatorname{ind}(e)=-\operatorname{sgn}(e)$};
		\end{tikzpicture}
		\caption{The $\operatorname{ind}(e)$ for $e \in D_1 \cap D_2$.} \label{fig511}
	\end{center}
\end{figure}

Let $D$ be an oriented knotoid diagram. Denote by $C(D)$ the set of classical crossings of $D$. Using the parity of crossings, we split $C(D)$ into two subsets: the set $O(D)$ of odd crossings and the set $E(D)$ of even crossings. Clearly, $C(D) = O(D) \cup E(D)$ is a disjoint union. Moreover, both sets $O(D)$ and $E(D)$ are invariant under reversing the orientation of $D$.

In \cite[Thm. 2.6]{Lee}, Lee presented an invariant of virtual knots. We extend its parity-enhanced form to virtual knotoids. 

\begin{definition} \label{def2.1}
	{\rm 
		With the above notation, \textit{the two-variable parity polynomial} $\mathbf{P}_D(x,y) \in \mathbb Z[x^{\pm 1}, y^{\pm 1}]$ of $D$ is defined by	
		$$
		\mathbf{P}_D(x,y)=\sum_{c \in E(D)} \operatorname{sgn}(c) i(c) x^{W_D(c)} + \sum_{c \in O(D)} \operatorname{sgn}(c) i(c) y^{W_D(c)}. 
		$$
	}
\end{definition}

\begin{theorem} \label{th2.1}
	Let $K$ be a virtual knotoid with diagram $D$. Then $\mathbf{P}_D(x,y)$ is an invariant of $K$.
\end{theorem}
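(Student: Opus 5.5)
We check that $\mathbf{P}_D(x,y)$ is unchanged under each generalized Reidemeister move of Fig.~\ref{fig2}. Let $D$ and $D'$ differ by one move. Every classical crossing away from the disk of the move survives in $D'$; call such a crossing $c$ in $D$ and $c'$ in $D'$. The proof reduces to three facts for each move: (i) surviving crossings keep their sign, parity, intersection index and weight; (ii) the crossings created or destroyed contribute zero in total; (iii) the integer labeling is well defined and only changes locally.

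We first record some preliminary facts. The weight $W_D(c)$ depends only on the difference of the labels of the two incoming arcs at $c$. A labeling starts at the tail, so it is determined uniquely once the label of the first arc is fixed. Shifting that initial label by a constant leaves every $W_D(c)$ unchanged, so we may normalize it to $0$. We also reinterpret $W_D(c)$ in Gauss-diagram terms: the label change along the path from the first passage of $c$ to the second is a signed count of crossings in which $c$'s loop passes under or over other strands. From this we expect to show that, up to the constant $1$ and the sign, $W_D(c)$ depends only on the signed counts of chords interlinked with the chord of $c$. This is the analogue of the Cheng coloring/index argument in~\cite{Che}. Similarly, $I(c)$ and $i(c)$ are determined by the chords crossing the chord $c$ in the Gauss diagram, with their signs and over/under data. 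The parity $I(c) \bmod 2$ is known to satisfy Manturov's axioms~\cite{Man}, and Henrich's index is known to be invariant under the moves~\cite{Hen}. We reprove both via the Gauss diagram, using the knotoid setting in which the diagram is a segment with tail and head.

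Purely virtual moves and the mixed move $\Omega_3^m$ do not change the Gauss diagram. The $\Omega_v$-move is also harmless: it involves a segment passing through virtual crossings near an endpoint, which changes neither the labels (they are constant at virtual crossings) nor the Gauss diagram. So these moves are immediate.

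For $\Omega_1$, the new crossing $c_0$ has a smoothing that splits off a small loop. That loop meets the rest of the diagram in no classical crossings, so $I(c_0)=0$ and $i(c_0)=0$, and $c_0$ contributes zero. Its chord is isolated, so the data of the other crossings are unchanged. We also check directly that the label jumps at the two passages through $c_0$ cancel, so labels outside the kink are unchanged.

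For $\Omega_2$, the two new crossings $c_1,c_2$ have opposite signs and adjacent, parallel chords. Hence $I(c_1)=I(c_2)$, and $\operatorname{sgn}(c_1)i(c_1)=-\operatorname{sgn}(c_2)i(c_2)$. The weights satisfy $W(c_1)=W(c_2)$: the label jumps between the two crossings are compensated, since one strand goes up by $1$ and the other down by $1$, together with the sign flip. So the two terms cancel, and the parity is the same for both, so they cancel in the same variable. For any other chord $d$, the chords of $c_1,c_2$ are either both interlinked with $d$ or both not, with contributions $\operatorname{ind}$ of opposite sign. Therefore $I(d)$ changes by $0$ or $2$, and $i(d)$ and $W(d)$ are unchanged.

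For $\Omega_3$, the three crossings persist with the same signs. Their chord endpoints are permuted only locally, so interlinking with chords outside the move is preserved. Among the three chords, the interlinking pattern is preserved as well, and an Euler-type count shows each of $I$, $i$, $W$ is unchanged for all crossings. We verify the weights by checking the labels on the six arcs of the triangle in all orientation cases, reducing to the braid-like and one cyclic oriented $\Omega_3$ using $\Omega_2$, as in Polyak's generating set.

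The main obstacle is step (iii) together with the $\Omega_3$ weight computation. In knotoids, unlike knots, the labeling has no closure condition, so it is not obvious that $W_D(c)$ is intrinsic. We expect to prove explicitly that $W_D(c)=\operatorname{sgn}(c)\bigl(\sum \varepsilon\bigr)-\dots$, a formula in the interlinked chords. Once that formula is established, every move reduces to chord bookkeeping.
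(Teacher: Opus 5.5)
Your overall plan matches the paper's: check each generalized Reidemeister move, show that surviving crossings keep their sign, parity, index and weight, and show that created crossings cancel. Your handling of $\Omega_1$, $\Omega_2$, the virtual and mixed moves, and of crossings outside an $\Omega_3$ disk is fine.

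The gap is in the $\Omega_3$ step, for the three crossings inside the move. Your assertion that ``among the three chords, the interlinking pattern is preserved'' is false. An $\Omega_3$ move transposes the two adjacent chord endpoints on each of the three strands. Each pair of local chords shares exactly one of these transposed pairs, and transposing adjacent endpoints of two different chords switches whether they are interlinked. Hence \emph{every} pair of the three local chords switches, and a local chord with $k$ local interlinked partners ends up with $2-k$. So $I$ is not unchanged: the chord interlinked with both others becomes interlinked with neither, or conversely. This is exactly the change $I(c_1')-I(c_1)=\pm 2$ computed in the paper. Your ``Euler-type count'' therefore cannot give invariance of $I$. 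Since your argument for $i(c_k)$ rested on the same preserved pattern, invariance of the index of the three moved crossings is left unproved.

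What is true, and suffices, is the following. First, $I(c_k)$ changes by $2-2k\in\{-2,0,2\}$, so parity is preserved. Second, $i(c_k)$ is preserved, but this needs a genuine sign check. For $k=1$, the lost and gained partners must have equal $\operatorname{ind}$; for $k\in\{0,2\}$, the two partners' contributions must cancel.

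The cleanest repair is to complete the formula you left with ``$\dots$'':
\begin{itemize}
\item At a crossing $e$ the label jumps by $+\operatorname{sgn}(e)$ on the under strand and by $-\operatorname{sgn}(e)$ on the over strand.
\item Along the loop component of the smoothing at $c$, crossings met twice contribute $0$.
\item The jump at an interlinked crossing $e$ equals $\operatorname{ind}(e)$ if $D_1$ is the segment component, and $-\operatorname{ind}(e)$ if $D_1$ is the loop component.
\item Running through the four cases (sign of $c$, and which incoming arc is traversed first) gives exactly $i(c)=-W_D(c)$, with no extra constant.
\end{itemize}
Your direct label check of the weights under $\Omega_3$ then yields invariance of $i$, and $i\equiv I \pmod 2$ yields invariance of parity.

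Two smaller remarks. The ``no closure condition'' obstacle is not real: the labeling from the tail is automatically well defined, and one only needs that labels outside the move disk are unchanged. For a co-oriented $\Omega_2$, the chords of $c_1$ and $c_2$ cross each other rather than being ``parallel''. So $i(c_1)=i(c_2)$ also requires their mutual $\operatorname{ind}$ contributions to agree, which they do.
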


\begin{proof}
	Let $D'$ be an oriented diagram of $K$ obtained from $D$ by applying one $M$-move, where $M$ belongs to the set of generalized Reidemeister moves. We consider the following moves.
	
	\noindent
	\textbf{Case (i):} $M =\Omega_1$. Assume that $c^*$ is the new classical crossing generated by the $\Omega_1$-move and $C(D')=C(D)\cup \{c^*\}$. Let $\alpha$ be an arc of $D$ with its integer labeling $\ell(\alpha)$, and  $\alpha_1, \alpha_2, \alpha_3$ be three arcs in $D'$ corresponding to $\alpha$. Then, from Fig.~\ref{fig1122}, the integer labels are  $\ell(\alpha_1) = \ell(\alpha)$, $\ell(\alpha_2) = \ell(\alpha)+1$, $\ell(\alpha_3) = \ell(\alpha)$. For any other arc $\gamma$ of $D$, denote by $\gamma'$ its corresponding arc in $D'$, then $\ell(\gamma') =  \ell(\gamma)$. Moreover, $W_{D'}(c^*)=0$, regardless of whether $\operatorname{sgn}(c^*)=1$ or $\operatorname{sgn}(c^*)=-1$. Obviously, $I(c^*)=0$ and $i(c^*)=0$. The set of odd crossings remains unchanged, while the new crossing $c^*$ is even. Thus, $O(D') = O(D)$ and $E(D') = E(D) \cup \{c^*\}$. 
	\begin{figure}[!ht]
		\begin{center}
			\tikzset{every picture/.style={line width=1.0pt}}  
			\begin{tikzpicture}[x=0.75pt,y=0.75pt,yscale=-1,xscale=1]	
				\draw    (188.51,242.37) .. controls (188.45,201.75) and (124.57,201.96) .. (124.83,242.58) ;
				\draw    (358.43,225.98) .. controls (303.62,180.58) and (471.22,202.41) .. (331.55,244.72) ;
				\draw    (371.6,236.78) .. controls (379.23,243.58) and (388.79,248.5) .. (400.94,250.73) ;
				\draw    (549.39,227.26) .. controls (604.51,182.22) and (436.77,202.94) .. (576.15,246.17) ;
				\draw    (536.15,237.97) .. controls (528.47,244.72) and (518.88,249.58) .. (506.72,251.72) ;
				\draw    (236.26,226.03) -- (286.48,225.69) ;
				\draw [shift={(288.48,225.68)}, rotate = 179.62] [color={rgb, 255:red, 0; green, 0; blue, 0 }  ][line width=0.75]    (10.93,-3.29) .. controls (6.95,-1.4) and (3.31,-0.3) .. (0,0) .. controls (3.31,0.3) and (6.95,1.4) .. (10.93,3.29)   ;
				\draw [shift={(234.26,226.04)}, rotate = 359.62] [color={rgb, 255:red, 0; green, 0; blue, 0 }  ][line width=0.75]    (10.93,-3.29) .. controls (6.95,-1.4) and (3.31,-0.3) .. (0,0) .. controls (3.31,0.3) and (6.95,1.4) .. (10.93,3.29)   ;
				\draw   (397.09,246.85) -- (401.14,250.66) -- (396,252.73) ;
				\draw   (572.71,241.87) -- (576.13,246.26) -- (570.73,247.51) ;
				\draw   (191.47,238.41) -- (188.4,243.05) -- (185.49,238.33) ;
				\draw (448.33,210.67) node [anchor=north west][inner sep=0.75pt]   [align=left] {or};
				\draw (353.67,234) node [anchor=north west][inner sep=0.75pt]    [font=\small]  {$c^{*}$};
				\draw (534.33,236) node [anchor=north west][inner sep=0.75pt]    [font=\small]  {$c^{*}$};
				\draw (107,236.67) node [anchor=north west][inner sep=0.75pt]    {$\alpha $};
				\draw (311.33,243) node [anchor=north west][inner sep=0.75pt]    {$\alpha _{1}$};
				\draw (355.33,188) node [anchor=north west][inner sep=0.75pt]    {$\alpha _{2}$};
				\draw (579,246) node [anchor=north west][inner sep=0.75pt]    {$\alpha _{3}$};
				\draw (489,246) node [anchor=north west][inner sep=0.75pt]    {$\alpha _{1}$};
				\draw (529,188.67) node [anchor=north west][inner sep=0.75pt]    {$\alpha _{2}$};
				\draw (402.67,248) node [anchor=north west][inner sep=0.75pt]    {$\alpha _{3}$};
			\end{tikzpicture}
			\caption{$\Omega_1$-move.} \label{fig1122}
		\end{center}
	\end{figure}
	
	Let $c\in C(D)$ be any classical crossing of $D$, and $c'\in C(D')$ the classical crossing of $D'$ corresponding to $c$. It is clear that $\operatorname{sgn}(c)=\operatorname{sgn}(c')$, and $W_D(c)=W_{D'}(c')$. Then $i(c^*)=0$ implies 
	$$
	\mathbf{P}_{D'}(x,y) - \mathbf{P}_D(x,y) = \operatorname{sgn}(c^*)i(c^*)x^0 =0.
	$$
	
	\noindent
	\textbf{Case (ii):} $M =\Omega_2$. Assume that $c_1$, $c_2$ are the new classical crossings generated by the $\Omega_2$-move and $C(D')=C(D)\cup \{c_1,c_2\}$. Let $\alpha$ and $\beta$ be arcs in $D$ with  integer labels $\ell(\alpha)$ and $\ell(\beta)$, respectively.  Denote the corresponding six arcs in $D'$ by $\alpha_1, \alpha_2, \alpha_3$ and $\beta_1, \beta_2, \beta_3$.  Then, from Fig.~\ref{fig1133} we get the integer labels $\ell(\alpha_1) = \ell(\alpha)$, $\ell(\alpha_2) = \ell(\alpha) - 1$, $\ell(\alpha_3) = \ell(\alpha)$ and $\ell(\beta_1) = \ell(\beta)$, $\ell(\beta_2) = \ell(\beta) + 1$, $\ell(\beta_3) = \ell(\beta)$. 
	For any other arc $\gamma$ of $D$, denote by $\gamma'$ its corresponding arc in $D'$, we have $\ell(\gamma') = \ell(\gamma)$. Moreover, $W_{D'}(c_1)=W_{D'}(c_2)=\ell(\beta)-\ell(\alpha)+1$. Since $I(c_1)=I(c_2)$, see Fig.~\ref{fig2233}, the crossings $c_1$ and $c_2$ have the same parity, that is, either $c_1, c_2 \in O(D')$ or $c_1, c_2 \in E(D')$. Equivalently, $\bigl(O(D'), E(D')\bigr) = \bigl(O(D) \cup \{c_1, c_2\}, E(D)\bigr)$ or $\bigl(O(D'), E(D')\bigr) = \bigl(O(D), E(D) \cup \{c_1, c_2\}\bigr)$. Moreover, $\operatorname{sgn}(c_1)=- \operatorname{sgn}(c_2)$ and $i(c_1)=i(c_2)$.
	
	\begin{figure}[!ht]
		\begin{center}
			\tikzset{every picture/.style={line width=1.0pt}}  
			\begin{tikzpicture}[x=0.75pt,y=0.75pt,yscale=-1,xscale=1]		
				\draw    (441.35,405.47) .. controls (357.73,379.88) and (379.24,351.14) .. (437.51,331.5) ;
				\draw    (371.37,408.07) .. controls (379.43,407.31) and (395.05,398.27) .. (398.76,394.64) ;
				\draw    (411.59,353.32) .. controls (419.23,359.66) and (425.32,370.81) .. (413.12,382.51) ;
				\draw    (370.19,332.07) .. controls (377.2,332.81) and (392.49,337.49) .. (397.42,342.64) ;
				\draw    (88.68,409.73) .. controls (123.38,402.34) and (146.12,357.94) .. (95.2,327.59) ;
				\draw    (248.59,364.36) -- (298.81,364.03) ;
				\draw [shift={(300.81,364.01)}, rotate = 179.62] [color={rgb, 255:red, 0; green, 0; blue, 0 }  ][line width=0.75]    (10.93,-3.29) .. controls (6.95,-1.4) and (3.31,-0.3) .. (0,0) .. controls (3.31,0.3) and (6.95,1.4) .. (10.93,3.29)   ;
				\draw [shift={(246.59,364.37)}, rotate = 359.62] [color={rgb, 255:red, 0; green, 0; blue, 0 }  ][line width=0.75]    (10.93,-3.29) .. controls (6.95,-1.4) and (3.31,-0.3) .. (0,0) .. controls (3.31,0.3) and (6.95,1.4) .. (10.93,3.29)   ;
				\draw    (185.62,409.88) .. controls (150.92,402.49) and (128.18,358.09) .. (179.1,327.74) ;
				\draw   (97.03,332.24) -- (94.62,327.22) -- (100.17,327.15) ;
				\draw   (432.32,330.06) -- (437.72,331.41) -- (434.23,335.72) ;
				\draw   (373.54,335.49) -- (369.22,331.99) -- (374.2,329.55) ;
				\draw   (173.53,327.66) -- (179.09,327.93) -- (176.52,332.84) ;
				\draw (397.23,396.68) node [anchor=north west][inner sep=0.75pt]  [font=\small,rotate=-0.11]  {$c_{1}$};
				\draw (394.99,328.57) node [anchor=north west][inner sep=0.75pt]  [font=\small,rotate=-0.11]  {$c_{2}$};
				\draw (73.04,404.27) node [anchor=north west][inner sep=0.75pt]  [font=\normalsize]  {$\alpha $};
				\draw (190.33,400.43) node [anchor=north west][inner sep=0.75pt]  [font=\normalsize]  {$\beta $};
				\draw (424.15,359.45) node [anchor=north west][inner sep=0.75pt]  [font=\normalsize,rotate=-0.11]  {$\alpha _{2}$};
				\draw (348.52,321.36) node [anchor=north west][inner sep=0.75pt]  [font=\normalsize,rotate=-0.11]  {$\alpha _{3}$};
				\draw (350.48,400.47) node [anchor=north west][inner sep=0.75pt]  [font=\normalsize,rotate=-0.11]  {$\alpha _{1}$};
				\draw (442.57,319.89) node [anchor=north west][inner sep=0.75pt]  [font=\normalsize,rotate=-0.11]  {$\beta _{3}$};
				\draw (365.31,355.5) node [anchor=north west][inner sep=0.75pt]  [font=\normalsize,rotate=-0.11]  {$\beta _{2}$};
				\draw (446.75,398.14) node [anchor=north west][inner sep=0.75pt]  [font=\normalsize,rotate=-0.11]  {$\beta _{1}$};	
			\end{tikzpicture}
			\caption{$\Omega_2$-move.} \label{fig1133}
		\end{center}
	\end{figure}
	
	\begin{figure}[!ht]
		\begin{center}
			\tikzset{every picture/.style={line width=1.0pt}}  
			\begin{tikzpicture}[x=0.75pt,y=0.75pt,yscale=-1,xscale=1]	
				\draw    (419.35,125.79) .. controls (320.03,95.22) and (420.88,79.93) .. (348.19,52.4) ;
				\draw    (349.37,128.39) .. controls (357.43,127.63) and (373.05,118.6) .. (376.76,114.96) ;
				\draw    (389.59,73.65) .. controls (391.45,85.08) and (403.32,91.13) .. (391.12,102.84) ;
				\draw    (389.59,73.65) .. controls (388.31,62.79) and (402.3,54.3) .. (415.86,51.65) ;
				\draw   (410.63,49.87) -- (415.92,51.58) -- (412.15,55.65) ;
				\draw   (351.14,56.68) -- (347.82,52.21) -- (353.25,51.09) ;
				\draw    (203.37,128.39) .. controls (265.32,119.77) and (182.58,80.33) .. (269.51,51.82) ;
				\draw    (245.12,102.84) .. controls (241.7,125.63) and (255.23,124.66) .. (274.43,127.77) ;
				\draw    (243.59,73.65) .. controls (254.03,85.86) and (246.3,95.16) .. (245.12,102.84) ;
				\draw    (202.19,52.4) .. controls (209.2,53.14) and (224.49,57.81) .. (229.42,62.97) ;
				\draw   (264.32,50.39) -- (269.72,51.74) -- (266.23,56.05) ;
				\draw   (205.54,55.82) -- (201.22,52.32) -- (206.2,49.88) ;
				\draw (251,101) node [anchor=north west][inner sep=0.75pt]  [font=\small]  {$D_{1}$};
				\draw (197.27,100.65) node [anchor=north west][inner sep=0.75pt]  [font=\small]  {$D_{2}$};
				\draw (349,64) node [anchor=north west][inner sep=0.75pt] [font=\small]   {$D_{1}$};
				\draw (395.27,64.65) node [anchor=north west][inner sep=0.75pt]  [font=\small]  {$D_{2}$};
			\end{tikzpicture}
			\caption{The diagrams obtained by orientation-preserving smoothing at $c_1$ and $c_2$.} \label{fig2233}
		\end{center}
	\end{figure}
	
	Let $c\in C(D)$ be any classical crossing of $D$, and $c'\in C(D')$ the classical crossing of $D'$ corresponding to $c$. It is clear that $\operatorname{sgn}(c)=\operatorname{sgn}(c')$ and $W_D(c)=W_{D'}(c')$. Then
	\begin{flalign*}
		&\mathbf{P}_{D'}(x,y)-\mathbf{P}_D(x,y) = \\
		&\begin{cases}
			\operatorname{sgn}(c_1)i(c_1)x^{\ell(\beta)-\ell(\alpha)+1} + \operatorname{sgn}(c_2)i(c_2)x^{\ell(\beta)-\ell(\alpha)+1} = 0, & \text{if } c_1,c_2\in E(D),\\[6pt]
			\operatorname{sgn}(c_1)i(c_1)y^{\ell(\beta)-\ell(\alpha)+1} + \operatorname{sgn}(c_2)i(c_2)y^{\ell(\beta)-\ell(\alpha)+1} = 0, & \text{if } c_1,c_2\in O(D).
		\end{cases}
	\end{flalign*}
	The case of $\Omega_2$ when $\beta$ has the opposite orientation can be shown in a similar way.
	
	\noindent
	\textbf{Case (iii):} $M =\Omega_3$. Assume that $c_1, c_2,c_3\in D$ are the classical crossings involved in $\Omega_3$-move. Denote by $c'_1, c'_2,c'_3$ the corresponding crossings in $D'$. Let $\alpha_1$, $\alpha_2$, $\alpha_3$, $\beta_1$, $\beta_2$, $\beta_3$, $\delta_1$, $\delta_2$, and $\delta_3$ be nine arcs of $D$, participating in the $\Omega_3$-move. Let $\alpha'_1$, $\alpha'_2$, $\alpha'_3$, $\beta'_1$, $\beta'_2$, $\beta'_3$, $\delta'_1$, $\delta'_2$, and $\delta'_3$ be the corresponding arcs in $D'$. For any other arc $\gamma$ of $D$, denote by $\gamma'$ its corresponding arc in $D'$. It is easy to see that  $\ell(\gamma') = \ell(\gamma)$. 
	\begin{figure}[!ht]
		\begin{center}
			\tikzset{every picture/.style={line width=1.0pt}}  
			\begin{tikzpicture}[x=0.75pt,y=0.75pt,yscale=-1,xscale=1]	
				\draw    (153.24,488.05) -- (86.62,549.38) ;
				\draw    (138.78,513.53) -- (177.4,548.59) ;
				\draw    (110.25,487.34) -- (125.17,501) ;
				\draw    (86.04,465.34) -- (100.96,478.79) ;
				\draw    (162.54,479) -- (177.47,465.34) ;
				\draw    (83.04,499.96) .. controls (123.04,469.96) and (150.45,470.19) .. (183.04,499.96) ;
				\draw    (397.84,527.66) -- (464.46,466.34) ; 
				\draw    (412.3,502.19) -- (373.68,467.13) ;
				\draw    (440.83,528.38) -- (425.91,514.72) ;
				\draw    (465.04,550.37) -- (450.12,536.93) ;
				\draw    (388.54,536.71) -- (373.61,550.38) ;
				\draw    (468.04,515.76) .. controls (428.04,545.76) and (400.63,545.53) .. (368.04,515.76) ;
				\draw    (252.26,503.93) -- (302.48,503.6) ;
				\draw [shift={(304.48,503.59)}, rotate = 179.62] [color={rgb, 255:red, 0; green, 0; blue, 0 }  ][line width=0.75]    (10.93,-3.29) .. controls (6.95,-1.4) and (3.31,-0.3) .. (0,0) .. controls (3.31,0.3) and (6.95,1.4) .. (10.93,3.29)   ;
				\draw [shift={(250.26,503.95)}, rotate = 359.62] [color={rgb, 255:red, 0; green, 0; blue, 0 }  ][line width=0.75]    (10.93,-3.29) .. controls (6.95,-1.4) and (3.31,-0.3) .. (0,0) .. controls (3.31,0.3) and (6.95,1.4) .. (10.93,3.29)   ;
				\draw   (181.54,494.62) -- (183.06,499.97) -- (177.58,499.09) ;
				\draw   (87.34,470.48) -- (85.63,465.19) -- (91.13,465.86) ;
				\draw   (172.44,465.8) -- (177.94,464.94) -- (176.41,470.27) ;
				\draw   (466.46,520.71) -- (468.39,515.49) -- (462.86,515.93) ;
				\draw   (374.96,472.18) -- (373.25,466.89) -- (378.75,467.56) ;
				\draw   (459.24,467.14) -- (464.7,466.07) -- (463.37,471.45) ;
				\draw (68.13,543.54) node [anchor=north west][inner sep=0.75pt]    {$\alpha _{1}$};
				\draw (143.62,496.71) node [anchor=north west][inner sep=0.75pt]    {$\alpha _{2}$};
				\draw (180.8,455.21) node [anchor=north west][inner sep=0.75pt]    {$\alpha _{3}$};
				\draw (179.47,541.38) node [anchor=north west][inner sep=0.75pt]    {$\beta _{1}$};
				\draw (102.29,494.55) node [anchor=north west][inner sep=0.75pt]    {$\beta _{2}$};
				\draw (64.8,452.71) node [anchor=north west][inner sep=0.75pt]    {$\beta _{3}$};
				\draw (65.8,495.38) node [anchor=north west][inner sep=0.75pt]    {$\delta _{1}$};
				\draw (121.96,460.22) node [anchor=north west][inner sep=0.75pt]    {$\delta _{2}$};
				\draw (184.47,493.71) node [anchor=north west][inner sep=0.75pt]    {$\delta _{3}$};
				\draw (354.51,542.54) node [anchor=north west][inner sep=0.75pt]    {$\alpha'_{1}$};
				\draw (386.66,500.71) node [anchor=north west][inner sep=0.75pt]    {$\alpha'_{2}$};
				\draw (467.84,453.54) node [anchor=north west][inner sep=0.75pt]    {$\alpha'_{3}$};
				\draw (464.84,541.71) node [anchor=north west][inner sep=0.75pt]    {$\beta'_{1}$};
				\draw (432.66,500.55) node [anchor=north west][inner sep=0.75pt]    {$\beta'_{2}$};
				\draw (352.51,455.38) node [anchor=north west][inner sep=0.75pt]    {$\beta'_{3}$};
				\draw (349.17,500.04) node [anchor=north west][inner sep=0.75pt]    {$\delta'_{1}$};
				\draw (405.66,540.22) node [anchor=north west][inner sep=0.75pt]    {$\delta'_{2}$};
				\draw (473.17,502.04) node [anchor=north west][inner sep=0.75pt]    {$\delta'_{3}$};
				\draw (84.33,477.04) node [anchor=north west][inner sep=0.75pt]    [font=\small] {$c_{1}$};
				\draw (124.33,516.5) node [anchor=north west][inner sep=0.75pt]    [font=\small] {$c_{3}$};
				\draw (165.33,476.83) node [anchor=north west][inner sep=0.75pt]    [font=\small] {$c_{2}$};
				\draw (453,523.83) node [anchor=north west][inner sep=0.75pt]    [font=\small] {$c'_{1}$};
				\draw (410.67,480.83) node [anchor=north west][inner sep=0.75pt]    [font=\small] {$c'_{3}$};
				\draw (364.33,523.83) node [anchor=north west][inner sep=0.75pt]    [font=\small] {$c'_{2}$};
			\end{tikzpicture}
			\caption{$\Omega_3$-move.} \label{fig1144}
		\end{center}
	\end{figure}
	
	From Fig.~\ref{fig1144}, we have 
	\begin{equation}
		\operatorname{sgn}(c'_i)=\operatorname{sgn}(c_i), \label{2.2}
	\end{equation}
	where $i=1,2,3$. Moreover, for $D$ we have 
	$$
	\begin{gathered}
		\ell(\alpha_2)=\ell(\alpha_1)-1, \quad \ell(\alpha_3)=\ell(\alpha_1), \quad 
		\ell(\beta_2)=\ell(\beta_1)+1,  \cr 
		\ell(\beta_3)=\ell(\beta_1)+2, \quad  \ell(\delta_2)=\ell(\delta_1)-1, \quad
		\ell(\delta_3)=\ell(\delta_1)-2, 
	\end{gathered}
	$$
	and for $D'$ we have
	$$
	\begin{gathered}	
		\ell(\alpha'_1)=\ell(\alpha_1), \quad 
		\ell(\alpha'_2)=\ell(\alpha_1)+1, \quad  
		\ell(\alpha'_3)=\ell(\alpha_1), \cr 
		\ell(\beta'_1)=\ell(\beta_1), \quad
		\ell(\beta'_2)=\ell(\beta_1)+1, \quad   
		\ell(\beta'_3)=\ell(\beta_1)+2, \cr
		\ell(\delta'_1)=\ell(\delta_1), \quad 
		\ell(\delta'_2)=\ell(\delta_1)-1, \quad 
		\ell(\delta'_3)=\ell(\delta_1)-2.
	\end{gathered}
	$$
	Therefore,
	\begin{align}
		W_{D'}(c'_1) &= W_D(c_1) = \ell(\delta_1)-\ell(\beta_1)-2, \label{2.3} \\
		W_{D'}(c'_2) &= W_D(c_2) = \ell(\delta_1)-\ell(\alpha_1)-1, \label{2.4} \\
		W_{D'}(c'_3) &= W_D(c_3) = \ell(\alpha_1)-\ell(\beta_1)-1.  \label{2.5} 
	\end{align}
	
	\begin{figure}[!ht]
		\centering
		\tikzset{every picture/.style={line width=1.0pt}}  
		\begin{tikzpicture}[x=0.75pt,y=0.75pt,yscale=-1.0,xscale=1.0]	
			\draw    (114.24,269.68) -- (47.62,331) ;
			\draw [color={rgb, 255:red, 74; green, 144; blue, 226 }  ,draw opacity=1 ]   (99.78,295.15) -- (138.4,330.21) ;
			\draw [color={rgb, 255:red, 248; green, 72; blue, 61 }  ,draw opacity=1 ]   (61.96,260.41) -- (47.03,274.08) ;
			\draw [color={rgb, 255:red, 248; green, 72; blue, 61 }  ,draw opacity=1 ]   (47.04,246.97) -- (61.96,260.41) ;
			\draw    (123.54,260.63) -- (138.47,246.96) ;
			\draw [color={rgb, 255:red, 74; green, 144; blue, 226 }  ,draw opacity=1 ]   (88.04,283.52) .. controls (32.04,250.72) and (111.45,251.82) .. (144.04,281.58) ;
			\draw  [color={rgb, 255:red, 74; green, 144; blue, 226 }  ,draw opacity=1 ] (142.54,276.24) -- (144.06,281.59) -- (138.58,280.71) ;
			\draw  [color={rgb, 255:red, 248; green, 72; blue, 61 }  ,draw opacity=1 ] (48.34,252.1) -- (46.63,246.81) -- (52.13,247.48) ;
			\draw   (133.44,247.42) -- (138.94,246.56) -- (137.41,251.89) ;
			\draw    (268.84,306.95) -- (335.46,245.63) ;
			\draw [color={rgb, 255:red, 248; green, 72; blue, 61 }  ,draw opacity=1 ]   (283.3,281.48) -- (244.68,246.42) ;
			\draw [color={rgb, 255:red, 74; green, 144; blue, 226 }  ,draw opacity=1 ]   (336.04,329.66) -- (321.12,316.22) ;
			\draw    (259.54,316) -- (244.61,329.67) ;
			\draw [color={rgb, 255:red, 248; green, 72; blue, 61 }  ,draw opacity=1 ]   (299.24,293.81) .. controls (339.24,327.15) and (271.63,324.81) .. (239.04,295.05) ;
			\draw  [color={rgb, 255:red, 74; green, 144; blue, 226 }  ,draw opacity=1 ] (334.62,308.17) -- (335.86,302.75) -- (330.44,303.9) ;
			\draw  [color={rgb, 255:red, 248; green, 72; blue, 61 }  ,draw opacity=1 ] (245.96,251.47) -- (244.25,246.17) -- (249.75,246.85) ;
			\draw   (330.24,246.43) -- (335.7,245.36) -- (334.37,250.74) ;
			\draw [color={rgb, 255:red, 74; green, 144; blue, 226 }  ,draw opacity=1 ]   (321.12,316.22) -- (336.04,302.77) ;
			\draw    (46.04,533.7) -- (60.96,547.15) ;
			\draw    (122.54,547.36) -- (137.47,533.7) ;
			\draw    (43.04,568.32) .. controls (83.04,538.32) and (110.45,538.55) .. (143.04,568.32) ;
			\draw    (338.04,618.73) -- (323.12,605.28) ;
			\draw    (261.54,605.07) -- (246.61,618.73) ;
			\draw    (341.04,584.11) .. controls (301.04,614.11) and (273.63,613.88) .. (241.04,584.11) ;
			\draw   (141.54,562.97) -- (143.06,568.33) -- (137.58,567.45) ;
			\draw   (47.34,538.83) -- (45.63,533.54) -- (51.13,534.21) ;
			\draw   (132.44,534.15) -- (137.94,533.29) -- (136.41,538.62) ;
			\draw   (339.46,589.06) -- (341.39,583.84) -- (335.86,584.29) ;
			\draw   (248.08,539.98) -- (246.05,534.79) -- (251.59,535.14) ;
			\draw   (332.24,535.49) -- (337.7,534.43) -- (336.37,539.81) ;
			\draw    (72.25,557.69) .. controls (111.51,588.59) and (57.51,617.05) .. (48.62,619.73) ;
			\draw    (114.45,557.36) .. controls (83.17,589.93) and (129.19,616.71) .. (138.07,619.4) ;
			\draw    (246.68,535.32) .. controls (276.3,553.72) and (293.9,578.12) .. (270.84,595.85) ;
			\draw    (337.33,534.99) .. controls (309.83,553.93) and (287.9,579.32) .. (313.17,595.52) ;
			\draw [color={rgb, 255:red, 74; green, 144; blue, 226 }  ,draw opacity=1 ]   (113.24,407.41) -- (46.62,468.73) ;
			\draw    (98.78,432.88) -- (137.4,467.94) ;
			\draw    (70.25,406.69) -- (85.17,420.36) ;
			\draw    (46.04,384.7) -- (60.96,398.15) ;
			\draw [color={rgb, 255:red, 74; green, 144; blue, 226 }  ,draw opacity=1 ]   (113.24,407.41) -- (142.65,418.74) ;
			\draw [color={rgb, 255:red, 248; green, 72; blue, 61 }  ,draw opacity=1 ]   (43.04,419.32) .. controls (76.99,380.41) and (106.32,412.41) .. (137.47,384.7) ;
			\draw [color={rgb, 255:red, 248; green, 72; blue, 61 }  ,draw opacity=1 ]   (270.84,455.02) -- (337.46,393.7) ;
			\draw    (285.3,429.55) -- (246.68,394.49) ;
			\draw    (313.83,455.73) -- (298.91,442.07) ;
			\draw    (338.04,477.73) -- (323.12,464.28) ;
			\draw [color={rgb, 255:red, 74; green, 144; blue, 226 }  ,draw opacity=1 ]   (341.37,443.11) .. controls (305.72,487.03) and (272.72,438.37) .. (246.95,477.73) ;
			\draw  [color={rgb, 255:red, 74; green, 144; blue, 226 }  ,draw opacity=1 ] (139.57,414.08) -- (142.63,418.73) -- (137.14,419.54) ;
			\draw   (47.34,389.83) -- (45.63,384.54) -- (51.13,385.21) ;
			\draw  [color={rgb, 255:red, 248; green, 72; blue, 61 }  ,draw opacity=1 ] (132.44,385.15) -- (137.94,384.29) -- (136.41,389.62) ;
			\draw  [color={rgb, 255:red, 74; green, 144; blue, 226 }  ,draw opacity=1 ] (341,448.55) -- (341.18,442.99) -- (336.08,445.17) ;
			\draw   (247.96,399.53) -- (246.25,394.24) -- (251.75,394.91) ;
			\draw  [color={rgb, 255:red, 248; green, 72; blue, 61 }  ,draw opacity=1 ] (332.24,394.49) -- (337.7,393.43) -- (336.37,398.81) ;
			\draw [color={rgb, 255:red, 248; green, 72; blue, 61 }  ,draw opacity=1 ]   (270.84,455.02) -- (241.04,443.11) ;
			\draw (181,342.91) node [anchor=north west][inner sep=0.75pt]    {$(a)$};
			\draw (181,628.51) node [anchor=north west][inner sep=0.75pt]    {$(c)$};
			\draw (26.6,267.35) node [anchor=north west][inner sep=0.75pt]  [font=\small]  {$D_{1}$};
			\draw (59.67,275.2) node [anchor=north west][inner sep=0.75pt]  [font=\small]  {$D_{2}$};
			\draw (284,298.35) node [anchor=north west][inner sep=0.75pt]  [font=\small]  {$D_{1}$};
			\draw (334.47,309.8) node [anchor=north west][inner sep=0.75pt]  [font=\small]  {$D_{2}$};
			\draw (61,571.35) node [anchor=north west][inner sep=0.75pt]  [font=\small]  {$D_{1}$};
			\draw (255.08,563.9) node [anchor=north west][inner sep=0.75pt]  [font=\small]  {$D_{1}$};
			\draw (106.47,571.8) node [anchor=north west][inner sep=0.75pt]  [font=\small]  {$D_{2}$};
			\draw (308.47,563.9) node [anchor=north west][inner sep=0.75pt]  [font=\small]  {$D_{2}$};
			\draw (181,491.51) node [anchor=north west][inner sep=0.75pt]    {$(b)$};
			\draw (97,377.35) node [anchor=north west][inner sep=0.75pt]  [font=\small]  {$D_{1}$};
			\draw (255,429.35) node [anchor=north west][inner sep=0.75pt]  [font=\small]  {$D_{1}$};
			\draw (105.67,416.2) node [anchor=north west][inner sep=0.75pt]  [font=\small]  {$D_{2}$};
			\draw (264.67,465.2) node [anchor=north west][inner sep=0.75pt]  [font=\small]  {$D_{2}$};	
		\end{tikzpicture}
		\caption{(a) Orientation-preserving smoothings at crossings $c_1$ and $c'_1$. (b) Orientation-preserving smoothings at crossings $c_2$ and $c'_2$.(c) Orientation-preserving smoothings at crossings $c_3$ and $c'_3$.} \label{fig2244}
	\end{figure}
	
	Let us discuss the parity of crossings $c_i$ and $c'_i$, where $i=1, 2, 3$. For $c_1$ and $c'_1$, consider the diagrams obtained by applying the orientation-preserving smoothings at $c_1$ and $c'_1$, respectively, see Fig.~\ref{fig2244}(a). Then there are two cases:
	\begin{itemize}
		\item[{(1)}] If the red part belongs to one component, say $D_1$, and the blue and black parts belong to the other component , say $D_2$, then $I(c'_1) - I(c_1) = 2$;
		\item[{(2)}] If the blue part belongs to one component, say $D_2$, and the red and black parts belong to the other, say $D_1$, then $I(c'_1) - I(c_1) = -2$.
	\end{itemize}
	
	For $c_2$ and $c'_2$, consider the diagrams obtained by applying the orientation-preserving smoothings at $c_2$ and $c'_2$, respectively, see Fig.~\ref{fig2244}(b). Then there are two cases:
	\begin{itemize}
		\item[{(1)}] If the red part belongs to one component, say $D_1$, and the blue and black parts belong to the other component , say $D_2$, then $I(c'_2) - I(c_2) = 0$;
		\item[{(2)}] If the blue part belongs to one component, say $D_2$, and the red and black parts belong to the other, say $D_1$, then $I(c'_2) - I(c_2) = 0$.
	\end{itemize}
	
	For $c_3$ and $c'_3$, from Fig.~\ref{fig2244}(c), one can easily obtain $I(c'_3) = I(c_3)$. Therefore, crossings $c'_i$ and $c_i$ have the same parity, for $i=1, 2, 3$. Moreover, $i(c'_i) = i(c_i)$ for $i=1, 2, 3$. Combining formulas \eqref{2.2}--\eqref{2.5}, we obtain that the contribution of $c'_i$ in $\mathbf{P}_{D'}(x,y)$ is the same as that of $c_i$ in $\mathbf{P}_{D}(x,y)$, where $i=1, 2, 3$. 
	
	Let $c\in C(D)$ be any classical crossing of $D$ not involved in $\Omega_3$-move, and $c'\in C(D')$ the classical crossing of $D'$ corresponding to $c$. It is clear that $\operatorname{sgn}(c)=\operatorname{sgn}(c')$ and $W_D(c)=W_{D'}(c')$. Therefore, the contribution of $c$ in $\mathbf{P}_D(x,y)$ is the same as that of $c'$ in $\mathbf{P}_{D'}(x,y)$. Hence, $\mathbf{P}_{D'}(x,y)=\mathbf{P}_D(x,y).$ The oriented $\Omega_3$-move has eight possible orientation configurations. Up to simultaneous reversal of all orientations, these reduce to four distinct cases. The remaining three cases can be proved individually by the same method.
	
	\noindent
	\textbf{Case (iv):} $M = \Omega^{m}_3$. The classical crossings involved in $\Omega^{m}_3$ remain unchanged, so this move has no effect on $\mathbf{P}_D(x,y)$. Hence $\mathbf{P}_{D'}(x,y) = \mathbf{P}_D(x,y)$.
	
	\noindent
	\textbf{Case (v):} $M = \Omega^{v}_i$ $(i=1,2,3)$ or $M = \Omega_v$. These moves do not involve classical crossings, so they have no effect on the definition of $\mathbf{P}_D(x,y)$. Hence, $\mathbf{P}_{D'}(x,y)=\mathbf{P}_D(x,y)$.
	
	It follows from the above cases that $\mathbf{P}_D(x,y)$ is an invariant of an oriented virtual knotoid $K$.
\end{proof}
Thus, for a virtual knotoid $K$ with diagram $D$, we may write $\mathbf{P}_K(x,y)=\mathbf{P}_D(x,y)$. 

\begin{remark}
	{\rm
		The invariant $P_K(x,y)$ is defined using the sign of classical crossing together with the parity of crossing, and the computation of $P_K(x,y)$ requires a single traversal of all crossings in a virtual knotoid diagram. Therefore, the overall computational complexity is linear in the number of classical crossings.
	}	
\end{remark}

\subsection{An example of calculation of $\mathbf{P}_K(x,y)$}

The following example shows that the invariant $\mathbf{P}(x,y)$ is non-trivial. 
\begin{example} \label{exp2.1}{\rm
		Consider the virtual knotoid $K_+$, whose diagram $D_+$ is presented in Fig.~\ref{fig1119} (a).  The diagram $D_+$ contains two virtual crossings and four classical crossings $c_1$, $c_2$, $c_3$ and $c_+$.  We will demonstrate that $\mathbf{P}_{D_+}(x,y) \neq 0$. 
		\begin{figure}[!ht]
			\begin{center}
				\tikzset{every picture/.style={line width=1.0pt}}  
				\begin{tikzpicture}[x=0.75pt,y=0.75pt,yscale=-0.85,xscale=0.85]	
					\draw    (144.72,101.18) .. controls (154.23,102.16) and (157.28,114.59) .. (157.77,118.86) ;
					\draw    (162.91,140.27) .. controls (166.93,157.69) and (164.49,199.46) .. (199.76,175.52) ; 
					\draw    (157.95,134.8) .. controls (105.46,204.41) and (95.68,88.39) .. (133.53,97.89) ; 
					\draw    (145.94,85.38) .. controls (117.92,133.68) and (120.53,171.93) .. (141.94,174.58) ;
					\draw    (141.94,174.58) .. controls (196.24,177.63) and (162.84,126.75) .. (202.88,128.64) ;
					\draw    (217.56,127.69) .. controls (240.24,126.27) and (226.48,84.67) .. (206.01,84.6) ; 
					\draw    (195.76,98.34) .. controls (220.22,121.54) and (218.71,162.91) .. (199.76,175.52) ;
					\draw    (168.76,114.09) .. controls (177.08,99.49) and (191.84,85.55) .. (206.01,84.6) ;
					\draw    (164.18,72.76) .. controls (166.85,73.71) and (178.86,78.45) .. (183.87,85.23) ;
					\draw    (157.95,134.8) .. controls (163.24,125.82) and (166.34,119.12) .. (168.76,114.09) ; 
					\draw   (119.31,155.97) .. controls (119.31,152.32) and (122.27,149.37) .. (125.92,149.37) .. controls (129.57,149.37) and (132.52,152.32) .. (132.52,155.97) .. controls (132.52,159.62) and (129.57,162.58) .. (125.92,162.58) .. controls (122.27,162.58) and (119.31,159.62) .. (119.31,155.97) -- cycle ;
					\draw   (161.31,168.37) .. controls (161.31,164.72) and (164.27,161.77) .. (167.92,161.77) .. controls (171.57,161.77) and (174.52,164.72) .. (174.52,168.37) .. controls (174.52,172.02) and (171.57,174.98) .. (167.92,174.98) .. controls (164.27,174.98) and (161.31,172.02) .. (161.31,168.37) -- cycle ;
					\draw  [fill={rgb, 255:red, 0; green, 0; blue, 0 }  ,fill opacity=1 ] (142.9,85.38) .. controls (142.9,83.7) and (144.26,82.34) .. (145.94,82.34) .. controls (147.62,82.34) and (148.98,83.7) .. (148.98,85.38) .. controls (148.98,87.06) and (147.62,88.42) .. (145.94,88.42) .. controls (144.26,88.42) and (142.9,87.06) .. (142.9,85.38) -- cycle ;
					\draw  [fill={rgb, 255:red, 0; green, 0; blue, 0 }  ,fill opacity=1 ] (161.14,72.76) .. controls (161.14,71.09) and (162.5,69.73) .. (164.18,69.73) .. controls (165.85,69.73) and (167.21,71.09) .. (167.21,72.76) .. controls (167.21,74.44) and (165.85,75.8) .. (164.18,75.8) .. controls (162.5,75.8) and (161.14,74.44) .. (161.14,72.76) -- cycle ;
					\draw   (208.43,108.41) -- (207.87,114.79) -- (202.18,111.86) ;
					\draw    (355.72,97.11) .. controls (365.23,98.09) and (368.28,110.52) .. (368.77,114.79) ;
					\draw    (373.91,136.2) .. controls (377.93,153.62) and (375.49,195.39) .. (410.76,171.45) ;
					\draw    (368.95,130.73) .. controls (316.46,200.34) and (306.68,84.32) .. (344.53,93.82) ; 
					\draw    (356.94,81.31) .. controls (328.92,129.61) and (331.53,167.86) .. (352.94,170.51) ;
					\draw    (352.94,170.51) .. controls (407.24,173.56) and (373.84,122.68) .. (413.88,124.57) ;
					\draw    (428.56,123.62) .. controls (451.24,122.2) and (437.48,80.6) .. (417.01,80.53) ;
					\draw    (406.76,94.27) .. controls (431.22,117.47) and (429.71,158.84) .. (410.76,171.45) ; 
					\draw    (379.76,110.02) .. controls (388.08,95.42) and (402.84,81.48) .. (417.01,80.53) ;
					\draw    (375.18,68.69) .. controls (377.85,69.64) and (389.86,74.38) .. (394.87,81.16) ;
					\draw    (368.95,130.73) .. controls (374.24,121.75) and (377.34,115.05) .. (379.76,110.02) ;
					\draw   (330.31,151.9) .. controls (330.31,148.25) and (333.27,145.3) .. (336.92,145.3) .. controls (340.57,145.3) and (343.52,148.25) .. (343.52,151.9) .. controls (343.52,155.55) and (340.57,158.51) .. (336.92,158.51) .. controls (333.27,158.51) and (330.31,155.55) .. (330.31,151.9) -- cycle ;
					\draw   (372.31,164.3) .. controls (372.31,160.65) and (375.27,157.7) .. (378.92,157.7) .. controls (382.57,157.7) and (385.52,160.65) .. (385.52,164.3) .. controls (385.52,167.95) and (382.57,170.91) .. (378.92,170.91) .. controls (375.27,170.91) and (372.31,167.95) .. (372.31,164.3) -- cycle ;
					\draw  [fill={rgb, 255:red, 0; green, 0; blue, 0 }  ,fill opacity=1 ] (353.9,81.31) .. controls (353.9,79.63) and (355.26,78.27) .. (356.94,78.27) .. controls (358.62,78.27) and (359.98,79.63) .. (359.98,81.31) .. controls (359.98,82.99) and (358.62,84.35) .. (356.94,84.35) .. controls (355.26,84.35) and (353.9,82.99) .. (353.9,81.31) -- cycle ;
					\draw  [fill={rgb, 255:red, 0; green, 0; blue, 0 }  ,fill opacity=1 ] (372.14,68.69) .. controls (372.14,67.02) and (373.5,65.66) .. (375.18,65.66) .. controls (376.85,65.66) and (378.21,67.02) .. (378.21,68.69) .. controls (378.21,70.37) and (376.85,71.73) .. (375.18,71.73) .. controls (373.5,71.73) and (372.14,70.37) .. (372.14,68.69) -- cycle ;
					\draw   (419.43,104.34) -- (418.87,110.72) -- (413.18,107.79) ;
					\draw (164.01,122.84) node [anchor=north west][inner sep=0.75pt]    {$c_+$};
					\draw (155.73,187.18) node [anchor=north west][inner sep=0.75pt]    {(a)};
					\draw (184,73.07) node [anchor=north west][inner sep=0.75pt]    {$c_{1}$};
					\draw (213.5,129.07) node [anchor=north west][inner sep=0.75pt]    {$c_{2}$};
					\draw (125.5,83.07) node [anchor=north west][inner sep=0.75pt]    {$c_{3}$};
					\draw (372.73,187.18) node [anchor=north west][inner sep=0.75pt]    {(b)};
					\draw (384.88,59.5) node [anchor=north west][inner sep=0.75pt]    {$0$};
					\draw (437.38,88.79) node [anchor=north west][inner sep=0.75pt]    {$2$};
					\draw (356.81,102.93) node [anchor=north west][inner sep=0.75pt]    {$1$};
					\draw (391.88,100.64) node [anchor=north west][inner sep=0.75pt]    {$-1$};
					\draw (342.53,76.21) node [anchor=north west][inner sep=0.75pt]    {$0$};
					\draw (341.81,171.07) node [anchor=north west][inner sep=0.75pt]    {$1$};
					\draw (418.67,159.36) node [anchor=north west][inner sep=0.75pt]    {$0$};
					\draw (309.24,123.5) node [anchor=north west][inner sep=0.75pt]    {$2$};
					\draw (373.81,88.21) node [anchor=north west][inner sep=0.75pt]    {$1$};
				\end{tikzpicture}
				\caption{A virtual knotoid diagram $D_+$ and its integer labeling.} \label{fig1119}
			\end{center}
		\end{figure}  
		
		As can be seen from Fig.~\ref{fig1119}(a), $\operatorname{sgn}(c_1)=\operatorname{sgn}(c_2) =-1$ and $ \operatorname{sgn}(c_3) = \operatorname{sgn}(c_+)= 1$.
		\begin{figure}[!ht]
			\begin{center}
				\tikzset{every picture/.style={line width=1.0pt}}  
				\begin{tikzpicture}[x=0.75pt,y=0.75pt,yscale=-0.85,xscale=0.85]	
					\draw    (204.55,71.94) .. controls (214.06,72.92) and (217.12,85.35) .. (217.61,89.62) ; 
					\draw    (222.74,111.04) .. controls (226.76,128.46) and (224.32,170.22) .. (259.6,146.29) ;
					\draw    (217.78,105.56) .. controls (165.3,175.17) and (155.51,59.15) .. (193.36,68.66) ;
					\draw [color={rgb, 255:red, 248; green, 72; blue, 61 }  ,draw opacity=1 ]   (205.77,56.14) .. controls (177.75,104.44) and (180.36,142.7) .. (201.77,145.34) ;
					\draw [color={rgb, 255:red, 248; green, 72; blue, 61 }  ,draw opacity=1 ]   (201.77,145.34) .. controls (256.07,148.39) and (288.61,91.74) .. (254.27,67.08) ;
					\draw    (282.27,98.08) .. controls (283.27,93.08) and (286.31,55.44) .. (265.84,55.36) ;
					\draw    (282.27,98.08) .. controls (280.27,108.74) and (278.54,133.68) .. (259.6,146.29) ;
					\draw    (228.59,84.85) .. controls (236.92,70.26) and (251.68,56.31) .. (265.84,55.36) ;
					\draw [color={rgb, 255:red, 248; green, 72; blue, 61 }  ,draw opacity=1 ]   (224.01,43.53) .. controls (226.68,44.47) and (238.69,49.21) .. (243.71,55.99) ;
					\draw    (217.78,105.56) .. controls (223.07,96.58) and (226.18,89.88) .. (228.59,84.85) ;
					\draw   (179.15,126.73) .. controls (179.15,123.09) and (182.1,120.13) .. (185.75,120.13) .. controls (189.4,120.13) and (192.36,123.09) .. (192.36,126.73) .. controls (192.36,130.38) and (189.4,133.34) .. (185.75,133.34) .. controls (182.1,133.34) and (179.15,130.38) .. (179.15,126.73) -- cycle ;
					\draw   (221.81,140.47) .. controls (221.81,136.82) and (224.77,133.86) .. (228.42,133.86) .. controls (232.07,133.86) and (235.02,136.82) .. (235.02,140.47) .. controls (235.02,144.12) and (232.07,147.07) .. (228.42,147.07) .. controls (224.77,147.07) and (221.81,144.12) .. (221.81,140.47) -- cycle ;
					\draw  [color={rgb, 255:red, 248; green, 72; blue, 61 }  ,draw opacity=1 ][fill={rgb, 255:red, 248; green, 72; blue, 61 }  ,fill opacity=1 ] (202.74,56.14) .. controls (202.74,54.47) and (204.1,53.11) .. (205.77,53.11) .. controls (207.45,53.11) and (208.81,54.47) .. (208.81,56.14) .. controls (208.81,57.82) and (207.45,59.18) .. (205.77,59.18) .. controls (204.1,59.18) and (202.74,57.82) .. (202.74,56.14) -- cycle ;
					\draw  [color={rgb, 255:red, 248; green, 72; blue, 61 }  ,draw opacity=1 ][fill={rgb, 255:red, 248; green, 72; blue, 61 }  ,fill opacity=1 ] (220.97,43.53) .. controls (220.97,41.85) and (222.33,40.49) .. (224.01,40.49) .. controls (225.69,40.49) and (227.05,41.85) .. (227.05,43.53) .. controls (227.05,45.2) and (225.69,46.56) .. (224.01,46.56) .. controls (222.33,46.56) and (220.97,45.2) .. (220.97,43.53) -- cycle ;
					\draw   (268.26,79.18) -- (267.7,85.56) -- (262.01,82.62) ;
					\draw [color={rgb, 255:red, 248; green, 72; blue, 61 }  ,draw opacity=1 ]   (54.88,72.12) .. controls (64.4,73.11) and (67.45,85.53) .. (67.94,89.8) ;
					\draw [color={rgb, 255:red, 248; green, 72; blue, 61 }  ,draw opacity=1 ]   (73.07,111.22) .. controls (77.1,128.64) and (74.65,170.41) .. (109.93,146.47) ;
					\draw [color={rgb, 255:red, 248; green, 72; blue, 61 }  ,draw opacity=1 ]   (68.12,105.74) .. controls (15.63,175.35) and (5.84,59.34) .. (43.7,68.84) ;
					\draw    (56.11,56.32) .. controls (28.08,104.63) and (30.69,142.88) .. (52.1,145.52) ;
					\draw    (52.1,145.52) .. controls (106.4,148.57) and (73.01,97.69) .. (113.04,99.59) ;
					\draw    (127.72,98.64) .. controls (150.41,97.22) and (136.64,55.62) .. (116.17,55.55) ;
					\draw [color={rgb, 255:red, 248; green, 72; blue, 61 }  ,draw opacity=1 ]   (105.93,69.28) .. controls (130.39,92.48) and (128.87,133.86) .. (109.93,146.47) ;
					\draw [color={rgb, 255:red, 248; green, 72; blue, 61 }  ,draw opacity=1 ]   (78.92,85.03) .. controls (87.25,70.44) and (88.94,59.71) .. (105.93,69.28) ;
					\draw    (74.34,43.71) .. controls (77.01,44.66) and (94.94,59.38) .. (116.17,55.55) ; 
					\draw [color={rgb, 255:red, 248; green, 72; blue, 61 }  ,draw opacity=1 ]   (68.12,105.74) .. controls (73.41,96.76) and (76.51,90.06) .. (78.92,85.03) ;
					\draw   (29.48,126.92) .. controls (29.48,123.27) and (32.44,120.31) .. (36.09,120.31) .. controls (39.73,120.31) and (42.69,123.27) .. (42.69,126.92) .. controls (42.69,130.56) and (39.73,133.52) .. (36.09,133.52) .. controls (32.44,133.52) and (29.48,130.56) .. (29.48,126.92) -- cycle ;
					\draw   (71.48,139.32) .. controls (71.48,135.67) and (74.44,132.71) .. (78.09,132.71) .. controls (81.73,132.71) and (84.69,135.67) .. (84.69,139.32) .. controls (84.69,142.96) and (81.73,145.92) .. (78.09,145.92) .. controls (74.44,145.92) and (71.48,142.96) .. (71.48,139.32) -- cycle ;
					\draw  [fill={rgb, 255:red, 0; green, 0; blue, 0 }  ,fill opacity=1 ] (53.07,56.32) .. controls (53.07,54.65) and (54.43,53.29) .. (56.11,53.29) .. controls (57.78,53.29) and (59.14,54.65) .. (59.14,56.32) .. controls (59.14,58) and (57.78,59.36) .. (56.11,59.36) .. controls (54.43,59.36) and (53.07,58) .. (53.07,56.32) -- cycle ;
					\draw  [fill={rgb, 255:red, 0; green, 0; blue, 0 }  ,fill opacity=1 ] (71.31,43.71) .. controls (71.31,42.03) and (72.67,40.67) .. (74.34,40.67) .. controls (76.02,40.67) and (77.38,42.03) .. (77.38,43.71) .. controls (77.38,45.39) and (76.02,46.75) .. (74.34,46.75) .. controls (72.67,46.75) and (71.31,45.39) .. (71.31,43.71) -- cycle ;
					\draw   (118.6,79.36) -- (118.04,85.74) -- (112.34,82.81) ;
					\draw   (138.93,70.02) -- (138.37,76.41) -- (132.68,73.47) ;
					\draw [color={rgb, 255:red, 248; green, 72; blue, 61 }  ,draw opacity=1 ]   (518.22,71.61) .. controls (527.73,72.59) and (532.64,91.08) .. (525.3,104.74) ;
					\draw    (536.41,110.7) .. controls (540.43,128.12) and (537.99,169.89) .. (573.26,145.95) ;
					\draw [color={rgb, 255:red, 248; green, 72; blue, 61 }  ,draw opacity=1 ]   (525.3,104.74) .. controls (485.64,174.74) and (469.18,58.82) .. (507.03,68.32) ;
					\draw    (519.44,55.81) .. controls (491.42,104.11) and (494.03,142.36) .. (515.44,145.01) ;
					\draw    (515.44,145.01) .. controls (569.74,148.06) and (536.34,97.18) .. (576.38,99.07) ;
					\draw    (591.06,98.12) .. controls (613.74,96.7) and (599.98,55.1) .. (579.51,55.03) ;
					\draw    (569.26,68.77) .. controls (593.72,91.97) and (592.21,133.34) .. (573.26,145.95) ;
					\draw    (542.26,84.52) .. controls (550.58,69.92) and (565.34,55.98) .. (579.51,55.03) ;
					\draw    (537.68,43.19) .. controls (540.35,44.14) and (552.36,48.88) .. (557.37,55.66) ;
					\draw    (536.41,110.7) .. controls (535.3,96.74) and (539.84,89.55) .. (542.26,84.52) ;
					\draw   (492.81,126.4) .. controls (492.81,122.75) and (495.77,119.8) .. (499.42,119.8) .. controls (503.07,119.8) and (506.02,122.75) .. (506.02,126.4) .. controls (506.02,130.05) and (503.07,133.01) .. (499.42,133.01) .. controls (495.77,133.01) and (492.81,130.05) .. (492.81,126.4) -- cycle ;
					\draw   (534.81,138.8) .. controls (534.81,135.15) and (537.77,132.2) .. (541.42,132.2) .. controls (545.07,132.2) and (548.02,135.15) .. (548.02,138.8) .. controls (548.02,142.45) and (545.07,145.41) .. (541.42,145.41) .. controls (537.77,145.41) and (534.81,142.45) .. (534.81,138.8) -- cycle ;
					\draw  [fill={rgb, 255:red, 0; green, 0; blue, 0 }  ,fill opacity=1 ] (516.4,55.81) .. controls (516.4,54.13) and (517.76,52.77) .. (519.44,52.77) .. controls (521.12,52.77) and (522.48,54.13) .. (522.48,55.81) .. controls (522.48,57.49) and (521.12,58.85) .. (519.44,58.85) .. controls (517.76,58.85) and (516.4,57.49) .. (516.4,55.81) -- cycle ;
					\draw  [fill={rgb, 255:red, 0; green, 0; blue, 0 }  ,fill opacity=1 ] (534.64,43.19) .. controls (534.64,41.52) and (536,40.16) .. (537.68,40.16) .. controls (539.35,40.16) and (540.71,41.52) .. (540.71,43.19) .. controls (540.71,44.87) and (539.35,46.23) .. (537.68,46.23) .. controls (536,46.23) and (534.64,44.87) .. (534.64,43.19) -- cycle ;
					\draw    (369.11,55.99) .. controls (368.65,68.31) and (380.45,85.2) .. (380.94,89.47) ;
					\draw    (386.07,110.89) .. controls (390.1,128.31) and (387.65,170.07) .. (422.93,146.13) ;
					\draw [color={rgb, 255:red, 248; green, 72; blue, 61 }  ,draw opacity=1 ]   (381.12,105.41) .. controls (328.63,175.02) and (318.84,59) .. (356.7,68.5) ;
					\draw [color={rgb, 255:red, 248; green, 72; blue, 61 }  ,draw opacity=1 ]   (356.7,68.5) .. controls (374.42,71.77) and (343.69,142.54) .. (365.1,145.19) ;
					\draw [color={rgb, 255:red, 248; green, 72; blue, 61 }  ,draw opacity=1 ]   (365.1,145.19) .. controls (419.4,148.24) and (386.01,97.36) .. (426.04,99.25) ;
					\draw [color={rgb, 255:red, 248; green, 72; blue, 61 }  ,draw opacity=1 ]   (440.72,98.31) .. controls (463.41,96.89) and (449.64,55.29) .. (429.17,55.21) ;
					\draw    (418.93,68.95) .. controls (443.39,92.15) and (441.87,133.53) .. (422.93,146.13) ;
					\draw [color={rgb, 255:red, 248; green, 72; blue, 61 }  ,draw opacity=1 ]   (391.92,84.7) .. controls (400.25,70.11) and (415.01,56.16) .. (429.17,55.21) ;
					\draw    (387.34,43.38) .. controls (390.01,44.32) and (402.02,49.06) .. (407.04,55.84) ;
					\draw [color={rgb, 255:red, 248; green, 72; blue, 61 }  ,draw opacity=1 ]   (381.12,105.41) .. controls (386.41,96.43) and (389.51,89.73) .. (391.92,84.7) ;
					\draw  [color={rgb, 255:red, 248; green, 72; blue, 61 }  ,draw opacity=1 ] (350.48,125.58) .. controls (350.48,121.93) and (353.44,118.98) .. (357.09,118.98) .. controls (360.73,118.98) and (363.69,121.93) .. (363.69,125.58) .. controls (363.69,129.23) and (360.73,132.19) .. (357.09,132.19) .. controls (353.44,132.19) and (350.48,129.23) .. (350.48,125.58) -- cycle ;
					\draw   (384.48,138.98) .. controls (384.48,135.33) and (387.44,132.38) .. (391.09,132.38) .. controls (394.73,132.38) and (397.69,135.33) .. (397.69,138.98) .. controls (397.69,142.63) and (394.73,145.59) .. (391.09,145.59) .. controls (387.44,145.59) and (384.48,142.63) .. (384.48,138.98) -- cycle ;
					\draw  [fill={rgb, 255:red, 0; green, 0; blue, 0 }  ,fill opacity=1 ] (366.07,55.99) .. controls (366.07,54.31) and (367.43,52.95) .. (369.11,52.95) .. controls (370.78,52.95) and (372.14,54.31) .. (372.14,55.99) .. controls (372.14,57.67) and (370.78,59.03) .. (369.11,59.03) .. controls (367.43,59.03) and (366.07,57.67) .. (366.07,55.99) -- cycle ;
					\draw  [fill={rgb, 255:red, 0; green, 0; blue, 0 }  ,fill opacity=1 ] (384.31,43.38) .. controls (384.31,41.7) and (385.67,40.34) .. (387.34,40.34) .. controls (389.02,40.34) and (390.38,41.7) .. (390.38,43.38) .. controls (390.38,45.05) and (389.02,46.41) .. (387.34,46.41) .. controls (385.67,46.41) and (384.31,45.05) .. (384.31,43.38) -- cycle ;
					\draw   (431.6,79.02) -- (431.04,85.41) -- (425.34,82.47) ;
					\draw   (285.94,99.37) -- (280.99,103.44) -- (279.09,97.33) ;
					\draw   (524.74,84.79) -- (526.67,78.68) -- (531.6,82.78) ;
					\draw   (541.63,78.53) -- (547.69,76.44) -- (547.33,82.84) ;
					\draw   (354.71,71.67) -- (349.43,68.05) -- (354.78,64.53) ;
					\draw (88,67.33) node [anchor=north west][inner sep=0.75pt]  [font=\small]  {$D_{1}$};
					\draw (90.67,34.67) node [anchor=north west][inner sep=0.75pt]  [font=\small]  {$D_{2}$};
					\draw (244,76.67) node [anchor=north west][inner sep=0.75pt]  [font=\small]  {$D_{1}$};
					\draw (284,77.33) node [anchor=north west][inner sep=0.75pt]  [font=\small]  {$D_{2}$};
					\draw (536,88) node [anchor=north west][inner sep=0.75pt]  [font=\small]  {$D_{2}$};
					\draw (503.33,87.33) node [anchor=north west][inner sep=0.75pt]  [font=\small]  {$D_{1}$};
					\draw (340.67,71) node [anchor=north west][inner sep=0.75pt]  [font=\small]  {$D_{1}$};
					\draw (372.14,55.99) node [anchor=north west][inner sep=0.75pt]  [font=\small]  {$D_{2}$};
					\draw (110.8,100.4) node [anchor=north west][inner sep=0.75pt]  [font=\small]  {$-$};
					\draw (46.7,74.84) node [anchor=north west][inner sep=0.75pt]  [font=\small]  {$-$};
					\draw (244.2,44.8) node [anchor=north west][inner sep=0.75pt]  [font=\small]  {$+$};
					\draw (194.36,72.66) node [anchor=north west][inner sep=0.75pt]  [font=\small]  {$+$};
					\draw (509.8,72.6) node [anchor=north west][inner sep=0.75pt]  [font=\small]  {$-$};
					\draw (424,99.8) node [anchor=north west][inner sep=0.75pt]  [font=\small]  {$+$};
					\draw (385.94,95.47) node [anchor=north west][inner sep=0.75pt]  [font=\small]  {$+$};
					\draw (410,47.8) node [anchor=north west][inner sep=0.75pt]  [font=\small]  {$-$};
					\draw (64,157.85) node [anchor=north west][inner sep=0.75pt]   [align=left] {(a)};
					\draw (377,157.85) node [anchor=north west][inner sep=0.75pt]   [align=left] {(c)};
					\draw (524,157.85) node [anchor=north west][inner sep=0.75pt]   [align=left] {(d)};
					\draw (215,158.85) node [anchor=north west][inner sep=0.75pt]   [align=left] {(b)};
				\end{tikzpicture}
				\caption{(a)--(d): Diagrams obtained by orientation-preserving smoothing at classical crossings $c_1$, $c_2$, $c_3$ and $c_+$. }  \label{fig13}
			\end{center}
		\end{figure} 
		
		According to Fig.~\ref{fig1119}(b), the straightforward calculations give
		$$
		\begin{aligned}
			W_{D_+}(c_1) &= -(0-2) = 2, \quad W_{D_+}(c_2) =-(2-0)= -2, \\ W_{D_+}(c_3) &=1-2= -1, \quad W_{D_+}(c_+) =2-1= 1.
		\end{aligned}
		$$ 
		From Fig.~\ref{fig13}, we see that $I(c_1)=I(c_2)=2$, $I(c_3)=3$ and $I(c_+)=1$, thus $c_1, c_2 \in E(D_+)$ and $c_3, c_+ \in O(D_+)$. Moreover, $i(c_1)=-2$, $i(c_2)=2$, $i(c_3)=1$, and $i(c_+)=-1$. Therefore, 
		\begin{eqnarray*}
			\mathbf{P}_{D_+}(x,y)&=&\operatorname{sgn}(c_1) i(c_1) x^{W_{D_+}(c_1)}+\operatorname{sgn}(c_2) i(c_2) x^{W_{D_+}(c_2)}\\
			&&+\operatorname{sgn}(c_3) i(c_3) y^{W_{D_+}(c_3)}+\operatorname{sgn}(c_+) i(c_+) y^{W_{D_+}(c_+)}\\
			&=& -(-2) x^{2}-2 x^{-2} + y^{-1} +(-1) y= 2x^2-2x^{-2}+y^{-1}-y \neq 0.
	\end{eqnarray*} }
\end{example} 

%%%%%
\section{The properties of $\mathbf{P}_K(x,y)$} \label{section3}

\subsection{$\mathbf{P}_K(x,y)$ and the odd writhe} \label{subsection3.0}

For a virtual knotoid $K$, the odd writhe invariant introduced in~\cite{GK} is given by
$
J(K)=\sum_{c\in O(K)}\operatorname{sgn}(c),
$
where $O(K)$ denotes the set of odd crossings of $K$. We now exhibit two virtual knotoids $K_1$ and $K_2$ that cannot be distinguished by $J(K)$, while $\mathbf{P}_K(x,y)$ successfully distinguishes them.

\begin{example} \label{example3.0}
	{\rm
		Let $K_1$ and $K_2$ be oriented virtual knotoids with diagrams $D_1$ and $D_2$ presented in Fig.~\ref{fig602}. 
		\begin{figure}[!ht]
			\begin{center}
				\tikzset{every picture/.style={line width=1.pt}}  
				\begin{tikzpicture}[x=0.75pt,y=0.75pt,yscale=-0.9,xscale=0.9]		
					\draw    (131.96,87.54) .. controls (158.32,49.99) and (198.95,81.67) .. (193.46,113.35) ;
					\draw    (125.37,154.41) .. controls (80.55,131.4) and (101.21,59.38) .. (150.63,106.31) ; 
					\draw    (125.37,154.41) .. controls (141.32,163.47) and (195.69,181.71) .. (213.22,130.95) ; 
					\draw    (196.59,82.12) .. controls (219.32,82.22) and (220.02,114.58) .. (213.22,130.95) ;
					\draw    (181.58,91.12) .. controls (164.01,105.59) and (192.27,135.23) .. (175.06,158.7) ;
					\draw    (127.66,100.9) .. controls (119.6,118.89) and (108.39,214.8) .. (167.86,171.1) ;
					\draw  [fill={rgb, 255:red, 0; green, 0; blue, 0 }  ,fill opacity=1 ] (147.59,106.31) .. controls (147.59,104.63) and (148.95,103.27) .. (150.63,103.27) .. controls (152.31,103.27) and (153.67,104.63) .. (153.67,106.31) .. controls (153.67,107.99) and (152.31,109.35) .. (150.63,109.35) .. controls (148.95,109.35) and (147.59,107.99) .. (147.59,106.31) -- cycle ;
					\draw  [fill={rgb, 255:red, 0; green, 0; blue, 0 }  ,fill opacity=1 ] (190.42,113.35) .. controls (190.42,111.67) and (191.78,110.31) .. (193.46,110.31) .. controls (195.13,110.31) and (196.49,111.67) .. (196.49,113.35) .. controls (196.49,115.02) and (195.13,116.38) .. (193.46,116.38) .. controls (191.78,116.38) and (190.42,115.02) .. (190.42,113.35) -- cycle ;
					\draw   (115.1,151.96) .. controls (115.1,148.31) and (118.06,145.35) .. (121.71,145.35) .. controls (125.35,145.35) and (128.31,148.31) .. (128.31,151.96) .. controls (128.31,155.6) and (125.35,158.56) .. (121.71,158.56) .. controls (118.06,158.56) and (115.1,155.6) .. (115.1,151.96) -- cycle ;
					\draw   (175.08,78.51) -- (170.77,72.5) -- (177.73,72.58) ;
					\draw    (371.96,86.54) .. controls (398.32,48.99) and (438.95,80.67) .. (433.46,112.35) ; 
					\draw    (355.84,147.27) .. controls (326.84,138.27) and (341.21,58.38) .. (390.63,105.31) ;
					\draw    (371.84,153.77) .. controls (387.78,162.83) and (435.69,180.71) .. (453.22,129.95) ;
					\draw    (436.59,81.12) .. controls (459.32,81.22) and (460.02,113.58) .. (453.22,129.95) ;
					\draw    (421.58,90.12) .. controls (404.01,104.59) and (436.84,151.27) .. (407.86,170.1) ;
					\draw    (367.66,99.9) .. controls (359.6,117.89) and (354.84,206.27) .. (407.86,170.1) ;
					\draw  [fill={rgb, 255:red, 0; green, 0; blue, 0 }  ,fill opacity=1 ] (387.59,105.31) .. controls (387.59,103.63) and (388.95,102.27) .. (390.63,102.27) .. controls (392.31,102.27) and (393.67,103.63) .. (393.67,105.31) .. controls (393.67,106.99) and (392.31,108.35) .. (390.63,108.35) .. controls (388.95,108.35) and (387.59,106.99) .. (387.59,105.31) -- cycle ;
					\draw  [fill={rgb, 255:red, 0; green, 0; blue, 0 }  ,fill opacity=1 ] (430.42,112.35) .. controls (430.42,110.67) and (431.78,109.31) .. (433.46,109.31) .. controls (435.13,109.31) and (436.49,110.67) .. (436.49,112.35) .. controls (436.49,114.02) and (435.13,115.38) .. (433.46,115.38) .. controls (431.78,115.38) and (430.42,114.02) .. (430.42,112.35) -- cycle ;
					\draw   (407.6,163.96) .. controls (407.6,160.31) and (410.56,157.35) .. (414.21,157.35) .. controls (417.85,157.35) and (420.81,160.31) .. (420.81,163.96) .. controls (420.81,167.6) and (417.85,170.56) .. (414.21,170.56) .. controls (410.56,170.56) and (407.6,167.6) .. (407.6,163.96) -- cycle ;
					\draw   (415.08,77.51) -- (410.77,71.5) -- (417.73,71.58) ;
					\draw (182.24,68.44) node [anchor=north west][inner sep=0.75pt]    {$c_{1}$};
					\draw (115.24,78.44) node [anchor=north west][inner sep=0.75pt]    {$c_{2}$};
					\draw (169.74,166.94) node [anchor=north west][inner sep=0.75pt]    {$c_{3}$};
					\draw (421.74,68.84) node [anchor=north west][inner sep=0.75pt]    {$c_{1}$};
					\draw (355.24,78.34) node [anchor=north west][inner sep=0.75pt]    {$c_{2}$};
					\draw (349.74,150.34) node [anchor=north west][inner sep=0.75pt]    {$c_{3}$};
					\draw (70.24,59.94) node [anchor=north west][inner sep=0.75pt]    {$D_{1}$};
					\draw (314.9,60.44) node [anchor=north west][inner sep=0.75pt]    {$D_{2}$};
					\draw (144.9,194.94) node [anchor=north west][inner sep=0.75pt]    {(a)};
					\draw (390.9,194.94) node [anchor=north west][inner sep=0.75pt]    {(b)};
				\end{tikzpicture}
				\caption{Virtual knotoid diagrams $D_1$ and $D_2$.} \label{fig602}
			\end{center}
		\end{figure} 
		The diagram $D_1$ contains one virtual crossing and three classical crossings $c_1$, $c_2$, and $c_3$. One can see from Fig.~\ref{fig602}(a) that 
		$\operatorname{sgn}(c_1)=\operatorname{sgn}(c_2)=\operatorname{sgn}(c_3)=-1$. 
		\begin{figure}[!ht]
			\begin{center}
				\tikzset{every picture/.style={line width=1.pt}}  
				\begin{tikzpicture}[x=0.75pt,y=0.75pt,yscale=-0.9,xscale=0.9]		
					\draw    (74.46,477.6) .. controls (102.54,447.31) and (137.03,472.42) .. (124.08,481.18) ;
					\draw [color={rgb, 255:red, 248; green, 72; blue, 61 }  ,draw opacity=1 ]   (67.87,544.47) .. controls (23.05,521.46) and (43.71,449.44) .. (93.13,496.37) ;
					\draw [color={rgb, 255:red, 248; green, 72; blue, 61 }  ,draw opacity=1 ]   (67.87,544.47) .. controls (83.82,553.53) and (138.19,571.77) .. (155.72,521.01) ;
					\draw [color={rgb, 255:red, 248; green, 72; blue, 61 }  ,draw opacity=1 ]   (139.09,472.18) .. controls (161.82,472.28) and (162.52,504.64) .. (155.72,521.01) ;
					\draw    (124.08,481.18) .. controls (106.51,495.65) and (134.77,525.29) .. (117.56,548.76) ;
					\draw    (70.16,490.96) .. controls (62.1,508.95) and (50.89,604.86) .. (110.36,561.16) ;
					\draw  [color={rgb, 255:red, 248; green, 72; blue, 61 }  ,draw opacity=1 ][fill={rgb, 255:red, 248; green, 72; blue, 61 }  ,fill opacity=1 ] (90.09,496.37) .. controls (90.09,494.69) and (91.45,493.33) .. (93.13,493.33) .. controls (94.81,493.33) and (96.17,494.69) .. (96.17,496.37) .. controls (96.17,498.05) and (94.81,499.41) .. (93.13,499.41) .. controls (91.45,499.41) and (90.09,498.05) .. (90.09,496.37) -- cycle ;
					\draw  [color={rgb, 255:red, 248; green, 72; blue, 61 }  ,draw opacity=1 ][fill={rgb, 255:red, 248; green, 72; blue, 61 }  ,fill opacity=1 ] (132.92,503.41) .. controls (132.92,501.73) and (134.28,500.37) .. (135.96,500.37) .. controls (137.63,500.37) and (138.99,501.73) .. (138.99,503.41) .. controls (138.99,505.08) and (137.63,506.44) .. (135.96,506.44) .. controls (134.28,506.44) and (132.92,505.08) .. (132.92,503.41) -- cycle ;
					\draw   (57.6,542.02) .. controls (57.6,538.37) and (60.56,535.41) .. (64.21,535.41) .. controls (67.85,535.41) and (70.81,538.37) .. (70.81,542.02) .. controls (70.81,545.66) and (67.85,548.62) .. (64.21,548.62) .. controls (60.56,548.62) and (57.6,545.66) .. (57.6,542.02) -- cycle ;
					\draw   (100.15,466.02) -- (92.79,465.3) -- (97.43,460.11) ;
					\draw [color={rgb, 255:red, 248; green, 72; blue, 61 }  ,draw opacity=1 ]   (139.09,472.18) .. controls (129.03,472.02) and (131.03,496.82) .. (135.96,503.41) ;
					\draw   (158.79,482) -- (158.63,489.39) -- (153.11,485.16) ;
					\draw    (242.36,475.45) .. controls (270.36,444.95) and (305.61,472.14) .. (300.12,503.82) ;
					\draw [color={rgb, 255:red, 248; green, 72; blue, 61 }  ,draw opacity=1 ]   (232.04,544.88) .. controls (187.22,521.87) and (239.86,464.95) .. (234.32,491.37) ;
					\draw [color={rgb, 255:red, 248; green, 72; blue, 61 }  ,draw opacity=1 ]   (232.04,544.88) .. controls (247.98,553.94) and (302.36,572.18) .. (319.89,521.42) ;
					\draw [color={rgb, 255:red, 248; green, 72; blue, 61 }  ,draw opacity=1 ]   (303.26,472.6) .. controls (325.99,472.69) and (326.68,505.06) .. (319.89,521.42) ;
					\draw [color={rgb, 255:red, 248; green, 72; blue, 61 }  ,draw opacity=1 ]   (288.25,481.59) .. controls (270.68,496.06) and (298.93,525.7) .. (281.73,549.17) ;
					\draw [color={rgb, 255:red, 248; green, 72; blue, 61 }  ,draw opacity=1 ]   (234.32,491.37) .. controls (226.27,509.36) and (215.06,605.27) .. (274.53,561.57) ;
					\draw  [fill={rgb, 255:red, 0; green, 0; blue, 0 }  ,fill opacity=1 ] (254.26,496.78) .. controls (254.26,495.1) and (255.62,493.74) .. (257.3,493.74) .. controls (258.97,493.74) and (260.33,495.1) .. (260.33,496.78) .. controls (260.33,498.46) and (258.97,499.82) .. (257.3,499.82) .. controls (255.62,499.82) and (254.26,498.46) .. (254.26,496.78) -- cycle ;
					\draw  [fill={rgb, 255:red, 0; green, 0; blue, 0 }  ,fill opacity=1 ] (297.09,503.82) .. controls (297.09,502.14) and (298.45,500.78) .. (300.12,500.78) .. controls (301.8,500.78) and (303.16,502.14) .. (303.16,503.82) .. controls (303.16,505.5) and (301.8,506.86) .. (300.12,506.86) .. controls (298.45,506.86) and (297.09,505.5) .. (297.09,503.82) -- cycle ;
					\draw  [color={rgb, 255:red, 248; green, 72; blue, 61 }  ,draw opacity=1 ] (221.77,542.43) .. controls (221.77,538.78) and (224.72,535.82) .. (228.37,535.82) .. controls (232.02,535.82) and (234.98,538.78) .. (234.98,542.43) .. controls (234.98,546.08) and (232.02,549.04) .. (228.37,549.04) .. controls (224.72,549.04) and (221.77,546.08) .. (221.77,542.43) -- cycle ;
					\draw   (281.74,468.98) -- (277.44,462.97) -- (284.4,463.05) ;
					\draw    (402.46,478.68) .. controls (428.82,441.13) and (469.45,472.81) .. (463.96,504.49) ;
					\draw    (395.87,545.55) .. controls (351.05,522.54) and (371.71,450.52) .. (421.13,497.45) ;
					\draw [color={rgb, 255:red, 248; green, 72; blue, 61 }  ,draw opacity=1 ]   (445.56,549.84) .. controls (440.59,557.19) and (466.19,572.85) .. (483.72,522.09) ;
					\draw [color={rgb, 255:red, 248; green, 72; blue, 61 }  ,draw opacity=1 ]   (467.09,473.26) .. controls (489.82,473.36) and (490.52,505.72) .. (483.72,522.09) ;
					\draw [color={rgb, 255:red, 248; green, 72; blue, 61 }  ,draw opacity=1 ]   (452.08,482.26) .. controls (434.51,496.73) and (462.77,526.37) .. (445.56,549.84) ;
					\draw    (398.16,492.04) .. controls (390.1,510.03) and (378.89,605.94) .. (438.36,562.24) ;
					\draw  [fill={rgb, 255:red, 0; green, 0; blue, 0 }  ,fill opacity=1 ] (418.09,497.45) .. controls (418.09,495.77) and (419.45,494.41) .. (421.13,494.41) .. controls (422.81,494.41) and (424.17,495.77) .. (424.17,497.45) .. controls (424.17,499.13) and (422.81,500.49) .. (421.13,500.49) .. controls (419.45,500.49) and (418.09,499.13) .. (418.09,497.45) -- cycle ;
					\draw  [fill={rgb, 255:red, 0; green, 0; blue, 0 }  ,fill opacity=1 ] (460.92,504.49) .. controls (460.92,502.81) and (462.28,501.45) .. (463.96,501.45) .. controls (465.63,501.45) and (466.99,502.81) .. (466.99,504.49) .. controls (466.99,506.16) and (465.63,507.52) .. (463.96,507.52) .. controls (462.28,507.52) and (460.92,506.16) .. (460.92,504.49) -- cycle ;
					\draw   (385.6,543.1) .. controls (385.6,539.45) and (388.56,536.49) .. (392.21,536.49) .. controls (395.85,536.49) and (398.81,539.45) .. (398.81,543.1) .. controls (398.81,546.74) and (395.85,549.7) .. (392.21,549.7) .. controls (388.56,549.7) and (385.6,546.74) .. (385.6,543.1) -- cycle ;
					\draw   (445.58,469.65) -- (441.27,463.64) -- (448.23,463.72) ;
					\draw    (242.36,475.45) .. controls (238.06,480.82) and (244.36,494.45) .. (257.3,496.78) ;
					\draw   (233.5,507.55) -- (229.32,513.65) -- (227.02,507.08) ;
					\draw    (395.87,545.55) .. controls (405.35,550.29) and (446.59,556.19) .. (438.36,562.24) ;
					\draw   (488.32,486.49) -- (486.93,493.75) -- (482.18,488.66) ;
					\draw (135.7,455.75) node [anchor=north west][inner sep=0.75pt]  [font=\small]  {$D_{1_{1}}$};
					\draw (106.6,449.15) node [anchor=north west][inner sep=0.75pt]  [font=\small]  {$D_{1_{2}}$};
					\draw (205.95,470.88) node [anchor=north west][inner sep=0.75pt]  [font=\small]  {$D_{1_{1}}$};
					\draw (234.85,447.88) node [anchor=north west][inner sep=0.75pt]  [font=\small]  {$D_{1_{2}}$};
					\draw (423.85,445.38) node [anchor=north west][inner sep=0.75pt]  [font=\small]  {$D_{1_{2}}$};
					\draw (477.06,460.38) node [anchor=north west][inner sep=0.75pt]  [font=\small]  {$D_{1_{1}}$};
					\draw (78.9,580.94) node [anchor=north west][inner sep=0.75pt]    {(a)};
					\draw (252.9,580.9) node [anchor=north west][inner sep=0.75pt]    {(b)};
					\draw (420.9,580.94) node [anchor=north west][inner sep=0.75pt]    {(c)};
					\draw (60.5,470.18) node [anchor=north west][inner sep=0.75pt]  [font=\small]  {$-$};
					\draw (290.82,460.68) node [anchor=north west][inner sep=0.75pt]  [font=\small]  {$+$};
					\draw (453.82,462.18) node [anchor=north west][inner sep=0.75pt]  [font=\small]  {$+$};
					\draw (114.5,554.68) node [anchor=north west][inner sep=0.75pt]  [font=\small]  {$-$};
				\end{tikzpicture}
				\caption{(a)--(c): Diagrams obtained by orientation-preserving smoothing at classical crossings $c_1$, $c_2$, and $c_3$ in $D_1$.} \label{fig604}
			\end{center}
		\end{figure} 
		From Fig.~\ref{fig604}, we obtain $I(c_1)=2$, $I(c_2)=I(c_3)=1$, thus $c_2,c_3\in O(D_1)$ and $c_1\in E(D_1)$. The diagram $D_2$ contains one virtual crossing and three classical crossings $c_1$, $c_2$, and $c_3$. One can see from Fig.~\ref{fig602}(b) that 
		$\operatorname{sgn}(c_1)=\operatorname{sgn}(c_2)=\operatorname{sgn}(c_3)=-1$. 
		\begin{figure}[!ht]
			\begin{center}
				\tikzset{every picture/.style={line width=1.pt}}  
				\begin{tikzpicture}[x=0.75pt,y=0.75pt,yscale=-0.9,xscale=0.9]		
					\draw    (85.96,656.6) .. controls (123.04,625.01) and (148.54,651.51) .. (135.58,660.18) ;
					\draw [color={rgb, 255:red, 248; green, 72; blue, 61 }  ,draw opacity=1 ]   (69.84,717.33) .. controls (40.84,708.33) and (55.21,628.44) .. (104.63,675.37) ;
					\draw [color={rgb, 255:red, 248; green, 72; blue, 61 }  ,draw opacity=1 ]   (85.84,723.83) .. controls (101.78,732.89) and (149.69,750.77) .. (167.22,700.01) ;
					\draw [color={rgb, 255:red, 248; green, 72; blue, 61 }  ,draw opacity=1 ]   (150.59,651.18) .. controls (173.32,651.28) and (174.02,683.64) .. (167.22,700.01) ;
					\draw    (135.58,660.18) .. controls (118.01,674.65) and (150.84,721.33) .. (121.86,740.16) ;
					\draw    (81.66,669.96) .. controls (73.6,687.95) and (68.84,776.33) .. (121.86,740.16) ; 
					\draw  [color={rgb, 255:red, 248; green, 72; blue, 61 }  ,draw opacity=1 ][fill={rgb, 255:red, 248; green, 72; blue, 61 }  ,fill opacity=1 ] (101.59,675.37) .. controls (101.59,673.69) and (102.95,672.33) .. (104.63,672.33) .. controls (106.31,672.33) and (107.67,673.69) .. (107.67,675.37) .. controls (107.67,677.05) and (106.31,678.41) .. (104.63,678.41) .. controls (102.95,678.41) and (101.59,677.05) .. (101.59,675.37) -- cycle ;
					\draw  [color={rgb, 255:red, 248; green, 72; blue, 61 }  ,draw opacity=1 ][fill={rgb, 255:red, 248; green, 72; blue, 61 }  ,fill opacity=1 ] (144.42,682.41) .. controls (144.42,680.73) and (145.78,679.37) .. (147.46,679.37) .. controls (149.13,679.37) and (150.49,680.73) .. (150.49,682.41) .. controls (150.49,684.08) and (149.13,685.44) .. (147.46,685.44) .. controls (145.78,685.44) and (144.42,684.08) .. (144.42,682.41) -- cycle ;
					\draw   (121.6,734.02) .. controls (121.6,730.37) and (124.56,727.41) .. (128.21,727.41) .. controls (131.85,727.41) and (134.81,730.37) .. (134.81,734.02) .. controls (134.81,737.66) and (131.85,740.62) .. (128.21,740.62) .. controls (124.56,740.62) and (121.6,737.66) .. (121.6,734.02) -- cycle ;
					\draw   (110.55,645.66) -- (103.16,645.37) -- (107.49,639.92) ;
					\draw    (259.69,654.04) .. controls (293.91,621.93) and (319.91,655.27) .. (317.46,682.41) ;
					\draw [color={rgb, 255:red, 248; green, 72; blue, 61 }  ,draw opacity=1 ]   (239.84,717.33) .. controls (210.84,708.33) and (254.57,642.6) .. (251.66,669.96) ;
					\draw [color={rgb, 255:red, 248; green, 72; blue, 61 }  ,draw opacity=1 ]   (255.84,723.83) .. controls (271.78,732.89) and (319.69,750.77) .. (337.22,700.01) ;
					\draw [color={rgb, 255:red, 248; green, 72; blue, 61 }  ,draw opacity=1 ]   (320.59,651.18) .. controls (343.32,651.28) and (344.02,683.64) .. (337.22,700.01) ;
					\draw [color={rgb, 255:red, 248; green, 72; blue, 61 }  ,draw opacity=1 ]   (305.58,660.18) .. controls (288.01,674.65) and (320.84,721.33) .. (291.86,740.16) ;
					\draw [color={rgb, 255:red, 248; green, 72; blue, 61 }  ,draw opacity=1 ]   (251.66,669.96) .. controls (243.6,687.95) and (238.84,776.33) .. (291.86,740.16) ;
					\draw  [fill={rgb, 255:red, 0; green, 0; blue, 0 }  ,fill opacity=1 ] (271.59,675.37) .. controls (271.59,673.69) and (272.95,672.33) .. (274.63,672.33) .. controls (276.31,672.33) and (277.67,673.69) .. (277.67,675.37) .. controls (277.67,677.05) and (276.31,678.41) .. (274.63,678.41) .. controls (272.95,678.41) and (271.59,677.05) .. (271.59,675.37) -- cycle ;
					\draw  [fill={rgb, 255:red, 0; green, 0; blue, 0 }  ,fill opacity=1 ] (314.42,682.41) .. controls (314.42,680.73) and (315.78,679.37) .. (317.46,679.37) .. controls (319.13,679.37) and (320.49,680.73) .. (320.49,682.41) .. controls (320.49,684.08) and (319.13,685.44) .. (317.46,685.44) .. controls (315.78,685.44) and (314.42,684.08) .. (314.42,682.41) -- cycle ;
					\draw  [color={rgb, 255:red, 248; green, 72; blue, 61 }  ,draw opacity=1 ] (291.6,734.02) .. controls (291.6,730.37) and (294.56,727.41) .. (298.21,727.41) .. controls (301.85,727.41) and (304.81,730.37) .. (304.81,734.02) .. controls (304.81,737.66) and (301.85,740.62) .. (298.21,740.62) .. controls (294.56,740.62) and (291.6,737.66) .. (291.6,734.02) -- cycle ;
					\draw   (299.08,647.57) -- (294.77,641.56) -- (301.73,641.64) ;
					\draw [color={rgb, 255:red, 248; green, 72; blue, 61 }  ,draw opacity=1 ]   (427.96,656.6) .. controls (454.32,619.05) and (494.95,650.73) .. (489.46,682.41) ;
					\draw [color={rgb, 255:red, 248; green, 72; blue, 61 }  ,draw opacity=1 ]   (411.84,717.33) .. controls (382.84,708.33) and (397.21,628.44) .. (446.63,675.37) ;
					\draw    (427.84,723.83) .. controls (443.78,732.89) and (491.69,750.77) .. (509.22,700.01) ;
					\draw    (492.59,651.18) .. controls (515.32,651.28) and (516.02,683.64) .. (509.22,700.01) ;
					\draw    (477.58,660.18) .. controls (460.01,674.65) and (492.84,721.33) .. (463.86,740.16) ;
					\draw    (427.84,723.83) .. controls (415.27,715.27) and (410.84,776.33) .. (463.86,740.16) ;
					\draw  [color={rgb, 255:red, 248; green, 72; blue, 61 }  ,draw opacity=1 ][fill={rgb, 255:red, 248; green, 72; blue, 61 }  ,fill opacity=1 ] (443.59,675.37) .. controls (443.59,673.69) and (444.95,672.33) .. (446.63,672.33) .. controls (448.31,672.33) and (449.67,673.69) .. (449.67,675.37) .. controls (449.67,677.05) and (448.31,678.41) .. (446.63,678.41) .. controls (444.95,678.41) and (443.59,677.05) .. (443.59,675.37) -- cycle ;
					\draw  [color={rgb, 255:red, 248; green, 72; blue, 61 }  ,draw opacity=1 ][fill={rgb, 255:red, 248; green, 72; blue, 61 }  ,fill opacity=1 ] (486.42,682.41) .. controls (486.42,680.73) and (487.78,679.37) .. (489.46,679.37) .. controls (491.13,679.37) and (492.49,680.73) .. (492.49,682.41) .. controls (492.49,684.08) and (491.13,685.44) .. (489.46,685.44) .. controls (487.78,685.44) and (486.42,684.08) .. (486.42,682.41) -- cycle ;
					\draw   (463.6,734.02) .. controls (463.6,730.37) and (466.56,727.41) .. (470.21,727.41) .. controls (473.85,727.41) and (476.81,730.37) .. (476.81,734.02) .. controls (476.81,737.66) and (473.85,740.62) .. (470.21,740.62) .. controls (466.56,740.62) and (463.6,737.66) .. (463.6,734.02) -- cycle ;
					\draw   (471.08,647.57) -- (466.77,641.56) -- (473.73,641.64) ;
					\draw [color={rgb, 255:red, 248; green, 72; blue, 61 }  ,draw opacity=1 ]   (150.59,651.18) .. controls (144.47,651.16) and (139.54,675.01) .. (147.46,682.41) ;
					\draw   (172.43,666.45) -- (170.61,673.61) -- (166.18,668.25) ;
					\draw    (259.69,654.04) .. controls (255.39,659.4) and (261.69,673.04) .. (274.63,675.37) ;
					\draw   (250.5,692.89) -- (246.32,698.98) -- (244.02,692.41) ;
					\draw [color={rgb, 255:red, 248; green, 72; blue, 61 }  ,draw opacity=1 ]   (411.84,717.33) .. controls (420.6,722.6) and (419.93,702.6) .. (424.15,670.51) ;
					\draw   (423.68,728.54) -- (419.41,734.57) -- (417.2,727.97) ;
					\draw (94.9,755.94) node [anchor=north west][inner sep=0.75pt]    {(a)};
					\draw (271.9,755.9) node [anchor=north west][inner sep=0.75pt]    {(b)};
					\draw (442.9,755.94) node [anchor=north west][inner sep=0.75pt]    {(c)};
					\draw (155.03,639.84) node [anchor=north west][inner sep=0.75pt]  [font=\small]  {$D_{2_{1}}$};
					\draw (117.27,627.84) node [anchor=north west][inner sep=0.75pt]  [font=\small]  {$D_{2_{2}}$};
					\draw (221.37,654.51) node [anchor=north west][inner sep=0.75pt]  [font=\small]  {$D_{2_{1}}$};
					\draw (255.18,627.84) node [anchor=north west][inner sep=0.75pt]  [font=\small]  {$D_{2_{2}}$};
					\draw (383.37,703.17) node [anchor=north west][inner sep=0.75pt]  [font=\small]  {$D_{2_{1}}$};
					\draw (401.85,738.17) node [anchor=north west][inner sep=0.75pt]  [font=\small]  {$D_{2_{2}}$};
					\draw (80.32,710.18) node [anchor=north west][inner sep=0.75pt]  [font=\small]  {$+$};
					\draw (71.84,646.58) node [anchor=north west][inner sep=0.75pt]  [font=\small]  {$-$};
					\draw (306.34,639.08) node [anchor=north west][inner sep=0.75pt]  [font=\small]  {$+$};
					\draw (480.34,638.58) node [anchor=north west][inner sep=0.75pt]  [font=\small]  {$-$};
				\end{tikzpicture}
				\caption{(a)--(c): Diagrams obtained by orientation-preserving smoothing at classical crossings $c_1$, $c_2$, and $c_3$ in $D_2$.} \label{fig605}
			\end{center}
		\end{figure}
		From Fig.~\ref{fig605}, we obtain $I(c_1)=2$, $I(c_2)=I(c_3)=1$, thus $c_2,c_3\in O(D_1)$ and $c_1\in E(D_1)$. Then we have $J(K_1)=J(K_2)=-2$.

		\begin{figure}[!ht]
			\begin{center}
				\tikzset{every picture/.style={line width=1.pt}}  
				\begin{tikzpicture}[x=0.75pt,y=0.75pt,yscale=-0.9,xscale=0.9]		
					\draw    (130.96,275.54) .. controls (157.32,237.99) and (197.95,269.67) .. (192.46,301.35) ;
					\draw    (124.37,342.41) .. controls (79.55,319.4) and (100.21,247.38) .. (149.63,294.31) ;
					\draw    (124.37,342.41) .. controls (140.32,351.47) and (194.69,369.71) .. (212.22,318.95) ;
					\draw    (195.59,270.12) .. controls (218.32,270.22) and (219.02,302.58) .. (212.22,318.95) ;
					\draw    (180.58,279.12) .. controls (163.01,293.59) and (191.27,323.23) .. (174.06,346.7) ;
					\draw    (126.66,288.9) .. controls (118.6,306.89) and (107.39,402.8) .. (166.86,359.1) ; 
					\draw  [fill={rgb, 255:red, 0; green, 0; blue, 0 }  ,fill opacity=1 ] (146.59,294.31) .. controls (146.59,292.63) and (147.95,291.27) .. (149.63,291.27) .. controls (151.31,291.27) and (152.67,292.63) .. (152.67,294.31) .. controls (152.67,295.99) and (151.31,297.35) .. (149.63,297.35) .. controls (147.95,297.35) and (146.59,295.99) .. (146.59,294.31) -- cycle ;
					\draw  [fill={rgb, 255:red, 0; green, 0; blue, 0 }  ,fill opacity=1 ] (189.42,301.35) .. controls (189.42,299.67) and (190.78,298.31) .. (192.46,298.31) .. controls (194.13,298.31) and (195.49,299.67) .. (195.49,301.35) .. controls (195.49,303.02) and (194.13,304.38) .. (192.46,304.38) .. controls (190.78,304.38) and (189.42,303.02) .. (189.42,301.35) -- cycle ;
					\draw   (114.1,339.96) .. controls (114.1,336.31) and (117.06,333.35) .. (120.71,333.35) .. controls (124.35,333.35) and (127.31,336.31) .. (127.31,339.96) .. controls (127.31,343.6) and (124.35,346.56) .. (120.71,346.56) .. controls (117.06,346.56) and (114.1,343.6) .. (114.1,339.96) -- cycle ;
					\draw   (174.08,266.51) -- (169.77,260.5) -- (176.73,260.58) ;
					\draw    (370.96,274.54) .. controls (397.32,236.99) and (437.95,268.67) .. (432.46,300.35) ;
					\draw    (354.84,335.27) .. controls (325.84,326.27) and (340.21,246.38) .. (389.63,293.31) ;
					\draw    (370.84,341.77) .. controls (386.78,350.83) and (434.69,368.71) .. (452.22,317.95) ;
					\draw    (435.59,269.12) .. controls (458.32,269.22) and (459.02,301.58) .. (452.22,317.95) ;
					\draw    (420.58,278.12) .. controls (403.01,292.59) and (435.84,339.27) .. (406.86,358.1) ; 
					\draw    (366.66,287.9) .. controls (358.6,305.89) and (353.84,394.27) .. (406.86,358.1) ;
					\draw  [fill={rgb, 255:red, 0; green, 0; blue, 0 }  ,fill opacity=1 ] (386.59,293.31) .. controls (386.59,291.63) and (387.95,290.27) .. (389.63,290.27) .. controls (391.31,290.27) and (392.67,291.63) .. (392.67,293.31) .. controls (392.67,294.99) and (391.31,296.35) .. (389.63,296.35) .. controls (387.95,296.35) and (386.59,294.99) .. (386.59,293.31) -- cycle ;
					\draw  [fill={rgb, 255:red, 0; green, 0; blue, 0 }  ,fill opacity=1 ] (429.42,300.35) .. controls (429.42,298.67) and (430.78,297.31) .. (432.46,297.31) .. controls (434.13,297.31) and (435.49,298.67) .. (435.49,300.35) .. controls (435.49,302.02) and (434.13,303.38) .. (432.46,303.38) .. controls (430.78,303.38) and (429.42,302.02) .. (429.42,300.35) -- cycle ;
					\draw   (406.6,351.96) .. controls (406.6,348.31) and (409.56,345.35) .. (413.21,345.35) .. controls (416.85,345.35) and (419.81,348.31) .. (419.81,351.96) .. controls (419.81,355.6) and (416.85,358.56) .. (413.21,358.56) .. controls (409.56,358.56) and (406.6,355.6) .. (406.6,351.96) -- cycle ;
					\draw   (414.08,265.51) -- (409.77,259.5) -- (416.73,259.58) ;
					\draw (69.24,247.94) node [anchor=north west][inner sep=0.75pt]    {$D_{1}$};
					\draw (313.9,248.44) node [anchor=north west][inner sep=0.75pt]    {$D_{2}$};
					\draw (143.9,382.94) node [anchor=north west][inner sep=0.75pt]    {(a)};
					\draw (389.9,382.94) node [anchor=north west][inner sep=0.75pt]    {(b)};
					\draw (192.66,277.54) node [anchor=north west][inner sep=0.75pt]    {$0$};
					\draw (157.16,312.04) node [anchor=north west][inner sep=0.75pt]    {$-1$};
					\draw (151.66,245.04) node [anchor=north west][inner sep=0.75pt]    {$1$};
					\draw (432.66,277.94) node [anchor=north west][inner sep=0.75pt]    {$0$};
					\draw (77.66,299.44) node [anchor=north west][inner sep=0.75pt]    {$-1$};
					\draw (122.16,307.44) node [anchor=north west][inner sep=0.75pt]    {$0$};
					\draw (208.16,322.94) node [anchor=north west][inner sep=0.75pt]    {$-2$};
					\draw (391.16,243.94) node [anchor=north west][inner sep=0.75pt]    {$1$};
					\draw (139.66,273.94) node [anchor=north west][inner sep=0.75pt]    {$0$};
					\draw (316.66,293.94) node [anchor=north west][inner sep=0.75pt]    {$-1$};
					\draw (404.66,314.44) node [anchor=north west][inner sep=0.75pt]    {$1$};
					\draw (363.16,301.94) node [anchor=north west][inner sep=0.75pt]    {$0$};
					\draw (455.66,297.94) node [anchor=north west][inner sep=0.75pt]    {$0$};
					\draw (378.16,273.44) node [anchor=north west][inner sep=0.75pt]    {$0$};
				\end{tikzpicture}
				\caption{Integer labeling for $D_1$ and $D_2$, respectively.} \label{fig603}
			\end{center}
		\end{figure}     
		Now we compute $\mathbf{P}_{K_1}(x,y)$ and $\mathbf{P}_{K_2}(x,y)$.  Based on Fig.~\ref{fig603}(a), we have 
		$$
		W_{D_1}(c_1)=-(-1-1)=2, \quad W_{D_1}(c_2)=-(1-0)=-1, \quad
		W_{D_1}(c_3)=-(0-(-1))=-1.
		$$  
		From Fig.~\ref{fig604} we get $i(c_1)=-2$, $i(c_2)=1$, $i(c_3)=1$. Therefore, 
		$$
		\mathbf{P}_{K_1}(x,y)=-(-2)x^2-y^{-1}-y^{-1}=2x^2-2y^{-1}.
		$$ 
		
		Based on Fig.~\ref{fig603}(b), we have 
		$$
		W_{D_2}(c_1)=-(1-1)=0, \quad W_{D_2}(c_2)=-(1-0)=-1, \quad
		W_{D_2}(c_3)=-(0-1)=1.
		$$ 
		From Fig.~\ref{fig605} we get $i(c_1)=0$, $i(c_2)=1$, $i(c_3)=-1$. Therefore, 
		$$
		\mathbf{P}_{K_2}(x,y)=-0-y^{-1}-(-1)y=y-y^{-1}.
		$$ 
		From the foregoing considerations, we conclude that $\mathbf{P}_{K_1}(x,y)\neq \mathbf{P}_{K_2}(x,y)$. Hence, $K_1$ and $K_2$ are distinct virtual knotoids.    
	}
\end{example}

\subsection{$\mathbf{P}_K(x,y)$ and the affine index polynomial} \label{subsection3.1}

For an oriented virtual knotoid $K$, let $AI_K(t)$ be the affine index polynomial of $K$ defined in~\cite{GK}. Recall that for a virtual or classical knotoid diagram $K$, its affine index polynomial is defined by $AI_K(t)=\sum_{c}\operatorname{sgn}(c)\bigl(t^{w_K(c)}-1\bigr)$. We show that $\mathbf{P}_K(x,y)$ is equivalent to $AI_K(t)$.

We begin with the following lemma, which will be needed in the proof of Theorem~\ref{thm2}.	

\begin{lemma}\label{lm2}
	Let $D$ be an oriented virtual knotoid diagram and let $c$ be a classical crossing of $D$. Then the intersection index $i(c)$ has the same parity as the crossing $c$.
\end{lemma}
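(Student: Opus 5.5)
The statement says $i(c)\equiv I(c) \pmod 2$. Here $I(c)$ is the number of classical crossings between the two components $D_1$ and $D_2$ obtained by the orientation-preserving smoothing at $c$, and $i(c)$ is the signed sum of indices over those same crossings. The plan is to compare the two sums term by term.

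First, fix a classical crossing $c$ of $D$, smooth it as in Fig.~\ref{fig12}, and let $D_1\cup D_2$ be the resulting ordered virtual multi-knotoid. Let $X=D_1\cap D_2$ denote the set of classical crossings between the two components, so that $I(c)=|X|$. By the definition recalled before Definition~\ref{def2.1},
$$
i(c)=\sum_{e\in X}\operatorname{ind}(e).
$$
For each $e\in X$ we have $\operatorname{ind}(e)=\pm\operatorname{sgn}(e)$, according to whether $D_1$ or $D_2$ is the over-strand at $e$ (Fig.~\ref{fig511}). Since $\operatorname{sgn}(e)\in\{1,-1\}$, every summand is $\pm1$, and hence $\operatorname{ind}(e)\equiv 1 \pmod 2$.

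Second, reduce modulo $2$:
$$
i(c)\equiv \sum_{e\in X}1 = |X| = I(c) \pmod 2.
$$
So $i(c)$ is odd exactly when $I(c)$ is odd, that is, exactly when $c\in O(D)$, which is the statement.

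The only point needing care is that $i(c)$ and $I(c)$ are computed over the same set of crossings. Both are defined using the same smoothing at $c$ and the same set $D_1\cap D_2$, so this holds. Virtual crossings between $D_1$ and $D_2$ enter neither count. The choice of ordering $(D_1,D_2)$ changes at most the overall sign of $i(c)$, so it does not affect parity. I expect no real obstacle here; the argument is the observation that each summand of $i(c)$ is a unit.
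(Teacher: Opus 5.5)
Your proof is correct and is the paper's argument: each $\operatorname{ind}(e)=\pm1$, so reducing $i(c)=\sum_{e\in D_1\cap D_2}\operatorname{ind}(e)$ modulo $2$ gives $|D_1\cap D_2|=I(c)$. Since $I(c)$ is exactly what determines the parity of $c$, this proves the lemma.
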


\begin{proof}
	Suppose that the orientation-preserving smoothing at $c$ produces the ordered two-component virtual knotoid diagram $D_1\cup D_2$.
	
	By definition,
	$ i(c)=\sum_{e\in D_1\cap D_2}\operatorname{ind}(e)$,	
	where $\operatorname{ind}(e)=\pm1$. Hence,
	$$
	i(c)\equiv
	\sum_{e\in D_1\cap D_2}1=I(c)\pmod2,
	$$
	where $I(c)$ denotes the number of classical crossings between the two components $D_1$ and $D_2$. Since the parity of the crossing $c$ is defined by the parity of $I(c)$, see subsection~\ref{subsection2.2}, it follows that $i(c)$ and $c$ have the same parity.
\end{proof}

Let $D$ be an oriented virtual knotoid diagram and let $c$ be a classical crossing of $D$. Consider the diagram obtained by applying the orientation-preserving smoothing, also known as  $1$-smoothing, at the crossing $c$. This operation produces a two-component diagram, denoted by $D_1 \cup D_2$. The following relation between the $i(c)$ and the $W_D(c)$ will be useful, where $i(c)$ denotes the intersection index of $c$ and $W_D(c)$ denotes the weight of $c$.

We note that $i(c) = -W(D_1\cup D_2)$, where $W(D_1\cup D_2)$ denotes the wiggle number defined in~\cite{FK}. From the proof of Theorem 7 in~\cite{FK}, we have $W(D_1\cup D_2)=W_D(c)$, which yields the following conclusion.

\begin{lemma}
	$i(c)=-W_D(c)$. \label{lm1}
\end{lemma}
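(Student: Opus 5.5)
Rather than rely only on the wiggle-number identity of~\cite{FK}, I would prove $i(c)=-W_D(c)$ directly, using a Cheng-type count along one smoothed component. Fix a classical crossing $c$ and take the orientation-preserving smoothing at $c$, giving the ordered pair $(D_1,D_2)$ of Fig.~\ref{fig12}. Since $D$ is a knotoid, exactly one component, say the loop, is a closed curve. This loop is the portion of $D$ traversed from one passage through $c$ back to the other passage. Its two endpoints at $c$ carry the labels of the arcs leaving and entering $c$. So the total label change along the loop is a difference of labels at $c$. By the labeling rule (Fig.~\ref{fig11}), this difference is $\ell(\alpha)-\ell(\beta)-1$ or its negative, up to the $\pm1$ contributed by $c$ itself.

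Next I would compute the same label change crossing by crossing along the loop. At a virtual crossing the label does not change. At a self-crossing of the loop, the loop passes through twice, once as the ``right'' strand and once as the ``left'' strand. The rule gives $+1$ on one passage and $-1$ on the other, so self-crossings contribute $0$ in total. Every remaining crossing $e\in D_1\cap D_2$ is passed once by the loop. By the labeling rule, the contribution there is $\pm1$, with the sign fixed by whether the loop enters from the down-right or the down-left. I then compare this with Fig.~\ref{fig511}. The claim to check is that the loop's increment at $e$ equals $\operatorname{ind}(e)$, or equals $-\operatorname{ind}(e)$ uniformly when the loop is $D_2$ rather than $D_1$. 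This is a finite check over the four local pictures of Fig.~\ref{fig511}. Summing gives
\[
\sum_{e\in D_1\cap D_2}\operatorname{ind}(e)=\pm\bigl(\text{label change along the loop}\bigr).
\]

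Finally I would combine the two computations. The label change along the loop is $\ell(\alpha)-(\ell(\beta)+1)$ up to a sign, and that sign is determined by $\operatorname{sgn}(c)$ and by which component is $D_1$ in Fig.~\ref{fig12}. This is exactly why the ordering there was tied to the sign of $c$: it makes the combined sign $-\operatorname{sgn}(c)$ in both cases. The result is $i(c)=-\operatorname{sgn}(c)\bigl(\ell(\alpha)-\ell(\beta)-1\bigr)=-W_D(c)$. As a consistency check, the values in Example~\ref{exp2.1} satisfy $i(c_1)=-2=-W(c_1)$ and $i(c_3)=1=-W(c_3)$. The main obstacle is the sign bookkeeping: matching the orientation conventions of Figs.~\ref{fig11}, \ref{fig12} and \ref{fig511}, and confirming that the ordering convention really produces a uniform global sign in both cases $\operatorname{sgn}(c)=\pm1$. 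I would settle this by computing one positive and one negative crossing explicitly, checked against Example~\ref{exp2.1}.
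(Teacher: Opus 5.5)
Your argument is correct, and it takes a genuinely different route from the paper. The paper does no computation. It identifies $i(c)$ with $-W(D_1\cup D_2)$, the wiggle number of Folwaczny--Kauffman~\cite{FK}, and quotes the proof of their Theorem~7 for $W(D_1\cup D_2)=W_D(c)$. In doing so it carries a statement about closed virtual knots over to knotoids without comment, even though for a knotoid one smoothed component is an open arc.

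Your label count is self-contained and uses only the conventions of Figs.~\ref{fig11}, \ref{fig12} and~\ref{fig511}. It also treats the knotoid case explicitly. Summing along the closed component is the right choice, because the label change along the open component involves the head label and is not local to $c$. Your argument also shows why the ordering in Fig.~\ref{fig12} has to depend on $\operatorname{sgn}(c)$. The citation buys brevity and a link to the wiggle number; your route buys a checkable proof in the paper's own setting.

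One correction to the sign bookkeeping you left open: your tentative comparison is reversed.
\begin{itemize}
\item From Fig.~\ref{fig511}, $\operatorname{ind}(e)=+1$ exactly when $D_1$ passes $e$ from down-left to top-right, independently of over/under. That is precisely where the label of $D_1$ drops by $1$.
\item Hence the loop's increment at $e$ is $-\operatorname{ind}(e)$ when the loop is $D_1$, and $+\operatorname{ind}(e)$ when the loop is $D_2$.
\item Measure the loop's total change from the exit label of the first passage to the entry label of the second passage. There is no further $\pm1$ from $c$, since the $-1$ is already in the exit label. This change is $\ell(\alpha)-\ell(\beta)-1$ if the loop lies on the left of $c$, i.e.\ the first passage is along $\beta$. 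It is the negative of this if the loop lies on the right.
\item By Fig.~\ref{fig12}, $D_1$ is the left component exactly when $\operatorname{sgn}(c)=1$.
\end{itemize}
All four cases then give $i(c)=-\operatorname{sgn}(c)\bigl(\ell(\alpha)-\ell(\beta)-1\bigr)=-W_D(c)$. With the sign as you first wrote it you would get $+W_D(c)$; your planned check against Example~\ref{exp2.1} would catch this.
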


Combined with Lemma~\ref{lm2}, we have

\begin{corollary} \label{cor:weight-parity}
	Let $D$ be an oriented virtual knotoid diagram and let $c$ be a classical crossing of $D$.	Then $W_D(c)$ and $c$ have the same parity.
\end{corollary}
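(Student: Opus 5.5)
This corollary follows by combining Lemma~\ref{lm1} with Lemma~\ref{lm2}. Let $c$ be a classical crossing of $D$, and let $D_1\cup D_2$ be the ordered two-component diagram obtained by the orientation-preserving smoothing at $c$, as in Fig.~\ref{fig12}.

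First, I invoke Lemma~\ref{lm1}, which gives $i(c)=-W_D(c)$. Consequently $W_D(c)\equiv -i(c)\equiv i(c)\pmod 2$, since negation does not change the residue modulo $2$.

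Second, I invoke Lemma~\ref{lm2}, whose proof shows $i(c)\equiv I(c)\pmod 2$. This holds because each summand $\operatorname{ind}(e)=\pm1$ is congruent to $1$ modulo $2$, and there are exactly $I(c)$ summands. Chaining the two congruences yields
$$
W_D(c)\equiv I(c)\pmod 2 .
$$
By the definition in subsection~\ref{subsection2.2}, $c$ is even exactly when $I(c)$ is even. Hence $W_D(c)$ is even if and only if $c\in E(D)$, and odd if and only if $c\in O(D)$, which is the claim.

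There is no real obstacle here. The only point needing care is that both lemmas use the same smoothing and the same component ordering $(D_1,D_2)$ fixed by $\operatorname{sgn}(c)$. Even so, a change of ordering would only flip the sign of $i(c)$, which cannot affect parity. As a cross-check, one can verify the statement against Example~\ref{exp2.1}: there $W(c_1)=2$ and $W(c_2)=-2$ for the even crossings, while $W(c_3)=-1$ and $W(c_+)=1$ for the odd crossings, consistent with the claim.
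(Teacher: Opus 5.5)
Your proposal is correct and is exactly the paper's argument: the corollary comes from combining $i(c)=-W_D(c)$ (Lemma~\ref{lm1}) with the congruence $i(c)\equiv I(c)\pmod 2$ (Lemma~\ref{lm2}). Your remark that the ordering of $D_1,D_2$ only affects the sign of $i(c)$, and so cannot change the parity conclusion, is a correct and worthwhile robustness check.
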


We now turn to a discussion of the relation between $\mathbf{P}_K(x,y)$ and $AI_K(t)$.

\begin{definition}
	Let $\mathbf{Q}^1$ and $\mathbf{Q}^2$ be two invariants of virtual knotoid. We say that $\mathbf{Q}^1$ and $\mathbf{Q}^2$ are equivalent if $$\mathbf{Q}^1_{K_1}=\mathbf{Q}^1_{K_2} \iff \mathbf{Q}^2_{K_1}=\mathbf{Q}^2_{K_2},$$ for any virtual knotoids $K_1$ and $K_2$.
\end{definition}

\begin{theorem}\label{thm2} 
	The two-variable parity polynomial $\mathbf{P}_D(x,y)$ and the affine index polynomial $AI_K(t)$ are equivalent invariants, that is, for any two oriented virtual knotoids $K_1$ and $K_2$,
	$$
	AI_{K_1}(t)=AI_{K_2}(t)
	\iff
	\mathbf P_{K_1}(x,y)=
	\mathbf P_{K_2}(x,y).
	$$
\end{theorem}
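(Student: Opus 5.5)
Using Lemma~\ref{lm1} and Corollary~\ref{cor:weight-parity}, I would rewrite $\mathbf{P}_K(x,y)$ so that it depends only on the multiset of pairs $(\operatorname{sgn}(c), W_D(c))$. By Lemma~\ref{lm1}, $i(c)=-W_D(c)$. By Corollary~\ref{cor:weight-parity}, $c$ is even exactly when $W_D(c)$ is even. Substituting both facts gives
$$
\mathbf{P}_K(x,y)=-\sum_{c:\,W_D(c)\ \mathrm{even}} \operatorname{sgn}(c)W_D(c)\,x^{W_D(c)}-\sum_{c:\,W_D(c)\ \mathrm{odd}} \operatorname{sgn}(c)W_D(c)\,y^{W_D(c)}.
$$
Now set $a_n=\sum_{c:\,W_D(c)=n}\operatorname{sgn}(c)$ for $n\in\mathbb Z$. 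Then
$$
\mathbf{P}_K(x,y)=-\sum_{n\ \mathrm{even}} n a_n x^n-\sum_{n\ \mathrm{odd}} n a_n y^n .
$$
Hence $\mathbf{P}_K$ determines exactly the numbers $a_n$ for $n\neq 0$, and conversely these numbers determine $\mathbf{P}_K$. The even and odd parts occupy disjoint monomials, so nothing is lost by splitting into $x$ and $y$.

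Next I would compare with the affine index polynomial. Its crossing weight $w_K(c)$ from~\cite{GK} should agree with $W_D(c)$, possibly up to a global sign convention; I will check this on a single crossing using the labeling of Fig.~\ref{fig11}. The $n=0$ terms contribute nothing to $AI_K$, so
$$
AI_K(t)=\sum_{n\neq 0} a_n(t^{n}-1)=\sum_{n\neq0}a_nt^n-\sum_{n\neq 0}a_n .
$$
Thus $AI_K$ also determines exactly the family $\{a_n\}_{n\neq 0}$: the coefficient of $t^n$ is $a_n$ for $n\neq0$, and the constant term is forced by them. Conversely, the family $\{a_n\}_{n\neq0}$ reconstructs $AI_K$. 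If $w_K=-W_D$, the same argument goes through after the substitution $n\mapsto -n$.

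Both invariants are therefore in bijection with the same data $\{a_n\}_{n\neq 0}$, which gives
$$
AI_{K_1}(t)=AI_{K_2}(t)\iff \{a_n(K_1)\}_{n\neq0}=\{a_n(K_2)\}_{n\neq0}\iff \mathbf P_{K_1}(x,y)=\mathbf P_{K_2}(x,y).
$$
Explicitly, $\mathbf{P}_K$ is obtained from $AI_K$ by replacing each nonconstant monomial $a_nt^n$ with $-na_nx^n$ or $-na_ny^n$ according to the parity of $n$. Dividing by $-n$ is harmless because $n\neq0$.

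The main obstacle is confirming that the weight in the affine index polynomial is literally $W_D(c)$ and not a shifted version. The reduction above needs, for each $c$, the value $W_D(c)$ itself rather than $W_D(c)$ plus a constant. I would first compare the definitions on the crossing in Fig.~\ref{fig11} and check the result against Example~\ref{exp3}, where $\mathbf P_K(t,t)\neq AI_K(t)$. If the conventions differ by a sign, I would insert the reflection $n\mapsto -n$ throughout.
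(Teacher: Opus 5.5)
Your argument is correct and is essentially the paper's proof. The paper packages the same substitution $a_m(t^m-1)\mapsto -m a_m x^m$ (for $m$ even) or $-m a_m y^m$ (for $m$ odd) as an injective $\mathbb Z$-linear map $\Phi$ with $\mathbf P_D=\Phi(AI_D)$, using Lemma~\ref{lm1} and Corollary~\ref{cor:weight-parity} exactly as you do; the convention issue you flag is settled there by simply identifying $w_K(c)$ with $W_D(c)$ when writing $AI_D(t)=\sum_{m\neq0}a_m(D)(t^m-1)$, so no shift or sign adjustment is needed.
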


\begin{proof}
	For every oriented virtual knotoid diagram $D$, write
	\[
	AI_D(t)=
	\sum_{m\neq0}
	a_m(D)(t^m-1),
	\]
	where
	$
	a_m(D)=
	\sum_{\substack{c\in C(D)\\W_D(c)=m}}
	\operatorname{sgn}(c).
	$
	Let $V$ be the free abelian group with basis
	$\{\,t^m-1\mid m\neq0\,\}$, there exists a unique
	$\mathbb Z$-linear map
	$
	\Phi:V\longrightarrow
	\mathbb Z[x^{\pm1},y^{\pm1}]
	$
	defined by
	\[
	\Phi(t^m-1)=
	\begin{cases}
		-mx^m,&m\text{ even},\\[1ex]
		-my^m,&m\text{ odd}.
	\end{cases}
	\]
	
	By Lemma~\ref{lm1} and Corollary~\ref{cor:weight-parity}, we have
	$i(c)=-W_D(c)$, and the weight $W_D(c)$ has the same parity as the crossing $c$.
	Hence,
	\begin{equation} \label{equ6}
		\mathbf P_D(x,y)
		=
		\Phi(AI_D(t)).	
	\end{equation}
	Suppose first that
	$
	AI_{K_1}(t)=AI_{K_2}(t).
	$
	Applying $\Phi$ to both sides gives
	\[
	\mathbf P_{K_1}(x,y)
	=
	\Phi(AI_{K_1}(t))
	=
	\Phi(AI_{K_2}(t))
	=
	\mathbf P_{K_2}(x,y).
	\]
	Conversely, suppose that
	$
	\mathbf P_{K_1}(x,y)
	=
	\mathbf P_{K_2}(x,y).
	$
	Then
	$
	\Phi\!\left(
	AI_{K_1}(t)-AI_{K_2}(t)
	\right)=0.
	$
	Since
	\[
	\Phi\!\left(
	\sum_{m\neq0}a_m(t^m-1)
	\right)
	=
	-\sum_{\substack{m\neq0\\m\text{ even}}}
	ma_mx^m
	-
	\sum_{\substack{m\text{ odd}}}
	ma_my^m,
	\]
	and the monomials
	$
	\{x^m\mid m\neq0,\;m\text{ even}\}
	\cup
	\{y^m\mid m\neq0,\;m\text{ odd}\}
	$
	are linearly independent over $\mathbb Z$, it follows that
	$ma_m=0$ for every $m\neq0$, and hence $a_m=0$ for all $m\neq0$.
	Therefore,
	$
	AI_{K_1}(t)-AI_{K_2}(t)=0,
	$
	that is,
	$
	AI_{K_1}(t)=AI_{K_2}(t).
	$
	
	This completes the proof.	
\end{proof}

Although the two-variable parity polynomial $\mathbf{P}_K(x,y)$ is equivalent to the affine index polynomial $AI_K(t)$, under the specialization $x=y=t$, the two polynomials are, in general, not equal. The following example illustrates this.

\begin{example} \label{exp3}
	{\rm
		Let $K_1$ and $K_2$  be oriented virtual knotoids with diagrams $D_1$ and $D_2$ presented in Fig.~\ref{fig514}. 
		\begin{figure}[!ht]
			\begin{center}
				\tikzset{every picture/.style={line width=1.pt}}  
				\begin{tikzpicture}[x=0.75pt,y=0.75pt,yscale=-1,xscale=1]	
					\draw    (115.14,104.96) .. controls (68,121.01) and (63.39,13.38) .. (118.06,12.81) ;
					\draw    (118.06,12.81) .. controls (187.45,15.54) and (105.67,92.64) .. (159.61,90.92) ;
					\draw    (129.41,34.81) .. controls (80.38,55.91) and (159.89,61.87) .. (170.84,80.67) ;
					\draw    (159.61,90.92) .. controls (201.79,91.67) and (171.87,3.02) .. (146.16,13.67) ;
					\draw    (123.15,41.64) .. controls (157.08,53.83) and (55.04,87.97) .. (129.54,105.2) ; 
					\draw    (113.36,36.87) .. controls (103.64,32.72) and (70.83,51.6) .. (101.2,71.06) ; 
					\draw    (101.2,71.06) .. controls (108.49,75.21) and (132.14,93.76) .. (123.23,98.55) ; 
					\draw    (129.54,105.2) .. controls (162.75,112.54) and (176.56,103.59) .. (174.18,93.04) ;
					\draw   (130.9,57.58) .. controls (130.96,60.57) and (128.57,63.04) .. (125.58,63.09) .. controls (122.59,63.15) and (120.12,60.76) .. (120.07,57.77) .. controls (120.02,54.78) and (122.4,52.31) .. (125.39,52.26) .. controls (128.38,52.21) and (130.85,54.59) .. (130.9,57.58) -- cycle ;
					\draw   (113.18,75.26) .. controls (113.23,78.25) and (110.85,80.72) .. (107.85,80.77) .. controls (104.86,80.82) and (102.39,78.44) .. (102.34,75.45) .. controls (102.29,72.46) and (104.67,69.99) .. (107.67,69.94) .. controls (110.66,69.88) and (113.13,72.27) .. (113.18,75.26) -- cycle ; 
					\draw   (147.65,63.93) .. controls (147.71,66.92) and (145.32,69.39) .. (142.33,69.44) .. controls (139.34,69.49) and (136.87,67.11) .. (136.82,64.12) .. controls (136.77,61.12) and (139.15,58.66) .. (142.14,58.6) .. controls (145.13,58.55) and (147.6,60.93) .. (147.65,63.93) -- cycle ;
					\draw    (124.86,22.8) .. controls (126.77,23.15) and (132.11,21.19) .. (134.11,20.18) ;
					\draw  [fill={rgb, 255:red, 17; green, 16; blue, 16 }  ,fill opacity=1 ] (127.34,22.77) .. controls (127.32,21.41) and (126.2,20.31) .. (124.84,20.32) .. controls (123.47,20.34) and (122.38,21.46) .. (122.39,22.83) .. controls (122.41,24.2) and (123.53,25.3) .. (124.89,25.28) .. controls (126.26,25.26) and (127.35,24.14) .. (127.34,22.77) -- cycle ; 
					\draw  [fill={rgb, 255:red, 17; green, 16; blue, 16 }  ,fill opacity=1 ] (131.88,34.78) .. controls (131.86,33.41) and (130.75,32.31) .. (129.38,32.33) .. controls (128.02,32.34) and (126.92,33.46) .. (126.94,34.83) .. controls (126.95,36.2) and (128.07,37.3) .. (129.44,37.28) .. controls (130.8,37.27) and (131.89,36.15) .. (131.88,34.78) -- cycle ;
					\draw   (110.26,11.32) -- (116.43,12.98) -- (110.75,15.9) ;
					\draw    (301.98,104.13) .. controls (264.01,90.12) and (206.42,19.46) .. (287.98,10.42) ;
					\draw    (287.98,10.42) .. controls (316.61,6.52) and (326.3,27.93) .. (315.71,44.18) .. controls (305.13,60.44) and (308.7,69.46) .. (320.19,72.88) ;
					\draw    (273.38,35.33) .. controls (273.93,56.8) and (305.9,49.01) .. (332.43,62.37) ;
					\draw    (320.19,72.88) .. controls (347.88,81.3) and (360.67,0.64) .. (319.89,12.41) ; 
					\draw    (282.87,39.06) .. controls (298.99,34.96) and (305.86,42.04) .. (288.84,60.74) ;
					\draw    (266.42,47.05) .. controls (255.87,51.85) and (237.49,101.74) .. (283.47,98.92) ; 
					\draw    (301.98,104.13) .. controls (339.6,121.87) and (340.49,85.08) .. (319,83.44) ; 
					\draw    (288.84,60.74) .. controls (272.42,79.7) and (293.62,80.98) .. (319,83.44) ;
					\draw    (289.96,21.18) .. controls (292.8,21.57) and (300.75,19.38) .. (303.74,18.25) ;
					\draw  [fill={rgb, 255:red, 17; green, 16; blue, 16 }  ,fill opacity=1 ] (292.43,21.15) .. controls (292.41,19.78) and (291.3,18.69) .. (289.93,18.7) .. controls (288.57,18.72) and (287.47,19.84) .. (287.49,21.21) .. controls (287.5,22.58) and (288.62,23.68) .. (289.99,23.66) .. controls (291.35,23.64) and (292.44,22.52) .. (292.43,21.15) -- cycle ;
					\draw  [fill={rgb, 255:red, 17; green, 16; blue, 16 }  ,fill opacity=1 ] (275.85,35.3) .. controls (275.83,33.93) and (274.71,32.84) .. (273.35,32.85) .. controls (271.98,32.87) and (270.89,33.99) .. (270.9,35.36) .. controls (270.92,36.73) and (272.04,37.82) .. (273.4,37.81) .. controls (274.77,37.79) and (275.86,36.67) .. (275.85,35.3) -- cycle ;
					\draw   (266.85,12.65) -- (273.23,13.04) -- (268.25,17.04) ;
					\draw    (297.5,97.12) .. controls (329.74,96.61) and (347.34,81.21) .. (342.37,71.22) ;
					\draw   (260.91,64.25) .. controls (260.97,67.24) and (258.58,69.71) .. (255.59,69.76) .. controls (252.6,69.81) and (250.13,67.43) .. (250.08,64.44) .. controls (250.03,61.45) and (252.41,58.98) .. (255.4,58.93) .. controls (258.39,58.87) and (260.86,61.26) .. (260.91,64.25) -- cycle ;
					\draw   (316.09,55.32) .. controls (316.14,58.31) and (313.76,60.78) .. (310.77,60.83) .. controls (307.78,60.88) and (305.31,58.5) .. (305.26,55.51) .. controls (305.2,52.52) and (307.59,50.05) .. (310.58,50) .. controls (313.57,49.94) and (316.04,52.33) .. (316.09,55.32) -- cycle ; 
					\draw   (335.84,89.89) .. controls (335.89,92.88) and (333.51,95.35) .. (330.52,95.4) .. controls (327.53,95.46) and (325.06,93.07) .. (325.01,90.08) .. controls (324.95,87.09) and (327.34,84.62) .. (330.33,84.57) .. controls (333.32,84.52) and (335.79,86.9) .. (335.84,89.89) -- cycle ;
					\draw   (301.2,52.28) .. controls (301.25,55.27) and (298.87,57.74) .. (295.88,57.79) .. controls (292.88,57.84) and (290.42,55.46) .. (290.36,52.47) .. controls (290.31,49.48) and (292.7,47.01) .. (295.69,46.96) .. controls (298.68,46.9) and (301.15,49.29) .. (301.2,52.28) -- cycle ;
					\draw (120.33,121.67) node [anchor=north west][inner sep=0.75pt]    {(a)};
					\draw (285.33,121.67) node [anchor=north west][inner sep=0.75pt]    {(b)};
					\draw (70,2) node [anchor=north west][inner sep=0.75pt]    {$D_{1}$};
					\draw (235,2) node [anchor=north west][inner sep=0.75pt]    {$D_{2}$};
					\draw (133,2) node [anchor=north west][inner sep=0.75pt]    {$c_{1}$};
					\draw (111.5,25) node [anchor=north west][inner sep=0.75pt]    {$c_{2}$};
					\draw (175.5,83) node [anchor=north west][inner sep=0.75pt]    {$c_{3}$};
					\draw (112.5,106) node [anchor=north west][inner sep=0.75pt]    {$c_{4}$};
					\draw (307,0) node [anchor=north west][inner sep=0.75pt]    {$c_{1}$};
					\draw (267,46.5) node [anchor=north west][inner sep=0.75pt]    {$c_{2}$};
					\draw (340.5,60) node [anchor=north west][inner sep=0.75pt]    {$c_{3}$};
					\draw (283,102) node [anchor=north west][inner sep=0.75pt]    {$c_{4}$};	    
				\end{tikzpicture}
				\caption{Virtual knotoid diagrams $D_1$ and $D_2$.} \label{fig514}
			\end{center}
		\end{figure} 
		The diagram $D_1$ contains three virtual crossings and four classical crossings $c_1$, $c_2$, $c_3$ and $c_4$. One can see from Fig.~\ref{fig514}(a) that 
		$$
		\operatorname{sgn}(c_1)=\operatorname{sgn}(c_2)=\operatorname{sgn}(c_3)=-1, \quad \operatorname{sgn}(c_4)=1.
		$$ 
		\begin{figure}[!ht]
			\begin{center}
				\tikzset{every picture/.style={line width=1.0pt}}  
				\begin{tikzpicture}[x=0.75pt,y=0.75pt,yscale=-1.1,xscale=1.1]
					\draw    (118.14,286.96) .. controls (71,303.01) and (66.39,195.38) .. (121.06,194.81) ;
					\draw    (121.06,194.81) .. controls (190.45,197.54) and (108.67,274.64) .. (162.61,272.92) ;
					\draw    (132.41,216.81) .. controls (83.38,237.91) and (162.89,243.87) .. (173.84,262.67) ;
					\draw    (162.61,272.92) .. controls (204.79,273.67) and (174.87,185.02) .. (149.16,195.67) ; 
					\draw    (126.15,223.64) .. controls (160.08,235.83) and (58.04,269.97) .. (132.54,287.2) ; 
					\draw    (116.36,218.87) .. controls (106.64,214.72) and (73.83,233.6) .. (104.2,253.06) ; 
					\draw    (104.2,253.06) .. controls (111.49,257.21) and (135.14,275.76) .. (126.23,280.55) ; 
					\draw    (132.54,287.2) .. controls (165.75,294.54) and (179.56,285.59) .. (177.18,275.04) ; 
					\draw   (133.9,239.58) .. controls (133.96,242.57) and (131.57,245.04) .. (128.58,245.09) .. controls (125.59,245.15) and (123.12,242.76) .. (123.07,239.77) .. controls (123.02,236.78) and (125.4,234.31) .. (128.39,234.26) .. controls (131.38,234.21) and (133.85,236.59) .. (133.9,239.58) -- cycle ; 
					\draw   (116.18,257.26) .. controls (116.23,260.25) and (113.85,262.72) .. (110.85,262.77) .. controls (107.86,262.82) and (105.39,260.44) .. (105.34,257.45) .. controls (105.29,254.46) and (107.67,251.99) .. (110.67,251.94) .. controls (113.66,251.88) and (116.13,254.27) .. (116.18,257.26) -- cycle ; 
					\draw   (150.65,245.93) .. controls (150.71,248.92) and (148.32,251.39) .. (145.33,251.44) .. controls (142.34,251.49) and (139.87,249.11) .. (139.82,246.12) .. controls (139.77,243.12) and (142.15,240.66) .. (145.14,240.6) .. controls (148.13,240.55) and (150.6,242.93) .. (150.65,245.93) -- cycle ;
					\draw    (127.86,204.8) .. controls (129.77,205.15) and (135.11,203.19) .. (137.11,202.18) ; 
					\draw  [fill={rgb, 255:red, 17; green, 16; blue, 16 }  ,fill opacity=1 ] (130.34,204.77) .. controls (130.32,203.41) and (129.2,202.31) .. (127.84,202.32) .. controls (126.47,202.34) and (125.38,203.46) .. (125.39,204.83) .. controls (125.41,206.2) and (126.53,207.3) .. (127.89,207.28) .. controls (129.26,207.26) and (130.35,206.14) .. (130.34,204.77) -- cycle ; 
					\draw  [fill={rgb, 255:red, 17; green, 16; blue, 16 }  ,fill opacity=1 ] (134.88,216.78) .. controls (134.86,215.41) and (133.75,214.31) .. (132.38,214.33) .. controls (131.02,214.34) and (129.92,215.46) .. (129.94,216.83) .. controls (129.95,218.2) and (131.07,219.3) .. (132.44,219.28) .. controls (133.8,219.27) and (134.89,218.15) .. (134.88,216.78) -- cycle ;
					\draw   (113.26,193.32) -- (119.43,194.98) -- (113.75,197.9) ;
					\draw    (304.98,286.13) .. controls (267.01,272.12) and (209.42,201.46) .. (290.98,192.42) ;
					\draw    (290.98,192.42) .. controls (319.61,188.52) and (329.3,209.93) .. (318.71,226.18) .. controls (308.13,242.44) and (311.7,251.46) .. (323.19,254.88) ; 
					\draw    (276.38,217.33) .. controls (276.93,238.8) and (308.9,231.01) .. (335.43,244.37) ;
					\draw    (323.19,254.88) .. controls (350.88,263.3) and (363.67,182.64) .. (322.89,194.41) ; 
					\draw    (285.87,221.06) .. controls (301.99,216.96) and (308.86,224.04) .. (291.84,242.74) ;
					\draw    (269.42,229.05) .. controls (258.87,233.85) and (240.49,283.74) .. (286.47,280.92) ; 
					\draw    (304.98,286.13) .. controls (342.6,303.87) and (343.49,267.08) .. (322,265.44) ; 
					\draw    (291.84,242.74) .. controls (275.42,261.7) and (296.62,262.98) .. (322,265.44) ;
					\draw    (292.96,203.18) .. controls (295.8,203.57) and (303.75,201.38) .. (306.74,200.25) ; 
					\draw  [fill={rgb, 255:red, 17; green, 16; blue, 16 }  ,fill opacity=1 ] (295.43,203.15) .. controls (295.41,201.78) and (294.3,200.69) .. (292.93,200.7) .. controls (291.57,200.72) and (290.47,201.84) .. (290.49,203.21) .. controls (290.5,204.58) and (291.62,205.68) .. (292.99,205.66) .. controls (294.35,205.64) and (295.44,204.52) .. (295.43,203.15) -- cycle ; 
					\draw  [fill={rgb, 255:red, 17; green, 16; blue, 16 }  ,fill opacity=1 ] (278.85,217.3) .. controls (278.83,215.93) and (277.71,214.84) .. (276.35,214.85) .. controls (274.98,214.87) and (273.89,215.99) .. (273.9,217.36) .. controls (273.92,218.73) and (275.04,219.82) .. (276.4,219.81) .. controls (277.77,219.79) and (278.86,218.67) .. (278.85,217.3) -- cycle ;
					\draw   (269.85,194.65) -- (276.23,195.04) -- (271.25,199.04) ;
					\draw    (300.5,279.12) .. controls (332.74,278.61) and (350.34,263.21) .. (345.37,253.22) ; 
					\draw   (263.91,246.25) .. controls (263.97,249.24) and (261.58,251.71) .. (258.59,251.76) .. controls (255.6,251.81) and (253.13,249.43) .. (253.08,246.44) .. controls (253.03,243.45) and (255.41,240.98) .. (258.4,240.93) .. controls (261.39,240.87) and (263.86,243.26) .. (263.91,246.25) -- cycle ; 
					\draw   (319.09,237.32) .. controls (319.14,240.31) and (316.76,242.78) .. (313.77,242.83) .. controls (310.78,242.88) and (308.31,240.5) .. (308.26,237.51) .. controls (308.2,234.52) and (310.59,232.05) .. (313.58,232) .. controls (316.57,231.94) and (319.04,234.33) .. (319.09,237.32) -- cycle ;
					\draw   (338.84,271.89) .. controls (338.89,274.88) and (336.51,277.35) .. (333.52,277.4) .. controls (330.53,277.46) and (328.06,275.07) .. (328.01,272.08) .. controls (327.95,269.09) and (330.34,266.62) .. (333.33,266.57) .. controls (336.32,266.52) and (338.79,268.9) .. (338.84,271.89) -- cycle ;
					\draw   (304.2,234.28) .. controls (304.25,237.27) and (301.87,239.74) .. (298.88,239.79) .. controls (295.88,239.84) and (293.42,237.46) .. (293.36,234.47) .. controls (293.31,231.48) and (295.7,229.01) .. (298.69,228.96) .. controls (301.68,228.9) and (304.15,231.29) .. (304.2,234.28) -- cycle ;
					\draw (118.73,208) node [anchor=north west][inner sep=0.75pt]    {$0$};
					\draw (174.23,199.39) node [anchor=north west][inner sep=0.75pt]    {$1$};
					\draw (116.23,183.14) node [anchor=north west][inner sep=0.75pt]    {$-1$};
					\draw (94.48,206.89) node [anchor=north west][inner sep=0.75pt]    {$-2$};
					\draw (323.48,209) node [anchor=north west][inner sep=0.75pt]    {$0$};
					\draw (292.23,250) node [anchor=north west][inner sep=0.75pt]    {$0$};
					\draw (296.73,203) node [anchor=north west][inner sep=0.75pt]    {$0$};
					\draw (347.98,201.89) node [anchor=north west][inner sep=0.75pt]    {$1$};
					\draw (265.98,216.25) node [anchor=north west][inner sep=0.75pt]    {$0$};
					\draw (232.73,204.89) node [anchor=north west][inner sep=0.75pt]    {$-1$};
					\draw (151.73,208.5) node [anchor=north west][inner sep=0.75pt]    {$0$};
					\draw (269.73,231.14) node [anchor=north west][inner sep=0.75pt]    {$-1$};
					\draw (149.48,278.25) node [anchor=north west][inner sep=0.75pt]    {$0$};
					\draw (248.23,275.64) node [anchor=north west][inner sep=0.75pt]    {$-1$};
					\draw (344.48,256.64) node [anchor=north west][inner sep=0.75pt]    {$-2$};
					\draw (130.48,203.5) node [anchor=north west][inner sep=0.75pt]    {$0$};
					\draw (157.73,238.89) node [anchor=north west][inner sep=0.75pt]    {$1$};
					\draw (86.98,264.39) node [anchor=north west][inner sep=0.75pt]    {$-1$};
					\draw (119,299.13) node [anchor=north west][inner sep=0.75pt]    {(a)};
					\draw (291,298.28) node [anchor=north west][inner sep=0.75pt]    {(b)};
					\draw (75,180) node [anchor=north west][inner sep=0.75pt]    {$D_1$};
					\draw (245,180) node [anchor=north west][inner sep=0.75pt]    {$D_2$};
				\end{tikzpicture}
				\caption{Integer labeling for $D_1$ and $D_2$, respectively.} \label{fig515}
			\end{center}
		\end{figure} 
		Based on Fig.~\ref{fig515}(a), we have 
		$$
		\begin{gathered}
			W_{D_1}(c_1)=-(1-0)=-1, \quad W_{D_1}(c_2)=-(-1-1)=2, \cr 
			W_{D_1}(c_3)=-(1-1)=0, \quad W_{D_1}(c_4)=0-(-1)=1.
		\end{gathered}
		$$
		The diagram $D_2$ contains four virtual crossings and four classical crossings $c_1$, $c_2$, $c_3$ and $c_4$. One can see from Fig.~\ref{fig514}(b) that 
		$$
		\operatorname{sgn}(c_1)=\operatorname{sgn}(c_3)=-1, \quad \operatorname{sgn}(c_2)=\operatorname{sgn}(c_4)=1.
		$$ 
		Based on Fig.~\ref{fig515}(b), we have 
		$$
		\begin{gathered} 
			W_{D_2}(c_1)=-(1-0)=-1, \quad W_{D_2}(c_2)=0-0=0, \cr 
			W_{D_2}(c_3)=-(-1-1)=2, \quad W_{D_2}(c_4)=0-(-1)=1.
		\end{gathered}
		$$
		
		Then we have $AI_{K_1}(t)=AI_{K_2}(t)=-t^2+t-t^{-1}+1$.
		
		Now we compute $\mathbf{P}_{K_1}(x,y)$ and $\mathbf{P}_{K_2}(x,y)$. 
		From Fig.~\ref{fig516}, we obtain $I(c_1)=I(c_4)=1$, $I(c_2)=I(c_3)=2$, thus $c_1,c_4\in O(D_1)$ and $c_2,c_3\in E(D_1)$. Moreover, $i(c_1)=1$, $i(c_2)=-2$, $i(c_3)=0$, $i(c_4)=-1$. Therefore, 
		$$
		\mathbf{P}_{K_1}(x,y)=2x^2-y-y^{-1}.
		$$

		From Fig.~\ref{fig517}, we obtain $I(c_1)=I(c_4)=1$, $I(c_2)=I(c_3)=2$, thus $c_1,c_4\in O(D_2)$ and $c_2,c_3\in E(D_2)$. Moreover, $i(c_1)=1$, $i(c_2)=0$, $i(c_3)=-2$, $i(c_4)=-1$. Therefore, 
		$$
		\mathbf{P}_{K_2}(x,y)=2x^2-y-y^{-1}.
		$$
		\begin{figure}[!ht]
			\centering
				\tikzset{every picture/.style={line width=1.0pt}}  
				\begin{tikzpicture}[x=0.75pt,y=0.75pt,yscale=-1.0,xscale=1.0]
					\draw [color={rgb, 255:red, 248; green, 72; blue, 61 }  ,draw opacity=1 ]   (70.47,131.63) .. controls (23.34,147.68) and (18.73,40.05) .. (73.39,39.48) ;
					\draw    (99.49,51.86) .. controls (128.49,69.38) and (61,119.31) .. (114.94,117.58) ;
					\draw [color={rgb, 255:red, 248; green, 72; blue, 61 }  ,draw opacity=1 ]   (84.74,61.47) .. controls (35.72,82.58) and (115.23,88.54) .. (126.18,107.33) ;
					\draw    (114.94,117.58) .. controls (157.13,118.34) and (127.21,29.68) .. (101.5,40.34) ;
					\draw [color={rgb, 255:red, 248; green, 72; blue, 61 }  ,draw opacity=1 ]   (78.48,68.31) .. controls (112.41,80.5) and (10.37,114.64) .. (84.88,131.87) ;
					\draw [color={rgb, 255:red, 248; green, 72; blue, 61 }  ,draw opacity=1 ]   (68.69,63.53) .. controls (58.97,59.39) and (26.16,78.26) .. (56.53,97.73) ;
					\draw [color={rgb, 255:red, 248; green, 72; blue, 61 }  ,draw opacity=1 ]   (56.53,97.73) .. controls (63.82,101.87) and (87.47,120.43) .. (78.57,125.21) ;
					\draw [color={rgb, 255:red, 248; green, 72; blue, 61 }  ,draw opacity=1 ]   (84.88,131.87) .. controls (118.08,139.2) and (131.89,130.26) .. (129.51,119.71) ;
					\draw  [color={rgb, 255:red, 248; green, 72; blue, 61 }  ,draw opacity=1 ] (86.24,84.25) .. controls (86.29,87.24) and (83.91,89.71) .. (80.91,89.76) .. controls (77.92,89.81) and (75.45,87.43) .. (75.4,84.44) .. controls (75.35,81.45) and (77.73,78.98) .. (80.72,78.93) .. controls (83.72,78.87) and (86.18,81.26) .. (86.24,84.25) -- cycle ;
					\draw  [color={rgb, 255:red, 248; green, 72; blue, 61 }  ,draw opacity=1 ] (68.51,101.93) .. controls (68.56,104.92) and (66.18,107.39) .. (63.19,107.44) .. controls (60.2,107.49) and (57.73,105.11) .. (57.68,102.12) .. controls (57.62,99.12) and (60.01,96.66) .. (63,96.6) .. controls (65.99,96.55) and (68.46,98.93) .. (68.51,101.93) -- cycle ;
					\draw   (102.99,90.59) .. controls (103.04,93.59) and (100.66,96.05) .. (97.66,96.11) .. controls (94.67,96.16) and (92.2,93.77) .. (92.15,90.78) .. controls (92.1,87.79) and (94.48,85.32) .. (97.47,85.27) .. controls (100.47,85.22) and (102.93,87.6) .. (102.99,90.59) -- cycle ; 
					\draw [color={rgb, 255:red, 248; green, 72; blue, 61 }  ,draw opacity=1 ]   (80.2,49.47) .. controls (82.1,49.81) and (87.44,47.86) .. (89.45,46.85) ;
					\draw  [color={rgb, 255:red, 248; green, 72; blue, 61 }  ,draw opacity=1 ][fill={rgb, 255:red, 248; green, 72; blue, 61 }  ,fill opacity=1 ] (82.67,49.44) .. controls (82.65,48.07) and (81.53,46.98) .. (80.17,46.99) .. controls (78.81,47.01) and (77.71,48.13) .. (77.73,49.5) .. controls (77.74,50.87) and (78.86,51.96) .. (80.23,51.95) .. controls (81.59,51.93) and (82.68,50.81) .. (82.67,49.44) -- cycle ;
					\draw  [color={rgb, 255:red, 248; green, 72; blue, 61 }  ,draw opacity=1 ][fill={rgb, 255:red, 248; green, 72; blue, 61 }  ,fill opacity=1 ] (87.21,61.44) .. controls (87.2,60.08) and (86.08,58.98) .. (84.71,58.99) .. controls (83.35,59.01) and (82.26,60.13) .. (82.27,61.5) .. controls (82.29,62.87) and (83.4,63.97) .. (84.77,63.95) .. controls (86.13,63.93) and (87.23,62.81) .. (87.21,61.44) -- cycle ;
					\draw   (65.59,37.98) -- (71.77,39.64) -- (66.08,42.57) ;
					\draw [color={rgb, 255:red, 248; green, 72; blue, 61 }  ,draw opacity=1 ]   (194.47,130.63) .. controls (147.34,146.68) and (142.73,39.05) .. (197.39,38.48) ;
					\draw [color={rgb, 255:red, 248; green, 72; blue, 61 }  ,draw opacity=1 ]   (197.39,38.48) .. controls (266.78,41.21) and (185,118.31) .. (238.94,116.58) ;
					\draw    (197.74,69.19) .. controls (177.88,83.76) and (239.23,87.54) .. (250.18,106.33) ; 
					\draw [color={rgb, 255:red, 248; green, 72; blue, 61 }  ,draw opacity=1 ]   (238.94,116.58) .. controls (281.13,117.34) and (251.21,28.68) .. (225.5,39.34) ;
					\draw    (202.48,67.31) .. controls (236.41,79.5) and (134.37,113.64) .. (208.88,130.87) ;
					\draw [color={rgb, 255:red, 248; green, 72; blue, 61 }  ,draw opacity=1 ]   (192.69,62.53) .. controls (177.02,61) and (150.16,77.26) .. (180.53,96.73) ; 
					\draw [color={rgb, 255:red, 248; green, 72; blue, 61 }  ,draw opacity=1 ]   (180.53,96.73) .. controls (187.82,100.87) and (211.47,119.43) .. (202.57,124.21) ;
					\draw    (208.88,130.87) .. controls (242.08,138.2) and (255.89,129.26) .. (253.51,118.71) ;
					\draw   (210.24,83.25) .. controls (210.29,86.24) and (207.91,88.71) .. (204.91,88.76) .. controls (201.92,88.81) and (199.45,86.43) .. (199.4,83.44) .. controls (199.35,80.45) and (201.73,77.98) .. (204.72,77.93) .. controls (207.72,77.87) and (210.18,80.26) .. (210.24,83.25) -- cycle ;
					\draw   (192.51,100.93) .. controls (192.56,103.92) and (190.18,106.39) .. (187.19,106.44) .. controls (184.2,106.49) and (181.73,104.11) .. (181.68,101.12) .. controls (181.62,98.12) and (184.01,95.66) .. (187,95.6) .. controls (189.99,95.55) and (192.46,97.93) .. (192.51,100.93) -- cycle ;
					\draw   (226.99,89.59) .. controls (227.04,92.59) and (224.66,95.05) .. (221.66,95.11) .. controls (218.67,95.16) and (216.2,92.77) .. (216.15,89.78) .. controls (216.1,86.79) and (218.48,84.32) .. (221.47,84.27) .. controls (224.47,84.22) and (226.93,86.6) .. (226.99,89.59) -- cycle ;
					\draw [color={rgb, 255:red, 248; green, 72; blue, 61 }  ,draw opacity=1 ][fill={rgb, 255:red, 248; green, 72; blue, 61 }  ,fill opacity=1 ]   (204.2,48.47) .. controls (206.1,48.81) and (211.44,46.86) .. (213.45,45.85) ;
					\draw  [color={rgb, 255:red, 248; green, 72; blue, 61 }  ,draw opacity=1 ][fill={rgb, 255:red, 248; green, 72; blue, 61 }  ,fill opacity=1 ] (206.67,48.44) .. controls (206.65,47.07) and (205.53,45.98) .. (204.17,45.99) .. controls (202.81,46.01) and (201.71,47.13) .. (201.73,48.5) .. controls (201.74,49.87) and (202.86,50.96) .. (204.23,50.95) .. controls (205.59,50.93) and (206.68,49.81) .. (206.67,48.44) -- cycle ; 
					\draw  [color={rgb, 255:red, 248; green, 72; blue, 61 }  ,draw opacity=1 ][fill={rgb, 255:red, 248; green, 72; blue, 61 }  ,fill opacity=1 ] (211.21,60.44) .. controls (211.2,59.08) and (210.08,57.98) .. (208.71,57.99) .. controls (207.35,58.01) and (206.26,59.13) .. (206.27,60.5) .. controls (206.29,61.87) and (207.4,62.97) .. (208.77,62.95) .. controls (210.13,62.93) and (211.23,61.81) .. (211.21,60.44) -- cycle ;
					\draw   (185.24,64.9) -- (178.84,64.91) -- (183.57,60.6) ; 
					\draw [color={rgb, 255:red, 248; green, 72; blue, 61 }  ,draw opacity=1 ]   (318.63,131.3) .. controls (271.5,147.34) and (266.89,39.72) .. (321.55,39.14) ; 
					\draw [color={rgb, 255:red, 248; green, 72; blue, 61 }  ,draw opacity=1 ]   (321.55,39.14) .. controls (390.94,41.88) and (309.16,118.97) .. (363.1,117.25) ; 
					\draw    (332.9,61.14) .. controls (288.13,78.08) and (367.4,104.44) .. (374.34,107) ;
					\draw    (374.34,107) .. controls (395.77,114.26) and (375.37,29.35) .. (349.66,40.01) ; 
					\draw [color={rgb, 255:red, 248; green, 72; blue, 61 }  ,draw opacity=1 ]   (326.64,67.98) .. controls (360.57,80.16) and (258.53,114.3) .. (333.04,131.53) ;
					\draw [color={rgb, 255:red, 248; green, 72; blue, 61 }  ,draw opacity=1 ]   (316.85,63.2) .. controls (307.13,59.05) and (274.32,77.93) .. (304.69,97.39) ;
					\draw [color={rgb, 255:red, 248; green, 72; blue, 61 }  ,draw opacity=1 ]   (304.69,97.39) .. controls (311.98,101.54) and (335.63,120.09) .. (326.73,124.88) ; 
					\draw [color={rgb, 255:red, 248; green, 72; blue, 61 }  ,draw opacity=1 ]   (333.04,131.53) .. controls (366.24,138.87) and (380.05,129.92) .. (377.67,119.38) ;
					\draw   (332.76,84.64) .. controls (332.81,87.63) and (330.43,90.1) .. (327.44,90.15) .. controls (324.45,90.21) and (321.98,87.82) .. (321.93,84.83) .. controls (321.87,81.84) and (324.26,79.37) .. (327.25,79.32) .. controls (330.24,79.27) and (332.71,81.65) .. (332.76,84.64) -- cycle ;
					\draw  [color={rgb, 255:red, 248; green, 72; blue, 61 }  ,draw opacity=1 ] (316.67,101.53) .. controls (316.72,104.52) and (314.34,106.99) .. (311.35,107.04) .. controls (308.36,107.09) and (305.89,104.71) .. (305.84,101.72) .. controls (305.78,98.72) and (308.17,96.26) .. (311.16,96.2) .. controls (314.15,96.15) and (316.62,98.54) .. (316.67,101.53) -- cycle ;
					\draw   (349.88,94.44) .. controls (349.93,97.43) and (347.54,99.9) .. (344.55,99.95) .. controls (341.56,100.01) and (339.09,97.62) .. (339.04,94.63) .. controls (338.99,91.64) and (341.37,89.17) .. (344.36,89.12) .. controls (347.35,89.07) and (349.82,91.45) .. (349.88,94.44) -- cycle ;
					\draw    (328.36,49.14) .. controls (330.26,49.48) and (335.6,47.53) .. (337.61,46.52) ;
					\draw  [fill={rgb, 255:red, 17; green, 16; blue, 16 }  ,fill opacity=1 ] (330.83,49.11) .. controls (330.81,47.74) and (329.7,46.64) .. (328.33,46.66) .. controls (326.97,46.67) and (325.87,47.79) .. (325.89,49.16) .. controls (325.9,50.53) and (327.02,51.63) .. (328.39,51.61) .. controls (329.75,51.6) and (330.84,50.48) .. (330.83,49.11) -- cycle ;
					\draw  [fill={rgb, 255:red, 17; green, 16; blue, 16 }  ,fill opacity=1 ] (335.37,61.11) .. controls (335.36,59.74) and (334.24,58.65) .. (332.87,58.66) .. controls (331.51,58.68) and (330.42,59.8) .. (330.43,61.17) .. controls (330.45,62.54) and (331.57,63.63) .. (332.93,63.62) .. controls (334.29,63.6) and (335.39,62.48) .. (335.37,61.11) -- cycle ;
					\draw   (313.75,37.45) -- (319.93,39.11) -- (314.24,42.04) ;
					\draw [color={rgb, 255:red, 248; green, 72; blue, 61 }  ,draw opacity=1 ]   (442.63,131.63) .. controls (395.5,147.68) and (390.89,40.05) .. (445.55,39.48) ; 
					\draw [color={rgb, 255:red, 248; green, 72; blue, 61 }  ,draw opacity=1 ]   (445.55,39.48) .. controls (514.94,42.21) and (433.16,119.31) .. (487.1,117.58) ;
					\draw [color={rgb, 255:red, 248; green, 72; blue, 61 }  ,draw opacity=1 ]   (456.9,61.47) .. controls (407.88,82.58) and (487.39,88.54) .. (498.34,107.33) ; 
					\draw [color={rgb, 255:red, 248; green, 72; blue, 61 }  ,draw opacity=1 ]   (487.1,117.58) .. controls (529.29,118.34) and (499.37,29.68) .. (473.66,40.34) ;
					\draw    (450.64,68.31) .. controls (484.57,80.5) and (387.36,119.36) .. (444.16,125.76) ;
					\draw    (440.85,63.53) .. controls (431.13,59.39) and (398.32,78.26) .. (428.69,97.73) ;
					\draw    (428.69,97.73) .. controls (435.98,101.87) and (459.63,120.43) .. (450.73,125.21) ;
					\draw [color={rgb, 255:red, 248; green, 72; blue, 61 }  ,draw opacity=1 ]   (457.04,131.87) .. controls (490.24,139.2) and (504.05,130.26) .. (501.67,119.71) ;
					\draw   (458.4,84.25) .. controls (458.45,87.24) and (456.07,89.71) .. (453.07,89.76) .. controls (450.08,89.81) and (447.61,87.43) .. (447.56,84.44) .. controls (447.51,81.45) and (449.89,78.98) .. (452.89,78.93) .. controls (455.88,78.87) and (458.35,81.26) .. (458.4,84.25) -- cycle ;
					\draw   (440.67,101.93) .. controls (440.72,104.92) and (438.34,107.39) .. (435.35,107.44) .. controls (432.36,107.49) and (429.89,105.11) .. (429.84,102.12) .. controls (429.78,99.12) and (432.17,96.66) .. (435.16,96.6) .. controls (438.15,96.55) and (440.62,98.93) .. (440.67,101.93) -- cycle ;
					\draw  [color={rgb, 255:red, 248; green, 72; blue, 61 }  ,draw opacity=1 ] (475.15,90.59) .. controls (475.2,93.59) and (472.82,96.05) .. (469.82,96.11) .. controls (466.83,96.16) and (464.36,93.77) .. (464.31,90.78) .. controls (464.26,87.79) and (466.64,85.32) .. (469.64,85.27) .. controls (472.63,85.22) and (475.1,87.6) .. (475.15,90.59) -- cycle ;
					\draw [color={rgb, 255:red, 248; green, 72; blue, 61 }  ,draw opacity=1 ]   (452.36,49.47) .. controls (454.26,49.81) and (459.6,47.86) .. (461.61,46.85) ;
					\draw  [color={rgb, 255:red, 248; green, 72; blue, 61 }  ,draw opacity=1 ][fill={rgb, 255:red, 248; green, 72; blue, 61 }  ,fill opacity=1 ] (454.83,49.44) .. controls (454.81,48.07) and (453.7,46.98) .. (452.33,46.99) .. controls (450.97,47.01) and (449.87,48.13) .. (449.89,49.5) .. controls (449.9,50.87) and (451.02,51.96) .. (452.39,51.95) .. controls (453.75,51.93) and (454.84,50.81) .. (454.83,49.44) -- cycle ; 
					\draw  [color={rgb, 255:red, 248; green, 72; blue, 61 }  ,draw opacity=1 ][fill={rgb, 255:red, 248; green, 72; blue, 61 }  ,fill opacity=1 ] (459.37,61.44) .. controls (459.36,60.08) and (458.24,58.98) .. (456.87,58.99) .. controls (455.51,59.01) and (454.42,60.13) .. (454.43,61.5) .. controls (454.45,62.87) and (455.57,63.97) .. (456.93,63.95) .. controls (458.29,63.93) and (459.39,62.81) .. (459.37,61.44) -- cycle ;
					\draw   (437.75,37.98) -- (443.93,39.64) -- (438.24,42.57) ;
					\draw [color={rgb, 255:red, 248; green, 72; blue, 61 }  ,draw opacity=1 ]   (73.39,39.48) .. controls (95.76,37.75) and (92.53,45.36) .. (89.45,46.85) ;
					\draw    (101.5,40.34) .. controls (94.2,43.64) and (94.43,49.29) .. (99.49,51.86) ;
					\draw   (108.44,58.64) -- (107.15,64.9) -- (103.89,59.4) ;
					\draw    (197.74,69.19) .. controls (198.44,68.6) and (200.62,66.69) .. (202.48,67.31) ;
					\draw [color={rgb, 255:red, 248; green, 72; blue, 61 }  ,draw opacity=1 ]   (192.69,62.53) .. controls (198.72,63) and (199.93,63.1) .. (202.84,62.4) .. controls (205.74,61.7) and (208.11,60.76) .. (208.74,60.47) ;
					\draw   (201.3,69.52) -- (195.2,71.46) -- (198.41,65.93) ;
					\draw [color={rgb, 255:red, 248; green, 72; blue, 61 }  ,draw opacity=1 ]   (363.1,117.25) .. controls (369.04,116.02) and (377.22,116.2) .. (377.67,119.38) ;
					\draw   (360.74,99.32) -- (365.3,103.81) -- (358.91,103.56) ; 
					\draw    (444.16,125.76) .. controls (445.82,125.91) and (449.35,125.91) .. (450.73,125.21) ; 
					\draw [color={rgb, 255:red, 248; green, 72; blue, 61 }  ,draw opacity=1 ]   (442.63,131.63) .. controls (447.01,130.29) and (453.66,130.75) .. (457.04,131.87) ;
					\draw   (441.62,127.88) -- (436.18,124.51) -- (442.47,123.35) ;
					\draw (67.8,24.82) node [anchor=north west][inner sep=0.75pt]  [font=\small]  {$D_{1_1}$};
					\draw (98.8,24.8) node [anchor=north west][inner sep=0.75pt]  [font=\small]  {$D_{1_2}$};
					\draw (130,109) node [anchor=north west][inner sep=0.75pt]  [font=\small]  {$+$};
					\draw (183.11,46.44) node [anchor=north west][inner sep=0.75pt]  [font=\small]  {$D_{1_1}$};
					\draw (178.85,72.66) node [anchor=north west][inner sep=0.75pt]  [font=\small]  {$D_{1_2}$};
					\draw (254.7,106.55) node [anchor=north west][inner sep=0.75pt]  [font=\small]  {$-$};
					\draw (192.33,129.02) node [anchor=north west][inner sep=0.75pt]  [font=\small]  {$-$};
					\draw (308.76,64.93) node [anchor=north west][inner sep=0.75pt]  [font=\small]  {$+$};
					\draw (337.3,29.39) node [anchor=north west][inner sep=0.75pt]  [font=\small]  {$-$};
					\draw (377.67,119.38) node [anchor=north west][inner sep=0.75pt]  [font=\small]  {$D_{1_1}$};
					\draw (383.03,93.67) node [anchor=north west][inner sep=0.75pt]  [font=\small]  {$D_{1_2}$};
					\draw (440.75,130.88) node [anchor=north west][inner sep=0.75pt]  [font=\small]  {$D_{1_1}$};
					\draw (453.49,109.32) node [anchor=north west][inner sep=0.75pt]  [font=\small]  {$D_{1_2}$};
					\draw (430.7,64.12) node [anchor=north west][inner sep=0.75pt]  [font=\small]  {$-$};
					\draw (69,145.85) node [anchor=north west][inner sep=0.75pt]   [align=left] {(a)};
					\draw (453,145.85) node [anchor=north west][inner sep=0.75pt]   [align=left] {(d)};
					\draw (326,145.85) node [anchor=north west][inner sep=0.75pt]   [align=left] {(c)};
					\draw (197,145.85) node [anchor=north west][inner sep=0.75pt]   [align=left] {(b)};
				\end{tikzpicture}
			\caption{(a)--(d): Diagrams obtained by orientation-preserving smoothing at classical crossings $c_1$, $c_2$, $c_3$ and $c_4$ in $D_1$.} \label{fig516}
		\end{figure}
		
		\begin{figure}[!ht]
			\centering
				\tikzset{every picture/.style={line width=1.0pt}} 
				\begin{tikzpicture}[x=0.75pt,y=0.75pt,yscale=-1.0,xscale=1.0]
					\draw [color={rgb, 255:red, 248; green, 72; blue, 61 }  ,draw opacity=1 ]   (90.98,266.63) .. controls (53.01,252.62) and (-4.58,181.96) .. (76.98,172.92) ;
					\draw    (108.89,174.91) .. controls (96.65,179.65) and (113.32,190.98) .. (104.71,206.68) .. controls (96.1,222.38) and (97.7,231.96) .. (109.19,235.38) ;
					\draw [color={rgb, 255:red, 248; green, 72; blue, 61 }  ,draw opacity=1 ]   (62.38,197.83) .. controls (62.93,219.3) and (94.9,211.51) .. (121.43,224.87) ;
					\draw    (109.19,235.38) .. controls (136.88,243.8) and (149.67,163.14) .. (108.89,174.91) ;
					\draw [color={rgb, 255:red, 248; green, 72; blue, 61 }  ,draw opacity=1 ]   (71.87,201.56) .. controls (87.99,197.46) and (94.86,204.54) .. (77.84,223.24) ;
					\draw [color={rgb, 255:red, 248; green, 72; blue, 61 }  ,draw opacity=1 ]   (55.42,209.55) .. controls (44.87,214.35) and (26.49,264.24) .. (72.47,261.42) ;
					\draw [color={rgb, 255:red, 248; green, 72; blue, 61 }  ,draw opacity=1 ]   (90.98,266.63) .. controls (128.6,284.37) and (129.49,247.58) .. (108,245.94) ;
					\draw [color={rgb, 255:red, 248; green, 72; blue, 61 }  ,draw opacity=1 ]   (77.84,223.24) .. controls (61.42,242.2) and (82.62,243.48) .. (108,245.94) ;
					\draw [color={rgb, 255:red, 248; green, 72; blue, 61 }  ,draw opacity=1 ]   (78.96,183.68) .. controls (81.8,184.07) and (89.75,181.88) .. (92.74,180.75) ;
					\draw  [color={rgb, 255:red, 248; green, 72; blue, 61 }  ,draw opacity=1 ][fill={rgb, 255:red, 248; green, 72; blue, 61 }  ,fill opacity=1 ] (81.43,183.65) .. controls (81.41,182.28) and (80.3,181.19) .. (78.93,181.2) .. controls (77.57,181.22) and (76.47,182.34) .. (76.49,183.71) .. controls (76.5,185.08) and (77.62,186.18) .. (78.99,186.16) .. controls (80.35,186.14) and (81.44,185.02) .. (81.43,183.65) -- cycle ;
					\draw  [color={rgb, 255:red, 248; green, 72; blue, 61 }  ,draw opacity=1 ][fill={rgb, 255:red, 248; green, 72; blue, 61 }  ,fill opacity=1 ] (64.85,197.8) .. controls (64.83,196.43) and (63.71,195.34) .. (62.35,195.35) .. controls (60.98,195.37) and (59.89,196.49) .. (59.9,197.86) .. controls (59.92,199.23) and (61.04,200.32) .. (62.4,200.31) .. controls (63.77,200.29) and (64.86,199.17) .. (64.85,197.8) -- cycle ;
					\draw   (55.85,175.15) -- (62.23,175.54) -- (57.25,179.54) ;
					\draw [color={rgb, 255:red, 248; green, 72; blue, 61 }  ,draw opacity=1 ]   (86.5,259.62) .. controls (118.74,259.11) and (136.34,243.71) .. (131.37,233.72) ;
					\draw  [color={rgb, 255:red, 248; green, 72; blue, 61 }  ,draw opacity=1 ] (49.91,226.75) .. controls (49.97,229.74) and (47.58,232.21) .. (44.59,232.26) .. controls (41.6,232.31) and (39.13,229.93) .. (39.08,226.94) .. controls (39.03,223.95) and (41.41,221.48) .. (44.4,221.43) .. controls (47.39,221.37) and (49.86,223.76) .. (49.91,226.75) -- cycle ;
					\draw   (105.09,217.82) .. controls (105.14,220.81) and (102.76,223.28) .. (99.77,223.33) .. controls (96.78,223.38) and (94.31,221) .. (94.26,218.01) .. controls (94.2,215.02) and (96.59,212.55) .. (99.58,212.5) .. controls (102.57,212.44) and (105.04,214.83) .. (105.09,217.82) -- cycle ;
					\draw  [color={rgb, 255:red, 248; green, 72; blue, 61 }  ,draw opacity=1 ] (124.84,252.39) .. controls (124.89,255.38) and (122.51,257.85) .. (119.52,257.9) .. controls (116.53,257.96) and (114.06,255.57) .. (114.01,252.58) .. controls (113.95,249.59) and (116.34,247.12) .. (119.33,247.07) .. controls (122.32,247.02) and (124.79,249.4) .. (124.84,252.39) -- cycle ;
					\draw  [color={rgb, 255:red, 248; green, 72; blue, 61 }  ,draw opacity=1 ] (90.2,214.78) .. controls (90.25,217.77) and (87.87,220.24) .. (84.88,220.29) .. controls (81.88,220.34) and (79.42,217.96) .. (79.36,214.97) .. controls (79.31,211.98) and (81.7,209.51) .. (84.69,209.46) .. controls (87.68,209.4) and (90.15,211.79) .. (90.2,214.78) -- cycle ;
					\draw [color={rgb, 255:red, 248; green, 72; blue, 61 }  ,draw opacity=1 ]   (224.48,265.13) .. controls (186.51,251.12) and (128.92,180.46) .. (210.48,171.42) ;
					\draw [color={rgb, 255:red, 248; green, 72; blue, 61 }  ,draw opacity=1 ]   (210.48,171.42) .. controls (239.11,167.52) and (248.8,188.93) .. (238.21,205.18) .. controls (227.63,221.44) and (231.2,230.46) .. (242.69,233.88) ;
					\draw    (188.92,208.05) .. controls (195.32,204.79) and (228.4,210.01) .. (254.93,223.37) ;
					\draw [color={rgb, 255:red, 248; green, 72; blue, 61 }  ,draw opacity=1 ]   (242.69,233.88) .. controls (270.38,242.3) and (283.17,161.64) .. (242.39,173.41) ;
					\draw [color={rgb, 255:red, 248; green, 72; blue, 61 }  ,draw opacity=1 ]   (205.37,200.06) .. controls (221.49,195.96) and (228.36,203.04) .. (211.34,221.74) ;
					\draw    (188.92,208.05) .. controls (178.37,212.85) and (159.99,262.74) .. (205.97,259.92) ;
					\draw [color={rgb, 255:red, 248; green, 72; blue, 61 }  ,draw opacity=1 ]   (224.48,265.13) .. controls (262.1,282.87) and (262.99,246.08) .. (241.5,244.44) ;
					\draw [color={rgb, 255:red, 248; green, 72; blue, 61 }  ,draw opacity=1 ]   (211.34,221.74) .. controls (194.92,240.7) and (216.12,241.98) .. (241.5,244.44) ;
					\draw [color={rgb, 255:red, 248; green, 72; blue, 61 }  ,draw opacity=1 ][fill={rgb, 255:red, 74; green, 144; blue, 226 }  ,fill opacity=1 ]   (212.46,182.18) .. controls (215.3,182.57) and (223.25,180.38) .. (226.24,179.25) ;
					\draw  [color={rgb, 255:red, 248; green, 72; blue, 61 }  ,draw opacity=1 ][fill={rgb, 255:red, 248; green, 72; blue, 61 }  ,fill opacity=1 ] (214.93,182.15) .. controls (214.91,180.78) and (213.8,179.69) .. (212.43,179.7) .. controls (211.07,179.72) and (209.97,180.84) .. (209.99,182.21) .. controls (210,183.58) and (211.12,184.68) .. (212.49,184.66) .. controls (213.85,184.64) and (214.94,183.52) .. (214.93,182.15) -- cycle ; 
					\draw  [color={rgb, 255:red, 248; green, 72; blue, 61 }  ,draw opacity=1 ][fill={rgb, 255:red, 248; green, 72; blue, 61 }  ,fill opacity=1 ] (198.35,196.3) .. controls (198.33,194.93) and (197.21,193.84) .. (195.85,193.85) .. controls (194.48,193.87) and (193.39,194.99) .. (193.4,196.36) .. controls (193.42,197.73) and (194.54,198.82) .. (195.9,198.81) .. controls (197.27,198.79) and (198.36,197.67) .. (198.35,196.3) -- cycle ;
					\draw   (189.35,173.65) -- (195.73,174.04) -- (190.75,178.04) ;
					\draw    (220,258.12) .. controls (252.24,257.61) and (269.84,242.21) .. (264.87,232.22) ;
					\draw   (183.41,225.25) .. controls (183.47,228.24) and (181.08,230.71) .. (178.09,230.76) .. controls (175.1,230.81) and (172.63,228.43) .. (172.58,225.44) .. controls (172.53,222.45) and (174.91,219.98) .. (177.9,219.93) .. controls (180.89,219.87) and (183.36,222.26) .. (183.41,225.25) -- cycle ;
					\draw   (239.04,214.32) .. controls (239.09,217.31) and (236.71,219.78) .. (233.71,219.83) .. controls (230.72,219.88) and (228.25,217.5) .. (228.2,214.51) .. controls (228.15,211.52) and (230.53,209.05) .. (233.52,209) .. controls (236.52,208.94) and (238.98,211.33) .. (239.04,214.32) -- cycle ;
					\draw   (258.34,250.89) .. controls (258.39,253.88) and (256.01,256.35) .. (253.02,256.4) .. controls (250.03,256.46) and (247.56,254.07) .. (247.51,251.08) .. controls (247.45,248.09) and (249.84,245.62) .. (252.83,245.57) .. controls (255.82,245.52) and (258.29,247.9) .. (258.34,250.89) -- cycle ;
					\draw   (225.03,209.95) .. controls (225.08,212.94) and (222.7,215.41) .. (219.71,215.46) .. controls (216.72,215.51) and (214.25,213.13) .. (214.2,210.14) .. controls (214.15,207.14) and (216.53,204.68) .. (219.52,204.62) .. controls (222.51,204.57) and (224.98,206.95) .. (225.03,209.95) -- cycle ;
					\draw [color={rgb, 255:red, 248; green, 72; blue, 61 }  ,draw opacity=1 ]   (359.48,264.63) .. controls (321.51,250.62) and (263.92,179.96) .. (345.48,170.92) ;
					\draw [color={rgb, 255:red, 248; green, 72; blue, 61 }  ,draw opacity=1 ]   (345.48,170.92) .. controls (374.11,167.02) and (383.8,188.43) .. (373.21,204.68) .. controls (362.63,220.94) and (366.2,229.96) .. (377.69,233.38) ;
					\draw    (330.88,195.83) .. controls (331.43,217.3) and (363.4,209.51) .. (389.93,222.87) ;
					\draw    (389.93,222.87) .. controls (406.76,231.93) and (418.17,161.14) .. (377.39,172.91) ;
					\draw [color={rgb, 255:red, 248; green, 72; blue, 61 }  ,draw opacity=1 ]   (340.37,199.56) .. controls (356.49,195.46) and (363.36,202.54) .. (346.34,221.24) ;
					\draw [color={rgb, 255:red, 248; green, 72; blue, 61 }  ,draw opacity=1 ]   (323.92,207.55) .. controls (313.37,212.35) and (294.99,262.24) .. (340.97,259.42) ;
					\draw [color={rgb, 255:red, 248; green, 72; blue, 61 }  ,draw opacity=1 ]   (359.48,264.63) .. controls (397.1,282.37) and (397.99,245.58) .. (376.5,243.94) ;
					\draw [color={rgb, 255:red, 248; green, 72; blue, 61 }  ,draw opacity=1 ]   (346.34,221.24) .. controls (329.92,240.2) and (351.12,241.48) .. (376.5,243.94) ;
					\draw    (347.46,181.68) .. controls (350.3,182.07) and (358.25,179.88) .. (361.24,178.75) ;
					\draw  [fill={rgb, 255:red, 17; green, 16; blue, 16 }  ,fill opacity=1 ] (349.93,181.65) .. controls (349.91,180.28) and (348.8,179.19) .. (347.43,179.2) .. controls (346.07,179.22) and (344.97,180.34) .. (344.99,181.71) .. controls (345,183.08) and (346.12,184.18) .. (347.49,184.16) .. controls (348.85,184.14) and (349.94,183.02) .. (349.93,181.65) -- cycle ;
					\draw  [fill={rgb, 255:red, 17; green, 16; blue, 16 }  ,fill opacity=1 ] (333.35,195.8) .. controls (333.33,194.43) and (332.21,193.34) .. (330.85,193.35) .. controls (329.48,193.37) and (328.39,194.49) .. (328.4,195.86) .. controls (328.42,197.23) and (329.54,198.32) .. (330.9,198.31) .. controls (332.27,198.29) and (333.36,197.17) .. (333.35,195.8) -- cycle ;
					\draw   (324.35,173.15) -- (330.73,173.54) -- (325.75,177.54) ;
					\draw [color={rgb, 255:red, 248; green, 72; blue, 61 }  ,draw opacity=1 ]   (355,257.62) .. controls (387.24,257.11) and (404.84,241.71) .. (399.87,231.72) ;
					\draw  [color={rgb, 255:red, 248; green, 72; blue, 61 }  ,draw opacity=1 ] (318.41,224.75) .. controls (318.47,227.74) and (316.08,230.21) .. (313.09,230.26) .. controls (310.1,230.31) and (307.63,227.93) .. (307.58,224.94) .. controls (307.53,221.95) and (309.91,219.48) .. (312.9,219.43) .. controls (315.89,219.37) and (318.36,221.76) .. (318.41,224.75) -- cycle ;
					\draw   (373.59,215.82) .. controls (373.64,218.81) and (371.26,221.28) .. (368.27,221.33) .. controls (365.28,221.38) and (362.81,219) .. (362.76,216.01) .. controls (362.7,213.02) and (365.09,210.55) .. (368.08,210.5) .. controls (371.07,210.44) and (373.54,212.83) .. (373.59,215.82) -- cycle ;
					\draw  [color={rgb, 255:red, 248; green, 72; blue, 61 }  ,draw opacity=1 ] (393.34,250.39) .. controls (393.39,253.38) and (391.01,255.85) .. (388.02,255.9) .. controls (385.03,255.96) and (382.56,253.57) .. (382.51,250.58) .. controls (382.45,247.59) and (384.84,245.12) .. (387.83,245.07) .. controls (390.82,245.02) and (393.29,247.4) .. (393.34,250.39) -- cycle ;
					\draw   (358.7,212.78) .. controls (358.75,215.77) and (356.37,218.24) .. (353.38,218.29) .. controls (350.38,218.34) and (347.92,215.96) .. (347.86,212.97) .. controls (347.81,209.98) and (350.2,207.51) .. (353.19,207.46) .. controls (356.18,207.4) and (358.65,209.79) .. (358.7,212.78) -- cycle ;
					\draw    (494,257.12) .. controls (463.57,254.54) and (402.92,179.46) .. (484.48,170.42) ;
					\draw    (484.48,170.42) .. controls (513.11,166.52) and (522.8,187.93) .. (512.21,204.18) .. controls (501.63,220.44) and (505.2,229.46) .. (516.69,232.88) ;
					\draw    (469.88,195.33) .. controls (470.43,216.8) and (502.4,209.01) .. (528.93,222.37) ;
					\draw    (516.69,232.88) .. controls (544.38,241.3) and (557.17,160.64) .. (516.39,172.41) ;
					\draw [color={rgb, 255:red, 248; green, 72; blue, 61 }  ,draw opacity=1 ]   (479.37,199.06) .. controls (495.49,194.96) and (502.36,202.04) .. (485.34,220.74) ;
					\draw [color={rgb, 255:red, 248; green, 72; blue, 61 }  ,draw opacity=1 ]   (462.92,207.05) .. controls (452.37,211.85) and (433.99,261.74) .. (479.97,258.92) ;
					\draw [color={rgb, 255:red, 248; green, 72; blue, 61 }  ,draw opacity=1 ]   (498.48,264.13) .. controls (536.1,281.87) and (536.99,245.08) .. (515.5,243.44) ;
					\draw [color={rgb, 255:red, 248; green, 72; blue, 61 }  ,draw opacity=1 ]   (485.34,220.74) .. controls (468.92,239.7) and (490.12,240.98) .. (515.5,243.44) ;
					\draw    (486.46,181.18) .. controls (489.3,181.57) and (497.25,179.38) .. (500.24,178.25) ;
					\draw  [fill={rgb, 255:red, 17; green, 16; blue, 16 }  ,fill opacity=1 ] (488.93,181.15) .. controls (488.91,179.78) and (487.8,178.69) .. (486.43,178.7) .. controls (485.07,178.72) and (483.97,179.84) .. (483.99,181.21) .. controls (484,182.58) and (485.12,183.68) .. (486.49,183.66) .. controls (487.85,183.64) and (488.94,182.52) .. (488.93,181.15) -- cycle ;
					\draw  [fill={rgb, 255:red, 17; green, 16; blue, 16 }  ,fill opacity=1 ] (472.35,195.3) .. controls (472.33,193.93) and (471.21,192.84) .. (469.85,192.85) .. controls (468.48,192.87) and (467.39,193.99) .. (467.4,195.36) .. controls (467.42,196.73) and (468.54,197.82) .. (469.9,197.81) .. controls (471.27,197.79) and (472.36,196.67) .. (472.35,195.3) -- cycle ;
					\draw   (463.35,172.65) -- (469.73,173.04) -- (464.75,177.04) ;
					\draw    (494,257.12) .. controls (526.24,256.61) and (543.84,241.21) .. (538.87,231.22) ;
					\draw   (457.41,224.25) .. controls (457.47,227.24) and (455.08,229.71) .. (452.09,229.76) .. controls (449.1,229.81) and (446.63,227.43) .. (446.58,224.44) .. controls (446.53,221.45) and (448.91,218.98) .. (451.9,218.93) .. controls (454.89,218.87) and (457.36,221.26) .. (457.41,224.25) -- cycle ;
					\draw   (512.59,215.32) .. controls (512.64,218.31) and (510.26,220.78) .. (507.27,220.83) .. controls (504.28,220.88) and (501.81,218.5) .. (501.76,215.51) .. controls (501.7,212.52) and (504.09,210.05) .. (507.08,210) .. controls (510.07,209.94) and (512.54,212.33) .. (512.59,215.32) -- cycle ;
					\draw   (532.34,249.89) .. controls (532.39,252.88) and (530.01,255.35) .. (527.02,255.4) .. controls (524.03,255.46) and (521.56,253.07) .. (521.51,250.08) .. controls (521.45,247.09) and (523.84,244.62) .. (526.83,244.57) .. controls (529.82,244.52) and (532.29,246.9) .. (532.34,249.89) -- cycle ;
					\draw   (497.7,212.28) .. controls (497.75,215.27) and (495.37,217.74) .. (492.38,217.79) .. controls (489.38,217.84) and (486.92,215.46) .. (486.86,212.47) .. controls (486.81,209.48) and (489.2,207.01) .. (492.19,206.96) .. controls (495.18,206.9) and (497.65,209.29) .. (497.7,212.28) -- cycle ;
					\draw [color={rgb, 255:red, 248; green, 72; blue, 61 }  ,draw opacity=1 ]   (76.98,172.92) .. controls (95.32,168.31) and (100.99,178.65) .. (92.74,180.75) ;
					\draw   (107.3,184.6) -- (106.65,190.96) -- (102.85,185.82) ;
					\draw [color={rgb, 255:red, 248; green, 72; blue, 61 }  ,draw opacity=1 ]   (195.88,196.33) .. controls (196.87,199.68) and (202.65,200.79) .. (205.37,200.06) ;
					\draw   (203.88,205.32) -- (209.44,208.48) -- (203.2,209.88) ;
					\draw [color={rgb, 255:red, 248; green, 72; blue, 61 }  ,draw opacity=1 ]   (377.69,233.38) .. controls (385.06,235.58) and (396.91,225.42) .. (399.87,231.72) ;
					\draw   (401.9,207.47) -- (405.14,201.96) -- (406.45,208.22) ;
					\draw [color={rgb, 255:red, 248; green, 72; blue, 61 }  ,draw opacity=1 ]   (479.97,258.92) .. controls (484.9,258.6) and (497.9,263.94) .. (498.48,264.13) ;
					\draw   (471.9,261.18) -- (466.41,257.89) -- (472.68,256.64) ;
					\draw (75.47,156.44) node [anchor=north west][inner sep=0.75pt]  [font=\small]  {$D_{2_1}$};
					\draw (108.13,159.09) node [anchor=north west][inner sep=0.75pt]  [font=\small]  {$D_{2_2}$};
					\draw (129.22,222.83) node [anchor=north west][inner sep=0.75pt]  [font=\small]  {$+$};
					\draw (198.02,185.68) node [anchor=north west][inner sep=0.75pt]  [font=\small]  {$D_{2_1}$};
					\draw (190.47,209.56) node [anchor=north west][inner sep=0.75pt]  [font=\small]  {$D_{2_2}$};
					\draw (204,260.27) node [anchor=north west][inner sep=0.75pt]  [font=\small]  {$+$};
					\draw (262.89,220.56) node [anchor=north west][inner sep=0.75pt]  [font=\small]  {$-$};
					\draw (401.07,231.16) node [anchor=north west][inner sep=0.75pt]  [font=\small]  {$D_{2_1}$};
					\draw (404.2,206.89) node [anchor=north west][inner sep=0.75pt]  [font=\small]  {$D_{2_2}$};
					\draw (322.39,209.19) node [anchor=north west][inner sep=0.75pt]  [font=\small]  {$-$};
					\draw (363.89,162.19) node [anchor=north west][inner sep=0.75pt]  [font=\small]  {$-$};
					\draw (478.24,260) node [anchor=north west][inner sep=0.75pt]  [font=\small]  {$D_{2_1}$};
					\draw (480.87,241.89) node [anchor=north west][inner sep=0.75pt]  [font=\small]  {$D_{2_2}$};
					\draw (462.17,205.34) node [anchor=north west][inner sep=0.75pt]  [font=\small]  {$-$};
					\draw (68,275.88) node [anchor=north west][inner sep=0.75pt]   [align=left] {(a)};
					\draw (484,275.88) node [anchor=north west][inner sep=0.75pt]   [align=left] {(d)};
					\draw (341,275.88) node [anchor=north west][inner sep=0.75pt]   [align=left] {(c)};
					\draw (204,275.88) node [anchor=north west][inner sep=0.75pt]   [align=left] {(b)};
				\end{tikzpicture}
			\caption{(a)--(d): Diagrams obtained by orientation-preserving smoothing at classical crossings $c_1$, $c_2$, $c_3$ and $c_4$ in $D_2$.} \label{fig517}
		\end{figure}
		Combining the above results, we obtain that $\mathbf{P}_{K_1}(x,y)= \mathbf{P}_{K_2}(x,y)$. 
	}
\end{example}

\subsection{$\mathbf{P}_K(x,y)$ is a  Vassiliev invariant of order one} \label{subsection3.3}

We introduce singular virtual knotoids following the standard approach in \cite{Hen}. A \textit{singular crossing} is a transverse double point without over/under-crossing information, indicated by a solid dot, see Fig.~\ref{fig1}(d).
A \textit{singular virtual knotoid diagram} is a virtual knotoid diagram with finitely many singular crossings. \textit{Singular virtual knotoids} are defined as equivalence classes of singular virtual knotoid diagrams under the natural extension of the  Reidemeister moves.

We now use singular virtual knotoids to define Vassiliev invariants. Any virtual knotoid invariant $V$ with values in an abelian group $A$ can be extended to an invariant of singular virtual knotoids.  Let $D$ be a singular virtual knotoid diagram with $n \geq 1$ singular crossings. By resolving a chosen singular crossing of $D$ into a positive crossing, we obtain $D_+$, and by resolving it into a negative crossing, we obtain $D_-$, as shown in Fig.~\ref{fig4}. The diagrams $D_+$ and $D_-$ each have $n-1$ singular crossings.		
Define the \textit{$n$-th derivative} $V^{(n)}$, $n \geq 1$, of $V$ by the following recursive relation: 
\begin{equation}
	V^{(n)}(D)=
	\begin{cases}
		V^{(n-1)}(D_{+}) - V^{(n-1)}(D_{-}), & n \ge 1,\\
		V(D), & n = 0.
	\end{cases}
	\label{eq4.1}
\end{equation}

\begin{figure}[!ht]
	\begin{center}
		\tikzset{every picture/.style={line width=1.0pt}}  
		\begin{tikzpicture}[x=0.75pt,y=0.75pt,yscale=-1,xscale=1]	
			\draw    (490.26,442.05) -- (431.17,502.42) ;
			\draw   (431.41,447.9) -- (430.66,441.97) -- (436.43,443.17) ;
			\draw   (484.79,443.32) -- (490.51,441.91) -- (489.98,447.87) ;
			\draw    (465.09,476.88) -- (492.09,503.71) ;
			\draw    (189.81,441.61) -- (165.76,466.15) ;
			\draw    (130.6,441.67) -- (189.93,501.92) ;
			\draw   (131.25,447.46) -- (130.51,441.53) -- (136.27,442.73) ;
			\draw   (184.34,442.88) -- (190.06,441.47) -- (189.53,447.43) ;
			\draw    (155.09,477.15) -- (130.72,501.98) ;
			\draw    (430.17,441.66) -- (455.76,467.54) ;
			\draw    (342.82,441.65) -- (283.74,502.03) ;
			\draw    (283.62,441.72) -- (342.94,501.96) ;
			\draw   (284.27,447.51) -- (283.52,441.58) -- (289.29,442.77) ;
			\draw   (337.35,442.93) -- (343.07,441.52) -- (342.54,447.47) ;
			\draw  [fill={rgb, 255:red, 14; green, 13; blue, 13 }  ,fill opacity=1 ] (307.86,471.84) .. controls (307.86,468.85) and (310.29,466.42) .. (313.28,466.42) .. controls (316.27,466.42) and (318.7,468.85) .. (318.7,471.84) .. controls (318.7,474.83) and (316.27,477.26) .. (313.28,477.26) .. controls (310.29,477.26) and (307.86,474.83) .. (307.86,471.84) -- cycle ;
			\draw    (371.11,469.66) -- (403.03,469.17) ;
			\draw   (400.74,466.29) -- (403.76,469.11) -- (400.81,472) ;
			\draw    (252.76,470.66) -- (220.84,470.17) ;
			\draw   (223.14,467.29) -- (220.11,470.11) -- (223.06,473) ;
			\draw (147,504.66) node [anchor=north west][inner sep=0.75pt]  [font=\normalsize]   {$D_{-}$};
			\draw (447,505.66) node [anchor=north west][inner sep=0.75pt]  [font=\normalsize]   {$D_{+}$};
			\draw (304,507.62) node [anchor=north west][inner sep=0.75pt]  [font=\normalsize]   {$D$};
		\end{tikzpicture}
		\caption{Two resolutions of a singular crossing in $D$.} \label{fig4}
	\end{center}
\end{figure}

An invariant $V$ is called a \textit{Vassiliev invariant of order $n$} if it vanishes on all singular virtual knotoids with more than $n$ singular crossings, and does not vanish on at least one singular virtual knotoid with $n$ singular crossings.

\begin{theorem} \label{thm3}
	The polynomial $\mathbf P_K(x,y)$ is an order one Vassiliev invariant of virtual knotoids.
\end{theorem}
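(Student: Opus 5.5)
The plan is to use Lemma~\ref{lm1} and Corollary~\ref{cor:weight-parity} to write $\mathbf P_D(x,y)$ as a sum of local contributions depending only on the sign and weight of each crossing. Then I will show that the first derivative at a singular crossing depends only on flat data, so the second derivative vanishes, and finally exhibit a singular diagram on which the first derivative is nonzero.

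\emph{Rewriting the invariant.} Since $i(c)=-W_D(c)$ and $W_D(c)$ has the same parity as $c$, we can write
$$\mathbf P_D(x,y)=-\sum_{c\in C(D)}\operatorname{sgn}(c)\,W_D(c)\,z_{W_D(c)}^{\,W_D(c)},$$
where $z_m=x$ for $m$ even and $z_m=y$ for $m$ odd. This is the same identity as \eqref{equ6}.

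\emph{Labels depend only on the flat diagram.} The labeling rule of Fig.~\ref{fig11} is stated purely in terms of the positions of the strands. The outgoing strand on the top left receives $\ell(\beta)+1$ and the one on the top right receives $\ell(\alpha)-1$, with no reference to which strand is over. Hence the integer labeling of $D$ coincides with that of its flattening, and in particular $D_+$ and $D_-$ carry identical labels on all arcs.

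\emph{First derivative.} Let $s$ be the singular crossing being resolved, with incoming labels $\ell(\alpha),\ell(\beta)$ (the identification of $\alpha,\beta$ being positional, hence the same in $D_\pm$). Put $w=\ell(\alpha)-\ell(\beta)-1$.
\begin{itemize}
\item Every other classical crossing has the same sign and the same weight in $D_+$ and $D_-$, so its contributions cancel in $\mathbf P(D_+)-\mathbf P(D_-)$.
\item For $s$ itself, $W_{D_+}(s)=w$ and $W_{D_-}(s)=-w$.
\end{itemize}
Since $w$ and $-w$ have the same parity, this gives
$$\mathbf P^{(1)}(D)=-w\,z_w^{\,w}+(-1)(-w)\,z_w^{-w}\cdot(-1)\cdot(-1)^{0}\,\text{-type terms} \;=\; -w\bigl(z_w^{\,w}-z_w^{-w}\bigr).$$
I expect the main care to be needed exactly here: tracking the sign in the definition of $W$ together with $\operatorname{sgn}(s)$, and making sure the $x$ versus $y$ choice is the same on both sides. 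The key conclusion is that $\mathbf P^{(1)}(D)$ depends only on the labels at $s$, i.e.\ only on the flattening of $D$.

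\emph{Second derivative.} Take a diagram with two singular crossings $s,t$. Resolving $t$ positively or negatively does not change the flattening, hence does not change the labels at $s$. Therefore
$$\mathbf P^{(2)}(D)=\mathbf P^{(1)}(D_{t+})-\mathbf P^{(1)}(D_{t-})=0.$$
By induction all higher derivatives vanish as well, since each is a difference of lower derivatives that are themselves zero.

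\emph{Nontriviality.} Replace $c_+$ in the diagram $D_+$ of Example~\ref{exp2.1} by a singular crossing; call the result $D$. From the labeling in Fig.~\ref{fig1119}(b) we have $w=W_{D_+}(c_+)=1$, so
$$\mathbf P^{(1)}(D)=-(y-y^{-1})\neq 0.$$
Hence $\mathbf P$ is of order exactly one.

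The step I expect to be the real obstacle is not the cancellation argument but justifying rigorously that the positional labels $\alpha,\beta$ and the labeling itself are unchanged under a crossing switch. If the figure conventions turned out to tie $\alpha,\beta$ to over/under information, I would instead verify the identity directly using $i(c)$. The key point is that switching $s$ reverses the ordering of $D_1,D_2$ and flips $\operatorname{ind}$ at each intersection $e$ only through the over/under data at $s$ itself, which does not lie in $D_1\cap D_2$.
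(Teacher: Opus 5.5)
Your proof is correct, but it takes a different route from the paper.

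\textbf{The paper's route.} It never analyses a singular crossing directly. It uses the identity $\mathbf P_D(x,y)=\Phi(AI_D(t))$ from the proof of Theorem~\ref{thm2} and gets $\mathbf P^{(2)}=\Phi(AI^{(2)})$ by linearity of $\Phi$. It then imports from \cite{MLK} that $AI$ has order one. Nontriviality is argued by contradiction, again through Theorem~\ref{thm2} and the order-one property of $AI$.

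\textbf{Your route.} You compute $\mathbf P^{(1)}(D)=-w\bigl(z_w^{\,w}-z_w^{-w}\bigr)$ explicitly, note that it depends only on the flat labeling at the singular crossing, and conclude $\mathbf P^{(2)}=0$ directly. You also give a concrete witness: in Example~\ref{exp2.1}, $c_+$ contributes $-y$ in the positive resolution and $-y^{-1}$ after switching, so $\mathbf P^{(1)}=-(y-y^{-1})\neq 0$.

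\textbf{Comparison.} The paper's argument is shorter once the equivalence with $AI$ is available. Yours is self-contained and needs neither \cite{MLK} nor Theorem~\ref{thm2}. The same computation also reproves the order-one property of $AI_K(t)$ in this setting.

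\textbf{Minor remarks.}
\begin{enumerate}
\item The step you flagged as the main obstacle is not one. The two panels of Fig.~\ref{fig11} carry identical positional labels for the two crossing signs, so the labeling is flat by definition.
\item The middle expression in your first-derivative display is garbled. It should simply record that $s$ contributes $-w\,z_w^{\,w}$ in $D_+$ and $-(-1)(-w)\,z_w^{-w}=-w\,z_w^{-w}$ in $D_-$.
\item Your fallback observation works and is worth keeping. By Fig.~\ref{fig511}, if both strands at $e$ are drawn pointing upward, then $\operatorname{ind}(e)=+1$ exactly when $D_1$ is the strand running from lower left to upper right. So $\operatorname{ind}(e)$ is flat, while the ordering $(D_1,D_2)$ reverses with $\operatorname{sgn}(c)$. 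Hence $\operatorname{sgn}(c)\,i(c)$ is flat, and $\mathbf P^{(1)}(D)=\operatorname{sgn}(s)\,i(s)\bigl(z^{w}-z^{-w}\bigr)$, with $z$ chosen by the flat parity of $s$. This version makes your argument independent of Lemma~\ref{lm1}, which the paper only takes from the literature.
\end{enumerate}
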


\begin{proof}
	We first prove that $\mathbf P_K(x,y)$ is a Vassiliev invariant of order at most one.
	
	Let $D_{pq}$ be a singular virtual knotoid diagram with two singular crossings $p$ and $q$.
	By replacing $p$ and $q$ with positive or negative classical crossings, denoted as $p_+$, $p_-$, $q_+$, $q_-$, we obtain four diagrams, denoted by $D_{p+q+}$, $D_{p+q-}$, $D_{p-q+}$, and $D_{p-q-}$. Then denoting by $\mathbf{P}^{(i)}$ the $i$-th derivative of $\mathbf{P}$.
	By Equation~\eqref{eq4.1}, 
	$$
	\begin{aligned}
		\mathbf P^{(2)}_{D_{pq}}(x,y)
		&=
		\mathbf P^{(1)}_{D_{p+}}(x,y)
		-
		\mathbf P^{(1)}_{D_{p-}}(x,y) \\
		&=
		\bigl(
		\mathbf P_{D_{p+q+}}(x,y)
		-
		\mathbf P_{D_{p+q-}}(x,y)
		\bigr)
		-
		\bigl(
		\mathbf P_{D_{p-q+}}(x,y)
		-
		\mathbf P_{D_{p-q-}}(x,y)
		\bigr).
	\end{aligned}
	$$		
	By Equation~\eqref{equ6} in the proof of Theorem~\ref{thm2}, we have
	$
	\mathbf P_D(x,y)=\Phi(AI_D(t))
	$. Therefore,
	$$
	\mathbf P^{(2)}_{D_{pq}}(x,y)
	=
	\Phi\left(AI_{D_{p+q+}}(t)\right)
	-
	\Phi\left(AI_{D_{p+q-}}(t)\right) 
	-
	\Phi\left(AI_{D_{p-q+}}(t)\right)
	+
	\Phi\left(AI_{D_{p-q-}}(t)\right).
	$$	
	Since $\Phi$ is $\mathbb Z$-linear,
	\[
	\begin{aligned}
		\mathbf P^{(2)}_{D_{pq}}(x,y)
		&=
		\Phi\left(
		AI_{D_{p+q+}}(t)
		-
		AI_{D_{p+q-}}(t)  
		-
		AI_{D_{p-q+}}(t)
		+
		AI_{D_{p-q-}}(t)
		\right) 
		=
		\Phi\left(
		AI^{(2)}_{D_{pq}}(t)
		\right).
	\end{aligned}
	\]		
	
	By \cite[Proposition~7.3]{MLK}, the affine index polynomial $AI_K(t)$ is an order one Vassiliev invariant, we have $AI^{(2)}_{D_{pq}}(t)=0$. Hence,	
	$\mathbf P^{(2)}_{D_{pq}}(x,y)=\Phi(0)=0.$
	
	Therefore, $\mathbf P_K(x,y)$ vanishes on every singular virtual knotoid with two singular crossings.
	
	It remains to show that there is a singular virtual knotoid with one singular crossing which has nontrivial $\mathbf P^{(1)}$. Suppose, to the contrary, that $\mathbf P^{(1)}_D(x,y)=0$ for every singular virtual knotoid diagram $D$ with one singular crossing. Then
	$\mathbf P_D^{(1)}(x,y)=\mathbf P_{D_+}(x,y)-\mathbf P_{D_-}(x,y) =0,$	
	and hence $\mathbf P_{D_+}(x,y)=\mathbf P_{D_-}(x,y)$. By Theorem~\ref{thm2}, this implies that $AI_{D_+}(t)=AI_{D_-}(t)$. Therefore, $AI_D^{(1)}(t)=AI_{D_+}(t)-AI_{D_-}(t) =0$, contradicting the fact that the affine index polynomial is a Vassiliev invariant of order one.
	
	Hence, there exists a singular virtual knotoid diagram $D$ with one singular crossing such that $ \mathbf P^{(1)}_D(x,y)\neq0$. 
	
	Consequently, $\mathbf P_K(x,y)$ is an order one Vassiliev invariant.
\end{proof}

\subsection{Reversing the orientation and taking the mirror image}
In this subsection we discuss the behavior of $\mathbf{P}_K(x,y)$ under reversing the orientation and taking the mirror image.

Let $D$ be a virtual knotoid diagram. Denote the orientation-reversed virtual knotoid diagram of $D$ by $r(D)$, and call $r(D)$ \textit{the inverted virtual knotoid diagram} of $D$. A virtual knotoid $K$ is \textit{invertible} if $K$ is equivalent to $r(K)$. Denote the diagram obtained by switching the under- and over-strands of every classical crossing in $D$ by $m(D)$, and call $m(D)$ the \textit{mirror image} of $D$. A virtual knotoid $K$ is \textit{amphichiral} if $K$ is equivalent to $m(K)$.

\begin{proposition} \label{prop3.1}
	Let $K$ be an oriented virtual knotoid, and $D$ a diagram of $K$. Then the following properties hold:	
	\begin{itemize}
		\item[{(1)}]
		$\mathbf{P}_{r(D)}(x,y)= -\mathbf{P}_D(x^{-1},y^{-1})$. If $D$ is invertible, then $\mathbf{P}_{D}(x,y)=-\mathbf{P}_{D}(x^{-1},y^{-1})$.
		\item[{(2)}]
		$\mathbf{P}_{m(D)}(x,y)= \mathbf{P}_{D}(x^{-1},y^{-1})$. If $D$ is amphichiral, then $\mathbf{P}_D(x,y)= \mathbf{P}_D(x^{-1},y^{-1})$.
		\item[{(3)}]
		If $D$ is invertible and amphichiral, then $\mathbf{P}_D (x,y)=0$.
	\end{itemize}
\end{proposition}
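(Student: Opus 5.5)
The plan is to reduce everything to how the three ingredients of each summand (sign, weight, parity) transform under $r$ and $m$. First I would rewrite the polynomial using Lemma~\ref{lm1}: since $i(c)=-W_D(c)$,
$$
\mathbf{P}_D(x,y)=-\sum_{c\in E(D)}\operatorname{sgn}(c)W_D(c)\,x^{W_D(c)}-\sum_{c\in O(D)}\operatorname{sgn}(c)W_D(c)\,y^{W_D(c)}.
$$
Classical crossings of $D$, $r(D)$ and $m(D)$ correspond bijectively. The parity of $c$ is determined by $I(c)$, which only counts classical crossings between the two components of the oriented smoothing at $c$. Reversing all orientations, or switching over/under information, gives the same two smoothed components (as unoriented curves) and the same crossings between them. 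Hence $E$ and $O$ are preserved, and the choice of variable $x$ or $y$ for each crossing does not change.

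Next I would track signs and weights. Under $r$, both strands at every crossing are reversed, so $\operatorname{sgn}(c)$ is unchanged. Under $m$, every sign flips. The key claim is that in both cases $i(c)\mapsto -i(c)$, equivalently $W(c)\mapsto -W(c)$. I would prove this directly from the definition of $i(c)$ via Fig.~\ref{fig12} and Fig.~\ref{fig511}.

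For $r$: the ordering $(D_1,D_2)$ in Fig.~\ref{fig12} is swapped, because rotating the picture by $\pi$ exchanges the left and right lobes while the sign stays the same. Each $e\in D_1\cap D_2$ keeps its sign and its over-component, so case (a) and case (b) of Fig.~\ref{fig511} are exchanged, and every $\operatorname{ind}(e)$ changes sign.

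For $m$: the sign of $c$ flips, which again swaps the ordering of Fig.~\ref{fig12}. Also $\operatorname{sgn}(e)$ flips and the over-component of $e$ flips. These are three sign changes, so $\operatorname{ind}(e)\mapsto-\operatorname{ind}(e)$.

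Verifying the ordering swap carefully from the figures is the step I expect to be the main obstacle. As a cross-check I would also verify $W\mapsto -W$ from the labeling rule of Fig.~\ref{fig11}, by checking how the labels transform in both cases.

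With these facts, each summand transforms as follows. Under $r$: $-\operatorname{sgn}(c)(-W)z^{-W}=-\bigl(-\operatorname{sgn}(c)W z^{-W}\bigr)$. Under $m$: $-(-\operatorname{sgn}(c))(-W)z^{-W}=-\operatorname{sgn}(c)Wz^{-W}$. Summing gives $\mathbf{P}_{r(D)}(x,y)=-\mathbf{P}_D(x^{-1},y^{-1})$ and $\mathbf{P}_{m(D)}(x,y)=\mathbf{P}_D(x^{-1},y^{-1})$.

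The statements about invertible and amphichiral $K$ then follow from Theorem~\ref{th2.1}, since $\mathbf{P}$ depends only on the knotoid. For (3), combining (1) and (2) gives $\mathbf{P}_D(x,y)=-\mathbf{P}_D(x^{-1},y^{-1})=-\mathbf{P}_D(x,y)$. Hence $2\mathbf{P}_D=0$, and since $\mathbb Z[x^{\pm1},y^{\pm1}]$ is torsion-free, $\mathbf{P}_D=0$.
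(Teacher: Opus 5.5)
Your proposal is correct and follows the paper's proof. The paper likewise notes that parity is unchanged, that $\operatorname{sgn}$ is preserved by $r$ and reversed by $m$, and that $W\mapsto -W$ (read off from the labelings in Fig.~\ref{fig413}), and then substitutes term by term. Your explicit check that $i(c)\mapsto -i(c)$, via the swap of the ordering $(D_1,D_2)$ or via Lemma~\ref{lm1}, fills in a step the paper uses tacitly, and your torsion-freeness remark makes item (3) fully rigorous.
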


\begin{proof}
	Let $\mathcal{D}$ be the set of all classical crossings in $D$. Let $r(\mathcal{D})$ (resp. $m(\mathcal{D})$) be the set of all classical crossings in $r(D)$ (resp. $m(D)$). Let $r(\mathcal{A})$ (resp. $m(\mathcal{A})$) be the set of arcs of $r(D)$ (resp. $m(D)$) segmented by the classical crossings. For any $c\in \mathcal{D}$, we denote the corresponding elements in $r(\mathcal{D})$ and $m(\mathcal{D})$ by $r(c)$ and $m(c)$. By the definition of parity, the parity of the corresponding classical crossings in $D$ and $r(D)$ ($m(D)$) remains unchanged. For $\alpha \in \mathcal{A}$, we denote the corresponding elements in $r(\mathcal{A})$ and $m(\mathcal{A})$ by $r(\alpha)$, $m(\alpha)$. We assign integer labelings to the arcs as shown in Fig.~\ref{fig413}. 
	\begin{figure}[htbp]
		\begin{center}
			\tikzset{every picture/.style={line width=1.0pt}}  
			\begin{tikzpicture}[x=0.75pt,y=0.75pt,yscale=-1,xscale=1]
				\draw    (225.58,59.05) -- (273.34,106.5) ;
				\draw    (143.09,58.74) -- (96.08,106.53) ;
				\draw    (95.99,58.64) -- (116.1,78.61) ;
				\draw   (96.27,63.37) -- (95.68,58.68) -- (100.27,59.63) ;
				\draw   (138.74,59.75) -- (143.29,58.63) -- (142.87,63.34) ;
				\draw    (123.51,86) -- (143.75,106.1) ;
				\draw   (225.86,63.78) -- (225.26,59.08) -- (229.85,60.03) ;
				\draw   (268.33,60.15) -- (272.88,59.03) -- (272.46,63.75) ;
				\draw    (272.68,59.14) -- (253.81,78.35) ;
				\draw    (244.54,87.73) -- (225.67,106.93) ;
				\draw    (502.78,58.01) -- (553.83,111.81) ;
				\draw    (414.61,57.66) -- (364.36,111.85) ;
				\draw    (364.26,57.55) -- (385.75,80.19) ;
				\draw   (364.56,62.91) -- (363.92,57.59) -- (368.83,58.66) ;
				\draw   (409.95,58.8) -- (414.82,57.53) -- (414.37,62.88) ;
				\draw    (393.68,88.57) -- (415.31,111.35) ;
				\draw   (503.08,63.37) -- (502.44,58.04) -- (507.35,59.12) ;
				\draw   (548.47,59.26) -- (553.34,57.99) -- (552.89,63.34) ;
				\draw    (553.13,58.11) -- (532.96,79.89) ;
				\draw    (523.05,90.52) -- (502.88,112.3) ;
				\draw (63.36,108.95) node [anchor=north west][inner sep=0.75pt]  [font=\small]  {$-( a-1)$};
				\draw (131.61,108.5) node [anchor=north west][inner sep=0.75pt]  [font=\small]  {$-( b+1)$};
				\draw (75.91,40.67) node [anchor=north west][inner sep=0.75pt]  [font=\small]  {$-b$};
				\draw (138.62,42.06) node [anchor=north west][inner sep=0.75pt]  [font=\small]  {$-a$};
				\draw (206.69,40.58) node [anchor=north west][inner sep=0.75pt]  [font=\small]  {$-b$};
				\draw (273.01,42.06) node [anchor=north west][inner sep=0.75pt]  [font=\small]  {$-a$};
				\draw (352.74,111.32) node [anchor=north west][inner sep=0.75pt]  [font=\small]  {$a$};
				\draw (418.42,109.45) node [anchor=north west][inner sep=0.75pt]  [font=\scriptsize]  {$b$};
				\draw (338.89,40.12) node [anchor=north west][inner sep=0.75pt]  [font=\small]  {$b+1$};
				\draw (412.95,39.50) node [anchor=north west][inner sep=0.75pt]  [font=\small]  {$a-1$};
				\draw (491.26,111.77) node [anchor=north west][inner sep=0.75pt]  [font=\small]  {$a$};
				\draw (556.94,109.91) node [anchor=north west][inner sep=0.75pt]  [font=\small]  {$b$};
				\draw (477.41,40.58) node [anchor=north west][inner sep=0.75pt]  [font=\small]  {$b+1$};
				\draw (551.47,39.50) node [anchor=north west][inner sep=0.75pt]  [font=\small]  {$a-1$};
				\draw (196.32,108.47) node [anchor=north west][inner sep=0.75pt]  [font=\small]  {$-( a-1)$};
				\draw (256.3,108.5) node [anchor=north west][inner sep=0.75pt]  [font=\small]  {$-( b+1)$};
				\draw (77.8,130.9) node [anchor=north west][inner sep=0.75pt]    [font=\small]  {$\operatorname{sgn}( r( c)) =1$};
				\draw (203.34,129.8) node [anchor=north west][inner sep=0.75pt]    [font=\small]  {$\operatorname{sgn}( r( c)) =-1$};
				\draw (347.34,129.8) node [anchor=north west][inner sep=0.75pt]    [font=\small]  {$\operatorname{sgn}( m( c)) =1$};
				\draw (482.88,128.7) node [anchor=north west][inner sep=0.75pt]    [font=\small]  {$\operatorname{sgn}( m( c)) =-1$};					
			\end{tikzpicture}
			\caption{Assigning labels to the arcs of $r(D)$ and $m(D)$.\label{fig413}}	
		\end{center}
	\end{figure}
	
	\begin{itemize} 
		\item[(1)] It is clear that $\operatorname{sgn}(r(c))=\operatorname{sgn}(c)$. According to the definition of $W_D(c)$, we get:
		\begin{eqnarray*}
			W_{r(D)}\big(r(c)\big) & = &  \operatorname{sgn} \big(r(c) \big)\left[-(a-1)-(-b)\right]  
			=  \operatorname{sgn} (c)\left[b-a+1\right]  \cr 
			& = & -\left( \operatorname{sgn}(c) [a-(b+1)] \right)  
			=  - W_{D} (c).   
		\end{eqnarray*}
		Therefore
		\begin{eqnarray*} 
			%\hspace{3em}
			\mathbf{P}_{r(D)}(x,y) & = & \sum_{r(c) \in E\big(r(D)\big)} \operatorname{sgn} \big(r(c)\big) i \big(r(c) \big) x^{W_{r(D)}\big(r(c)\big)} \\ &&+ \sum_{r(c) \in O\big(r(D)\big)} \operatorname{sgn}\big(r(c)\big) i(r(c)) y^{W_{r(D)} (r(c))}   \cr 
			& = & \sum_{c \in E(D)} \operatorname{sgn}(c) \big(-i(c)\big) x^{-W_D(c)} + \sum_{c \in O(D)} \operatorname{sgn}(c) \big(-i(c)\big) y^{-W_D(c)} \\ &=& -\mathbf{P}_D(x^{-1},y^{-1}). 
		\end{eqnarray*}
		If $D$ is invertible, then $\mathbf{P}_D(x,y)= -\mathbf{P}_D(x^{-1},y^{-1})$.
		
		\item[(2)] It is obvious that $\operatorname{sgn}(m(c))=-\operatorname{sgn}(c)$ and
		$$ 
		W_{m(D)}\big(m(c)\big) =  \operatorname{sgn}\big(m(c)\big)[a-(b+1)]  
		=  -\operatorname{sgn} ( c )[a-(b+1)]  
		=   -W_{D} (c).
		$$
		Therefore 
		\begin{eqnarray*}
			\mathbf{P}_{m(D)}(x,y) & = &  \sum_{m(c) \in E\big(m(D)\big)} \operatorname{sgn}\big(m(c)\big) i\big(m(c)\big) x^{W_{m(D)}\big(m(c)\big)}\\ &&+ \sum_{m(c) \in O\big(m(D)\big)} \operatorname{sgn}\big(m(c)\big) i\big(m(c)\big) y^{W_{m(D)}\big(m(c)\big)} \cr 
			& = & - \sum_{c \in E(D)} \operatorname{sgn}(c) \big(-i(c)\big) x^{-W_D(c)} - \sum_{c \in O(D)} \operatorname{sgn}(c) \big(-i(c)\big) y^{-W_D(c)} \\ &=& \mathbf{P}_{D}(x^{-1},y^{-1}).
		\end{eqnarray*}    
		If $D$ is amphichiral, then $\mathbf{P}_D (x,y)=\mathbf{P}_D (x^{-1},y^{-1})$.
		\item[(3)] The proof follows from items (1) and (2). 
	\end{itemize}	
	This completes the proof of the proposition.
\end{proof} 

%%%%%
\section{$\mathbf{P}_K(x,y)$ and the universal Vassiliev invariant $\mathcal G(K)$ } \label{section4}

%%%%%
\subsection{Singular based matrices for singular virtual open strings} \label{subsection4.1}

The notion of a based matrix for virtual strings, which correspond to flat virtual knots, was originally introduced by Turaev in~\cite{Tur2}.  Based matrices were subsequently generalized by Henrich~\cite{Hen} to the setting of flat singular virtual knots. It was further extended by Petit~\cite{Pet} to the categories of framed virtual knots and of long virtual knots, both framed and unframed. In the present work, we extend this construction to the setting of flat virtual knotoids. Analogously, we introduce virtual open strings corresponding to flat virtual knotoids. We now give the precise definitions.

For an integer $m \geq 0$, a \textit{virtual open string} $\alpha$ of rank $m$ consists of an oriented arc $S$, called the \textit{core arc} of $\alpha$, together with a distinguished set of $2m$ distinct points on $S$ partitioned into $m$ ordered pairs. The starting point of $S$ is called the \textit{tail} of $\alpha$, and the end point is called the \textit{head}. The $m$ ordered pairs of points are called \textit{arrows} of $\alpha$, and their collection is denoted by $\operatorname{arr}(\alpha)$. For an arrow $(a,b) \in \operatorname{arr}(\alpha)$, the points $a,b \in S$ are referred to as its \textit{tail} and \textit{head}, respectively.

Two virtual open strings are said to be \textit{homeomorphic} if there exists an orientation-preserving homeomorphism of their core arcs that maps the set of arrows of one to that of the other. The corresponding homeomorphism classes are also referred to as virtual open strings.

Given a virtual open string $\alpha$, the associated flat virtual knotoid diagram can be constructed as follows. Each arrow of $\alpha$ corresponds to a crossing in the diagram. Traversing the oriented core arc $S$ through the head of an arrow corresponds to passing through the strand labeled $b$ at the crossing, while passing through the tail of an arrow corresponds to traversing the strand labeled $a$, see Fig.~\ref{fig127}.
\begin{figure}[htbp]
	\begin{center}
		\tikzset{every picture/.style={line width=1.0pt}}  
		\begin{tikzpicture}[x=0.75pt,y=0.75pt,yscale=-0.8, xscale=0.8] 
			\draw    (147.86,21.77) -- (79.76,92.61) ;
			\draw    (79.62,21.85) -- (148,92.53) ;
			\draw   (80.37,28.64) -- (79.51,21.68) -- (86.16,23.08) ;
			\draw    (314.66,56.78) -- (234.76,56.32) ;
			\draw   (309.96,53.6) -- (315.34,56.76) -- (309.72,59.44) ;
			\draw    (245.86,73.74) .. controls (231.21,65.76) and (230.64,46.99) .. (245.67,39.12) ;
			\draw   (141.56,23.27) -- (148.15,21.61) -- (147.54,28.6) ;
			\draw    (304.95,39.66) .. controls (319.51,47.83) and (319.86,66.59) .. (304.74,74.29) ;
			\draw (147.93,8.43) node [anchor=north west][inner sep=0.75pt]    {$a$};
			\draw (69.84,7) node [anchor=north west][inner sep=0.75pt]    {$b$};
			\draw (218.27,48.5) node [anchor=north west][inner sep=0.75pt]    {$a$};
			\draw (321.49,46.97) node [anchor=north west][inner sep=0.75pt]    {$b$};
		\end{tikzpicture}
		\caption{A flat crossing and its corresponding arrow $(a,b)$.\label{fig127}}
	\end{center}
\end{figure}
Flat singular virtual knotoids with one singular crossing can be regarded as virtual open strings in which one arrow is designated as the preferred arrow, which can be pictured by a thickened arrow. Such modified virtual open strings will be referred to as \textit{singular virtual open strings}. The preferred arrow in a singular virtual open string naturally corresponds to the singular crossing in the associated flat singular virtual knotoid diagram.

We next introduce the moves defined on singular virtual open strings.
Recall that there are three moves (i)--(iii) for virtual open strings:
\begin{itemize}
	\item[(i)] Adding an arrow $(a, b)$ to $\operatorname{arr}(\alpha)$, where $a, b \in S$ are points for which the arc $ab$ is disjoint from $\operatorname{arr}(\alpha)$.
	\item[(ii)] Adding a pair of new arrows $(a, b)$ and $(b',a')$, where $a, b, a', b' \in S$ are four distinct points such that the two corresponding arcs on $S$ with endpoints $a,a'$ and $b,b'$ are both disjoint from $\operatorname{arr}(\alpha)$.
	\item[(iii)] Replacing the arrows $(a',b)$, $(b',c)$, $(c',a)$ with $(a, b')$, $(b, c')$, $(c, a')$, where $a, b, c, a', b',$ $c' \in S$ satisfy that $(a',b)$, $(b',c)$, $(c',a)$ are arrows of $\alpha$ and the arcs $aa'$, $bb'$, $cc'$ are all disjoint from the other arrows, see Fig.~\ref{fig530}. 
\end{itemize}

\begin{figure}[htbp]
	\begin{center}
		\tikzset{every picture/.style={line width=1.0pt}}  
		\begin{tikzpicture}[x=0.75pt,y=0.75pt,yscale=-1.0, xscale=1.0] 
			\draw  [draw opacity=0] (152.73,262.89) .. controls (147.18,264.91) and (141.19,266.01) .. (134.94,266.01) .. controls (106.22,266.01) and (82.94,242.73) .. (82.94,214.01) .. controls (82.94,185.29) and (106.22,162.01) .. (134.94,162.01) .. controls (163.66,162.01) and (186.94,185.29) .. (186.94,214.01) .. controls (186.94,219.51) and (186.09,224.82) .. (184.5,229.79) -- (134.94,214.01) -- cycle ; \draw   (152.73,262.89) .. controls (147.18,264.91) and (141.19,266.01) .. (134.94,266.01) .. controls (106.22,266.01) and (82.94,242.73) .. (82.94,214.01) .. controls (82.94,185.29) and (106.22,162.01) .. (134.94,162.01) .. controls (163.66,162.01) and (186.94,185.29) .. (186.94,214.01) .. controls (186.94,219.51) and (186.09,224.82) .. (184.5,229.79) ;  
			\draw  [fill={rgb, 255:red, 0; green, 0; blue, 0 }  ,fill opacity=1 ] (181.47,229.79) .. controls (181.47,228.11) and (182.83,226.76) .. (184.5,226.76) .. controls (186.18,226.76) and (187.54,228.11) .. (187.54,229.79) .. controls (187.54,231.47) and (186.18,232.83) .. (184.5,232.83) .. controls (182.83,232.83) and (181.47,231.47) .. (181.47,229.79) -- cycle ;
			\draw  [fill={rgb, 255:red, 0; green, 0; blue, 0 }  ,fill opacity=1 ] (149.69,262.89) .. controls (149.69,261.21) and (151.05,259.85) .. (152.73,259.85) .. controls (154.41,259.85) and (155.77,261.21) .. (155.77,262.89) .. controls (155.77,264.57) and (154.41,265.93) .. (152.73,265.93) .. controls (151.05,265.93) and (149.69,264.57) .. (149.69,262.89) -- cycle ;
			\draw    (121.44,263.94) -- (181.08,190.48) ;
			\draw    (97.9,178.05) -- (93.62,244.88) ;
			\draw    (83,213.86) -- (157.44,167.03) ;
			\draw  [draw opacity=0] (401.84,262.43) .. controls (396.29,264.45) and (390.3,265.55) .. (384.05,265.55) .. controls (355.33,265.55) and (332.05,242.27) .. (332.05,213.55) .. controls (332.05,184.83) and (355.33,161.55) .. (384.05,161.55) .. controls (412.77,161.55) and (436.05,184.83) .. (436.05,213.55) .. controls (436.05,219.05) and (435.2,224.35) .. (433.61,229.33) -- (384.05,213.55) -- cycle ; \draw   (401.84,262.43) .. controls (396.29,264.45) and (390.3,265.55) .. (384.05,265.55) .. controls (355.33,265.55) and (332.05,242.27) .. (332.05,213.55) .. controls (332.05,184.83) and (355.33,161.55) .. (384.05,161.55) .. controls (412.77,161.55) and (436.05,184.83) .. (436.05,213.55) .. controls (436.05,219.05) and (435.2,224.35) .. (433.61,229.33) ;  
			\draw  [fill={rgb, 255:red, 0; green, 0; blue, 0 }  ,fill opacity=1 ] (430.58,229.33) .. controls (430.58,227.65) and (431.94,226.29) .. (433.61,226.29) .. controls (435.29,226.29) and (436.65,227.65) .. (436.65,229.33) .. controls (436.65,231.01) and (435.29,232.37) .. (433.61,232.37) .. controls (431.94,232.37) and (430.58,231.01) .. (430.58,229.33) -- cycle ;
			\draw  [fill={rgb, 255:red, 0; green, 0; blue, 0 }  ,fill opacity=1 ] (398.8,262.43) .. controls (398.8,260.75) and (400.16,259.39) .. (401.84,259.39) .. controls (403.52,259.39) and (404.88,260.75) .. (404.88,262.43) .. controls (404.88,264.1) and (403.52,265.46) .. (401.84,265.46) .. controls (400.16,265.46) and (398.8,264.1) .. (398.8,262.43) -- cycle ;
			\draw    (347.2,176.79) -- (430.19,190.02) ;
			\draw    (406.55,166.79) -- (342.73,244.97) ;
			\draw    (333.81,200.09) -- (380.53,264.8) ;
			\draw   (350.88,239.83) -- (343.11,244.39) -- (346.19,235.92) ;
			\draw   (421.54,185.62) -- (429.43,189.97) -- (420.57,191.65) ;
			\draw   (336.88,209.57) -- (334.36,200.91) -- (341.82,205.98) ;
			\draw   (147.75,169.67) -- (156.49,167.48) -- (151.15,174.74) ;
			\draw   (94.29,186.6) -- (97.94,178.36) -- (100.38,187.04) ;
			\draw   (129.47,258.93) -- (121.7,263.49) -- (124.78,255.02) ;
			\draw    (236.67,214.56) -- (283.42,215.02) ;
			\draw [shift={(285.42,215.04)}, rotate = 180.56] [color={rgb, 255:red, 0; green, 0; blue, 0 }  ][line width=0.75]    (10.93,-3.29) .. controls (6.95,-1.4) and (3.31,-0.3) .. (0,0) .. controls (3.31,0.3) and (6.95,1.4) .. (10.93,3.29)   ;
			\draw [shift={(234.67,214.54)}, rotate = 0.56] [color={rgb, 255:red, 0; green, 0; blue, 0 }  ][line width=0.75]    (10.93,-3.29) .. controls (6.95,-1.4) and (3.31,-0.3) .. (0,0) .. controls (3.31,0.3) and (6.95,1.4) .. (10.93,3.29)   ;
			\draw (180.14,175.58) node [anchor=north west][inner sep=0.75pt]    {$b'$};
			\draw (115.68,267.19) node [anchor=north west][inner sep=0.75pt]    {$c$};
			\draw (88.67,166.49) node [anchor=north west][inner sep=0.75pt]    {$a$};
			\draw (152.35,152.69) node [anchor=north west][inner sep=0.75pt]    {$b$};
			\draw (69.93,207.25) node [anchor=north west][inner sep=0.75pt]    {$a'$};
			\draw (78.69,242) node [anchor=north west][inner sep=0.75pt]    {$c'$};
			\draw (429.25,175.11) node [anchor=north west][inner sep=0.75pt]    {$b'$};
			\draw (374.39,267.53) node [anchor=north west][inner sep=0.75pt]    {$c$};
			\draw (337.78,166.02) node [anchor=north west][inner sep=0.75pt]    {$a$};
			\draw (403.05,152.62) node [anchor=north west][inner sep=0.75pt]    {$b$};
			\draw (320.64,191.38) node [anchor=north west][inner sep=0.75pt]    {$a'$};
			\draw (327.8,241.53) node [anchor=north west][inner sep=0.75pt]  [font=\small]  {$c'$};
		\end{tikzpicture}
		\caption{Move (iii).} \label{fig530}
	\end{center}
\end{figure}

In the singular case, we further introduce an additional move that allows us to change the preferred arrow:
\begin{itemize} 
	\item[(s-ii)] Assume that $(a,b)$ is the preferred arrow and $(a',b')$ is another arrow in the diagram such that the interior of the arcs on the core circle $S$ with endpoints $a$ and $b'$ is disjoint from $\operatorname{arr}(\alpha)$, and similarly the interior of the arcs on $S$ with endpoints $a'$ and $b$ is disjoint from $\operatorname{arr}(\alpha)$, see Fig.~\ref{fig531}.   Then the role of the preferred arrow can be switched from $(a,b)$ to $(a',b')$.
\end{itemize}

\begin{figure}[htbp]
	\begin{center}
		\tikzset{every picture/.style={line width=1.0pt}}  
		\begin{tikzpicture}[x=0.75pt,y=0.75pt,yscale=-1.0, xscale=1.0] 
			\draw    (90.27,374.2) -- (161,329.46) ;
			\draw  [draw opacity=0] (405.32,420.86) .. controls (399.77,422.88) and (393.78,423.98) .. (387.53,423.98) .. controls (358.81,423.98) and (335.53,400.7) .. (335.53,371.98) .. controls (335.53,343.26) and (358.81,319.98) .. (387.53,319.98) .. controls (416.25,319.98) and (439.53,343.26) .. (439.53,371.98) .. controls (439.53,377.48) and (438.67,382.78) .. (437.09,387.76) -- (387.53,371.98) -- cycle ; \draw   (405.32,420.86) .. controls (399.77,422.88) and (393.78,423.98) .. (387.53,423.98) .. controls (358.81,423.98) and (335.53,400.7) .. (335.53,371.98) .. controls (335.53,343.26) and (358.81,319.98) .. (387.53,319.98) .. controls (416.25,319.98) and (439.53,343.26) .. (439.53,371.98) .. controls (439.53,377.48) and (438.67,382.78) .. (437.09,387.76) ;  
			\draw  [draw opacity=0] (159.73,421.89) .. controls (154.18,423.91) and (148.19,425.01) .. (141.94,425.01) .. controls (113.22,425.01) and (89.94,401.73) .. (89.94,373.01) .. controls (89.94,344.29) and (113.22,321.01) .. (141.94,321.01) .. controls (170.66,321.01) and (193.94,344.29) .. (193.94,373.01) .. controls (193.94,378.51) and (193.09,383.82) .. (191.5,388.79) -- (141.94,373.01) -- cycle ; \draw   (159.73,421.89) .. controls (154.18,423.91) and (148.19,425.01) .. (141.94,425.01) .. controls (113.22,425.01) and (89.94,401.73) .. (89.94,373.01) .. controls (89.94,344.29) and (113.22,321.01) .. (141.94,321.01) .. controls (170.66,321.01) and (193.94,344.29) .. (193.94,373.01) .. controls (193.94,378.51) and (193.09,383.82) .. (191.5,388.79) ;  
			\draw  [fill={rgb, 255:red, 0; green, 0; blue, 0 }  ,fill opacity=1 ] (188.47,388.79) .. controls (188.47,387.11) and (189.83,385.76) .. (191.5,385.76) .. controls (193.18,385.76) and (194.54,387.11) .. (194.54,388.79) .. controls (194.54,390.47) and (193.18,391.83) .. (191.5,391.83) .. controls (189.83,391.83) and (188.47,390.47) .. (188.47,388.79) -- cycle ;
			\draw  [fill={rgb, 255:red, 0; green, 0; blue, 0 }  ,fill opacity=1 ] (156.69,421.89) .. controls (156.69,420.21) and (158.05,418.85) .. (159.73,418.85) .. controls (161.41,418.85) and (162.77,420.21) .. (162.77,421.89) .. controls (162.77,423.57) and (161.41,424.93) .. (159.73,424.93) .. controls (158.05,424.93) and (156.69,423.57) .. (156.69,421.89) -- cycle ;
			\draw    (104.14,408.45) -- (187.64,348.99) ;
			\draw   (113.49,405.7) -- (104.81,408.11) -- (109.96,400.72) ;
			\draw    (90.27,371.46) -- (161.7,326.89) ;
			\draw  [fill={rgb, 255:red, 0; green, 0; blue, 0 }  ,fill opacity=1 ] (434.05,387.76) .. controls (434.05,386.08) and (435.41,384.72) .. (437.09,384.72) .. controls (438.77,384.72) and (440.13,386.08) .. (440.13,387.76) .. controls (440.13,389.43) and (438.77,390.79) .. (437.09,390.79) .. controls (435.41,390.79) and (434.05,389.43) .. (434.05,387.76) -- cycle ; 
			\draw  [fill={rgb, 255:red, 0; green, 0; blue, 0 }  ,fill opacity=1 ] (402.28,420.86) .. controls (402.28,419.18) and (403.64,417.82) .. (405.32,417.82) .. controls (406.99,417.82) and (408.35,419.18) .. (408.35,420.86) .. controls (408.35,422.53) and (406.99,423.89) .. (405.32,423.89) .. controls (403.64,423.89) and (402.28,422.53) .. (402.28,420.86) -- cycle ;
			\draw    (351.73,405.27) -- (433.28,347.26) ;
			\draw   (400.33,327.63) -- (409.07,325.45) -- (403.73,332.7) ;
			\draw   (359.29,404.96) -- (350.5,407.22) -- (355.55,399.68) ;
			\draw    (335.72,371.63) -- (408,326.2) ;
			\draw    (354,406.21) -- (434.23,349.39) ;
			\draw   (154.75,328.67) -- (163.5,326.5) -- (158.17,333.77) ;
			\draw    (242.67,371.89) -- (271.11,372.17) -- (289.42,372.36) ;
			\draw [shift={(291.42,372.38)}, rotate = 180.56] [color={rgb, 255:red, 0; green, 0; blue, 0 }  ][line width=0.75]    (10.93,-3.29) .. controls (6.95,-1.4) and (3.31,-0.3) .. (0,0) .. controls (3.31,0.3) and (6.95,1.4) .. (10.93,3.29)   ;
			\draw [shift={(240.67,371.88)}, rotate = 0.56] [color={rgb, 255:red, 0; green, 0; blue, 0 }  ][line width=0.75]    (10.93,-3.29) .. controls (6.95,-1.4) and (3.31,-0.3) .. (0,0) .. controls (3.31,0.3) and (6.95,1.4) .. (10.93,3.29)   ;
			\draw (188.14,335.86) node [anchor=north west][inner sep=0.75pt]    {$b'$};
			\draw (159.35,311.69) node [anchor=north west][inner sep=0.75pt]    {$b$};
			\draw (79.5,373.25) node [anchor=north west][inner sep=0.75pt]    {$a$};
			\draw (90.44,410.58) node [anchor=north west][inner sep=0.75pt]    {$a'$};
			\draw (433.73,335.83) node [anchor=north west][inner sep=0.75pt]    {$b'$};
			\draw (404.93,310.65) node [anchor=north west][inner sep=0.75pt]    {$b$};
			\draw (325.08,372.21) node [anchor=north west][inner sep=0.75pt]    {$a$};
			\draw (335.44,407.33) node [anchor=north west][inner sep=0.75pt]    {$a'$};
		\end{tikzpicture}
		\caption{Move (s-ii).}\label{fig531}
	\end{center}
\end{figure}

Moves (i)--(iii) still apply when only ordinary arrows are involved, and move (iii) is also allowed even if one of the arrows involved is the preferred one.
Two singular virtual open strings are said to be \textit{homotopic} if one can be obtained from the other by a finite sequence of moves consisting of (i), (ii), (iii), and (s-ii).
The homotopy equivalence relation obtained by this
collection of moves corresponds precisely to the equivalence relation on flat singular virtual knotoids with precisely one singular crossing.

A singular virtual open string can be associated with a singular based matrix. Let $\alpha$ be a singular virtual open string with a preferred arrow corresponding to the singular crossing. We define a finite set 
$G = \operatorname{arr}(\alpha) \sqcup \{s\},$
where the preferred arrow is denoted by $d \in G$.

The \textit{singular based matrix (SBM)} is a quadruple $(G, s, d, {\bf b})$, where ${\bf b}: G \times G \to \mathbb{Z}$ is a skew-symmetric map defined as follows.

\noindent
\textbf{Rule 1.}  
For $e \in \operatorname{arr}(\alpha)$, ${\bf b}(e,s)$ is computed by performing the $1$-smoothing, see Fig.~\ref{fig000}, at the flat crossing corresponding to $e$, which produces a flat virtual multi-knotoid diagram with two components ordered as $(\ell_1, \ell_2)$. Fig.~\ref{fig12} shows the 1-smoothing applied at classical crossings. Analogously, we present the case for flat crossings with no restrictions on branch orders. Denote by $\ell_1 \cap \ell_2$ the set of all flat crossings between these components in the flat virtual multi-knotoid diagram. For a flat crossing $c$ we define $\operatorname{ind}(c)$, see Fig.~\ref{2.10}, that agrees with $\operatorname{ind}(c^*)$ for classical crossing $c^*$ in Fig.~\ref{fig511}, and the intersection index 
$i(e) = \sum_{c \in \ell_1 \cap \ell_2} \operatorname{ind}(c)$. Then we  set ${\bf b}(e,s) = i(e)$. 
\begin{figure}[htbp]
	\begin{center}
		\tikzset{every picture/.style={line width=1.0pt}} 
		\begin{tikzpicture}[x=0.75pt,y=0.75pt,yscale=-1,xscale=1]
			\draw    (172.17,570.58) -- (234.09,632.64) ;
			\draw    (232.26,570.97) -- (173.17,631.35) ;
			\draw   (173.41,576.83) -- (172.66,570.9) -- (178.43,572.09) ;
			\draw   (226.79,572.25) -- (232.51,570.84) -- (231.98,576.79) ;
			\draw   (362.13,577.9) -- (360.73,572.18) -- (366.69,572.72) ;
			\draw    (362.89,639.17) .. controls (387.17,615.07) and (391.06,598.38) .. (361.37,572.5) ;
			\draw   (410.79,572.73) -- (416.59,571.69) -- (415.68,577.6) ;
			\draw    (416.27,572.22) .. controls (391.88,595.37) and (391.34,614.7) .. (417.57,638.9) ;
			\draw    (266.39,604.87) -- (322.53,604.93) ;
			\draw   (320.32,602.18) -- (323.12,604.88) -- (320.35,607.61) ;
			\draw (212.17,595.97) node [anchor=north west][inner sep=0.75pt]  [font=\small]  {$e$};		
		\end{tikzpicture}
		\caption{Applying 1-smoothing at the flat crossing corresponding to $e$.} \label{fig000}
	\end{center}
\end{figure}

\begin{figure}[htbp]
	\begin{center}
		\tikzset{every picture/.style={line width=1.0pt}} 
		\begin{tikzpicture}[x=0.75pt,y=0.75pt,yscale=-1,xscale=1]
			\draw    (210.67,2337.8) -- (149.85,2400.22) ; 
			\draw    (149.73,2337.87) -- (210.79,2400.15) ;
			\draw   (150.4,2343.85) -- (149.63,2337.72) -- (155.56,2338.96) ;
			\draw   (205.04,2339.12) -- (210.93,2337.66) -- (210.38,2343.82) ;
			\draw    (346.2,2338.42) -- (285.38,2400.84) ;
			\draw    (285.25,2338.49) -- (346.32,2400.77) ;
			\draw   (285.92,2344.47) -- (285.15,2338.34) -- (291.09,2339.58) ;
			\draw   (340.57,2339.74) -- (346.46,2338.28) -- (345.91,2344.44) ;
			\draw (134.27,2320.64) node [anchor=north west][inner sep=0.75pt]  [font=\normalsize]  {$\ell_2$};
			\draw (210,2321.26) node [anchor=north west][inner sep=0.75pt]  [font=\normalsize]  {$\ell_1$};
			\draw (347.36,2320.64) node [anchor=north west][inner sep=0.75pt]  [font=\normalsize]  {$\ell_2$};
			\draw (270.41,2321.26) node [anchor=north west][inner sep=0.75pt]  [font=\normalsize]  {$\ell_1$};
			\draw (188.76,2362) node [anchor=north west][inner sep=0.75pt]    {$+$};
			\draw (327.46,2362) node [anchor=north west][inner sep=0.75pt]    {$-$};	
			\draw (162.76,2364) node [anchor=north west][inner sep=0.75pt]    {$c$};
			\draw (296.46,2364) node [anchor=north west][inner sep=0.75pt]    {$c$};	
		\end{tikzpicture}
		\caption{The $\operatorname{ind}(c)$ for $c \in \ell_1 \cap \ell_2$.} \label{2.10}
	\end{center}
\end{figure}

\noindent
\textbf{Rule 2.} For $e, f \in \operatorname{arr}(\alpha)$, let $e=(a,b)$ and $f=(c,d)$. Denote by $(ab)^\circ$ and $(cd)^\circ$ the interiors of the arcs $ab$ and $cd$, respectively. Define $ab \cdot cd$ as the number of arrows with tails in $(ab)^\circ$ and heads in $(cd)^\circ$ minus the number of arrows with tails in $(cd)^\circ$ and heads in $(ab)^\circ$. Figure Fig.~\ref{fig532} demonstrates an arrow $\zeta$ with tail in $(ab)^\circ$ and head in $(cd)^\circ$. It should be noted that an arc $ab$ (or $cd$) is allowed to be disconnected, in this case the tail and the head of $\alpha$ may lie between the endpoints of the arc $ab$ (or $cd$).

\begin{figure}[htbp]
	\begin{center}
		\tikzset{every picture/.style={line width=1.0pt}} 
		\begin{tikzpicture}[x=0.75pt,y=0.75pt,yscale=-1,xscale=1]
			\draw  [draw opacity=0] (148.74,582) .. controls (122.04,581.4) and (100.3,560.65) .. (98.12,534.36) -- (149.94,530.01) -- cycle ; \draw  [color={rgb, 255:red, 74; green, 144; blue, 226 }  ,draw opacity=1 ] (148.74,582) .. controls (122.04,581.4) and (100.3,560.65) .. (98.12,534.36) ;  
			\draw    (107.64,499.59) -- (193.79,501.9) ;
			\draw  [draw opacity=0] (193.22,501.18) .. controls (198.73,509.43) and (201.94,519.35) .. (201.94,530.01) .. controls (201.94,535.51) and (201.09,540.82) .. (199.5,545.79) -- (149.94,530.01) -- cycle ; \draw   (193.22,501.18) .. controls (198.73,509.43) and (201.94,519.35) .. (201.94,530.01) .. controls (201.94,535.51) and (201.09,540.82) .. (199.5,545.79) ;  
			\draw  [fill={rgb, 255:red, 0; green, 0; blue, 0 }  ,fill opacity=1 ] (196.47,545.79) .. controls (196.47,544.11) and (197.83,542.76) .. (199.5,542.76) .. controls (201.18,542.76) and (202.54,544.11) .. (202.54,545.79) .. controls (202.54,547.47) and (201.18,548.83) .. (199.5,548.83) .. controls (197.83,548.83) and (196.47,547.47) .. (196.47,545.79) -- cycle ;
			\draw  [fill={rgb, 255:red, 0; green, 0; blue, 0 }  ,fill opacity=1 ] (164.69,578.89) .. controls (164.69,577.21) and (166.05,575.85) .. (167.73,575.85) .. controls (169.41,575.85) and (170.77,577.21) .. (170.77,578.89) .. controls (170.77,580.57) and (169.41,581.93) .. (167.73,581.93) .. controls (166.05,581.93) and (164.69,580.57) .. (164.69,578.89) -- cycle ;
			\draw    (98.1,534.67) -- (148.31,581.5) ;
			\draw   (145.21,573.76) -- (148.27,581.47) -- (140.51,578.53) ;
			\draw   (115.89,503.33) -- (108.55,499.46) -- (116.36,496.65) ;
			\draw  [draw opacity=0] (108.12,499.11) .. controls (117.59,486.31) and (132.8,478.01) .. (149.94,478.01) .. controls (168.17,478.01) and (184.2,487.39) .. (193.49,501.58) -- (149.94,530.01) -- cycle ; 
			\draw  [color={rgb, 255:red, 248; green, 72; blue, 61 }  ,draw opacity=1 ] (108.12,499.11) .. controls (117.59,486.31) and (132.8,478.01) .. (149.94,478.01) .. controls (168.17,478.01) and (184.2,487.39) .. (193.49,501.58) ;  
			\draw  [draw opacity=0] (98.16,534.78) .. controls (98.01,533.21) and (97.94,531.62) .. (97.94,530.01) .. controls (97.94,518.46) and (101.71,507.79) .. (108.08,499.16) -- (149.94,530.01) -- cycle ; \draw   (98.16,534.78) .. controls (98.01,533.21) and (97.94,531.62) .. (97.94,530.01) .. controls (97.94,518.46) and (101.71,507.79) .. (108.08,499.16) ;  
			\draw    (115.99,569.13) -- (154.84,477.86) ;
			\draw   (122.21,563.1) -- (116.12,568.74) -- (116.06,560.44) ;
			\draw  [draw opacity=0] (167.73,578.89) .. controls (162.18,580.91) and (156.19,582.01) .. (149.94,582.01) .. controls (149.45,582.01) and (148.97,582.01) .. (148.48,581.99) -- (149.94,530.01) -- cycle ; \draw   (167.73,578.89) .. controls (162.18,580.91) and (156.19,582.01) .. (149.94,582.01) .. controls (149.45,582.01) and (148.97,582.01) .. (148.48,581.99) ;  
			\draw (140.91,582.85) node [anchor=north west][inner sep=0.75pt]  [font=\small]  {$d$};
			\draw (98.68,485.42) node [anchor=north west][inner sep=0.75pt]  [font=\small]  {$b$};
			\draw (192.71,486.93) node [anchor=north west][inner sep=0.75pt]  [font=\small]  {$a$};
			\draw (88.52,531.58) node [anchor=north west][inner sep=0.75pt]  [font=\small]  {$c$};
			\draw (135.5,461.75) node [anchor=north west][inner sep=0.75pt]  [font=\small,color={rgb, 255:red, 248; green, 72; blue, 61 }  ,opacity=1 ]  {$(ab)^\circ$};
			\draw (94.5,574.75) node [anchor=north west][inner sep=0.75pt]  [font=\small,color={rgb, 255:red, 74; green, 144; blue, 226 }  ,opacity=1 ]  {$(cd)^\circ$};
			\draw (136.81,517.24) node [anchor=north west][inner sep=0.75pt]  [font=\small]  {$\zeta $};		
		\end{tikzpicture}
		\caption{An arrow $\zeta$ with tail in $(ab)^\circ$ and head in $(cd)^\circ$.} \label{fig532}
	\end{center}
\end{figure}

Let $\epsilon(e,f)\in\{-1,0,1\}$ be determined by whether arcs $e$ and $f$ are linked negatively, unlinked, or linked positively, see Fig.~\ref{fig7}. Then we set ${\bf b}(e,f) = ab \cdot cd + \epsilon(e,f)$.   
\begin{figure}[htbp]
	\begin{center}
		\tikzset{every picture/.style={line width=1.0pt}}  
		\begin{tikzpicture}[x=0.75pt,y=0.75pt,yscale=-1.0,xscale=1.0]  
			\draw  [draw opacity=0] (154.11,318.29) .. controls (149.85,319.86) and (145.24,320.72) .. (140.44,320.72) .. controls (118.51,320.72) and (100.73,302.83) .. (100.73,280.76) .. controls (100.73,258.69) and (118.51,240.8) .. (140.44,240.8) .. controls (162.38,240.8) and (180.16,258.69) .. (180.16,280.76) .. controls (180.16,286.17) and (179.09,291.33) .. (177.16,296.03) -- (140.44,280.76) -- cycle ; \draw   (154.11,318.29) .. controls (149.85,319.86) and (145.24,320.72) .. (140.44,320.72) .. controls (118.51,320.72) and (100.73,302.83) .. (100.73,280.76) .. controls (100.73,258.69) and (118.51,240.8) .. (140.44,240.8) .. controls (162.38,240.8) and (180.16,258.69) .. (180.16,280.76) .. controls (180.16,286.17) and (179.09,291.33) .. (177.16,296.03) ;  
			\draw    (100.81,279.99) -- (180.13,279.99) ;
			\draw    (139.43,241.13) -- (139.58,320.42) ;
			\draw   (173.93,277.02) -- (179.79,279.96) -- (173.93,282.91) ;
			\draw    (215.36,278.23) -- (294.68,278.23) ;
			\draw    (253.98,239.37) -- (254.13,318.66) ;
			\draw   (288.48,275.27) -- (294.34,278.21) -- (288.48,281.15) ;
			\draw   (257.13,312.74) -- (254.12,318.6) -- (251.27,312.66) ;
			\draw   (136.51,247.12) -- (139.42,241.21) -- (142.37,247.1) ;
			\draw    (369.03,320.47) -- (368.99,240.65) ;
			\draw    (359.25,241.32) -- (359.37,319.83) ;
			\draw   (366.04,247.48) -- (368.97,241.58) -- (371.9,247.48) ;
			\draw   (362.37,312.71) -- (359.36,318.57) -- (356.52,312.63) ;
			\draw  [fill={rgb, 255:red, 17; green, 16; blue, 16 }  ,fill opacity=1 ] (151.63,318.31) .. controls (151.65,319.68) and (152.76,320.78) .. (154.13,320.77) .. controls (155.49,320.76) and (156.59,319.64) .. (156.58,318.27) .. controls (156.57,316.9) and (155.45,315.8) .. (154.09,315.81) .. controls (152.72,315.82) and (151.62,316.94) .. (151.63,318.31) -- cycle ;
			\draw  [fill={rgb, 255:red, 17; green, 16; blue, 16 }  ,fill opacity=1 ] (174.68,296.05) .. controls (174.7,297.42) and (175.81,298.52) .. (177.18,298.51) .. controls (178.54,298.5) and (179.64,297.38) .. (179.63,296.01) .. controls (179.62,294.64) and (178.5,293.54) .. (177.14,293.55) .. controls (175.77,293.56) and (174.67,294.68) .. (174.68,296.05) -- cycle ;
			\draw  [draw opacity=0] (269.11,316.54) .. controls (264.85,318.11) and (260.24,318.97) .. (255.44,318.97) .. controls (233.51,318.97) and (215.73,301.08) .. (215.73,279.01) .. controls (215.73,256.94) and (233.51,239.05) .. (255.44,239.05) .. controls (277.38,239.05) and (295.16,256.94) .. (295.16,279.01) .. controls (295.16,284.42) and (294.09,289.58) .. (292.16,294.28) -- (255.44,279.01) -- cycle ; \draw   (269.11,316.54) .. controls (264.85,318.11) and (260.24,318.97) .. (255.44,318.97) .. controls (233.51,318.97) and (215.73,301.08) .. (215.73,279.01) .. controls (215.73,256.94) and (233.51,239.05) .. (255.44,239.05) .. controls (277.38,239.05) and (295.16,256.94) .. (295.16,279.01) .. controls (295.16,284.42) and (294.09,289.58) .. (292.16,294.28) ;  
			\draw  [fill={rgb, 255:red, 17; green, 16; blue, 16 }  ,fill opacity=1 ] (266.63,316.56) .. controls (266.65,317.93) and (267.76,319.03) .. (269.13,319.02) .. controls (270.49,319.01) and (271.59,317.89) .. (271.58,316.52) .. controls (271.57,315.15) and (270.45,314.05) .. (269.09,314.06) .. controls (267.72,314.07) and (266.62,315.19) .. (266.63,316.56) -- cycle ;
			\draw  [fill={rgb, 255:red, 17; green, 16; blue, 16 }  ,fill opacity=1 ] (289.68,294.3) .. controls (289.7,295.67) and (290.81,296.77) .. (292.18,296.76) .. controls (293.54,296.75) and (294.64,295.63) .. (294.63,294.26) .. controls (294.62,292.89) and (293.5,291.79) .. (292.14,291.8) .. controls (290.77,291.81) and (289.67,292.93) .. (289.68,294.3) -- cycle ;
			\draw  [draw opacity=0] (382.76,317.74) .. controls (378.5,319.31) and (373.9,320.17) .. (369.1,320.17) .. controls (347.16,320.17) and (329.38,302.28) .. (329.38,280.21) .. controls (329.38,258.14) and (347.16,240.25) .. (369.1,240.25) .. controls (391.03,240.25) and (408.81,258.14) .. (408.81,280.21) .. controls (408.81,285.62) and (407.74,290.78) .. (405.81,295.48) -- (369.1,280.21) -- cycle ; \draw   (382.76,317.74) .. controls (378.5,319.31) and (373.9,320.17) .. (369.1,320.17) .. controls (347.16,320.17) and (329.38,302.28) .. (329.38,280.21) .. controls (329.38,258.14) and (347.16,240.25) .. (369.1,240.25) .. controls (391.03,240.25) and (408.81,258.14) .. (408.81,280.21) .. controls (408.81,285.62) and (407.74,290.78) .. (405.81,295.48) ;  
			\draw  [fill={rgb, 255:red, 17; green, 16; blue, 16 }  ,fill opacity=1 ] (380.29,317.76) .. controls (380.3,319.13) and (381.41,320.23) .. (382.78,320.22) .. controls (384.14,320.21) and (385.24,319.09) .. (385.23,317.72) .. controls (385.22,316.35) and (384.1,315.25) .. (382.74,315.26) .. controls (381.37,315.27) and (380.28,316.39) .. (380.29,317.76) -- cycle ;
			\draw  [fill={rgb, 255:red, 17; green, 16; blue, 16 }  ,fill opacity=1 ] (403.34,295.5) .. controls (403.35,296.87) and (404.46,297.97) .. (405.83,297.96) .. controls (407.19,297.95) and (408.29,296.83) .. (408.28,295.46) .. controls (408.27,294.09) and (407.15,292.99) .. (405.79,293) .. controls (404.42,293.01) and (403.33,294.13) .. (403.34,295.5) -- cycle ;
			\draw (132.25,222.53) node [anchor=north west][inner sep=0.75pt]    {$f$};
			\draw (181.18,273.49) node [anchor=north west][inner sep=0.75pt]    {$e$};
			\draw (296.64,273.99) node [anchor=north west][inner sep=0.75pt]    {$e$};
			\draw (245.63,220.36) node [anchor=north west][inner sep=0.75pt]    {$f$};
			\draw (363.45,227.08) node [anchor=north west][inner sep=0.75pt]    {$e$};
			\draw (350.11,323.11) node [anchor=north west][inner sep=0.75pt]    {$f$};					
		\end{tikzpicture}
		\centerline{$\epsilon (e,f) = 1$ \quad\qquad $\epsilon(e,f) = -1$ \quad\qquad $\epsilon (e,f) = 0$} 
		\caption{Cases when arrows $f$ and $e$ are linked positively, negatively, or are unlinked.}
		\label{fig7}
	\end{center}
\end{figure}

An element $g \in G - \{s, d\}$ is said to be \textit{annihilating} when ${\bf b}(g, h) = 0$ holds for all $h\in G$. An element $g\in G - \left\{s, d\right\}$ is termed a \textit{core element} if ${\bf b}(g, h)$ equals ${\bf b}(s, h)$ for each $h\in G$. Two elements $g_1, g_2\in G - \left\{s, d\right\}$ are said to be \textit{complementary} provided that ${\bf b}(g_1, h) + {\bf b}(g_2, h) = {\bf b}(s, h)$ for all $h\in G$. A distinguished element $g\in \left\{s, d\right\}$ is referred to as \textit{annihilating-like} if ${\bf b}(g, h) = 0$ for all $h\in G$. We term $d$ \textit{core-like} when ${\bf b}(d, h) = {\bf b}(s, h)$ for each $h\in G$.

Turaev first introduced elementary extension operations for based matrices in~\cite{Tur2}, and later Henrich defined similar operations $\widetilde{M}_1, \widetilde{M}_2, \widetilde{M}_3$ and $N$ in~\cite{Hen}. These operations are capable of transforming one SBM into another, and their specific definitions are as follows.

\begin{itemize}
	\item $\widetilde{M}_1$ transforms the SBM $(G, s, d, {\bf b} : G \times G \to H)$ into $(G_1 = G \sqcup \{g\}, s, d, {\bf b}_1 : G_1 \times G_1 \to H)$ for some $g$. Here, ${\bf b}_1$ is an extension of ${\bf b}$ such that  ${\bf b}_1(g, h) = 0$ for each $h \in G_1$. 
	\item $\widetilde{M}_2$ transforms $(G, s, d, {\bf b} : G \times G \to H)$ into $(G_2 = G \sqcup \{g\}, s, d, {\bf b}_2 : G_2 \times G_2 \to H)$ for some $g$. Here ${\bf b}_2$ is an extension ${\bf b}$ such that ${\bf b}_2(g, h) = {\bf b}_2(s, h)$ for all $h\in G_2.$ 
	\item $\widetilde{M}_3$ maps $(G, s, d, {\bf b} : G \times G \to H)$ to $(G_3 = G \sqcup \{g_i, g_j\}, s, d, {\bf b}_3 : G_3 \times G_3 \to H)$ for some $g_i$ and $g_j$. In this case, ${\bf b}_3$ is a skew-symmetric map that extends ${\bf b}$ and such that ${\bf b}_3(g_i, h) + {\bf b}_3(g_j, h) = {\bf b}_3(s, h)$ for all $h\in G_3$. 
	\item If there exists an element $g\in G$ such that ${\bf b}(g, h) + {\bf b}(d, h) = {\bf b}(s, h)$ for all $h\in G$ (i.e., $g$ and $d$ are complementary), then $N$ transforms $(G, s, d, {\bf b})$ into $(G, s, g, {\bf b})$. Essentially, this operation switches the roles of $g$ and $d$.
\end{itemize}
The operations $\widetilde{M}_1, \widetilde{M}_2, \widetilde{M}_3$ are known as \textit{elementary extensions}, while $N$ is known as a \textit{singularity switch}. For each $i=1,2,3$, we denote by $\widetilde{M}_i^{-1}$ the inverse operation for $\widetilde{M_i}$. 

\begin{definition} \cite[Def.~4.9]{Hen} \label{def:primitive} 
	{\rm 
		\begin{itemize}
			\item[(i)] Two SBMs $(G, s, d, {\bf b})$ and $(G', s', d', {\bf b}')$ are \textit{isomorphic} if there exists a bijection from $G$ to $G'$ that sends $s$ to $s'$, $d$ to $d'$, and transforms ${\bf b}$ into ${\bf b}'$. 
			\item[(ii)] An SBM $(G, s, d, {\bf b})$ is \textit{primitive} if it is impossible to obtain it from any other SBM via an elementary extension, even after performing the singularity switch operation. 
			\item[(iii)] Two SBMs are \textit{homologous} if one can be derived from the other through a finite sequence of moves, where each move is either $\widetilde{M}_1^{\pm 1}, \widetilde{M}_2^{\pm 1}, \widetilde{M}_3^{\pm 1}$ or $N$.
		\end{itemize}
	}
\end{definition}

\begin{theorem}\cite[Th.~4.12]{Hen}\label{th5.1}
	For any two homologous primitive SBMs, the second can be obtained from the first either by an isomorphism, or by combining an isomorphism with a single move of the type $\widetilde{M_1}^{-1} \circ N \circ \widetilde{M_2}$, $\widetilde{M_2}^{-1} \circ N \circ \widetilde{M_1}$, or $N$. 
\end{theorem}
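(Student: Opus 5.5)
We would prove the statement in the same spirit as Turaev's uniqueness theorem for primitive based matrices, adapted by Henrich. The key is a reduction lemma. Any SBM can be brought to a primitive one by successive inverse elementary extensions $\widetilde{M}_i^{-1}$, possibly interleaved with singularity switches $N$. Moreover, this primitive reduction is unique up to isomorphism and up to the listed ambiguous moves.

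Concretely, fix a homologous pair of primitive SBMs $P$ and $P'$, and a chain of moves connecting them. We would first normalize the chain so that all extensions precede all reductions. The tool is a commutation analysis: a reduction $\widetilde{M}_j^{-1}$ immediately following an extension $\widetilde{M}_i$ either undoes it, or acts on disjoint elements and can be swapped. In the latter case the deleted annihilating, core or complementary elements remain so after removing the other elements, because ${\bf b}$ only restricts.

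Switches $N$ must be commuted as well. An $N$ exchanges $d$ with an element $g$ complementary to $d$. When such a $g$ is itself created or destroyed by an adjacent $\widetilde{M}_3$, we cannot commute and must track the result. This is where the composite moves $\widetilde{M}_1^{-1}\circ N\circ \widetilde{M}_2$ and $\widetilde{M}_2^{-1}\circ N\circ \widetilde{M}_1$ arise. Suppose $d$ is annihilating-like, so ${\bf b}(d,\cdot)=0$. Then a core element $g$ is complementary to $d$, and switching $d$ with $g$ turns the old $d$ into an annihilating element that can be removed, and vice versa.

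After normalization we get $P\to Q\to P'$, where $Q$ is reached from $P$ by extensions (with switches) and $P'$ is reached from $Q$ by reductions (with switches). Both $P$ and $P'$ are then maximal reductions of $Q$. We would then prove a diamond (confluence) lemma. If $Q$ admits two different reductions, then either the two deleted sets can be matched by an isomorphism of the remainders, or they commute to a common further reduction. The standard check is that two annihilating elements, or two core elements, are interchangeable by an isomorphism, since their rows in ${\bf b}$ coincide. Likewise, in a complementary pair $g_1,g_2$ sharing an element with another pair, the leftover elements have equal rows. By induction on $|G_Q|$ this gives $P\cong P'$, except where $d$ itself participates: the annihilating-like/core-like $d$ situations or $d$ lying in a complementary pair. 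There the two reductions differ exactly by one of $N$, $\widetilde{M}_1^{-1}\circ N\circ\widetilde{M}_2$ or $\widetilde{M}_2^{-1}\circ N\circ\widetilde{M}_1$.

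The main obstacle is this last case analysis involving the distinguished element $d$. There we must show that at most one such non-isomorphism move is ever needed, rather than a composition of several. We would handle it by noting that, for a primitive SBM, $d$ can be complementary to at most one class of elements up to isomorphism; otherwise a complementary pair would exist, contradicting primitivity even after switching. The remaining verifications follow Turaev's argument verbatim.
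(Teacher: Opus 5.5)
The paper does not prove this statement; it quotes it from Henrich, whose proof adapts Turaev's uniqueness argument for primitive based matrices. Your ingredients are the right ones for that argument:
\begin{itemize}
\item the diamond lemma with induction on $|G|$;
\item the fact that two annihilating elements, two core elements, or the leftover members of two complementary pairs sharing an element have equal rows, and so are interchanged by an isomorphism;
\item the observation that an annihilating-like $d$ is complementary to any core element, which produces $\widetilde{M}_1^{-1}\circ N\circ\widetilde{M}_2$ and its mirror;
\item your unique-complement remark, which is correct: if $g$ and $h$ are both complementary to $d$, then after switching $d$ with $g$ the pair $d,h$ is a removable complementary pair, contradicting primitivity.
\end{itemize}

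The gap is in the normalization step, which is what connects an arbitrary chain to the diamond lemma. To reach $P\to Q\to P'$ with all extensions first, you must rewrite a reduction followed by an extension. You analyze the opposite pattern, a reduction following an extension, and swapping that pair moves the reduction earlier, i.e.\ toward the other normal form. Your justification (``$\mathbf b$ only restricts'') is valid only in that direction. In the direction you need, an extension done after deleting $X$ must be redone before $X$ is deleted. That requires inventing entries $\mathbf b(y,x)$, $x\in X$, so that $X$ stays annihilating, core or complementary, which restriction does not give you.

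With a switch in between, no reordering exists at all. Take the segment ``delete a complementary pair $X$, switch $d$ with $g$, add a complementary pair $Y$''. The switch need not be legal before the deletion, since $d,g$ need not be complementary over $X$. It need not be legal after the addition either: the entries $\mathbf b(y,d),\mathbf b(y,g),\mathbf b(y,s)$ are fixed, and $d,g$ need not be complementary over $Y$. So no ordering of these three moves works. Your dichotomy ``undoes it or is disjoint'' is also incomplete. For instance, $\widetilde{M}_3$ may add $\{g_1,g_2\}$ and $\widetilde{M}_3^{-1}$ then remove $\{g_1,h\}$; such overlaps close up only up to isomorphism, via the twin argument.

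The fix is to drop the normalization, which your first paragraph already makes unnecessary. For consecutive SBMs in the chain, one is obtained from the other by an elementary extension or a switch. Since $T$ is a reduction of $\widetilde{M}_i(T)$, and switches are allowed steps when reducing, the two share a primitive reduct. The theorem then rests on two facts:
\begin{itemize}
\item[(a)] the diamond lemma for the rewriting system of reductions \emph{and} switches together, whose failures to close up are exactly the cases where $d$ is annihilating-like, core-like, or has a complement;
\item[(b)] transitivity, on primitive SBMs, of ``isomorphic or related by one listed move''.
\end{itemize}

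For (b), your unique-complement remark needs two supplements. First, on a primitive SBM at most one of $N$, $\widetilde{M}_1^{-1}\circ N\circ\widetilde{M}_2$, $\widetilde{M}_2^{-1}\circ N\circ\widetilde{M}_1$ applies non-trivially: a complement of an annihilating-like $d$ would be a core element, and one of a core-like $d$ an annihilating element. Second, each of these moves is an involution up to isomorphism. Together these ensure any chain of them collapses to at most one move.
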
 

In~\cite{Pet}, it is shown that one can associate a based matrix to a long virtual string, and that two homotopic long virtual strings yield homologous based matrices. Naturally, we can associate a based matrix to a virtual open string, and two homotopic virtual open strings yield homologous based matrices. 

\begin{theorem} \cite{Pet} \label{th4.1} 
	If two singular virtual open strings are homotopic, then their associated SBMs are homologous.
\end{theorem}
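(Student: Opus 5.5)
The plan is to check that each elementary homotopy move on singular virtual open strings, namely (i), (ii), (iii) and (s-ii), changes the associated SBM only by a homology move. These moves are $\widetilde{M}_1^{\pm1}$, $\widetilde{M}_2^{\pm1}$, $\widetilde{M}_3^{\pm1}$, $N$, or an isomorphism. Homology is an equivalence relation generated by these moves, so the statement then follows by induction on the length of the sequence of moves. The model is Turaev's argument for virtual strings, Henrich's adaptation to the singular case, and Petit's version for long strings. I will mirror these, taking care that the core is now an arc, so arcs $ab$ on $S$ may ``wrap'' through the tail and head, as the remark after Rule 2 allows.

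\textbf{Move (i).} A new arrow $g=(a,b)$ is added with the arc $ab$ free of arrow endpoints. Its $1$-smoothing produces a component with no crossings with the other component, so ${\bf b}(g,s)=i(g)=0$. For any other arrow $f$, $ab\cdot cd=0$ and $\epsilon(g,f)=0$, so ${\bf b}(g,f)=0$. This is exactly $\widetilde M_1$. The reverse orientation of the arc, where the complementary arc is the short one, gives instead ${\bf b}(g,h)={\bf b}(s,h)$. That is the core-element case, $\widetilde M_2$.

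\textbf{Move (ii).} Adding the pair $(a,b),(b',a')$ with $aa'$ and $bb'$ free, the two new elements $g_1,g_2$ satisfy ${\bf b}(g_1,h)+{\bf b}(g_2,h)={\bf b}(s,h)$ for all $h$. I would verify this separately for $h=s$, $h=g_1,g_2$, and $h$ an old arrow. For $h=s$, the two smoothings partition the crossings so that intersection indices add to the total. For an old arrow $h$, the counts $ab\cdot cd$ are additive over the complementary arcs. This yields $\widetilde M_3$.

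\textbf{Move (iii).} The crossing data changes only locally, and the matrix entries among the three arrows involved, and between them and others, are preserved. Here I would use the fact that arrows with endpoints in the small arcs $aa'$, $bb'$, $cc'$ do not exist. This gives an isomorphism of SBMs, which also holds when one of the three arrows is the preferred arrow $d$.

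\textbf{Move (s-ii).} The hypothesis that $ab'$ and $a'b$ are free forces ${\bf b}(d,h)+{\bf b}(g,h)={\bf b}(s,h)$ for $g=(a',b')$. So switching the preferred arrow is precisely the singularity switch $N$.

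The main obstacle is the bookkeeping in Rule 2 for the open-arc setting. When the arc $ab$ contains the tail and head of the string, it is disconnected. I must check that the additivity identities for moves (ii) and (s-ii), and the linking term $\epsilon$, still behave as in the closed case. My first attempt would be to close the open string by an auxiliary virtual arc joining head to tail. I would then verify that the entries ${\bf b}$ coincide with those of Turaev/Henrich for the resulting closed virtual string, which is how Petit handles long strings. After that, the closed-case computations of \cite{Hen} apply verbatim, with the tail and head recorded as a fixed marked point that none of the moves cross.
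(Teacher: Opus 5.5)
Your proposal is correct. The paper gives no argument of its own for this statement: it cites Petit's theorem for long virtual strings and asserts that it carries over ``naturally'' to open strings. Your move-by-move plan is the Turaev--Henrich--Petit argument, so it matches what the paper relies on. The step you flag at the end, however, actually makes that plan unnecessary.

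In the paper, the arc $ab$ in Rule~2 runs from $a$ to $b$ along $S$, wrapping through head and tail when $b$ precedes $a$. With that convention, the component $\ell_1$ of the $1$-smoothing in Rule~1 (the right-hand one) is exactly the arc $ab$: it is the loop when $a$ precedes $b$, and the segment $[\text{tail},b]\cup[a,\text{head}]$ otherwise. By the conventions of Figs.~\ref{fig127} and~\ref{2.10}, $\operatorname{ind}=+1$ precisely when $\ell_1$ carries the tail of the crossing's arrow. Hence $i(e)=ab\cdot ba$, which is Turaev's $n(e)$ for the closed string obtained by joining head to tail. Rule~2 and $\epsilon$ depend only on the cyclic order of endpoints.

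So the SBM of an open string is literally the SBM of its closure. Each of (i), (ii), (iii), (s-ii) on the open string is a homotopy move of the closure (one avoiding the junction). Henrich's closed-case invariance theorem therefore gives the statement at once, with no case checks. This route yields a complete proof from Henrich alone. It also shows that, with these definitions, the SBM does not see where tail and head sit: it detects only the closure. The paper's citation leaves the transfer from Petit's long setting, and this convention issue, implicit.

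If you keep the direct verification, two sub-claims need tightening.

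In (ii), the case $h=s$ is $i(g_1)+i(g_2)=0=\mathbf{b}(s,s)$, not a sum ``to the total''. The case $h=g_1$ amounts to $\mathbf{b}(g_1,g_2)=i(g_1)$, and the sign of $\epsilon(g_1,g_2)$ is essential when the two new arrows are linked.

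In (iii), for each pair among the three moved arrows the linking status flips. For example, exactly one of the pairs $\{(a',b),(b',c)\}$ and $\{(a,b'),(b,c')\}$ is linked. This flip is compensated by the moved arrows' own endpoints entering the arcs. The absence of other endpoints in $aa'$, $bb'$, $cc'$ therefore does not by itself justify invariance of those entries.
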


%%%%%
\subsection{$\mathbf{P}_K(x,y)$ and the gluing invariant $\mathcal G(K)$ } \label{subsection4.2}

The gluing invariant was introduced by Henrich~\cite{Hen} as a universal Vassiliev invariant of order one for virtual knots. It was later extended to long virtual knots by Petit~\cite{Pet}, and thus naturally defines an invariant for virtual knotoids.

Let $K$ be a virtual knotoid with diagram $D$. For each classical crossing $c$ of $D$, let $D^{c}_{\operatorname{glue}}$ denote the diagram obtained by gluing the two strands at $c$, see Fig.~\ref{fig5}, and let $[\overline{D^{c}_{\operatorname{glue}}}]$ be the flat equivalence class of its shadow $\overline{D^{c}_{\operatorname{glue}}}$.
\begin{figure}[!ht]
	\begin{center}
		\tikzset{every picture/.style={line width=1.0pt}}  
		\begin{tikzpicture}[x=0.75pt,y=0.75pt,yscale=-1,xscale=1]
			\draw    (477.08,568.99) -- (421.47,623.44) ;
			\draw   (421.69,574.27) -- (420.99,568.92) -- (426.42,570) ;
			\draw   (471.93,570.14) -- (477.31,568.87) -- (476.81,574.24) ;
			\draw    (453.39,600.4) -- (478.8,624.6) ;
			\draw    (194.32,568.59) -- (171.68,590.72) ;
			\draw    (138.6,568.65) -- (194.43,622.98) ;
			\draw   (139.21,573.87) -- (138.51,568.53) -- (143.93,569.6) ;
			\draw   (189.17,569.74) -- (194.55,568.47) -- (194.05,573.84) ;
			\draw    (161.64,600.64) -- (138.71,623.04) ;
			\draw    (420.53,568.64) -- (444.61,591.98) ;
			\draw    (338.32,568.63) -- (282.71,623.08) ;
			\draw    (282.6,568.69) -- (338.43,623.02) ;
			\draw   (283.22,573.91) -- (282.51,568.57) -- (287.94,569.64) ;
			\draw   (333.17,569.78) -- (338.56,568.51) -- (338.06,573.88) ;
			\draw  [fill={rgb, 255:red, 14; green, 13; blue, 13 }  ,fill opacity=1 ] (305.42,595.86) .. controls (305.42,593.16) and (307.7,590.97) .. (310.52,590.97) .. controls (313.34,590.97) and (315.62,593.16) .. (315.62,595.86) .. controls (315.62,598.55) and (313.34,600.74) .. (310.52,600.74) .. controls (307.7,600.74) and (305.42,598.55) .. (305.42,595.86) -- cycle ;
			\draw    (395.67,593.89) -- (365.63,593.45) ;
			\draw   (367.79,590.85) -- (364.94,593.39) -- (367.72,596) ;
			\draw    (222.83,594.79) -- (252.88,594.35) ;
			\draw   (250.72,591.75) -- (253.56,594.3) -- (250.79,596.9) ;
			\draw (175.76,589.63) node [anchor=north west][inner sep=0.75pt]  [font=\large]  {$c$};
			\draw (157.23,631.27) node [anchor=north west][inner sep=0.75pt]    {$D$};
			\draw (439.6,631.27) node [anchor=north west][inner sep=0.75pt]    {$D$};
			\draw (294.28,631.04) node [anchor=north west][inner sep=0.75pt]    {$D^c_{\operatorname{glue}}$};
			\draw (458.76,589.63) node [anchor=north west][inner sep=0.75pt]  [font=\large]  {$c$};
		\end{tikzpicture}
		\caption{Gluing two strands in a classical crossing to form a singular crossing.} \label{fig5}
	\end{center}
\end{figure}

Let $D^{0}_{\operatorname{sing}}$ be the diagram obtained by introducing a kink via an $\Omega_1$-move and then replacing it with a singular crossing, see Fig.~\ref{fig6}. Denote by $[\overline{D^{0}_{\operatorname{sing}}}]$ the flat equivalence class of its shadow $\overline{D^{0}_{\operatorname{sing}}}$. This class is independent of the position of the kink.

\begin{figure}[!ht]
	\begin{center}
		\tikzset{every picture/.style={line width=1.0pt}}  
		\begin{tikzpicture}[x=0.75pt,y=0.75pt,yscale=-1,xscale=1]
			\draw    (336.04,178.26) .. controls (345.68,196.59) and (345.57,214.77) .. (335.97,232.89) ;
			\draw    (286.93,178.08) -- (335.97,232.89) ;
			\draw    (336.04,178.26) -- (314.94,201.05) ;
			\draw    (173.77,177.48) -- (173.77,232.88) ;
			\draw    (307.18,209.27) -- (286.37,231.73) ;
			\draw    (222.77,206.58) -- (247.85,206.22) ;
			\draw   (246.05,204.15) -- (248.42,206.18) -- (246.1,208.27) ;
			\draw    (481.46,177.46) .. controls (491.1,195.79) and (490.98,213.97) .. (481.39,232.09) ;
			\draw    (432.35,177.28) -- (481.39,232.09) ;
			\draw    (481.46,177.46) -- (460.36,200.25) ;
			\draw    (460.36,200.25) -- (431.79,230.93) ;
			\draw    (378.19,205.78) -- (403.26,205.42) ;
			\draw   (401.46,203.34) -- (403.84,205.38) -- (401.52,207.47) ;
			\draw   (170.93,181.1) -- (173.74,177.14) -- (176.69,180.91) ;
			\draw   (287.74,183.13) -- (287.12,178.37) -- (291.93,179.33) ;
			\draw   (432.55,181.63) -- (431.92,176.88) -- (436.73,177.83) ;
			\draw  [fill={rgb, 255:red, 14; green, 13; blue, 13 }  ,fill opacity=1 ] (451.77,204.68) .. controls (451.77,201.99) and (454.05,199.8) .. (456.87,199.8) .. controls (459.69,199.8) and (461.97,201.99) .. (461.97,204.68) .. controls (461.97,207.38) and (459.69,209.57) .. (456.87,209.57) .. controls (454.05,209.57) and (451.77,207.38) .. (451.77,204.68) -- cycle ;
			\draw (166.59,238.54) node [anchor=north west][inner sep=0.75pt]    {$D$};
			\draw (282.23,240.54) node [anchor=north west][inner sep=0.75pt]    {kink on $D$};
			\draw (445.4,236.04) node [anchor=north west][inner sep=0.75pt]    {$D^0_{\operatorname{sing}}$};
		\end{tikzpicture}
		\caption{Gluing in a kink crossing.} \label{fig6}
	\end{center}
\end{figure} 

\begin{definition}\cite[Def.~3.25]{Pet} {\rm 
		For a virtual knotoid diagram $D$, the \textit{gluing invariant} $\mathcal{G}(D)$ is defined by
		\begin{equation*}
			\mathcal{G} (D) = \sum_{c \in C(D)} \operatorname{sgn}(c) \, [\overline{D^c_{\operatorname{glue}}}] - \operatorname{wr}(D) \, [\overline{D^0_{\operatorname{sing}}}]
		\end{equation*}
		where the sum runs over the set C(D) of all classical crossings of $D$, and $\operatorname{wr}(D)$ is the writhe of $D$.
	}
\end{definition}	

A \textit{crossing change} is a local move that swaps the under- and over-strands at a given crossing in a diagram.
Two virtual knotoid diagrams are said to be \textit{homotopic} if they are related by a finite sequence of moves from $\textit{g}\mathcal{R}$ and crossing changes.

A Vassiliev invariant $\bold{U}_1$ of order one for virtual knotoids is called the \textit{universal invariant of order one} if, for any Vassiliev invariant $V$ of order one for virtual knotoids, $V$ can be recovered from $\bold{U}_1$ by using the first derivative of $V$ together with the values of $V$ on a set of representatives of the homotopy classes of virtual knotoids. 

\begin{theorem} \cite[Th.~3.26]{Pet} \label{th4.2}
	The gluing invariant $\mathcal{G}(D)$ is the universal Vassiliev invariant of order one for virtual knotoids.
\end{theorem}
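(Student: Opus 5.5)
The plan is to check three things: that $\mathcal G$ is an invariant, that it has order exactly one, and that it is universal. For invariance we go through the generalized Reidemeister moves $\textit{g}\mathcal{R}$ as in Theorem~\ref{th2.1}. Virtual moves, $\Omega_3^m$ and $\Omega_v$ do not touch classical crossings. They change a shadow $\overline{D^c_{\operatorname{glue}}}$ only by flat virtual moves, so its flat class is unchanged.
\begin{itemize}
\item Under $\Omega_1$, the new crossing $c^*$ contributes $\operatorname{sgn}(c^*)[\overline{D^{c^*}_{\operatorname{glue}}}]$. Its glued shadow is a kink glued into a singular crossing, i.e.\ $[\overline{D^0_{\operatorname{sing}}}]$. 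This is cancelled by the change $\operatorname{sgn}(c^*)$ in $\operatorname{wr}(D)$, which is multiplied by $[\overline{D^0_{\operatorname{sing}}}]$. Here we use that this class does not depend on where the kink sits.
\item Under $\Omega_2$, the two new crossings have opposite signs. Their glued shadows differ by a flat move in which the singular crossing slides across the remaining flat crossing, so their terms cancel and the writhe is unchanged.
\item Under $\Omega_3$, each $[\overline{D^{c_i}_{\operatorname{glue}}}]$ is carried to $[\overline{D'^{c'_i}_{\operatorname{glue}}}]$ by a flat $\Omega_3$ move involving the singular crossing.
\end{itemize}
In each of the last two items the flat move involving a singular crossing is exactly one of the moves (iii), (s-ii) for singular virtual open strings.

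For the order, resolve one singular crossing $p$ of a singular diagram $D$ into $D_\pm$. The other classical crossings have the same signs and the same glued shadows in both resolutions, because shadows forget over/under information. Only the term of $p$ and the writhe change, so
$$\mathcal G^{(1)}(D)=\mathcal G(D_+)-\mathcal G(D_-)=2[\overline{D}]-2[\overline{D^0_{\operatorname{sing}}}],$$
where $\overline D$ is the flat shadow with $p$ as singular crossing. This depends only on the shadow of $D$, so changing any other crossing does not affect it, and $\mathcal G^{(2)}=0$. For order exactly one it suffices to exhibit a flat singular knotoid with $[\overline D]\neq[\overline{D^0_{\operatorname{sing}}}]$. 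To distinguish them we use the SBM and Theorem~\ref{th4.1}: if the SBMs are non-homologous, the classes differ. A suitable choice is the gluing of $c_+$ in Example~\ref{exp2.1}, where $c_+$ is odd. By Theorem~\ref{th5.1} it is enough to compare the primitive forms of the two SBMs, and we expect the parity of ${\bf b}(d,s)$ to separate them.

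For universality, let $V$ be of order one with values in $A$. Since $V^{(2)}=0$, $V^{(1)}$ of a one-singular-crossing diagram is unchanged by crossing changes elsewhere. Hence it descends to a function $f$ on flat classes of singular knotoids with one singular crossing, and we extend it linearly. Moreover $f([\overline{D^0_{\operatorname{sing}}}])=0$, since both resolutions of a kink are related by $\Omega_1$ and so take the same value of $V$. Now let $D$ be a diagram and $D_h$ the fixed representative of its homotopy class. Using $\textit{g}\mathcal{R}$ moves we may assume $D_h$ has the same shadow as $D$, so $D_h$ is obtained from $D$ by changing a set $S$ of crossings one at a time. The order-one property then gives the telescoping formula
$$V(D)=V(D_h)+\sum_{c\in S}\operatorname{sgn}_D(c)\,f([\overline{D^c_{\operatorname{glue}}}]).$$
We also have
$$\mathcal G(D)-\mathcal G(D_h)=2\sum_{c\in S}\operatorname{sgn}_D(c)\bigl([\overline{D^c_{\operatorname{glue}}}]-[\overline{D^0_{\operatorname{sing}}}]\bigr).$$
So this sum, and hence $V(D)$, is read off from $\mathcal G(D)$ and $f$. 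We can divide by $2$ because $\mathcal G$ takes values in the free abelian group on flat classes, and applying $f$ kills the $[\overline{D^0_{\operatorname{sing}}}]$ term.

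The main obstacle is well-definedness in the universality step. The homotopy representative, and the way $D$ is matched with it, involve choices. We must show that the resulting expression for $V(D)$ does not depend on these choices, i.e.\ that different ways of reaching $D_h$ give the same value. The first thing I would try is to use the invariance of $\mathcal G$ already proved, together with $f$ being defined on flat classes, to reduce any two such paths to the same element $\mathcal G(D)-\mathcal G(D_h)$, exactly as in Petit's treatment of long virtual knots.
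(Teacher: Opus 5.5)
The paper does not prove this statement; it quotes it from Petit. Your sketch therefore has to stand on its own.

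The invariance, order-one and nontriviality parts are fine. For nontriviality you can skip Theorem~\ref{th5.1}. The quantity ${\bf b}(d,s)$ is untouched by $\widetilde M_i^{\pm1}$ and only changes sign under $N$, so its parity is a homology invariant. It is odd for the gluing of $c_+$ and $0$ for the kink, which separates the two classes.

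The genuine gap is in the universality step, at the sentence ``using $\textit{g}\mathcal{R}$ moves we may assume $D_h$ has the same shadow as $D$''. Read literally (replace $D_h$ by another diagram of the same virtual knotoid, keeping $D$ fixed), it is false. For example, $D$ may be crossingless while the chosen representative of its homotopy class is the knot-type trefoil knotoid, which is homotopic to the trivial knotoid by one crossing change but has no crossingless diagram. If instead you move both diagrams, you are using an unproved lemma: that homotopic virtual knotoids have diagrams with identical shadows. This does not follow by lifting a flat path from $\overline{D}$ to $\overline{D_h}$. A classical $\Omega_2$ deletion or $\Omega_3$ move is only available when the over/under data at the crossings involved is compatible, and forcing compatibility means inserting crossing changes in the middle of the path.

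The fix is to drop the reduction and telescope along an arbitrary sequence of $\textit{g}\mathcal{R}$ moves and crossing changes from $D$ to $D_h$.
\begin{itemize}
\item Moves change neither $V$ nor $\mathcal G$.
\item Suppose a step switches a crossing $x$ of an intermediate diagram $E$ from sign $\varepsilon$ to $-\varepsilon$. By exactly your computation of $\mathcal G^{(1)}$, $\mathcal G$ decreases by $2\varepsilon\bigl([\overline{E^x_{\operatorname{glue}}}]-[\overline{D^0_{\operatorname{sing}}}]\bigr)$ and $V$ decreases by $\varepsilon f([\overline{E^x_{\operatorname{glue}}}])$. All intermediate diagrams are homotopic to $D$, so the kink class is the same throughout.
\item Summing over the steps, and using $f([\overline{D^0_{\operatorname{sing}}}])=0$, gives $\mathcal G(D)-\mathcal G(D_h)=2X$ and $V(D)-V(D_h)=f(X)$ for one and the same $X$.
\item Since $\mathcal G$ takes values in a free abelian group, $X$ is determined by $\mathcal G(D)-\mathcal G(D_h)$. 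Hence $V(D)=V(D_h)+f\bigl(\tfrac12(\mathcal G(D)-\mathcal G(D_h))\bigr)$.
\end{itemize}

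This also disposes of what you call the main obstacle. The right-hand side involves only $\mathcal G(D)$, $\mathcal G(D_h)$, $V(D_h)$ and $f$, none of which depends on a path. So path-independence is automatic once the identity holds along one path; the real issue was the existence of a same-shadow pair, not well-definedness.

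When you define $f$, also say explicitly why it descends to flat classes. A flat equivalence of one-singular-crossing shadows lifts to singular moves interleaved with crossing changes: first change crossings so that each move becomes valid, then perform it. Those crossing changes are harmless for $V^{(1)}$ precisely because $V^{(2)}=0$.
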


\begin{theorem} \label{prop4.1}
	The gluing invariant $\mathcal{G}(K)$ is strictly stronger than $\mathbf{P}_K(x,y)$.
\end{theorem}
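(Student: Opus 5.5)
The plan has two parts: first show that $\mathbf{P}_K(x,y)$ is determined by $\mathcal{G}(K)$, and then exhibit a pair of virtual knotoids with equal $\mathbf{P}$ but different $\mathcal{G}$.

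For the first part, I would build an explicit map from $\mathcal{G}$ to $\mathbf{P}$. The idea is to assign to each flat class $[\overline{D^c_{\operatorname{glue}}}]$ of a flat singular virtual knotoid with one singular crossing a monomial, depending only on that class. The natural candidate is the intersection index $i(c)$ of the singular crossing, which by Rule~1 is the entry ${\bf b}(d,s)$ of the associated SBM. By Lemma~\ref{lm1} we have $W_D(c)=-i(c)$, and the parity of $c$ is the parity of $i(c)$ by Lemma~\ref{lm2}. So the summand $\operatorname{sgn}(c)\,i(c)\,x^{W_D(c)}$ (or the same with $y$) is $\operatorname{sgn}(c)$ times a function $\psi$ of ${\bf b}(d,s)$:
$$\psi(n)=n\,x^{-n}\ \text{for $n$ even},\qquad \psi(n)=n\,y^{-n}\ \text{for $n$ odd}.$$

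The key check is that ${\bf b}(d,s)$ is an invariant of the homology class of the SBM. The elementary extensions $\widetilde M_i^{\pm1}$ leave $d$ and ${\bf b}(d,s)$ unchanged. The singularity switch $N$ replaces $d$ by a complementary element $g$, and then ${\bf b}(g,s)={\bf b}(s,s)-{\bf b}(d,s)=-{\bf b}(d,s)$. Since $\psi(-n)=-n\,x^{n}\ne\psi(n)$ in general, I would instead use the symmetric quantity $n(x^{n}+x^{-n})$, or check how the ordering convention of Fig.~\ref{fig12} interacts with $N$. By Theorem~\ref{th4.1}, homotopic singular open strings give homologous SBMs, so a sign-corrected $\psi$ descends to a homomorphism $\Psi$ from the free abelian group on flat singular classes into $\mathbb Z[x^{\pm1},y^{\pm1}]$.

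The kink class $[\overline{D^0_{\operatorname{sing}}}]$ has index $0$, so $\Psi$ kills the writhe term. This gives $\Psi(\mathcal{G}(D))=\mathbf{P}_D(x,y)$, hence $\mathcal{G}(K_1)=\mathcal{G}(K_2)$ implies $\mathbf{P}_{K_1}=\mathbf{P}_{K_2}$. An alternative route is to invoke universality (Theorem~\ref{th4.2}) together with Theorem~\ref{thm3}. However, the stated universality needs the values of $\mathbf{P}$ on homotopy class representatives, so the direct map $\Psi$ is cleaner. I expect the main obstacle to be the $N$-move sign issue. If it genuinely fails, I would fall back to showing that the sign ambiguity cancels in the total sum, using the fact that $\mathbf{P}$ is already known to be invariant (Theorem~\ref{th2.1}).

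For strictness, I would reuse Example~\ref{exp3}, where $\mathbf{P}_{K_1}=\mathbf{P}_{K_2}=2x^2-y-y^{-1}$. I would compute $\mathcal{G}$ for both diagrams and show that some glued flat class occurs with different coefficients. Concretely, $c_3$ in $D_1$ and $c_2$ in $D_2$ both have $i=0$ but opposite signs, so each gives a term $\mp[\cdot]$. I would compute the SBMs of $\overline{D^{c}_{\operatorname{glue}}}$ via Rules~1 and~2, reduce them to primitive form, and use Theorem~\ref{th5.1} to show that the resulting classes differ from each other and from the kink class. This makes the coefficient vectors of $\mathcal{G}(K_1)$ and $\mathcal{G}(K_2)$ differ, so the two invariants are not equivalent and $\mathcal{G}$ is strictly stronger. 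The remaining work in this part is routine matrix computation.
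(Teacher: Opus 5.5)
Your first half has a genuine gap. No homomorphism $\Psi$ on flat singular classes can satisfy $\Psi(\mathcal{G}(D))=\mathbf{P}_D(x,y)$, so neither a re-symmetrized $\psi$ nor your fallback can rescue it.

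Mirror images show this at once. The diagram $m(D)$ has the same shadow as $D$, with every sign and the writhe reversed, so $\mathcal{G}(m(K))=-\mathcal{G}(K)$. Any additive $\Psi$ would therefore force $\mathbf{P}_{m(K)}=-\mathbf{P}_K$. But Proposition~\ref{prop3.1} gives $\mathbf{P}_{m(K)}(x,y)=\mathbf{P}_K(x^{-1},y^{-1})$, which for $K_1$ of Example~\ref{example3.0} equals $2x^{-2}-2y\neq -2x^2+2y^{-1}$.

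Summand by summand, the error is identifying $i(c)$ with ${\bf b}(d,s)$. Fig.~\ref{fig12} orders the smoothed components according to $\operatorname{sgn}(c)$. Rule~1, as used in Section~\ref{section4}, instead fixes the order from the shadow, so $i(c)=-\operatorname{sgn}(c)\,{\bf b}(d,s)$. For instance, the positive crossing $c_2$ of $D_1$ in Fig.~\ref{fig116} has $i(c_2)=2$ but ${\bf b}_{c_3}(c_2,s_{c_3})=-2$. So with $n={\bf b}(d,s)$ and $\epsilon=\operatorname{sgn}(c)$, the summand of $\mathbf{P}$ is $-n\,z^{\epsilon n}$, with $z=x$ or $y$ by parity. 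This is not $\epsilon$ times a function of the glued class. Incidentally, $n(x^n+x^{-n})$ is odd in $n$, so it is not $N$-invariant either.

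What does hold is the splitting $-n z^{\epsilon n}=\epsilon\cdot\tfrac n2\,(z^{-n}-z^{n})-\tfrac n2\,(z^{n}+z^{-n})$. Since $|{\bf b}(d,s)|$ is preserved by $\widetilde M_i^{\pm1}$, by $N$ and by isomorphisms, the first term, which is $\epsilon\,\psi(n)$ with $\psi$ even in $n$, descends to a well-defined $\Psi$. This $\Psi$ has rational coefficients and kills the kink class. The second term is independent of the crossing signs. Thus $\mathbf{P}_K=\Psi(\mathcal{G}(K))+F(K)$, where $F$ depends only on the homotopy class of $K$. It is nonzero in general: for $K_1$ of Example~\ref{example3.0}, $F=x^2+x^{-2}-y-y^{-1}$.

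This is exactly the dependence on homotopy-class representatives that you correctly spotted in the universality statement. The paper's one-line appeal to Theorem~\ref{th4.2} does not address it explicitly. The missing step is to show that $\mathcal{G}(K)$ determines $F(K)$. If $\mathcal{G}(K)\neq 0$, flattening the singular point of any class in its support recovers the homotopy class of $K$. The case $\mathcal{G}(K_1)=\mathcal{G}(K_2)=0$, however, still needs an argument.

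Your strictness plan uses a different pair from the paper. The paper takes the diagrams of Fig.~\ref{fig116}, both with $\mathbf{P}=0$, and separates two glued classes with $|{\bf b}(d,s)|=2$ via primitive SBMs. Your route is workable: by the invariance of $|{\bf b}(d,s)|$ you may compare only the parts of $\mathcal{G}$ supported on classes with $n=0$. Write $[\sigma_c]$ for the glued class at $c$ and $\kappa$ for the kink class. These parts are $-[\sigma_{c_3}]+2[\kappa]$ for $K_1$ and $[\sigma_{c_2}]$ for $K_2$, so it suffices to show that one of these two glued classes is not $\kappa$. That single SBM check is the real content, and you have not carried it out.
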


\begin{proof} 
	Firstly, since $\mathcal{G}$ is a universal Vassiliev invariant, by definition of universal, we directly obtain that if $\mathbf{P}_{K_1}(x,y) \neq \mathbf{P}_{K_2}(x,y)$ then $\mathcal{G}(K_1)\neq \mathcal{G}(K_2)$.
	
	Next, we demonstrate that there are two virtual knotoids $K_1$ and $K_2$ such that $\mathbf{P}_{K_1}(x,y)=\mathbf{P}_{K_2}(x,y)$, whereas $\mathcal{G}(K_1)\neq \mathcal{G}(K_2)$. Let $D_1$ and $D_2$ be diagrams of two oriented virtual knotoids $K_1$ and $K_2$ shown in Fig.~\ref{fig116}, differing by crossing changes at crossings $c_3$ and $c_4$. We show that $\mathbf{P}_{K_1}(x,y)=\mathbf{P}_{K_2}(x,y)$, whereas $\mathcal{G}(K_1)\neq \mathcal{G}(K_2)$.
	\begin{figure}[htbp]
		\begin{center}
			\tikzset{every picture/.style={line width=1.0pt}}  
			\begin{tikzpicture}[x=0.75pt,y=0.75pt,yscale=-0.95,xscale=0.95]
				\draw   (213.6,55.19) -- (220.48,59.01) -- (213.6,62.83) ;
				\draw   (492.62,57.74) -- (499.5,61.56) -- (492.62,65.38) ;
				\draw    (61.91,98.42) .. controls (71.12,56.05) and (89.79,120.05) .. (104.45,96.71) ;
				\draw    (132.3,114.92) .. controls (108.51,107.31) and (61.83,120.89) .. (81.12,97.38) ;
				\draw    (87.58,88.76) .. controls (109.12,61.38) and (108.79,127.71) .. (140.45,89.38) ;
				\draw    (104.45,96.71) .. controls (117.12,71.38) and (133.12,82.71) .. (135.45,87.05) ;
				\draw    (140.45,89.38) .. controls (176.45,47) and (238.45,57.67) .. (266.78,68.8) ;
				\draw    (142.58,95.76) .. controls (145.12,102.38) and (162.03,99.12) .. (163.36,100.45) ;
				\draw    (165,88.17) .. controls (195.71,71.88) and (203.37,88.05) .. (205.93,90.05) ;
				\draw    (205.93,90.05) .. controls (224.88,117.71) and (230.12,68.89) .. (249.12,85.56) ;
				\draw    (61.91,98.42) .. controls (54.04,135.05) and (140.71,170.17) .. (200.3,93.38) ;
				\draw    (209.52,86.05) .. controls (226.93,68.05) and (238.71,123.05) .. (253.05,88.38) ;
				\draw    (256.79,92.23) .. controls (264.79,98.89) and (264.45,104.89) .. (276.58,90.85) ;
				\draw    (253.05,88.38) .. controls (270.3,59.59) and (315.08,137.88) .. (220.79,130.17) ;
				\draw    (266.78,68.8) .. controls (279.94,75.03) and (284.23,83.87) .. (276.58,90.85) ;
				\draw    (132.3,114.92) .. controls (170.15,125.45) and (214.87,129.54) .. (220.79,130.17) ;
				\draw   (113.49,90.11) .. controls (113.54,93.1) and (111.16,95.57) .. (108.16,95.62) .. controls (105.17,95.67) and (102.7,93.29) .. (102.65,90.3) .. controls (102.6,87.3) and (104.98,84.84) .. (107.97,84.78) .. controls (110.97,84.73) and (113.43,87.12) .. (113.49,90.11) -- cycle ;
				\draw   (174.2,122.8) .. controls (174.26,125.79) and (171.87,128.26) .. (168.88,128.31) .. controls (165.89,128.36) and (163.42,125.98) .. (163.37,122.99) .. controls (163.32,120) and (165.7,117.53) .. (168.69,117.48) .. controls (171.68,117.42) and (174.15,119.81) .. (174.2,122.8) -- cycle ;
				\draw   (281.03,91.33) .. controls (281.08,94.32) and (278.7,96.79) .. (275.71,96.84) .. controls (272.71,96.89) and (270.25,94.51) .. (270.19,91.52) .. controls (270.14,88.52) and (272.53,86.06) .. (275.52,86) .. controls (278.51,85.95) and (280.98,88.34) .. (281.03,91.33) -- cycle ;
				\draw   (234.53,89.38) .. controls (234.59,92.38) and (232.2,94.84) .. (229.21,94.9) .. controls (226.22,94.95) and (223.75,92.56) .. (223.7,89.57) .. controls (223.65,86.58) and (226.03,84.11) .. (229.02,84.06) .. controls (232.01,84.01) and (234.48,86.39) .. (234.53,89.38) -- cycle ;
				\draw  [fill={rgb, 255:red, 13; green, 13; blue, 13 }  ,fill opacity=1 ] (161.71,100.45) .. controls (161.71,99.54) and (162.45,98.8) .. (163.36,98.8) .. controls (164.28,98.8) and (165.02,99.54) .. (165.02,100.45) .. controls (165.02,101.37) and (164.28,102.11) .. (163.36,102.11) .. controls (162.45,102.11) and (161.71,101.37) .. (161.71,100.45) -- cycle ;
				\draw  [fill={rgb, 255:red, 13; green, 13; blue, 13 }  ,fill opacity=1 ] (163.35,88.17) .. controls (163.35,87.25) and (164.09,86.51) .. (165,86.51) .. controls (165.92,86.51) and (166.66,87.25) .. (166.66,88.17) .. controls (166.66,89.08) and (165.92,89.83) .. (165,89.83) .. controls (164.09,89.83) and (163.35,89.08) .. (163.35,88.17) -- cycle ;
				\draw    (341.91,101.09) .. controls (351.12,58.71) and (369.79,122.71) .. (384.45,99.38) ;
				\draw    (412.3,117.59) .. controls (388.51,109.98) and (341.83,123.56) .. (361.12,100.05) ;
				\draw    (367.58,91.42) .. controls (389.12,64.05) and (388.79,130.38) .. (420.45,92.05) ;
				\draw    (384.45,99.38) .. controls (397.12,74.05) and (413.12,85.38) .. (415.45,89.71) ;
				\draw    (420.45,92.05) .. controls (456.45,49.67) and (518.45,60.33) .. (546.78,71.46) ;
				\draw    (422.58,98.42) .. controls (425.12,105.05) and (442.03,101.79) .. (443.36,103.12) ;
				\draw    (445,90.84) .. controls (467.12,75.95) and (479.89,86.95) .. (482.45,88.95) ;
				\draw    (488.79,96.29) .. controls (506.79,115.95) and (509.12,74.62) .. (524.81,85.71) ;
				\draw    (341.91,101.09) .. controls (334.04,137.71) and (425.2,171.41) .. (484.79,94.62) ;
				\draw    (484.79,94.62) .. controls (508.12,58.95) and (511.79,121.56) .. (529.12,96.23) ;
				\draw    (524.81,85.71) .. controls (536.79,93.95) and (539.12,115.95) .. (556.58,93.52) ;
				\draw    (535.45,88.56) .. controls (552.45,68.23) and (595.08,140.55) .. (500.79,132.84) ;
				\draw    (546.78,71.46) .. controls (559.94,77.69) and (564.23,86.53) .. (556.58,93.52) ;
				\draw    (412.3,117.59) .. controls (450.15,128.12) and (494.87,132.2) .. (500.79,132.84) ;
				\draw   (393.49,92.77) .. controls (393.54,95.77) and (391.16,98.23) .. (388.16,98.29) .. controls (385.17,98.34) and (382.7,95.96) .. (382.65,92.96) .. controls (382.6,89.97) and (384.98,87.5) .. (387.97,87.45) .. controls (390.97,87.4) and (393.43,89.78) .. (393.49,92.77) -- cycle ;
				\draw   (455.76,125.69) .. controls (455.81,128.68) and (453.43,131.15) .. (450.44,131.2) .. controls (447.44,131.25) and (444.98,128.87) .. (444.92,125.88) .. controls (444.87,122.88) and (447.25,120.42) .. (450.25,120.36) .. controls (453.24,120.31) and (455.71,122.7) .. (455.76,125.69) -- cycle ;
				\draw   (561.03,93.99) .. controls (561.08,96.99) and (558.7,99.45) .. (555.71,99.51) .. controls (552.71,99.56) and (550.25,97.18) .. (550.19,94.18) .. controls (550.14,91.19) and (552.53,88.72) .. (555.52,88.67) .. controls (558.51,88.62) and (560.98,91) .. (561.03,93.99) -- cycle ;
				\draw   (514.53,92.05) .. controls (514.59,95.04) and (512.2,97.51) .. (509.21,97.56) .. controls (506.22,97.61) and (503.75,95.23) .. (503.7,92.24) .. controls (503.65,89.25) and (506.03,86.78) .. (509.02,86.73) .. controls (512.01,86.67) and (514.48,89.06) .. (514.53,92.05) -- cycle ;
				\draw  [fill={rgb, 255:red, 13; green, 13; blue, 13 }  ,fill opacity=1 ] (441.71,103.12) .. controls (441.71,102.21) and (442.45,101.46) .. (443.36,101.46) .. controls (444.28,101.46) and (445.02,102.21) .. (445.02,103.12) .. controls (445.02,104.04) and (444.28,104.78) .. (443.36,104.78) .. controls (442.45,104.78) and (441.71,104.04) .. (441.71,103.12) -- cycle ;
				\draw  [fill={rgb, 255:red, 13; green, 13; blue, 13 }  ,fill opacity=1 ] (443.35,90.84) .. controls (443.35,89.92) and (444.09,89.18) .. (445,89.18) .. controls (445.92,89.18) and (446.66,89.92) .. (446.66,90.84) .. controls (446.66,91.75) and (445.92,92.49) .. (445,92.49) .. controls (444.09,92.49) and (443.35,91.75) .. (443.35,90.84) -- cycle ;
				\draw (75,71) node [anchor=north west][inner sep=0.75pt]    {$c_1$};
				\draw (133,71) node [anchor=north west][inner sep=0.75pt]    {$c_2$};
				\draw (199.33,71) node [anchor=north west][inner sep=0.75pt]    {$c_3$};
				\draw (245.67,71) node [anchor=north west][inner sep=0.75pt]    {$c_4$};
				\draw (356.67,71) node [anchor=north west][inner sep=0.75pt]    {$c_1$};
				\draw (413,71) node [anchor=north west][inner sep=0.75pt]    {$c_2$};
				\draw (479.33,71) node [anchor=north west][inner sep=0.75pt]    {$c_3$};
				\draw (525.67,71) node [anchor=north west][inner sep=0.75pt]    {$c_4$};
				\draw (159.11,141.27) node [anchor=north west][inner sep=0.75pt]    {$D_1$};
				\draw (441.33,140.38) node [anchor=north west][inner sep=0.75pt]    {$D_2$};
			\end{tikzpicture}
			\caption{Example illustrating that $\mathcal{G}$ is strictly stronger than $\mathbf{P}(x,y)$.} \label{fig116}
		\end{center}
	\end{figure} 
	
	We first compute $\mathbf{P}_{K_1}(x,y)$. It follows from Fig.~\ref{fig116} that $\operatorname{sgn}(c_1)=\operatorname{sgn}(c_3)=-1$, $\operatorname{sgn}(c_2)=\operatorname{sgn}(c_4)=1$, then $\operatorname{wr}(D)=0$. And $i(c_1)=i(c_4)=-2$, $i(c_2)=i(c_3)=2$. 
	\begin{figure}[!ht]
		\begin{center}
			\tikzset{every picture/.style={line width=1.0pt}}  
			\begin{tikzpicture}[x=0.75pt,y=0.75pt,yscale=-0.95,xscale=0.95]	
				\draw  [color={rgb, 255:red, 248; green, 72; blue, 61 }  ,draw opacity=1 ]  (95.61,71.69) .. controls (93.04,77.69) and (100.75,88.26) .. (106.45,79.38) ;
				\draw    (134.3,97.59) .. controls (110.51,89.98) and (63.83,103.56) .. (83.12,80.05) ;
				\draw  [color={rgb, 255:red, 248; green, 72; blue, 61 }  ,draw opacity=1 ]   (95.61,71.69) .. controls (108.75,45.12) and (110.79,110.38) .. (142.45,72.05) ;
				\draw  [color={rgb, 255:red, 248; green, 72; blue, 61 }  ,draw opacity=1 ]  (106.45,79.38) .. controls (119.12,54.05) and (135.12,65.38) .. (137.45,69.71) ;
				\draw  [color={rgb, 255:red, 248; green, 72; blue, 61 }  ,draw opacity=1 ]   (142.45,72.05) .. controls (178.45,29.67) and (240.45,40.33) .. (268.78,51.46) ;
				\draw   [color={rgb, 255:red, 248; green, 72; blue, 61 }  ,draw opacity=1 ]   (144.58,78.42) .. controls (147.12,85.05) and (164.03,81.79) .. (165.36,83.12) ;
				\draw    [color={rgb, 255:red, 248; green, 72; blue, 61 }  ,draw opacity=1 ]  (167,70.84) .. controls (197.71,54.55) and (205.37,70.71) .. (207.93,72.71) ;
				\draw   [color={rgb, 255:red, 248; green, 72; blue, 61 }  ,draw opacity=1 ]   (207.93,72.71) .. controls (226.88,100.38) and (232.12,51.56) .. (251.12,68.23) ;
				\draw    (63.91,81.09) .. controls (56.04,117.71) and (142.71,152.84) .. (202.3,76.05) ;
				\draw    (211.52,68.71) .. controls (228.93,50.71) and (240.71,105.71) .. (255.05,71.05) ;
				\draw  [color={rgb, 255:red, 248; green, 72; blue, 61 }  ,draw opacity=1 ]   (258.79,74.89) .. controls (266.79,81.56) and (266.45,87.56) .. (278.58,73.52) ;
				\draw    (255.05,71.05) .. controls (272.3,42.25) and (317.08,120.55) .. (222.79,112.84) ;
				\draw   [color={rgb, 255:red, 248; green, 72; blue, 61 }  ,draw opacity=1 ]   (268.78,51.46) .. controls (281.94,57.69) and (286.23,66.53) .. (278.58,73.52) ;
				\draw    (134.3,97.59) .. controls (172.15,108.12) and (216.87,112.2) .. (222.79,112.84) ;
				\draw   [color={rgb, 255:red, 248; green, 72; blue, 61 }  ,draw opacity=1 ]  (115.49,72.77) .. controls (115.54,75.77) and (113.16,78.23) .. (110.16,78.29) .. controls (107.17,78.34) and (104.7,75.96) .. (104.65,72.96) .. controls (104.6,69.97) and (106.98,67.5) .. (109.97,67.45) .. controls (112.97,67.4) and (115.43,69.78) .. (115.49,72.77) -- cycle ;
				\draw   (176.2,105.47) .. controls (176.26,108.46) and (173.87,110.93) .. (170.88,110.98) .. controls (167.89,111.03) and (165.42,108.65) .. (165.37,105.65) .. controls (165.32,102.66) and (167.7,100.19) .. (170.69,100.14) .. controls (173.68,100.09) and (176.15,102.47) .. (176.2,105.47) -- cycle ;
				\draw   (283.03,73.99) .. controls (283.08,76.99) and (280.7,79.45) .. (277.71,79.51) .. controls (274.71,79.56) and (272.25,77.18) .. (272.19,74.18) .. controls (272.14,71.19) and (274.53,68.72) .. (277.52,68.67) .. controls (280.51,68.62) and (282.98,71) .. (283.03,73.99) -- cycle ;
				\draw   (236.53,72.05) .. controls (236.59,75.04) and (234.2,77.51) .. (231.21,77.56) .. controls (228.22,77.61) and (225.75,75.23) .. (225.7,72.24) .. controls (225.65,69.25) and (228.03,66.78) .. (231.02,66.73) .. controls (234.01,66.67) and (236.48,69.06) .. (236.53,72.05) -- cycle ;
				\draw  [color={rgb, 255:red, 248; green, 72; blue, 61 }  ,draw opacity=1 ][fill={rgb, 255:red, 248; green, 72; blue, 61 }  ,fill opacity=1 ] (163.71,83.12) .. controls (163.71,82.21) and (164.45,81.46) .. (165.36,81.46) .. controls (166.28,81.46) and (167.02,82.21) .. (167.02,83.12) .. controls (167.02,84.04) and (166.28,84.78) .. (165.36,84.78) .. controls (164.45,84.78) and (163.71,84.04) .. (163.71,83.12) -- cycle ;
				\draw  [color={rgb, 255:red, 248; green, 72; blue, 61 }  ,draw opacity=1 ][fill={rgb, 255:red, 248; green, 72; blue, 61 }  ,fill opacity=1 ] (165.35,70.84) .. controls (165.35,69.92) and (166.09,69.18) .. (167,69.18) .. controls (167.92,69.18) and (168.66,69.92) .. (168.66,70.84) .. controls (168.66,71.75) and (167.92,72.49) .. (167,72.49) .. controls (166.09,72.49) and (165.35,71.75) .. (165.35,70.84) -- cycle ;
				\draw   (215.6,37.85) -- (222.48,41.67) -- (215.6,45.49) ;
				\draw    (60.91,185.42) .. controls (70.12,143.05) and (88.79,207.05) .. (103.45,183.71) ;
				\draw [color={rgb, 255:red, 248; green, 72; blue, 61 }  ,draw opacity=1 ]   (131.3,201.92) .. controls (107.51,194.31) and (60.83,207.89) .. (80.12,184.38) ;
				\draw [color={rgb, 255:red, 248; green, 72; blue, 61 }  ,draw opacity=1 ]   (86.58,175.76) .. controls (108.12,148.38) and (107.79,214.71) .. (139.45,176.38) ;
				\draw    (103.45,183.71) .. controls (116.12,158.38) and (132.12,169.71) .. (134.45,174.05) ;
				\draw [color={rgb, 255:red, 248; green, 72; blue, 61 }  ,draw opacity=1 ]   (139.45,176.38) .. controls (175.45,134) and (237.45,144.67) .. (265.78,155.8) ;
				\draw    (141.58,182.76) .. controls (144.12,189.38) and (161.03,186.12) .. (162.36,187.45) ;
				\draw    (164,175.17) .. controls (194.71,158.88) and (196.96,175.58) .. (199.3,180.38) ;
				\draw [color={rgb, 255:red, 248; green, 72; blue, 61 }  ,draw opacity=1 ]   (212.68,174.73) .. controls (219.82,208.16) and (229.12,155.89) .. (248.12,172.56) ;
				\draw    (60.91,185.42) .. controls (57.53,242.75) and (217.25,235.01) .. (199.3,180.38) ;
				\draw [color={rgb, 255:red, 248; green, 72; blue, 61 }  ,draw opacity=1 ]   (212.68,174.73) .. controls (210.1,149.58) and (245.82,203.58) .. (252.05,175.38) ;
				\draw [color={rgb, 255:red, 248; green, 72; blue, 61 }  ,draw opacity=1 ]   (255.79,179.23) .. controls (263.79,185.89) and (263.45,191.89) .. (275.58,177.85) ;
				\draw [color={rgb, 255:red, 248; green, 72; blue, 61 }  ,draw opacity=1 ]   (252.05,175.38) .. controls (269.3,146.59) and (314.08,224.88) .. (219.79,217.17) ;
				\draw [color={rgb, 255:red, 248; green, 72; blue, 61 }  ,draw opacity=1 ]   (265.78,155.8) .. controls (278.94,162.03) and (283.23,170.87) .. (275.58,177.85) ;
				\draw [color={rgb, 255:red, 248; green, 72; blue, 61 }  ,draw opacity=1 ]   (131.3,201.92) .. controls (169.15,212.45) and (213.87,216.54) .. (219.79,217.17) ;
				\draw   (112.49,177.11) .. controls (112.54,180.1) and (110.16,182.57) .. (107.16,182.62) .. controls (104.17,182.67) and (101.7,180.29) .. (101.65,177.3) .. controls (101.6,174.3) and (103.98,171.84) .. (106.97,171.78) .. controls (109.97,171.73) and (112.43,174.12) .. (112.49,177.11) -- cycle ;
				\draw   (189.72,212.43) .. controls (189.77,215.42) and (187.39,217.89) .. (184.39,217.94) .. controls (181.4,217.99) and (178.93,215.61) .. (178.88,212.62) .. controls (178.83,209.62) and (181.21,207.16) .. (184.21,207.1) .. controls (187.2,207.05) and (189.67,209.44) .. (189.72,212.43) -- cycle ;
				\draw  [color={rgb, 255:red, 248; green, 72; blue, 61 }  ,draw opacity=1 ] (280.03,178.33) .. controls (280.08,181.32) and (277.7,183.79) .. (274.71,183.84) .. controls (271.71,183.89) and (269.25,181.51) .. (269.19,178.52) .. controls (269.14,175.52) and (271.53,173.06) .. (274.52,173) .. controls (277.51,172.95) and (279.98,175.34) .. (280.03,178.33) -- cycle ;
				\draw  [color={rgb, 255:red, 248; green, 72; blue, 61 }  ,draw opacity=1 ] (233.53,176.38) .. controls (233.59,179.38) and (231.2,181.84) .. (228.21,181.9) .. controls (225.22,181.95) and (222.75,179.56) .. (222.7,176.57) .. controls (222.65,173.58) and (225.03,171.11) .. (228.02,171.06) .. controls (231.01,171.01) and (233.48,173.39) .. (233.53,176.38) -- cycle ;
				\draw  [fill={rgb, 255:red, 13; green, 13; blue, 13 }  ,fill opacity=1 ] (160.71,187.45) .. controls (160.71,186.54) and (161.45,185.8) .. (162.36,185.8) .. controls (163.28,185.8) and (164.02,186.54) .. (164.02,187.45) .. controls (164.02,188.37) and (163.28,189.11) .. (162.36,189.11) .. controls (161.45,189.11) and (160.71,188.37) .. (160.71,187.45) -- cycle ;
				\draw  [fill={rgb, 255:red, 13; green, 13; blue, 13 }  ,fill opacity=1 ] (162.35,175.17) .. controls (162.35,174.25) and (163.09,173.51) .. (164,173.51) .. controls (164.92,173.51) and (165.66,174.25) .. (165.66,175.17) .. controls (165.66,176.08) and (164.92,176.83) .. (164,176.83) .. controls (163.09,176.83) and (162.35,176.08) .. (162.35,175.17) -- cycle ;
				\draw   (212.6,142.19) -- (219.48,146.01) -- (212.6,149.83) ;
				\draw [color={rgb, 255:red, 248; green, 72; blue, 61 }  ,draw opacity=1 ]   (363.7,81.23) .. controls (372.91,38.85) and (391.58,102.85) .. (406.24,79.52) ;
				\draw [color={rgb, 255:red, 248; green, 72; blue, 61 }  ,draw opacity=1 ]  (434.09,97.73) .. controls (410.29,90.12) and (363.62,103.69) .. (382.91,80.18) ;
				\draw [color={rgb, 255:red, 248; green, 72; blue, 61 }  ,draw opacity=1 ]   (389.36,71.56) .. controls (410.91,44.18) and (409.58,106.83) .. (437.24,69.85) ;
				\draw [color={rgb, 255:red, 248; green, 72; blue, 61 }  ,draw opacity=1 ]   (406.24,79.52) .. controls (418.91,54.18) and (444.44,61.69) .. (437.24,69.85) ;
				\draw    (447.58,72.26) .. controls (483.58,29.88) and (540.24,40.47) .. (568.57,51.6) ;
				\draw    (447.58,72.26) .. controls (440.64,81.76) and (463.82,81.93) .. (465.15,83.26) ;
				\draw    (466.79,70.97) .. controls (497.5,54.69) and (505.16,70.85) .. (507.72,72.85) ;
				\draw    (507.72,72.85) .. controls (526.67,100.52) and (531.91,51.7) .. (550.91,68.37) ;
				\draw [color={rgb, 255:red, 248; green, 72; blue, 61 }  ,draw opacity=1 ]   (363.7,81.23) .. controls (355.83,117.85) and (442.49,152.97) .. (502.09,76.18) ;
				\draw [color={rgb, 255:red, 248; green, 72; blue, 61 }  ,draw opacity=1 ]   (511.31,68.85) .. controls (528.72,50.85) and (540.5,105.85) .. (554.84,71.18) ;
				\draw    (558.58,75.03) .. controls (566.58,81.7) and (566.24,87.7) .. (578.37,73.65) ;
				\draw [color={rgb, 255:red, 248; green, 72; blue, 61 }  ,draw opacity=1 ]   (554.84,71.18) .. controls (572.09,42.39) and (616.87,120.69) .. (522.58,112.97) ;
				\draw    (568.57,51.6) .. controls (581.72,57.83) and (586.02,66.67) .. (578.37,73.65) ;
				\draw [color={rgb, 255:red, 248; green, 72; blue, 61 }  ,draw opacity=1 ]   (434.09,97.73) .. controls (471.94,108.26) and (516.66,112.34) .. (522.58,112.97) ;
				\draw  [color={rgb, 255:red, 248; green, 72; blue, 61 }  ,draw opacity=1 ] (415.27,72.91) .. controls (415.33,75.9) and (412.94,78.37) .. (409.95,78.42) .. controls (406.96,78.48) and (404.49,76.09) .. (404.44,73.1) .. controls (404.39,70.11) and (406.77,67.64) .. (409.76,67.59) .. controls (412.75,67.54) and (415.22,69.92) .. (415.27,72.91) -- cycle ;
				\draw  [color={rgb, 255:red, 248; green, 72; blue, 61 }  ,draw opacity=1 ] (475.99,105.6) .. controls (476.04,108.6) and (473.66,111.06) .. (470.67,111.12) .. controls (467.68,111.17) and (465.21,108.78) .. (465.16,105.79) .. controls (465.1,102.8) and (467.49,100.33) .. (470.48,100.28) .. controls (473.47,100.23) and (475.94,102.61) .. (475.99,105.6) -- cycle ;
				\draw   (582.82,74.13) .. controls (582.87,77.12) and (580.49,79.59) .. (577.49,79.64) .. controls (574.5,79.7) and (572.03,77.31) .. (571.98,74.32) .. controls (571.93,71.33) and (574.31,68.86) .. (577.31,68.81) .. controls (580.3,68.76) and (582.76,71.14) .. (582.82,74.13) -- cycle ;
				\draw   (536.32,72.19) .. controls (536.37,75.18) and (533.99,77.65) .. (531,77.7) .. controls (528.01,77.75) and (525.54,75.37) .. (525.49,72.38) .. controls (525.43,69.38) and (527.82,66.92) .. (530.81,66.86) .. controls (533.8,66.81) and (536.27,69.2) .. (536.32,72.19) -- cycle ;
				\draw  [fill={rgb, 255:red, 13; green, 13; blue, 13 }  ,fill opacity=1 ] (463.49,83.26) .. controls (463.49,82.34) and (464.24,81.6) .. (465.15,81.6) .. controls (466.07,81.6) and (466.81,82.34) .. (466.81,83.26) .. controls (466.81,84.17) and (466.07,84.92) .. (465.15,84.92) .. controls (464.24,84.92) and (463.49,84.17) .. (463.49,83.26) -- cycle ;
				\draw  [fill={rgb, 255:red, 13; green, 13; blue, 13 }  ,fill opacity=1 ] (465.13,70.97) .. controls (465.13,70.06) and (465.88,69.32) .. (466.79,69.32) .. controls (467.71,69.32) and (468.45,70.06) .. (468.45,70.97) .. controls (468.45,71.89) and (467.71,72.63) .. (466.79,72.63) .. controls (465.88,72.63) and (465.13,71.89) .. (465.13,70.97) -- cycle ;
				\draw   (515.39,37.99) -- (522.27,41.81) -- (515.39,45.63) ;
				\draw [color={rgb, 255:red, 248; green, 72; blue, 61 }  ,draw opacity=1 ]   (360.7,185.56) .. controls (369.91,143.18) and (388.58,207.18) .. (403.24,183.85) ;
				\draw    (431.09,202.06) .. controls (407.29,194.45) and (360.62,208.03) .. (379.91,184.52) ;
				\draw    (386.36,175.9) .. controls (407.91,148.52) and (407.58,214.85) .. (439.24,176.52) ;
				\draw [color={rgb, 255:red, 248; green, 72; blue, 61 }  ,draw opacity=1 ]   (403.24,183.85) .. controls (415.91,158.52) and (431.91,169.85) .. (434.24,174.18) ;
				\draw    (439.24,176.52) .. controls (475.24,134.14) and (537.24,144.8) .. (565.57,155.93) ;
				\draw [color={rgb, 255:red, 248; green, 72; blue, 61 }  ,draw opacity=1 ]   (441.36,182.9) .. controls (443.91,189.52) and (460.82,186.26) .. (462.15,187.59) ;
				\draw [color={rgb, 255:red, 248; green, 72; blue, 61 }  ,draw opacity=1 ]   (463.79,175.31) .. controls (494.5,159.02) and (502.16,175.18) .. (504.72,177.18) ;
				\draw [color={rgb, 255:red, 248; green, 72; blue, 61 }  ,draw opacity=1 ]   (504.72,177.18) .. controls (523.67,204.85) and (541.95,146.44) .. (547.95,175.58) ;
				\draw [color={rgb, 255:red, 248; green, 72; blue, 61 }  ,draw opacity=1 ]   (360.7,185.56) .. controls (352.83,222.18) and (439.49,257.31) .. (499.09,180.52) ;
				\draw [color={rgb, 255:red, 248; green, 72; blue, 61 }  ,draw opacity=1 ]   (508.31,173.18) .. controls (525.72,155.18) and (552.52,208.16) .. (547.95,175.58) ;
				\draw    (556.23,182.44) .. controls (562.23,186.44) and (566.52,186.16) .. (575.37,177.99) ;
				\draw    (555.38,171.01) .. controls (575.95,160.44) and (613.87,225.02) .. (519.58,217.31) ;
				\draw    (565.57,155.93) .. controls (578.72,162.16) and (583.02,171) .. (575.37,177.99) ;
				\draw    (431.09,202.06) .. controls (468.94,212.59) and (513.66,216.67) .. (519.58,217.31) ;
				\draw   (412.27,177.25) .. controls (412.33,180.24) and (409.94,182.71) .. (406.95,182.76) .. controls (403.96,182.81) and (401.49,180.43) .. (401.44,177.43) .. controls (401.39,174.44) and (403.77,171.97) .. (406.76,171.92) .. controls (409.75,171.87) and (412.22,174.25) .. (412.27,177.25) -- cycle ;
				\draw   (472.99,209.94) .. controls (473.04,212.93) and (470.66,215.4) .. (467.67,215.45) .. controls (464.68,215.5) and (462.21,213.12) .. (462.16,210.13) .. controls (462.1,207.13) and (464.49,204.67) .. (467.48,204.61) .. controls (470.47,204.56) and (472.94,206.94) .. (472.99,209.94) -- cycle ;
				\draw   (579.82,178.47) .. controls (579.87,181.46) and (577.49,183.93) .. (574.49,183.98) .. controls (571.5,184.03) and (569.03,181.65) .. (568.98,178.65) .. controls (568.93,175.66) and (571.31,173.19) .. (574.31,173.14) .. controls (577.3,173.09) and (579.76,175.47) .. (579.82,178.47) -- cycle ;
				\draw  [color={rgb, 255:red, 248; green, 72; blue, 61 }  ,draw opacity=1 ] (536.75,175.38) .. controls (536.8,178.37) and (534.42,180.84) .. (531.43,180.89) .. controls (528.44,180.89) and (525.97,178.56) .. (525.92,175.57) .. controls (525.86,172.58) and (528.25,170.11) .. (531.24,170.06) .. controls (534.23,170) and (536.7,172.39) .. (536.75,175.38) -- cycle ;
				\draw  [color={rgb, 255:red, 248; green, 72; blue, 61 }  ,draw opacity=1 ][fill={rgb, 255:red, 248; green, 72; blue, 61 }  ,fill opacity=1 ] (460.49,187.59) .. controls (460.49,186.68) and (461.24,185.94) .. (462.15,185.94) .. controls (463.07,185.94) and (463.81,186.68) .. (463.81,187.59) .. controls (463.81,188.51) and (463.07,189.25) .. (462.15,189.25) .. controls (461.24,189.25) and (460.49,188.51) .. (460.49,187.59) -- cycle ;
				\draw  [color={rgb, 255:red, 248; green, 72; blue, 61 }  ,draw opacity=1 ][fill={rgb, 255:red, 248; green, 72; blue, 61 }  ,fill opacity=1 ]  (462.13,175.31) .. controls (462.13,174.39) and (462.88,173.65) .. (463.79,173.65) .. controls (464.71,173.65) and (465.45,174.39) .. (465.45,175.31) .. controls (465.45,176.22) and (464.71,176.96) .. (463.79,176.96) .. controls (462.88,176.96) and (462.13,176.08) .. (462.13,175.31) -- cycle ;
				\draw   (512.39,142.32) -- (519.27,146.14) -- (512.39,149.96) ;
				\draw    (63.91,81.09) .. controls (67.04,47.69) and (95.9,61.69) .. (83.12,80.05) ;
				\draw   (117.03,117.85) -- (123.91,121.67) -- (117.03,125.49) ;
				\draw   (428.21,65.87) -- (420.57,64.02) -- (426.18,58.5) ;
				\draw   (196.96,186.89) -- (198.86,179.26) -- (204.34,184.91) ;
				\draw    (555.38,171.01) .. controls (552.81,173.3) and (553.09,179.58) .. (556.23,182.44) ;
				\draw  [color={rgb, 255:red, 248; green, 72; blue, 61 }  ,draw opacity=1 ] (496.52,173.97) -- (491.12,168.24) -- (498.84,166.69) ;
				\draw (94.4,44) node [anchor=north west][inner sep=0.75pt]  [font=\small]  {$D_{1_1}$};
				\draw (418.4,38.4) node [anchor=north west][inner sep=0.75pt]  [font=\small]  {$D_{1_1}$};
				\draw (216.8,186.2) node [anchor=north west][inner sep=0.75pt]  [font=\small]  {$D_{1_1}$};
				\draw (524.8,185.6) node [anchor=north west][inner sep=0.75pt]  [font=\small]  {$D_{1_1}$};
				\draw (66,43.2) node [anchor=north west][inner sep=0.75pt]  [font=\small]  {$D_{1_2}$};
				\draw (174.4,185.2) node [anchor=north west][inner sep=0.75pt]  [font=\small]  {$D_{1_2}$};
				\draw (444.4,38) node [anchor=north west][inner sep=0.75pt]  [font=\small]  {$D_{1_2}$};
				\draw (553.2,185.6) node [anchor=north west][inner sep=0.75pt]  [font=\small]  {$D_{1_2}$};
				\draw (200.8,77.6) node [anchor=north west][inner sep=0.75pt]  [font=\small]  {$-$};
				\draw (130.8,161.6) node [anchor=north west][inner sep=0.75pt]  [font=\small]  {$+$};
				\draw (430.8,156.8) node [anchor=north west][inner sep=0.75pt]  [font=\small]  {$-$};
				\draw (74.4,160.8) node [anchor=north west][inner sep=0.75pt]  [font=\small]  {$+$};
				\draw (373.6,158.4) node [anchor=north west][inner sep=0.75pt]  [font=\small]  {$-$};
				\draw (548,77.6) node [anchor=north west][inner sep=0.75pt]  [font=\small]  {$+$};
				\draw (247.6,77.6) node [anchor=north west][inner sep=0.75pt]  [font=\small]  {$-$};
				\draw (500,78) node [anchor=north west][inner sep=0.75pt]  [font=\small]  {$+$};	
				\draw (159,123.45) node [anchor=north west][inner sep=0.75pt]   [align=left] {(a)};
				\draw (464,123.45) node [anchor=north west][inner sep=0.75pt]   [align=left] {(b)};
				\draw (464,234.45) node [anchor=north west][inner sep=0.75pt]   [align=left] {(d)};
				\draw (159,234.45) node [anchor=north west][inner sep=0.75pt]   [align=left] {(c)};
			\end{tikzpicture}
			\caption{(a)--(d): Diagrams obtained by orientation-preserving smoothing at classical crossings $c_1$, $c_2$, $c_3$ and $c_4$ in $D_1$.} \label{fig18}
		\end{center}
	\end{figure} 
	From Fig.~\ref{fig18}, we see that $I(c_1)=I(c_2)=I(c_3)=I(c_4)=2$, thus $c_1,c_2,c_3,c_4\in E(D)$. From the computation in the left panel of Fig.~\ref{fig19}, we obtain 
	$$
	\begin{gathered}
		W_D(c_1)= -(0-2)=2, \quad W_D(c_2)=-1-1=-2, \cr W_D(c_3)= -(2-0) =-2, \quad W_D(c_4)=1-(-1)=2.
	\end{gathered}
	$$ 
	Then
	\begin{eqnarray*}
		\mathbf{P}_{K_1}(x,y)&=&\operatorname{sgn}(c_1) i(c_1) x^{W_D(c_1)}+\operatorname{sgn}(c_2) i(c_2) x^{W_D(c_2)}+\operatorname{sgn}(c_3) i(c_3) x^{W_D(c_3)}\\ &&+\operatorname{sgn}(c_4) i(c_4) x^{W_D(c_4)} 
		=- (-2) x^2 + 2 x^{-2}- 2 x^{-2} + (-2) x^2=0
	\end{eqnarray*}
	For $K_2$, we see that $\operatorname{sgn}(c_1)=\operatorname{sgn}(c_4)=-1$, $\operatorname{sgn}(c_2)=\operatorname{sgn}(c_3)=1$ and $c_1, c_2, c_3, c_4 \in E(D)$ with $I(c_i) = 2$ for  $i=1,2,3,4$. And $i(c_1)=i(c_3)=-2$, $i(c_2)=i(c_4)=2$.  
	The corresponding weights are 
	$$
	\begin{gathered}
		W_D(c_1) = -(0-2) = 2, \quad W_D(c_2) = -1 - 1 = -2, \cr W_D(c_3) = 2 - 0 = 2, \quad W_D(c_4) = -(1-(-1)) = -2.
	\end{gathered}
	$$  
	Therefore, 
	$$
	\mathbf{P}_{K_2}(x,y) = - (-2) x^2 + 2 x^{-2} +  (-2) x^2 - 2 x^{-2}  = 0,
	$$
	which shows that $\mathbf{P}_{K_1}(x,y) = \mathbf{P}_{K_2}(x,y)$.
	\begin{figure}[!ht]
		\begin{center}
			\tikzset{every picture/.style={line width=1.0pt}}  
			\begin{tikzpicture}[x=0.75pt,y=0.75pt,yscale=-1,xscale=1]	
				\draw    (81.91,118.42) .. controls (91.12,76.05) and (109.79,140.05) .. (124.45,116.71) ;
				\draw    (152.3,134.92) .. controls (128.51,127.31) and (81.83,140.89) .. (101.12,117.38) ;
				\draw    (107.58,108.76) .. controls (129.12,81.38) and (128.79,147.71) .. (160.45,109.38) ;
				\draw    (124.45,116.71) .. controls (137.12,91.38) and (153.12,102.71) .. (155.45,107.05) ;
				\draw    (160.45,109.38) .. controls (196.45,67) and (258.45,77.67) .. (286.78,88.8) ;
				\draw    (162.58,115.76) .. controls (165.12,122.38) and (182.03,119.12) .. (183.36,120.45) ;
				\draw    (185,108.17) .. controls (215.71,91.88) and (223.37,108.05) .. (225.93,110.05) ;
				\draw    (225.93,110.05) .. controls (244.88,137.71) and (250.12,88.89) .. (269.12,105.56) ;
				\draw    (81.91,118.42) .. controls (74.04,155.05) and (160.71,190.17) .. (220.3,113.38) ;
				\draw    (229.52,106.05) .. controls (246.93,88.05) and (258.71,143.05) .. (273.05,108.38) ;
				\draw    (276.79,112.23) .. controls (284.79,118.89) and (284.45,124.89) .. (296.58,110.85) ;
				\draw    (273.05,108.38) .. controls (290.3,79.59) and (335.08,157.88) .. (240.79,150.17) ;
				\draw    (286.78,88.8) .. controls (299.94,95.03) and (304.23,103.87) .. (296.58,110.85) ;
				\draw    (152.3,134.92) .. controls (190.15,145.45) and (234.87,149.54) .. (240.79,150.17) ;
				\draw   (133.49,110.11) .. controls (133.54,113.1) and (131.16,115.57) .. (128.16,115.62) .. controls (125.17,115.67) and (122.7,113.29) .. (122.65,110.3) .. controls (122.6,107.3) and (124.98,104.84) .. (127.97,104.78) .. controls (130.97,104.73) and (133.43,107.12) .. (133.49,110.11) -- cycle ;
				\draw   (194.2,142.8) .. controls (194.26,145.79) and (191.87,148.26) .. (188.88,148.31) .. controls (185.89,148.36) and (183.42,145.98) .. (183.37,142.99) .. controls (183.32,140) and (185.7,137.53) .. (188.69,137.48) .. controls (191.68,137.42) and (194.15,139.81) .. (194.2,142.8) -- cycle ;
				\draw   (301.03,111.33) .. controls (301.08,114.32) and (298.7,116.79) .. (295.71,116.84) .. controls (292.71,116.89) and (290.25,114.51) .. (290.19,111.52) .. controls (290.14,108.52) and (292.53,106.06) .. (295.52,106) .. controls (298.51,105.95) and (300.98,108.34) .. (301.03,111.33) -- cycle ;
				\draw   (254.53,109.38) .. controls (254.59,112.38) and (252.2,114.84) .. (249.21,114.9) .. controls (246.22,114.95) and (243.75,112.56) .. (243.7,109.57) .. controls (243.65,106.58) and (246.03,104.11) .. (249.02,104.06) .. controls (252.01,104.01) and (254.48,106.39) .. (254.53,109.38) -- cycle ;
				\draw  [fill={rgb, 255:red, 13; green, 13; blue, 13 }  ,fill opacity=1 ] (181.71,120.45) .. controls (181.71,119.54) and (182.45,118.8) .. (183.36,118.8) .. controls (184.28,118.8) and (185.02,119.54) .. (185.02,120.45) .. controls (185.02,121.37) and (184.28,122.11) .. (183.36,122.11) .. controls (182.45,122.11) and (181.71,121.37) .. (181.71,120.45) -- cycle ;
				\draw  [fill={rgb, 255:red, 13; green, 13; blue, 13 }  ,fill opacity=1 ] (183.35,108.17) .. controls (183.35,107.25) and (184.09,106.51) .. (185,106.51) .. controls (185.92,106.51) and (186.66,107.25) .. (186.66,108.17) .. controls (186.66,109.08) and (185.92,109.83) .. (185,109.83) .. controls (184.09,109.83) and (183.35,109.08) .. (183.35,108.17) -- cycle ;
				\draw    (361.91,121.09) .. controls (371.12,78.71) and (389.79,142.71) .. (404.45,119.38) ;
				\draw    (432.3,137.59) .. controls (408.51,129.98) and (361.83,143.56) .. (381.12,120.05) ;
				\draw    (387.58,111.42) .. controls (409.12,84.05) and (408.79,150.38) .. (440.45,112.05) ;
				\draw    (404.45,119.38) .. controls (417.12,94.05) and (433.12,105.38) .. (435.45,109.71) ;
				\draw    (440.45,112.05) .. controls (476.45,69.67) and (538.45,80.33) .. (566.78,91.46) ;
				\draw    (442.58,118.42) .. controls (445.12,125.05) and (462.03,121.79) .. (463.36,123.12) ;
				\draw    (465,110.84) .. controls (487.12,95.95) and (499.89,106.95) .. (502.45,108.95) ;
				\draw    (508.79,116.29) .. controls (526.79,135.95) and (529.12,94.62) .. (544.81,105.71) ;
				\draw    (361.91,121.09) .. controls (354.04,157.71) and (445.2,191.41) .. (504.79,114.62) ;
				\draw    (504.79,114.62) .. controls (528.12,78.95) and (531.79,141.56) .. (549.12,116.23) ;
				\draw    (544.81,105.71) .. controls (556.79,113.95) and (559.12,135.95) .. (576.58,113.52) ;
				\draw    (555.45,108.56) .. controls (572.45,88.23) and (615.08,160.55) .. (520.79,152.84) ;
				\draw    (566.78,91.46) .. controls (579.94,97.69) and (584.23,106.53) .. (576.58,113.52) ;
				\draw    (432.3,137.59) .. controls (470.15,148.12) and (514.87,152.2) .. (520.79,152.84) ;
				\draw   (413.49,112.77) .. controls (413.54,115.77) and (411.16,118.23) .. (408.16,118.29) .. controls (405.17,118.34) and (402.7,115.96) .. (402.65,112.96) .. controls (402.6,109.97) and (404.98,107.5) .. (407.97,107.45) .. controls (410.97,107.4) and (413.43,109.78) .. (413.49,112.77) -- cycle ;
				\draw   (475.76,145.69) .. controls (475.81,148.68) and (473.43,151.15) .. (470.44,151.2) .. controls (467.44,151.25) and (464.98,148.87) .. (464.92,145.88) .. controls (464.87,142.88) and (467.25,140.42) .. (470.25,140.36) .. controls (473.24,140.31) and (475.71,142.7) .. (475.76,145.69) -- cycle ;
				\draw   (581.03,113.99) .. controls (581.08,116.99) and (578.7,119.45) .. (575.71,119.51) .. controls (572.71,119.56) and (570.25,117.18) .. (570.19,114.18) .. controls (570.14,111.19) and (572.53,108.72) .. (575.52,108.67) .. controls (578.51,108.62) and (580.98,111) .. (581.03,113.99) -- cycle ;
				\draw   (534.53,112.05) .. controls (534.59,115.04) and (532.2,117.51) .. (529.21,117.56) .. controls (526.22,117.61) and (523.75,115.23) .. (523.7,112.24) .. controls (523.65,109.25) and (526.03,106.78) .. (529.02,106.73) .. controls (532.01,106.67) and (534.48,109.06) .. (534.53,112.05) -- cycle ;
				\draw  [fill={rgb, 255:red, 13; green, 13; blue, 13 }  ,fill opacity=1 ] (461.71,123.12) .. controls (461.71,122.21) and (462.45,121.46) .. (463.36,121.46) .. controls (464.28,121.46) and (465.02,122.21) .. (465.02,123.12) .. controls (465.02,124.04) and (464.28,124.78) .. (463.36,124.78) .. controls (462.45,124.78) and (461.71,124.04) .. (461.71,123.12) -- cycle ;
				\draw  [fill={rgb, 255:red, 13; green, 13; blue, 13 }  ,fill opacity=1 ] (463.35,110.84) .. controls (463.35,109.92) and (464.09,109.18) .. (465,109.18) .. controls (465.92,109.18) and (466.66,109.92) .. (466.66,110.84) .. controls (466.66,111.75) and (465.92,112.49) .. (465,112.49) .. controls (464.09,112.49) and (463.35,111.75) .. (463.35,110.84) -- cycle ;
				\draw   (233.6,75.19) -- (240.48,79.01) -- (233.6,82.83) ;
				\draw   (512.62,77.74) -- (519.5,81.56) -- (512.62,85.38) ;
				\draw (179.11,161.27) node [anchor=north west][inner sep=0.75pt]    {$D_{1}$};
				\draw (461.33,160.38) node [anchor=north west][inner sep=0.75pt]    {$D_{2}$};
				\draw (167.27,121.79) node [anchor=north west][inner sep=0.75pt]  [font=\footnotesize]  {$0$};
				\draw (512.27,89.51) node [anchor=north west][inner sep=0.75pt]  [font=\footnotesize]  {$1$};
				\draw (367.82,150.08) node [anchor=north west][inner sep=0.75pt]  [font=\footnotesize]  {$2$};
				\draw (251.02,88.37) node [anchor=north west][inner sep=0.75pt]  [font=\footnotesize]  {$-1$};
				\draw (478.13,89.91) node [anchor=north west][inner sep=0.75pt]  [font=\footnotesize]  {$0$};
				\draw (418.77,88.7) node [anchor=north west][inner sep=0.75pt]  [font=\footnotesize]  {$1$};
				\draw (383.02,90.21) node [anchor=north west][inner sep=0.75pt]  [font=\footnotesize]  {$-1$};
				\draw (444.13,123.41) node [anchor=north west][inner sep=0.75pt]  [font=\footnotesize]  {$0$};
				\draw (229.84,88.13) node [anchor=north west][inner sep=0.75pt]  [font=\footnotesize]  {$1$};
				\draw (562.5,148.67) node [anchor=north west][inner sep=0.75pt]  [font=\footnotesize]  {$0$};
				\draw (527.52,89.21) node [anchor=north west][inner sep=0.75pt]  [font=\footnotesize]  {$-1$};
				\draw (566.69,79.03) node [anchor=north west][inner sep=0.75pt]  [font=\footnotesize]  {$-2$};
				\draw (285.56,144.41) node [anchor=north west][inner sep=0.75pt]  [font=\footnotesize]  {$0$};
				\draw (134.72,88.27) node [anchor=north west][inner sep=0.75pt]  [font=\footnotesize]  {$1$};
				\draw (86.94,146.41) node [anchor=north west][inner sep=0.75pt]  [font=\footnotesize]  {$2$};
				\draw (199.79,88.89) node [anchor=north west][inner sep=0.75pt]  [font=\footnotesize]  {$0$};
				\draw (101.88,88.42) node [anchor=north west][inner sep=0.75pt]  [font=\footnotesize]  {$-1$};
				\draw (280.19,73.25) node [anchor=north west][inner sep=0.75pt]  [font=\footnotesize]  {$-2$};
			\end{tikzpicture}
			\caption{Integer labeling for $D_1$ and $D_2$. } \label{fig19}
		\end{center}
	\end{figure} 
	
	Now we consider $\mathcal{G}(K_{1})$, where $K_1$ has diagram $D_1$. Recall that 
	$$
	\mathcal{G} (D_1) = \sum_{k=1}^{4} w_{D_1} (c_k) \, [\overline{(D_1)^{c_k}_{\operatorname{glue}}}] - \operatorname{wr}(D_1) \, [\overline{(D_1)^0_{\operatorname{sing}}}]. 
	$$
	The crossings $c_3$ and $c_4$, contribute the terms $(-1)[\overline{(D_1)^{c_3}_{\operatorname{glue}}}]$ and $[\overline{(D_1)^{c_4}_{\operatorname{glue}}}]$, respectively.
	
	We now compute the SBMs $(G_{c_3}, s_{c_3}, d_{c_3}, {\bf b}_{c_3})$ and $(G_{c_4}, s_{c_4}, d_{c_4}, {\bf b}_{c_4})$ associated to these two singular virtual open strings as shown on Fig.~\ref{fig117}(a) and (b), respectively, to show that they  are distinct, that implies that their terms in $\mathcal{G}(K_{1})$ do not cancel.
	\begin{figure}[htbp]
		\begin{center}
			\tikzset{every picture/.style={line width=1.0pt}} 
			\begin{tikzpicture}[x=0.75pt,y=0.75pt,yscale=-0.95,xscale=0.95]
				\draw  [fill={rgb, 255:red, 17; green, 16; blue, 16 }  ,fill opacity=1 ] (222.73,301.94) .. controls (222.73,303.05) and (223.62,303.95) .. (224.7,303.94) .. controls (225.78,303.93) and (226.65,303.02) .. (226.64,301.9) .. controls (226.63,300.79) and (225.75,299.89) .. (224.67,299.9) .. controls (223.59,299.9) and (222.72,300.82) .. (222.73,301.94) -- cycle ;
				\draw  [fill={rgb, 255:red, 17; green, 16; blue, 16 }  ,fill opacity=1 ] (197.66,324.75) .. controls (197.67,325.86) and (198.55,326.76) .. (199.63,326.75) .. controls (200.72,326.74) and (201.59,325.83) .. (201.58,324.71) .. controls (201.57,323.6) and (200.68,322.7) .. (199.6,322.71) .. controls (198.52,322.71) and (197.65,323.63) .. (197.66,324.75) -- cycle ;
				\draw    (177.78,323.57) -- (181.6,238.51) ;
				\draw    (159.36,246.62) -- (156.36,316.49) ;
				\draw  [draw opacity=0] (199.62,324.73) .. controls (194.85,326.46) and (189.71,327.41) .. (184.35,327.41) .. controls (159.69,327.41) and (139.7,307.42) .. (139.7,282.76) .. controls (139.7,258.11) and (159.69,238.12) .. (184.35,238.12) .. controls (209,238.12) and (228.99,258.11) .. (228.99,282.76) .. controls (228.99,289.62) and (227.45,296.11) .. (224.68,301.92) -- (184.35,282.76) -- cycle ; \draw   (199.62,324.73) .. controls (194.85,326.46) and (189.71,327.41) .. (184.35,327.41) .. controls (159.69,327.41) and (139.7,307.42) .. (139.7,282.76) .. controls (139.7,258.11) and (159.69,238.12) .. (184.35,238.12) .. controls (209,238.12) and (228.99,258.11) .. (228.99,282.76) .. controls (228.99,289.62) and (227.45,296.11) .. (224.68,301.92) ;  
				\draw    (143.16,268.49) -- (209.96,246.89) ;
				\draw    (139.61,291.29) -- (222.81,264.49) ;
				\draw   (181.87,319.47) -- (178.76,326.02) -- (176.01,319.42) ;
				\draw   (202.7,246.26) -- (209.93,246.7) -- (204.85,251.72) ;
				\draw   (216.92,263.82) -- (224.15,264.25) -- (219.07,269.27) ;
				\draw   (159.43,310.39) -- (156.32,316.93) -- (153.57,310.34) ;
				\draw  [fill={rgb, 255:red, 17; green, 16; blue, 16 }  ,fill opacity=1 ] (401.39,301.94) .. controls (401.4,303.05) and (402.29,303.95) .. (403.37,303.94) .. controls (404.45,303.93) and (405.32,303.02) .. (405.31,301.9) .. controls (405.3,300.79) and (404.42,299.89) .. (403.34,299.9) .. controls (402.25,299.9) and (401.38,300.82) .. (401.39,301.94) -- cycle ;
				\draw  [fill={rgb, 255:red, 17; green, 16; blue, 16 }  ,fill opacity=1 ] (376.33,324.75) .. controls (376.34,325.86) and (377.22,326.76) .. (378.3,326.75) .. controls (379.38,326.74) and (380.25,325.83) .. (380.24,324.71) .. controls (380.24,323.6) and (379.35,322.7) .. (378.27,322.71) .. controls (377.19,322.71) and (376.32,323.63) .. (376.33,324.75) -- cycle ;
				\draw    (357.43,326.49) -- (361.38,238.07) ;
				\draw    (336.69,246.4) -- (333.88,314.01) ;
				\draw  [draw opacity=0] (378.29,324.73) .. controls (373.52,326.46) and (368.38,327.41) .. (363.01,327.41) .. controls (338.35,327.41) and (318.37,307.42) .. (318.37,282.76) .. controls (318.37,258.11) and (338.35,238.12) .. (363.01,238.12) .. controls (387.67,238.12) and (407.66,258.11) .. (407.66,282.76) .. controls (407.66,289.62) and (406.11,296.11) .. (403.35,301.92) -- (363.01,282.76) -- cycle ; \draw   (378.29,324.73) .. controls (373.52,326.46) and (368.38,327.41) .. (363.01,327.41) .. controls (338.35,327.41) and (318.37,307.42) .. (318.37,282.76) .. controls (318.37,258.11) and (338.35,238.12) .. (363.01,238.12) .. controls (387.67,238.12) and (407.66,258.11) .. (407.66,282.76) .. controls (407.66,289.62) and (406.11,296.11) .. (403.35,301.92) ;  
				\draw    (321.83,268.49) -- (388.63,246.89) ;
				\draw    (319.83,291.29) -- (403.03,264.49) ;
				\draw   (360.54,319.47) -- (357.43,326.02) -- (354.68,319.42) ;
				\draw   (381.36,246.26) -- (388.6,246.7) -- (383.51,251.72) ;
				\draw   (395.58,263.82) -- (402.82,264.25) -- (397.74,269.27) ;
				\draw    (180,323.34) -- (183.82,238.29) ;
				\draw    (338.91,245.29) -- (336.32,314.01) ;
				\draw   (338.1,310.39) -- (334.98,316.93) -- (332.23,310.34) ;
				\draw (209.6,234.57) node [anchor=north west][inner sep=0.75pt]  [font=\small]  {$c_1$};
				\draw (166.73,328.93) node [anchor=north west][inner sep=0.75pt]  [font=\small]  {$d_{c_3}=c_3$};
				\draw (146.96,319.71) node [anchor=north west][inner sep=0.75pt]  [font=\small]  {$c_4$};
				\draw (225.21,253.75) node [anchor=north west][inner sep=0.75pt]  [font=\small]  {$c_2$};
				\draw (388.26,234.57) node [anchor=north west][inner sep=0.75pt]  [font=\small]  {$c_1$};
				\draw (352.73,328.26) node [anchor=north west][inner sep=0.75pt]  [font=\small]  {$c_3$};
				\draw (295.63,318.71) node [anchor=north west][inner sep=0.75pt]  [font=\small]  {$d_{c_4}=c_4$};
				\draw (403.88,253.75) node [anchor=north west][inner sep=0.75pt]  [font=\small]  {$c_2$};
				\draw (168,348.44) node [anchor=north west][inner sep=0.75pt]    {(a)};
				\draw (352.8,348.04) node [anchor=north west][inner sep=0.75pt]    {(b)};
			\end{tikzpicture}
			\caption{Singular virtual open strings associated to gluing $D_1$ at crossings $c_3$ and $c_4$.} \label{fig117} 
		\end{center}
	\end{figure}
	
	\underline{SBM $(G_{c_3}, s_{c_3}$, $d_{c_3}, {\bf b}_{c_3})$.}
	Denote by ${\bf b}_{c_3}$ the skew-symmetric map in the SBM $(G_{c_3}, s_{c_3},$ $ d_{c_3},{\bf b}_{c_3})$ corresponding to the singular virtual open string associated with crossing $c_3$, shown in Fig.~\ref{fig117}(a). Equivalently, one can view this string as the flat singular virtual knotoid obtained by performing the gluing operation at crossing $c_3$ in Fig.~\ref{fig116}(left). 
	
	Following {\bf Rule~1}, we first compute ${\bf b}_{c_3}(e,s_{c_3})$. For each $e \in \{c_1,c_2,c_4,d_{c_3}\}$, the map ${\bf b}_{c_3}(e,s_{c_3})$ is defined as the sum of the signs of the flat crossings between the two components obtained by the $1$-smoothing at the crossing corresponding to $e$. We order the components such that the component on the right is $\ell_1$ and the component on the left is $\ell_2$, see Fig.~\ref{figXX}. Then we obtain:
	$$
	{\bf b}_{c_3}(c_1,s_{c_3})=-2, \quad {\bf b}_{c_3}(c_2,s_{c_3})=-2, \quad {\bf b}_{c_3}(c_4,s_{c_3})=2, \quad {\bf b}_{c_3}(d_{c_3},s_{c_3})=2.
	$$
	\begin{figure}[!ht]
		\begin{center}
			\tikzset{every picture/.style={line width=1.0pt}}  
			\begin{tikzpicture}[x=0.75pt,y=0.75pt,yscale=-0.95,xscale=0.95]	
				\draw [color={rgb, 255:red, 228; green, 43; blue, 43 }  ,draw opacity=1 ]   (88.61,81.69) .. controls (86.04,87.69) and (93.75,98.26) .. (99.45,89.38) ;
				\draw    (127.3,107.59) .. controls (103.51,99.98) and (56.83,113.56) .. (76.12,90.05) ;
				\draw [color={rgb, 255:red, 228; green, 43; blue, 43 }  ,draw opacity=1 ]   (88.61,81.69) .. controls (101.75,55.12) and (103.79,120.38) .. (135.45,82.05) ;
				\draw [color={rgb, 255:red, 228; green, 43; blue, 43 }  ,draw opacity=1 ]   (99.45,89.38) .. controls (112.12,64.05) and (126.85,73.2) .. (130.45,79.71) ;
				\draw [color={rgb, 255:red, 228; green, 43; blue, 43 }  ,draw opacity=1 ]   (135.45,82.05) .. controls (171.45,39.67) and (233.45,50.33) .. (261.78,61.46) ;
				\draw [color={rgb, 255:red, 228; green, 43; blue, 43 }  ,draw opacity=1 ]   (130.45,79.71) .. controls (139.18,96.53) and (157.03,91.79) .. (158.36,93.12) ;
				\draw [color={rgb, 255:red, 228; green, 43; blue, 43 }  ,draw opacity=1 ]   (160,80.84) .. controls (190.71,64.55) and (198.37,80.71) .. (200.93,82.71) ;
				\draw [color={rgb, 255:red, 248; green, 72; blue, 61 }  ,draw opacity=1 ]   (200.93,82.71) .. controls (219.88,110.38) and (225.12,61.56) .. (244.12,78.23) ;
				\draw    (56.91,91.09) .. controls (49.04,127.71) and (137.52,163.53) .. (199.18,83.53) ;
				\draw    (199.18,83.53) .. controls (222.85,55.53) and (233.71,115.71) .. (248.05,81.05) ;
				\draw [color={rgb, 255:red, 248; green, 72; blue, 61 }  ,draw opacity=1 ]   (244.12,78.23) .. controls (252.52,89.53) and (259.45,97.56) .. (271.58,83.52) ; 
				\draw    (248.05,81.05) .. controls (265.3,52.25) and (310.08,130.55) .. (215.79,122.84) ;
				\draw [color={rgb, 255:red, 248; green, 72; blue, 61 }  ,draw opacity=1 ]   (261.78,61.46) .. controls (274.94,67.69) and (279.23,76.53) .. (271.58,83.52) ;
				\draw    (127.3,107.59) .. controls (165.15,118.12) and (209.87,122.2) .. (215.79,122.84) ;
				\draw  [color={rgb, 255:red, 228; green, 43; blue, 43 }  ,draw opacity=1 ] (108.49,82.77) .. controls (108.54,85.77) and (106.16,88.23) .. (103.16,88.29) .. controls (100.17,88.34) and (97.7,85.96) .. (97.65,82.96) .. controls (97.6,79.97) and (99.98,77.5) .. (102.97,77.45) .. controls (105.97,77.4) and (108.43,79.78) .. (108.49,82.77) -- cycle ;
				\draw   (169.2,115.47) .. controls (169.26,118.46) and (166.87,120.93) .. (163.88,120.98) .. controls (160.89,121.03) and (158.42,118.65) .. (158.37,115.65) .. controls (158.32,112.66) and (160.7,110.19) .. (163.69,110.14) .. controls (166.68,110.09) and (169.15,112.47) .. (169.2,115.47) -- cycle ;
				\draw   (276.03,83.99) .. controls (276.08,86.99) and (273.7,89.45) .. (270.71,89.51) .. controls (267.71,89.56) and (265.25,87.18) .. (265.19,84.18) .. controls (265.14,81.19) and (267.53,78.72) .. (270.52,78.67) .. controls (273.51,78.62) and (275.98,81) .. (276.03,83.99) -- cycle ; 
				\draw   (229.53,82.05) .. controls (229.59,85.04) and (227.2,87.51) .. (224.21,87.56) .. controls (221.22,87.61) and (218.75,85.23) .. (218.7,82.24) .. controls (218.65,79.25) and (221.03,76.78) .. (224.02,76.73) .. controls (227.01,76.67) and (229.48,79.06) .. (229.53,82.05) -- cycle ;
				\draw  [color={rgb, 255:red, 228; green, 43; blue, 43 }  ,draw opacity=1 ][fill={rgb, 255:red, 248; green, 72; blue, 61 }  ,fill opacity=1 ] (156.71,93.12) .. controls (156.71,92.21) and (157.45,91.46) .. (158.36,91.46) .. controls (159.28,91.46) and (160.02,92.21) .. (160.02,93.12) .. controls (160.02,94.04) and (159.28,94.78) .. (158.36,94.78) .. controls (157.45,94.78) and (156.71,94.04) .. (156.71,93.12) -- cycle ; 
				\draw  [color={rgb, 255:red, 228; green, 43; blue, 43 }  ,draw opacity=1 ][fill={rgb, 255:red, 248; green, 72; blue, 61 }  ,fill opacity=1 ] (158.35,80.84) .. controls (158.35,79.92) and (159.09,79.18) .. (160,79.18) .. controls (160.92,79.18) and (161.66,79.92) .. (161.66,80.84) .. controls (161.66,81.75) and (160.92,82.49) .. (160,82.49) .. controls (159.09,82.49) and (158.35,81.75) .. (158.35,80.84) -- cycle ;
				\draw   (208.6,47.85) -- (215.48,51.67) -- (208.6,55.49) ;
				\draw    (53.91,195.42) .. controls (63.12,153.05) and (81.79,217.05) .. (96.45,193.71) ;
				\draw [color={rgb, 255:red, 248; green, 72; blue, 61 }  ,draw opacity=1 ]   (124.3,211.92) .. controls (100.51,204.31) and (53.74,213.56) .. (79.58,185.76) ;
				\draw [color={rgb, 255:red, 248; green, 72; blue, 61 }  ,draw opacity=1 ]   (79.58,185.76) .. controls (101.12,158.38) and (100.79,224.71) .. (132.45,186.38) ;
				\draw    (96.45,193.71) .. controls (109.12,168.38) and (125.12,179.71) .. (127.45,184.05) ;
				\draw [color={rgb, 255:red, 248; green, 72; blue, 61 }  ,draw opacity=1 ]   (132.45,186.38) .. controls (168.45,144) and (230.45,154.67) .. (258.78,165.8) ;
				\draw    (127.45,184.05) .. controls (138.03,200.13) and (154.03,196.12) .. (155.36,197.45) ;
				\draw    (157,185.17) .. controls (187.71,168.88) and (189.96,185.58) .. (192.3,190.38) ;
				\draw [color={rgb, 255:red, 248; green, 72; blue, 61 }  ,draw opacity=1 ]   (205.68,184.73) .. controls (212.82,218.16) and (222.12,165.89) .. (241.12,182.56) ;
				\draw    (53.91,195.42) .. controls (50.53,252.75) and (210.25,245.01) .. (192.3,190.38) ;
				\draw [color={rgb, 255:red, 248; green, 72; blue, 61 }  ,draw opacity=1 ]   (205.68,184.73) .. controls (203.1,159.58) and (232.88,212.13) .. (245.05,185.38) ; 
				\draw [color={rgb, 255:red, 248; green, 72; blue, 61 }  ,draw opacity=1 ]   (241.12,182.56) .. controls (250.31,194.7) and (256.45,201.89) .. (268.58,187.85) ;
				\draw [color={rgb, 255:red, 248; green, 72; blue, 61 }  ,draw opacity=1 ]   (245.05,185.38) .. controls (262.3,156.59) and (307.08,234.88) .. (212.79,227.17) ;
				\draw [color={rgb, 255:red, 248; green, 72; blue, 61 }  ,draw opacity=1 ]   (258.78,165.8) .. controls (271.94,172.03) and (276.23,180.87) .. (268.58,187.85) ;
				\draw [color={rgb, 255:red, 248; green, 72; blue, 61 }  ,draw opacity=1 ]   (124.3,211.92) .. controls (162.15,222.45) and (206.87,226.54) .. (212.79,227.17) ; 
				\draw   (105.49,187.11) .. controls (105.54,190.1) and (103.16,192.57) .. (100.16,192.62) .. controls (97.17,192.67) and (94.7,190.29) .. (94.65,187.3) .. controls (94.6,184.3) and (96.98,181.84) .. (99.97,181.78) .. controls (102.97,181.73) and (105.43,184.12) .. (105.49,187.11) -- cycle ; 
				\draw   (182.72,222.43) .. controls (182.77,225.42) and (180.39,227.89) .. (177.39,227.94) .. controls (174.4,227.99) and (171.93,225.61) .. (171.88,222.62) .. controls (171.83,219.62) and (174.21,217.16) .. (177.21,217.1) .. controls (180.2,217.05) and (182.67,219.44) .. (182.72,222.43) -- cycle ;
				\draw   (273.03,188.33) .. controls (273.08,191.32) and (270.7,193.79) .. (267.71,193.84) .. controls (264.71,193.89) and (262.25,191.51) .. (262.19,188.52) .. controls (262.14,185.52) and (264.53,183.06) .. (267.52,183) .. controls (270.51,182.95) and (272.98,185.34) .. (273.03,188.33) -- cycle ;
				\draw   (226.53,186.38) .. controls (226.59,189.38) and (224.2,191.84) .. (221.21,191.9) .. controls (218.22,191.95) and (215.75,189.56) .. (215.7,186.57) .. controls (215.65,183.58) and (218.03,181.11) .. (221.02,181.06) .. controls (224.01,181.01) and (226.48,183.39) .. (226.53,186.38) -- cycle ;
				\draw  [fill={rgb, 255:red, 13; green, 13; blue, 13 }  ,fill opacity=1 ] (153.71,197.45) .. controls (153.71,196.54) and (154.45,195.8) .. (155.36,195.8) .. controls (156.28,195.8) and (157.02,196.54) .. (157.02,197.45) .. controls (157.02,198.37) and (156.28,199.11) .. (155.36,199.11) .. controls (154.45,199.11) and (153.71,198.37) .. (153.71,197.45) -- cycle ;
				\draw  [fill={rgb, 255:red, 13; green, 13; blue, 13 }  ,fill opacity=1 ] (155.35,185.17) .. controls (155.35,184.25) and (156.09,183.51) .. (157,183.51) .. controls (157.92,183.51) and (158.66,184.25) .. (158.66,185.17) .. controls (158.66,186.08) and (157.92,186.83) .. (157,186.83) .. controls (156.09,186.83) and (155.35,186.08) .. (155.35,185.17) -- cycle ;
				\draw   (205.6,152.19) -- (212.48,156.01) -- (205.6,159.83) ;
				\draw    (356.7,91.23) .. controls (365.91,48.85) and (384.58,112.85) .. (399.24,89.52) ;
				\draw    (427.09,107.73) .. controls (403.29,100.12) and (356.94,121.22) .. (382.36,81.56) ;
				\draw    (382.36,81.56) .. controls (403.91,54.18) and (402.58,116.83) .. (430.24,79.85) ;
				\draw    (399.24,89.52) .. controls (411.91,64.18) and (437.44,71.69) .. (430.24,79.85) ;
				\draw [color={rgb, 255:red, 248; green, 72; blue, 61 }  ,draw opacity=1 ]   (440.58,82.26) .. controls (476.58,39.88) and (533.24,50.47) .. (561.57,61.6) ;
				\draw [color={rgb, 255:red, 248; green, 72; blue, 61 }  ,draw opacity=1 ]   (440.58,82.26) .. controls (433.64,91.76) and (456.82,91.93) .. (458.15,93.26) ;
				\draw [color={rgb, 255:red, 248; green, 72; blue, 61 }  ,draw opacity=1 ]   (459.79,80.97) .. controls (490.5,64.69) and (498.16,80.85) .. (500.72,82.85) ;
				\draw [color={rgb, 255:red, 248; green, 72; blue, 61 }  ,draw opacity=1 ]   (500.72,82.85) .. controls (519.67,110.52) and (524.91,61.7) .. (543.91,78.37) ;
				\draw    (356.7,91.23) .. controls (348.83,127.85) and (440.36,163.79) .. (504.31,78.85) ;
				\draw    (504.31,78.85) .. controls (521.72,60.85) and (533.5,115.85) .. (547.84,81.18) ; 
				\draw [color={rgb, 255:red, 248; green, 72; blue, 61 }  ,draw opacity=1 ]   (543.91,78.37) .. controls (554.36,88.94) and (559.24,97.7) .. (571.37,83.65) ;
				\draw    (547.84,81.18) .. controls (565.09,52.39) and (609.87,130.69) .. (515.58,122.97) ;
				\draw [color={rgb, 255:red, 248; green, 72; blue, 61 }  ,draw opacity=1 ]   (561.57,61.6) .. controls (574.72,67.83) and (579.02,76.67) .. (571.37,83.65) ;
				\draw    (427.09,107.73) .. controls (464.94,118.26) and (509.66,122.34) .. (515.58,122.97) ; 
				\draw   (408.27,82.91) .. controls (408.33,85.9) and (405.94,88.37) .. (402.95,88.42) .. controls (399.96,88.48) and (397.49,86.09) .. (397.44,83.1) .. controls (397.39,80.11) and (399.77,77.64) .. (402.76,77.59) .. controls (405.75,77.54) and (408.22,79.92) .. (408.27,82.91) -- cycle ;
				\draw   (468.99,115.6) .. controls (469.04,118.6) and (466.66,121.06) .. (463.67,121.12) .. controls (460.68,121.17) and (458.21,118.78) .. (458.16,115.79) .. controls (458.1,112.8) and (460.49,110.33) .. (463.48,110.28) .. controls (466.47,110.23) and (468.94,112.61) .. (468.99,115.6) -- cycle ;
				\draw   (575.82,84.13) .. controls (575.87,87.12) and (573.49,89.59) .. (570.49,89.64) .. controls (567.5,89.7) and (565.03,87.31) .. (564.98,84.32) .. controls (564.93,81.33) and (567.31,78.86) .. (570.31,78.81) .. controls (573.3,78.76) and (575.76,81.14) .. (575.82,84.13) -- cycle ; 
				\draw   (529.32,82.19) .. controls (529.37,85.18) and (526.99,87.65) .. (524,87.7) .. controls (521.01,87.75) and (518.54,85.37) .. (518.49,82.38) .. controls (518.43,79.38) and (520.82,76.92) .. (523.81,76.86) .. controls (526.8,76.81) and (529.27,79.2) .. (529.32,82.19) -- cycle ;
				\draw  [color={rgb, 255:red, 248; green, 72; blue, 61 }  ,draw opacity=1 ][fill={rgb, 255:red, 248; green, 72; blue, 61 }  ,fill opacity=1 ] (456.49,93.26) .. controls (456.49,92.34) and (457.24,91.6) .. (458.15,91.6) .. controls (459.07,91.6) and (459.81,92.34) .. (459.81,93.26) .. controls (459.81,94.17) and (459.07,94.92) .. (458.15,94.92) .. controls (457.24,94.92) and (456.49,94.17) .. (456.49,93.26) -- cycle ;
				\draw  [color={rgb, 255:red, 248; green, 72; blue, 61 }  ,draw opacity=1 ][fill={rgb, 255:red, 248; green, 72; blue, 61 }  ,fill opacity=1 ] (458.13,80.97) .. controls (458.13,80.06) and (458.88,79.32) .. (459.79,79.32) .. controls (460.71,79.32) and (461.45,80.06) .. (461.45,80.97) .. controls (461.45,81.89) and (460.71,82.63) .. (459.79,82.63) .. controls (458.88,82.63) and (458.13,81.89) .. (458.13,80.97) -- cycle ;
				\draw   (508.39,47.99) -- (515.27,51.81) -- (508.39,55.63) ;
				\draw    (353.7,195.56) .. controls (362.91,153.18) and (381.58,217.18) .. (396.24,193.85) ;
				\draw [color={rgb, 255:red, 248; green, 72; blue, 61 }  ,draw opacity=1 ]   (424.09,212.06) .. controls (400.29,204.45) and (353.79,211.9) .. (379.36,185.9) ;
				\draw [color={rgb, 255:red, 248; green, 72; blue, 61 }  ,draw opacity=1 ]   (379.36,185.9) .. controls (400.91,158.52) and (400.58,224.85) .. (432.24,186.52) ;
				\draw    (396.24,193.85) .. controls (408.91,168.52) and (424.91,179.85) .. (427.24,184.18) ;
				\draw [color={rgb, 255:red, 248; green, 72; blue, 61 }  ,draw opacity=1 ]   (432.24,186.52) .. controls (468.24,144.14) and (530.24,154.8) .. (558.57,165.93) ;
				\draw    (427.24,184.18) .. controls (437.22,196.47) and (453.82,196.26) .. (455.15,197.59) ;
				\draw    (456.79,185.31) .. controls (487.5,169.02) and (495.16,185.18) .. (497.72,187.18) ;
				\draw    (497.72,187.18) .. controls (516.67,214.85) and (534.95,156.44) .. (540.95,185.58) ;
				\draw    (353.7,195.56) .. controls (345.83,232.18) and (458.08,255.84) .. (501.31,183.18) ;
				\draw    (501.31,183.18) .. controls (518.72,165.18) and (545.52,218.16) .. (540.95,185.58) ;
				\draw [color={rgb, 255:red, 248; green, 72; blue, 61 }  ,draw opacity=1 ]   (549.23,192.44) .. controls (555.23,196.44) and (559.52,196.16) .. (568.37,187.99) ; 
				\draw [color={rgb, 255:red, 248; green, 72; blue, 61 }  ,draw opacity=1 ]   (548.38,181.01) .. controls (568.95,170.44) and (606.87,235.02) .. (512.58,227.31) ;
				\draw [color={rgb, 255:red, 248; green, 72; blue, 61 }  ,draw opacity=1 ]   (558.57,165.93) .. controls (571.72,172.16) and (576.02,181) .. (568.37,187.99) ;
				\draw [color={rgb, 255:red, 248; green, 72; blue, 61 }  ,draw opacity=1 ]   (424.09,212.06) .. controls (461.94,222.59) and (506.66,226.67) .. (512.58,227.31) ;
				\draw   (405.27,187.25) .. controls (405.33,190.24) and (402.94,192.71) .. (399.95,192.76) .. controls (396.96,192.81) and (394.49,190.43) .. (394.44,187.43) .. controls (394.39,184.44) and (396.77,181.97) .. (399.76,181.92) .. controls (402.75,181.87) and (405.22,184.25) .. (405.27,187.25) -- cycle ;
				\draw   (465.99,219.94) .. controls (466.04,222.93) and (463.66,225.4) .. (460.67,225.45) .. controls (457.68,225.5) and (455.21,223.12) .. (455.16,220.13) .. controls (455.1,217.13) and (457.49,214.67) .. (460.48,214.61) .. controls (463.47,214.56) and (465.94,216.94) .. (465.99,219.94) -- cycle ;
				\draw   (572.82,188.47) .. controls (572.87,191.46) and (570.49,193.93) .. (567.49,193.98) .. controls (564.5,194.03) and (562.03,191.65) .. (561.98,188.65) .. controls (561.93,185.66) and (564.31,183.19) .. (567.31,183.14) .. controls (570.3,183.09) and (572.76,185.47) .. (572.82,188.47) -- cycle ;
				\draw   (529.75,185.38) .. controls (529.8,188.37) and (527.42,190.84) .. (524.43,190.89) .. controls (521.44,190.94) and (518.97,188.56) .. (518.92,185.57) .. controls (518.86,182.58) and (521.25,180.11) .. (524.24,180.06) .. controls (527.23,180) and (529.7,182.39) .. (529.75,185.38) -- cycle ;
				\draw  [fill={rgb, 255:red, 13; green, 13; blue, 13 }  ,fill opacity=1 ] (453.49,197.59) .. controls (453.49,196.68) and (454.24,195.94) .. (455.15,195.94) .. controls (456.07,195.94) and (456.81,196.68) .. (456.81,197.59) .. controls (456.81,198.51) and (456.07,199.25) .. (455.15,199.25) .. controls (454.24,199.25) and (453.49,198.51) .. (453.49,197.59) -- cycle ;
				\draw  [fill={rgb, 255:red, 13; green, 13; blue, 13 }  ,fill opacity=1 ] (455.13,185.31) .. controls (455.13,184.39) and (455.88,183.65) .. (456.79,183.65) .. controls (457.71,183.65) and (458.45,184.39) .. (458.45,185.31) .. controls (458.45,186.22) and (457.71,186.96) .. (456.79,186.96) .. controls (455.88,186.96) and (455.13,186.22) .. (455.13,185.31) -- cycle ;
				\draw   (505.39,152.32) -- (512.27,156.14) -- (505.39,159.96) ;
				\draw    (56.91,91.09) .. controls (60.04,57.69) and (88.9,71.69) .. (76.12,90.05) ;
				\draw   (110.03,127.85) -- (116.91,131.67) -- (110.03,135.49) ;
				\draw   (421.21,75.87) -- (413.57,74.02) -- (419.18,68.5) ;
				\draw   (189.96,196.89) -- (191.86,189.26) -- (197.34,194.91) ;
				\draw [color={rgb, 255:red, 248; green, 72; blue, 61 }  ,draw opacity=1 ]   (548.38,181.01) .. controls (545.81,183.3) and (546.09,189.58) .. (549.23,192.44) ;
				\draw   (489.52,183.97) -- (484.12,178.24) -- (491.84,176.69) ;
				\draw (98.21,53.07) node [anchor=north west][inner sep=0.75pt]  [color={rgb, 255:red, 248; green, 72; blue, 61 }  ,opacity=1 ]  {$\ell _{1}$};
				\draw (50.06,53) node [anchor=north west][inner sep=0.75pt]    {$\ell _{2}$};
				\draw (442.87,53) node [anchor=north west][inner sep=0.75pt]  [color={rgb, 255:red, 248; green, 72; blue, 61 }  ,opacity=1 ]  {$\ell _{1}$};
				\draw (394.73,53) node [anchor=north west][inner sep=0.75pt]    {$\ell _{2}$};
				\draw (216.99,193.58) node [anchor=north west][inner sep=0.75pt]  [color={rgb, 255:red, 248; green, 72; blue, 61 }  ,opacity=1 ]  {$\ell _{1}$};
				\draw (170.85,192.51) node [anchor=north west][inner sep=0.75pt]    {$\ell _{2}$};
				\draw (552.3,195.91) node [anchor=north west][inner sep=0.75pt]  [color={rgb, 255:red, 248; green, 72; blue, 61 }  ,opacity=1 ]  {$\ell _{1}$};
				\draw (504.15,195.84) node [anchor=north west][inner sep=0.75pt]    {$\ell _{2}$};
				\draw (148,128.45) node [anchor=north west][inner sep=0.75pt]   [align=left] {(a)};
				\draw (457,128.45) node [anchor=north west][inner sep=0.75pt]   [align=left] {(b)};
				\draw (457,238.45) node [anchor=north west][inner sep=0.75pt]   [align=left] {(d)};
				\draw (148,238.45) node [anchor=north west][inner sep=0.75pt]   [align=left] {(c)};
			\end{tikzpicture}
			\caption{(a)--(d): Diagrams obtained by 1-smoothing at flat crossings $c_1$, $c_2$, $c_3$ and $c_4$ in the shadow of $D_1$.} \label{figXX}
		\end{center}
	\end{figure} 
	Next, by {\bf Rule~2}, for any pair of classical crossings $e,f \in \{c_1,c_2,c_4,d_{c_3}\}$, denote $e=(a_e,b_e)$, $f=(a_f,b_f)$ and compute
	$a_eb_e \cdot a_fb_f$, which equals to the number of arrows with tails in  $(a_e,b_e)^\circ$ and heads in $(a_f,b_f)^\circ$ minus the number of arrows with tails in $(a_f,b_f)^\circ$ and heads in $(a_e,b_e)^\circ$.
	It follows from Fig.~\ref{fig117}(a) that $\epsilon(c_1,c_2)=0$, $\epsilon(c_1,c_4)=-1$, $\epsilon(c_1,d_{c_3})=-1$, $\epsilon(c_2,c_4)=-1$, $\epsilon(c_2,d_{c_3})=-1$, $\epsilon(c_4,d_{c_3})=0$. We obtain
	$$
	\begin{gathered} 
		{\bf b}_{c_3}(c_1,c_2)=0-0+0=0, \quad {\bf b}_{c_3}(c_1,c_4)=0-1-1=-2,  \cr {\bf b}_{c_3}(c_1,d_{c_3})=0-2-1=-3,  \quad 
		{\bf b}_{c_3}(c_2,c_4)=0-0-1=-1, \cr {\bf b}_{c_3}(c_2,d_{c_3})=0-1-1=-2, \quad {\bf b}_{c_3}(c_4,d_{c_3})=0-0-0=0.
	\end{gathered}
	$$
	All remaining entries are obtained by skew-symmetry and the diagonal entries ${\bf b}_{c_3}(e,e)=0$, giving the full matrix $(G_{c_3}, s_{c_3}, d_{c_3}, {\bf b}_{c_3})$, written as $M_{{\bf b}_{c_3}}$. We represent the obtained values into the following matrix:
	$$	
	M_{{\bf b}_{c_3}} = \qquad \bordermatrix{
		&     s_{c_3} &  c_1 & c_2 & c_4 & d_{c_3}  \cr
		s_{c_3}   & 0 &  2 & 2 &-2 &-2  \cr
		c_1   &-2 &  0 & 0 &-2 &-3  \cr
		c_2   &-2 &  0 & 0 &-1 &-2  \cr
		c_4   & 2 &  2 & 1 & 0 & 0  \cr
		d_{c_3}   & 2 &  3 & 2 & 0 & 0  \cr	 
	}
	$$

	We claim that the matrix $M_{{\bf b}_{c_3}}$ is primitive. By the definition, a matrix is \textit{primitive} if it cannot be obtained from another SBM by elementary extensions, even after applying singularity switch operations.
	
	We verify that $M_{{\bf b}_{c_3}}$ cannot be obtained by any of the operations $\widetilde{M}_1$, $\widetilde{M}_2$, $\widetilde{M}_3$, or $N$. Indeed, it is not obtained from $\widetilde{M}_1$ since no row is identically zero, not from $\widetilde{M}_2$ since no row coincides with $s_{c_3} = [0, 2, 2, -2, -2]$, not from $\widetilde{M}_3$ since no two rows sum to $s_{c_3}$, and not from $N$, since
	\[
	\begin{aligned}
		\operatorname{row}(c_1)+\operatorname{row}(d_{c_3})
		&=(0,3,2,-2,-3),\\
		\operatorname{row}(c_2)+\operatorname{row}(d_{c_3})
		&=(0,3,2,-1,-2),\\
		\operatorname{row}(c_4)+\operatorname{row}(d_{c_3})
		&=(4,5,3,0,0),
	\end{aligned}
	\]
	none of which is equal to
	\[
	\operatorname{row}(s_{c_3})=(0,2,2,-2,-2).
	\]
	Therefore, $M_{{\bf b}_{c_3}}$ is primitive.
	
	Moreover, $d_{c_3} \in {\bf b}_{c_3}$ is neither annihilating-like nor core-like. Indeed, $d_{c_3}$ is not annihilating-like since there exists $h \in G$ such that ${\bf b}_{c_3}(d_{c_3},h) \neq 0$. It is not core-like since there exists $h \in G$ such that ${\bf b}_{c_3}(d_{c_3},h) \neq {\bf b}_{c_3}(s_{c_3},h).$
	
	\underline{SBM $(G_{c_4}, s_{c_4}, d_{c_4}, {\bf b}_{c_4})$.}
	Denote by ${\bf b}_{c_4}$ the skew-symmetric map in the SBM $(G_{c_4}, s_{c_4}, $ $d_{c_4},{\bf b}_{c_4})$ corresponding to the singular virtual open string associated with crossing $c_4$, shown in Fig.~\ref{fig117}(b). Equivalently, one can view this string as the flat singular virtual knotoid obtained by performing the gluing operation at crossing $c_4$ in Fig.~\ref{fig116}(right).
	
	The same procedure applies to ${\bf b}_{c_4}$: first compute ${\bf b}_{c_4}(e,s_{c_4})$ via Rule~1 with the same ordering of components, and then compute ${\bf b}_{c_4}(e,f)$ for all pairs $e,f \in \{c_1,c_2,c_3,d_{c_4}\}$ via Rule~2. The remaining entries are filled using skew-symmetry and zeros on the diagonal, producing the full matrix $(G_{c_4}, s_{c_4}, d_{c_4}, {\bf b}_{c_4})$, written as $M_{{\bf b}_{c_4}}$. We organize the computed result in a matrix as follows: 
	$$	
	M_{{\bf b}_{c_4}} = \qquad \bordermatrix{
		&     s_{c_4} &  c_1 & c_2 & c_3 & d_{c_4}  \cr
		s_{c_4}   & 0 &  2 & 2 &-2 &-2  \cr
		c_1   &-2 &  0 & 0 &-3 &-2  \cr
		c_2   &-2 &  0 & 0 &-2 &-1  \cr
		c_3   & 2 &  3 & 2 & 0 & 0  \cr
		d_{c_4}  & 2 &  2 & 1 & 0 & 0  \cr	 
	} 
	$$	
	
	We claim that the matrix $M_{{\bf b}_{c_4}}$ is primitive. We need to verify that $M_{{\bf b}_{c_3}}$ cannot be obtained by any of the operations $\widetilde{M}_1$, $\widetilde{M}_2$, $\widetilde{M}_3$, or $N$. Indeed, it is not obtained from $\widetilde{M}_1$ since no row is identically zero, not from $\widetilde{M}_2$ since no row coincides with $s_{c_4} = [0, 2, 2, -2, -2]$, not from $\widetilde{M}_3$ since no two rows sum to $s_{c_4}$, and not from $N$, since
	\[
	\begin{aligned}
		\operatorname{row}(c_1)+\operatorname{row}(d_{c_4})
		&=(0,2,1,-3,-2),\\
		\operatorname{row}(c_2)+\operatorname{row}(d_{c_4})
		&=(0,2,1,-2,-1),\\
		\operatorname{row}(c_4)+\operatorname{row}(d_{c_4})
		&=(4,5,3,0,0),
	\end{aligned}
	\]
	none of which is equal to
	\[
	\operatorname{row}(s_{c_4})=(0,2,2,-2,-2).
	\]
	Therefore, $M_{{\bf b}_{c_4}}$ is primitive.
	
	By the same arguments as for $M_{{\bf b}_{c_3}}$, we conclude that $d_{c_4} \in {\bf b}_{c_4}$ is neither annihilating-like nor core-like.
	
	\underline{Comparing $M_{{\bf b}_{c_3}}$ and $M_{{\bf b}_{c_4}}$.}
	We now determine whether $M_{{\bf b}_{c_3}}$ and $M_{{\bf b}_{c_4}}$ are related by an isomorphism or by a composition of an isomorphism with a single move: $\widetilde{M}_1^{-1} \circ N \circ \widetilde{M}_2$, $\widetilde{M}_2^{-1} \circ N \circ \widetilde{M}_1$, or $N$.
	
	First, $M_{{\bf b}_{c_3}}$ and $M_{{\bf b}_{c_4}}$ are not isomorphic follows directly from the proof of Theroem 3.11 in\cite{Hen}. Indeed, $M_{{\bf b}_{c_3}}$ has elements $\{s_{c_3},c_1,c_2,c_4,d_{c_3}\}$, while $M_{{\bf b}_{c_4}}$ has elements $\{s_{c_4},c_1,c_2,c_3,d_{c_4}\}$. And there is no bijection $G_{c_3} \to G_{c_4}$ sending $s_{c_3}$ to $s_{c_4}$, $d_{c_3}$ to $d_{c_4}$ and transforms ${\bf b}_{c_3}$ into ${\bf b}_{c_4}$.
	
	Moreover, $M_{{\bf b}_{c_3}}$ and $M_{{\bf b}_{c_4}}$ cannot be related by any composition of an isomorphism with a single move among $\widetilde{M}_1^{-1} \circ N \circ \widetilde{M}_2$, $\widetilde{M}_2^{-1} \circ N \circ \widetilde{M}_1$, or $N$. Since both $M_{{\bf b}_{c_3}}$ and $M_{{\bf b}_{c_4}}$ are primitive, the inverse operations $\widetilde{M}_1^{-1}$, $\widetilde{M}_2^{-1}$, and $N^{-1}$ are not applicable.
	
	Therefore, by Theorem~\ref{th4.1}, $M_{{\bf b}_{c_3}}$ and $M_{{\bf b}_{c_4}}$ are not homologous. It then follows from Theorem~\ref{th2.1} that the two singular virtual open strings in Fig.~\ref{fig117} are not homotopic. Consequently, the terms in $\mathcal{G}(K_1)$ corresponding to crossings $c_3$ and $c_4$ do not cancel.
	
	On the other hand, We see that the terms in $\mathcal{G}(K_2)$ corresponding to crossings $3$ and $4$ in $K_2$ are the same as in $\mathcal{G}(K_1)$, except with opposite sign. Thus, $\mathcal{G}(K_1)-\mathcal{G}(K_2)$ is non-zero. Indeed this difference has two terms, one with coefficient $+2$ and one with coefficient $-2$. By the analysis above, these terms do not cancel, hence $\mathcal{G}(K_1) \neq \mathcal{G}(K_2)$. This completes the proof of Theorem~\ref{prop4.1}. 
\end{proof}

\section{Conclusion}

In this paper, we introduce a two-variable parity polynomial $\mathbf{P}_K(x,y)$ for oriented virtual knotoids and prove that it is an invariant of virtual knotoids. We study its basic properties, including its behavior under inversion and mirror reflection, and show that it is a Vassiliev invariant of order one. We also give an example showing that $\mathbf{P}_K(x,y)$ distinguishes virtual knotoids that are not distinguished by the odd writhe, and prove that it is equivalent to the affine index polynomial. Finally, we compare $\mathbf{P}_K(x,y)$ with the Petit gluing invariant and prove that the latter is strictly stronger.

For future work, it would be interesting to know which properties of a virtual knotoid $K$ can be detected by $P_K(x,y)$. Another natural problem is to understand the relationship between $P_K(x,y)$ and the three-variable invariant in \cite{FLV}.

Recently, knot data analysis (KDA) incorporated knot theory into topological data analysis by introducing the multiscale Gauss link integral and combining it with machine learning and deep learning~\cite{KDA}. The resulting framework has been applied to protein flexibility analysis, protein--ligand interactions, human Ether-à-go-go-Related Gene potassium channel blockade screening, and quantitative toxicity assessment. This suggests a possible direction for studying applications of the polynomial invariant $\mathbf{P}_K(x,y)$ in knot data analysis.

\end{document}